\newcommand{\Con}{\ensuremath{\mathcal{C}}}
\newcommand{\mb}[1]{\ensuremath{\mathbb{#1}}}
\newcommand{\N}{\mb{N}}
\newcommand{\R}{\mb{R}}
\newcommand{\A}{\ensuremath{{\mathcal A}}}
\newfont{\bl}{msbm10 scaled \magstep2}
\newcommand{\beq}{\begin{equation}}
\newcommand{\eeq}{\end{equation}}
\newcommand{\notmid}{\mid\kern-0.5em\not\kern0.5em}
\newcommand{\al}{\alpha}
\newcommand{\eps}{\varepsilon}
\newcommand{\vphi}{\varphi}
\newtheorem{thm}{Theorem}[section]
\newtheorem{lem}[thm]{Lemma}
\newtheorem{prop}[thm]{Proposition}
\newtheorem{cor}[thm]{Corollary}
\newtheorem{defi}[thm]{Definition}
\theoremstyle{definition}
\newtheorem{rem}[thm]{Remark}
\newenvironment{pr}{\begin{proof}[\textbf{Proof:}] \ }{\end{proof}}
\newtheorem{ex}[thm]{Example}
\newcommand{\bx}{\bar{x}}
\newcommand{\by}{\bar{y}}
\newcommand{\bz}{\bar{z}}
\newcommand{\LLS}{Lorentzian length space }
\newcommand{\LLSn}{Lorentzian length space}
\newcommand{\LpLS}{Lorentzian pre-length space }
\newcommand{\LpLSn}{Lorentzian pre-length space}
\newcommand{\Yll}{$(Y,d,\ll,\leq,\tau)$ }
\newcommand{\Tau}{\mathcal{T}}
\newcommand{\dd}{\mathrm{d}}
\newcommand{\diam}{\mathrm{diam}}
\newcommand{\wpd}{generalized cone}
\newcommand{\wpds}{generalized cones}
\newcommand{\hyp}{\mathbb{H}}
\newcommand{\lv}{L^{\mathrm{var}}}
\newcommand{\I}{\mathcal{I}}
\newcommand{\lstr}{Lorentzian length structure }
\newcommand{\lstrn}{Lorentzian length structure}
\newcounter{desccount}
\newcommand{\descref}[1]{{\bf (\hyperref[#1]{#1})}}
\newtheorem*{Theorem48}{Theorem 4.8 \& Corollary 4.9}
\title{Generalized cones as Lorentzian length spaces: Causality, curvature, and singularity theorems}
 \author{Stephanie B. Alexander\thanks{{\tt sba@illinois.edu}, Department of Mathematics, University of Illinois at 
Urbana-Champaign, USA.}, Melanie Graf\thanks{{\tt mgraf2@uw.edu}, Department of Mathematics, University 
of Washington, USA.}, Michael Kunzinger\thanks{{\tt michael.kunzinger@univie.ac.at}, Faculty of Mathematics, University 
of Vienna, Austria.}, Clemens S\"amann\thanks{{\tt clemens.saemann@utoronto.ca}, Department of Mathematics, University of Toronto, Canada.}}
\begin{document}

 \maketitle
 
 \begin{abstract}
We study generalizations of Lorentzian warped products with one-dimensional base of the form $I\times_f X$, where $I$ is an interval, $X$ is a length space and $f$ is a
positive continuous function. These \emph{generalized cones} furnish an important
class of Lorentzian length spaces in the sense of \cite{KS:18}, displaying
optimal causality properties that allow for explicit descriptions of all
underlying notions. In addition, synthetic sectional curvature bounds of
generalized cones are directly related to metric curvature bounds of the fiber $X$.
The interest in such spaces comes both from metric geometry and from 
General Relativity, where warped products underlie important cosmological models (FLRW spacetimes). Moreover, we prove singularity theorems for these spaces, showing that non-positive lower timelike curvature bounds imply the existence 
of incomplete timelike geodesics.
\bigskip

\noindent
\emph{Keywords:} Length spaces, Lorentzian length spaces, causality theory, synthetic curvature bounds, triangle comparison, metric geometry, warped products
\medskip

\noindent
\emph{MSC2010:} 
51K10, 
53C23, 
53C50, 
53B30, 
53C80 
 
\end{abstract}

\newpage
 \tableofcontents

\section{Introduction}
Warped products are of central importance to Riemannian geometry, in particular in the study of constant curvature geometries and as a rich source of examples and counterexamples (cf., e.g., \cite{Pet:16}). Generalized cones and warped products of metric spaces likewise play an important role in the theory of length metric spaces with synthetic curvature bounds (\emph{Alexandrov spaces}). These spaces, while including Riemannian manifolds with curvature bounds, allow singularities, being closed, for example, under Gromov-Hausdorff limits or gluing operations. Alexandrov spaces have yielded major insights into classical Riemannian geometry  (\cite{Per:93}, cf.\  \cite{Kap:07, Gro:01}). Generalized cones and warped products provide examples and counterexamples in Alexandrov geometry  (cf.\ \cite{Che:99, AB:98, AB:04, AB:16}). Moreover, the first-order structure is captured by the \emph{tangent cone};  for instance, for Alexandrov spaces of curvature bounded below, the tangent cone at a point is homeomorphic to a small metric ball centered at the point (cf.\ \cite{BBI:01}).
 
In the smooth pseudo-Riemannian setting, Alexander and Bishop gave in \cite{AB:08} a characterization of sectional curvature bounds in terms of triangle comparison, including applications to Lorentzian and general semi-Riemannian warped products and so-called \emph{Friedmann-Lemaitre-Robertson-Walker (FLRW)}-spacetimes. Lorentzian geometry enjoys a unique position in the smooth pseudo-Riemannian world: Pseudo-Riemannian metrics of signature $(-,+,\dots ,+)$ are central to the study of the physical theory of General Relativity and (apart from the Riemannian case) Lorentzian theory is the most mathematically well explored case, providing many strong tools and results which are absent in more general signatures. Additionally, the past decade has seen increasing interest from the mathematical physics community in the study of low regularity Lorentzian geometry and General Relativity. An intensive study of causality theory (cf.\ \cite{Min:19b}) for Lorentzian metrics of low regularity was initiated by P.\ Chru\'sciel and J.D.E.\ Grant in \cite{CG:12} and then pursued by various authors, see \cite{Min:15,KSS:14,KSSV:14,Sae:16}. In particular, Chru\'sciel and Grant showed in \cite{CG:12} that for spacetimes with merely continuous metrics pathologies of the causal structure may occur, e.g.\ there are so-called \emph{causal bubbles}, where the boundary of the lightcone is not a hypersurface but has positive measure (see also \cite{GKSS:20}). 
An important recent result for continuous Lorentzian metrics is the $\Con^0$-inextendibility of the Schwarzschild solution to the Einstein equations, which was shown by J.\ Sbierski in \cite{Sbi:18} and has sparked further research into low regularity (in-)extendibility and causality (e.g.\ \cite{GLS:18, GL:17, DL:17, GL:18, GKS:19}). The importance of such low regularity (in-)extendibility results is rooted in the strong cosmic censorship conjecture (cf.\ e.g.\ \cite{Ise:15}), which, roughly, states that the maximal globally hyperbolic development of generic initial data for the Einstein equations is inextendible as a suitably regular Lorentzian manifold and which is intimately related to the question of determinism in General Relativity. Another area of mathematical general relativity, where low regularity has recently come to the forefront of current research, is the study of singularities and in particular of so-called singularity theorems, predicting causal geodesic incompleteness under certain curvature and causality assumptions. The classical singularity theorems of Hawking and Penrose have only recently been successfully extended to the $\Con^{1,1}$-setting (\cite{KSSV:15, KSV:15, GGKS:18}), which is a natural regularity class to consider as curvature is still almost everywhere defined and locally bounded. Furthermore, these singularity theorems have recently been extended to the regularity class $\Con^1$ in \cite{Gra:20}. With the final results in Section \ref{sec:sin-thm} this paper will directly contribute to this line of research by proving singularity theorems for generalized cones. We also mention another natural generalization of smooth Lorentzian geometry, namely cone structures on differentiable manifolds and Lorentz-Finsler spacetimes, see \cite{FS:12,BS:18,Min:19a, MS:19, LMO:19}, which proved to be a significant extension of the field. Also, there currently is strong interest in bringing techniques from Riemannian geometry and optimal transport into the Lorentzian setting, cf.\ \cite{CM:20, McC:20, MS:18}. For extending these techniques further to a synthetic setting it might prove useful to use generalized cones  as introduced and studied in the present paper as a starting point.

Finally, we note that there have been several approaches to a synthetic or axiomatic description of (parts of) Lorentzian geometry and causality. We mention in particular the \emph{timelike spaces} of Busemann \cite{Bus:67} and the \emph{causal set theory} of quantum gravity \cite{BLMS:87, Sur:19}. For a more detailed discussion see the introduction and Subsection 5.3 of \cite{KS:18}.
Another closely related direction of research is the recent approach of Sormani and Vega \cite{SV:16} and its further development by Allen and Burtscher in \cite{AB:19} of defining a metric on a spacetime that is compatible with the causal structure in case the spacetime admits a time function satisfying an anti-Lipschitz condition.

The importance of warped products, specifically, in General Relativity (cf., e.g., \cite{ONe:83,Wal:84,Min:07}) stems from the fact that the FLRW models of the universe in cosmology are particular examples of warped products with one-dimensional base, see e.g.\ \cite[Ch.\ 12]{ONe:83}. Such spaces have a very simple structure geometrically and provide a good starting point for trying to generalize the smooth theory. Our main object of study will be generalizations of Lorentzian warped products with one-dimensional base to the case where the fiber is merely a metric length space, but some of our results are new even if the fiber is a smooth Riemannian manifold. For example, we show that there is no causal bubbling in such spacetimes even if the warping function $f$, and hence the Lorentzian  metric, is merely continuous and that maximizing causal curves of positive length have to be timelike. Moreover, any globally hyperbolic smooth spacetime $(M,g)$ splits isometrically as $(M,g)\cong (\R\times S,-\beta\,  \dd t^2 + h_t)$, where $S$ is a Cauchy hypersurface in $M$, $\beta$ is a smooth positive function on $\R\times S$ and $h_t$ is a $t$-dependent family of Riemannian metrics on each level set $\{t\}\times S$ (cf.\ \cite{BS:03, BS:05}). Globally hyperbolic spacetimes can therefore be viewed as generalizations of warped products with one-dimensional base, so our methods may also find applications to such spaces in future research.

Both from the perspective of Lorentzian geometry and with a view to the fundamental importance of warped products in General Relativity it is therefore of interest to study generalizations of such geometries beyond the setting of smooth manifolds. A natural framework in which to carry out such an extension is the theory of Lorentzian length spaces (\cite{KS:18,GKS:19}), see Subsection \ref{subsec-lls} below for a brief introduction. 

\subsection{Main results and outline of the paper}
The plan of the paper is as follows. In the remainder of this introduction we recall
the basic notions of the theory of metric spaces with curvature bounds and Lorentzian length spaces. Section \ref{sec-con} introduces Minkowski-cones, a Lorentzian analogue of cones over metric spaces, and a first instance of a generalized cone as defined in Section \ref{sec:gen_cone}.
The main result of Section \ref{sec-con} relates curvature bounds of the metric space $X$ (the fiber) to timelike curvature bounds of the cone over $X$. In fact, we prove
\setcounter{section}{2}
\setcounter{thm}{4}
\begin{thm}
Let $Y=\mathrm{Cone}(X)$ be the Minkowski cone over a geodesic length space $X$. Then $Y$ has timelike curvature bounded below (above) by $0$ if and only if $X$ is an Alexandrov space of curvature bounded below (above) by $-1$.
\end{thm}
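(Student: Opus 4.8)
The plan is to reduce the Lorentzian triangle comparison that defines timelike curvature bounds in $Y$ to the metric triangle comparison that defines Alexandrov bounds in $X$, exploiting the explicit structure of the Minkowski cone. Two facts from the construction of $\mathrm{Cone}(X)$ drive everything. First, the time separation obeys a hyperbolic law of cosines,
\[
\tau\big((s,x),(t,y)\big)^2 = s^2 + t^2 - 2st\cosh\big(d_X(x,y)\big),
\]
valid whenever the right-hand side is positive (and $\tau=0$ otherwise). Second, maximal timelike geodesics of $Y$ project to geodesics of $X$, and --- this is where geodesy of $X$ enters --- the joint profile of height and fractional arclength in $X$ along such a geodesic is determined by its endpoint data $(s,t,d_X(x,y))$ alone. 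Consequently a timelike geodesic triangle $\Delta$ in $Y$ with vertices $(a,x_A)$, $(b,x_B)$, $(c,x_C)$ is completely encoded by the genuine geodesic triangle $T=\triangle x_Ax_Bx_C$ in $X$ together with the heights $a,b,c$.

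The conceptual core is that $\mathrm{Cone}(\mathbb{H}^2)$ is isometric to an open subset of flat Minkowski space $\mathbb{L}^3$, so that $\mathbb{H}^2$ --- the model for Alexandrov curvature $-1$ --- cones up to the zero-curvature Lorentzian model. I would use this to construct the Lorentzian comparison triangle directly from the metric one: given $T$, take its comparison triangle $\bar T\subset\mathbb{H}^2$ (same side lengths as $T$) and cone it up, with the same heights $a,b,c$, to a timelike triangle $\bar\Delta\subset\mathrm{Cone}(\mathbb{H}^2)\subset\mathbb{L}^3$. By the law of cosines, since $\bar T$ and $T$ share side lengths, $\bar\Delta$ and $\Delta$ share time separations; and as all sides of $\bar\Delta$ are timelike, $\bar\Delta$ spans a timelike $2$-plane isometric to $\mathbb{L}^2$, so it is exactly the comparison triangle of $\Delta$ in the sense of \cite{KS:18}. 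Because the geodesic profiles depend only on endpoint data, which $\Delta$ and $\bar\Delta$ share, corresponding points on corresponding sides of $\Delta$ and $\bar\Delta$ sit at equal heights over comparison points $x_p\in T$ and $\bar x_p\in\bar T$.

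With this dictionary in place, the comparison inequality collapses to a single monotone identity. For a point $P$ on the side $[A,C]$ of $\Delta$ over $x_p\in[x_A,x_C]$ at height $h$, with opposite vertex over $x_B$ at height $b$,
\[
\tau(P,B)^2 = h^2 + b^2 - 2hb\cosh\big(d_X(x_p,x_B)\big), \qquad \bar\tau(\bar P,\bar B)^2 = h^2 + b^2 - 2hb\cosh\big(d_{\mathbb{H}^2}(\bar x_p,\bar x_B)\big),
\]
with identical heights. Since $\cosh$ is increasing, $\tau(P,B)\geq\bar\tau(\bar P,\bar B)$ holds if and only if $d_X(x_p,x_B)\leq d_{\mathbb{H}^2}(\bar x_p,\bar x_B)$, i.e.\ the side-to-vertex distances in $T$ are at most those in its $\mathbb{H}^2$-comparison triangle. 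Unwinding the definitions, the former inequality for all such $P$ is precisely timelike curvature bounded above by $0$, while the latter is precisely Alexandrov curvature bounded above by $-1$; the reversed inequalities give the lower bounds. Quantifying over all admissible timelike triangles in a comparison neighborhood, which under the cone map sweep out exactly the admissible geodesic triangles of the corresponding region of $X$, yields both directions of the equivalence.

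The main obstacle I anticipate is establishing the geodesic structure rigorously when $X$ is merely a geodesic length space: that maximal timelike geodesics of $Y$ correspond bijectively to geodesics of $X$ with a profile depending only on endpoint data, together with the profile matching between $\Delta$ and $\bar\Delta$ that identifies corresponding interior points. Two further points need care: the domain bookkeeping --- tracking for which configurations the law of cosines produces genuinely timelike-related points, which constrains the admissible $d_X$ relative to the height ratios --- and the localization, ensuring that comparison neighborhoods in $Y$ correspond to neighborhoods of $X$ on which the metric comparison is tested, including a separate treatment of degenerate configurations where a side becomes null.
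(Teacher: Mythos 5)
Your proposal is correct and takes essentially the same route as the paper: the paper likewise passes through the cone over $\hyp^2\subseteq\R^3_1$ to build the flat comparison triangle, converts all comparison inequalities via the law of cosines \eqref{taudef} and monotonicity of $\cosh$, and relies on exactly the geodesic-profile/equal-radial-coordinate facts you flag as the main obstacle (these are its Lemma~\ref{curve_rel}, proved beforehand). The only point to tighten is that the timelike curvature bound of \cite{KS:18} compares \emph{arbitrary pairs} of points on the sides, not only side-point-to-opposite-vertex pairs; your equal-heights dictionary together with the same law-of-cosines identity covers this general case verbatim, which is how the paper phrases its computation.
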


In Section \ref{sec:gen_cone}, we introduce the main object of this paper, a metric analogue of Lorentzian warped products with one-dimensional base and a length space as fiber. We then study timelike and causal curves, introduce a time-separation function and establish the main features of causality theory for these \emph{generalized cones}.
While there are a number of analogues to the metric theory of warped products (e.g., fiber independence of geodesics), these causality results require new methods. The main result in this section is that generalized cones display no causal pathologies, a fact that is used extensively later on. In more detail, we show that

\setcounter{section}{3}
\setcounter{thm}{21}
\begin{prop}(Push-up and openness of $I^\pm$)
Every \wpd\ $Y=I\times_f X$ such that $(X,d)$ is a length space has the property that $p\ll q$ if and only if there 
exists a future directed causal curve from $p$ to $q$ of positive length, i.e., push-up holds. 
Moreover, $I^\pm(p)$ is open for any  $p\in Y$. 
\end{prop}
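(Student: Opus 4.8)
The plan is to reduce both claims to the explicit description of causal and timelike curves in $Y=I\times_f X$ obtained above, turning them into an arithmetic comparison between the $X$-distance of the spatial endpoints and the ``time budget'' $\Phi(t_0,t_1):=\int_{t_0}^{t_1}\dd s/f(s)$. Write $p=(t_0,x_0)$ and $q=(t_1,x_1)$. One implication of push-up is immediate from the definitions: if $p\ll q$ there is a future directed timelike curve from $p$ to $q$, its Lorentzian speed $\sqrt{\dot\alpha^2-f(\alpha)^2 v_\beta^2}$ is pointwise positive, so its length is positive, and a timelike curve is in particular causal.

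For the converse I would first read off from positivity of the length that the spatial endpoints lie strictly within the time budget. Let $\gamma=(\alpha,\beta)$ be a future directed causal curve from $p$ to $q$ with $L(\gamma)>0$, and let $v_\beta$ denote the (a.e.\ defined) metric speed of $\beta$ in $X$. Causality gives $f(\alpha)\,v_\beta\le\dot\alpha$ a.e., whence $d(x_0,x_1)\le\int v_\beta\,\dd s\le\int\dot\alpha/f(\alpha)\,\dd s=\Phi(t_0,t_1)$. If $f(\alpha)\,v_\beta=\dot\alpha$ held a.e.\ the integrand of $L$ would vanish identically, contradicting $L(\gamma)>0$; hence the second inequality is strict and $d(x_0,x_1)<\Phi(t_0,t_1)$, and in particular $t_0<t_1$.

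It then remains to manufacture a genuinely timelike curve from $p$ to $q$ out of this strict slack, and this is the step where the length-space hypothesis on $X$ is used; I expect it to be the main technical point. Since $(X,d)$ is a length space, pick a rectifiable curve $\sigma$ in $X$ from $x_0$ to $x_1$ of length $\ell<\Phi(t_0,t_1)$ and parametrize it by constant speed. Lifting it by $u(t):=\Phi(t_0,t)/\Phi(t_0,t_1)$, the curve $t\mapsto(t,\sigma(u(t)))$, $t\in[t_0,t_1]$, has $X$-speed equal to the fixed fraction $\ell/\Phi(t_0,t_1)<1$ of the null bound $1/f(t)$, so it satisfies the strict timelike inequality everywhere and witnesses $p\ll q$. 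The only points requiring care are the degenerate case $x_0=x_1$ (use a vertical curve) and the bookkeeping that keeps the $t$-component monotone and inside $I$, both of which are unproblematic thanks to the strict inequality $\ell<\Phi(t_0,t_1)$.

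Finally, openness of $I^\pm(p)$ drops out of the characterization just established together with the continuity of $f$. By the above, $I^+(p)=\{(t_1,x_1)\in Y: t_1>t_0\text{ and }d(x_0,x_1)<\Phi(t_0,t_1)\}$; since $f$ is continuous, the map $(t_1,x_1)\mapsto\Phi(t_0,t_1)-d(x_0,x_1)$ is continuous on $Y$, so $I^+(p)$ is the intersection of the two open conditions $t_1>t_0$ and $\Phi(t_0,t_1)-d(x_0,x_1)>0$ and hence open. The argument for $I^-(p)$ is symmetric.
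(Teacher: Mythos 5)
Your proof is correct and follows essentially the same route as the paper's: both rest on the explicit description $I^+(p)=\{(t_1,x_1)\in Y:\ t_1>t_0,\ d(x_0,x_1)<\int_{t_0}^{t_1}\frac{\dd s}{f(s)}\}$, obtained from the causal-curve estimate $d(x_0,x_1)\le \int v_\beta\le \int \dot\alpha/(f\circ\alpha)$ (strict when $L(\gamma)>0$) together with the lift of an almost length-minimizing fiber curve to a timelike curve, and openness then follows from continuity. The only difference is organizational: the paper encodes the time budget through the inverse function $h_{p_0}$ of $r\mapsto\int_{p_0}^{r}\frac{\dd s}{f(s)}$ and proves the converse by contradiction in two cases ($d(\bar p,\bar q)<b_{p_0}$ and $d(\bar p,\bar q)\ge b_{p_0}$), which your direct use of $\Phi$ collapses into a single case-free computation.
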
 

The above result is then used in Section \ref{sec:gen_cones_as_lls} to establish that generalized cones are examples of Lorentzian length spaces, without any additional assumptions on the causality or on the warping function $f$ (continuous and positive):

\setcounter{section}{4}
\setcounter{thm}{7}
\begin{Theorem48} \emph{
 Any \wpd\ $I\times_f X$, where $(X,d)$ is a locally compact length space, is a strongly causal \LLSn. If $X$ is a locally compact geodesic length space, then $I\times_f X$ is a regular strongly causal \LLSn. }
\end{Theorem48}
In particular we prove that if the fiber $X$ is a geodesic length space that is proper then $I\times_f X$ is globally hyperbolic. Section \ref{sec-cb} is then devoted to relating synthetic curvature bounds (via triangle comparison) in generalized cones to corresponding bounds in the fiber.
Here the main results are as follows:
\setcounter{section}{5}
\setcounter{thm}{2}
\begin{thm}
	Let $K, K'\in \R$ and let $(X,d)$ be a geodesic length space with curvature bounded below/above by $K$. 
	Then $Y=I\times_fX$ has timelike curvature bounded below/above by $K'$ if
	$I\times_f \mathbb{M}^2(K)$ has timelike curvature bounded below/above by $K'$.
\end{thm}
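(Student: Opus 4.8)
The plan is to reduce the curvature comparison for $Y=I\times_fX$ to the already-assumed comparison for the model generalized cone $I\times_f\mathbb{M}^2(K)$, exploiting two structural features of generalized cones established in the previous sections. First, a geodesic in $I\times_fX$ projects to a geodesic in the fiber $X$, and its base coordinate evolves according to a first-order ODE governed by a conserved quantity, depending only on the base coordinate and on the fiber arc length covered, not on the fiber $X$ itself. Second, the time separation $\tau\bigl((s,x),(t,y)\bigr)$ depends on the fiber only through the distance $d(x,y)$, and is non-increasing in $d(x,y)$ for fixed $s,t$. Since timelike curvature bounds are a local condition, I would work throughout in comparison neighborhoods on which $\tau$ is finite and continuous and maximizing timelike geodesics between timelike related points exist.

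First I would take an arbitrary timelike geodesic triangle in $Y$ with vertices $p=(t_p,x_p)$, $q=(t_q,x_q)$, $r=(t_r,x_r)$ and consider its fiber projection, the geodesic triangle $\triangle x_px_qx_r$ in $X$. Because $(X,d)$ has curvature bounded below (resp.\ above) by $K$, this fiber triangle admits a comparison triangle $\triangle\tilde x_p\tilde x_q\tilde x_r$ in $\mathbb{M}^2(K)$ with the same side lengths, and for points on its sides the Alexandrov inequalities hold: the fiber distance between two such points is $\geq$ (resp.\ $\leq$) the distance between the corresponding points of the comparison triangle. I would then \emph{lift} this comparison triangle to $I\times_f\mathbb{M}^2(K)$ by keeping the base coordinates and replacing the fiber points, i.e.\ setting $\tilde p=(t_p,\tilde x_p)$, and similarly for $\tilde q,\tilde r$. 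Since the time separation of two points is a function of their base coordinates and of the fiber distance between them alone, and the vertex distances are preserved by the comparison triangle, the lifted configuration is again a timelike geodesic triangle, now in $I\times_f\mathbb{M}^2(K)$, with exactly the same side lengths as the original; its sides project, by local unique geodesy of $\mathbb{M}^2(K)$, onto the sides of $\triangle\tilde x_p\tilde x_q\tilde x_r$.

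With this in hand I would chain the two comparisons. For points $x,y$ lying on sides of the triangle in $Y$, their fiber projections $\pi x,\pi y$ (with $\pi\colon Y\to X$ the projection onto the fiber) lie on fiber geodesics; the Alexandrov comparison for $X$ bounds $d(\pi x,\pi y)$ against the distance of the corresponding points in $\mathbb{M}^2(K)$, and monotonicity of $\tau$ in the fiber distance converts this into a comparison between $\tau(x,y)$ and the time separation of the corresponding points $\tilde x,\tilde y$ on the lifted triangle in $I\times_f\mathbb{M}^2(K)$. The assumed timelike curvature bound of $I\times_f\mathbb{M}^2(K)$ then compares the latter against the time separation in the Lorentzian model space of curvature $K'$. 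Composing the two inequalities yields the required comparison of $\tau(x,y)$ against the model, which is precisely the statement that $Y$ has timelike curvature bounded below (resp.\ above) by $K'$; one checks that for the below-case the three inequalities ($d\geq$ comparison, $\tau$ non-increasing, model comparison) compose in the correct direction, and likewise, with all inequalities reversed, for the above-case.

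The main obstacle I anticipate is the compatibility of the two distinct notions of ``corresponding point'' that appear: in the Lorentzian triangle comparison a point on a side is identified with its counterpart by the proper-time (time-separation) parametrization of the timelike geodesic, whereas in the metric Alexandrov comparison for the fiber the identification is by fiber arc length. Reconciling them is exactly where the explicit, fiber-independent description of geodesics is needed: since the base coordinate of a warped-product geodesic evolves by the same ODE in $Y$ and in $I\times_f\mathbb{M}^2(K)$ whenever the endpoint base coordinates and the fiber length agree, the map from proper-time fraction to fiber-arc-length fraction is identical along a side of the triangle and along its lift, so the two notions of corresponding point coincide. Care is also required to ensure comparison triangles exist (the size and perimeter restriction when $K>0$), which is covered by the locality of the definition, and to treat points lying on two \emph{different} sides, for which the point-to-point fiber comparison is a standard consequence of the one-sided Alexandrov condition.
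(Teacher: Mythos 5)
Your proposal is correct and follows essentially the same route as the paper: project the timelike geodesic triangle to the fiber, use the Alexandrov bound on $X$ to build a comparison triangle in $\mathbb{M}^2(K)$, lift it (keeping base coordinates) to $I\times_f\mathbb{M}^2(K)$, compare time separations via monotonicity of $\tau$ in the fiber distance, and then invoke the assumed timelike curvature bound of $I\times_f\mathbb{M}^2(K)$ against $\mathbb{L}^2(K')$. You also correctly identify, and resolve via fiber independence, the same subtlety the paper handles, namely that corresponding points under the Lorentzian (time-separation) and metric (arc-length) parametrizations must agree along a side and its lift.
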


\setcounter{thm}{6}
\begin{thm}
	If $X$ is a geodesic length space, $Y=I\times_f X$ has timelike curvature bounded below (above) by $K'$ and $Y'=I\times_f \mathbb{M}^2(K)$ has timelike curvature bounded above (below) by $K'$ then $X$ has curvature bounded below (above) by $K$.
\end{thm}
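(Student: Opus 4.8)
The plan is to set up a correspondence between geodesic triangles in the fiber and timelike triangles in the generalized cone, exploiting that $Y=I\times_f X$ and $Y'=I\times_f\mathbb{M}^2(K)$ share the same warping function $f$ and base $I$. This forces corresponding timelike triangles to have identical side lengths, hence a common comparison triangle in the Lorentzian model $\mathbb{L}^2(K')$, against which the two opposing curvature hypotheses can be played off. I describe the ``below'' case; the ``above'' case is verbatim with all inequalities reversed.

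First I would fix an arbitrary small geodesic triangle $\triangle(p_0,p_1,p_2)$ in $X$ together with its comparison triangle $\triangle(\bar p_0,\bar p_1,\bar p_2)$ in $\mathbb{M}^2(K)$ (equal corresponding side lengths by definition). I lift both using a single triple of times $t_0<t_1<t_2$ in $I$, taking the vertices of $T_Y$ to be $(t_i,p_i)$ and those of $T_{Y'}$ to be $(t_i,\bar p_i)$. Choosing the fiber triangle small relative to the time gaps --- permissible since curvature bounds are a local, comparison-neighborhood condition --- ensures all vertex pairs are timelike related with the sides realized by maximizing timelike geodesics. The crucial structural input, from Section~\ref{sec:gen_cone}, is that the time separation in a generalized cone between two points depends only on their two time coordinates and the fiber distance between them, and is strictly decreasing in that fiber distance. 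Since $d(p_i,p_j)=d(\bar p_i,\bar p_j)$ by construction, the side lengths of $T_Y$ and $T_{Y'}$ coincide, so both realize \emph{the same} comparison triangle in $\mathbb{L}^2(K')$.

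Next I would match interior points. For a parameter value $\lambda$ along the side $[(t_0,p_0),(t_1,p_1)]$ of $T_Y$, let $m$ be the corresponding point, and let $m'$ be the point at the same $\lambda$ on the side $[(t_0,\bar p_0),(t_1,\bar p_1)]$ of $T_{Y'}$. Because $Y$ and $Y'$ share $f$ and the same $t_0,t_1$, while the fiber geodesics $[p_0,p_1]$ and $[\bar p_0,\bar p_1]$ have equal length, the time-versus-fiber parametrization of the two sides is governed by one and the same one-dimensional variational problem; hence $m$ and $m'$ sit at the same time coordinate and project, by fiber independence of geodesics, to fiber points at equal arc length. Their common comparison point $\bar m$ in $\mathbb{L}^2(K')$ is then also the same. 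Now the hypotheses apply: timelike curvature bounded below by $K'$ in $Y$ and above by $K'$ in $Y'$ give opposite comparison inequalities for $\tau(m,(t_2,p_2))$ and $\tau(m',(t_2,\bar p_2))$ against this common value $\bar\tau$, sandwiching to $\tau(m,(t_2,p_2))\le\bar\tau\le\tau(m',(t_2,\bar p_2))$. Since both time separations are the value of the same function $\Phi$ at the same pair of times, differing only in the fiber distances $d(\pi(m),p_2)$ and $d(\overline{\pi(m)},\bar p_2)$, strict monotonicity of $\Phi$ yields $d(\pi(m),p_2)\ge d(\overline{\pi(m)},\bar p_2)$. This is exactly the triangle-comparison inequality defining curvature bounded below by $K$ for $X$.

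The main obstacle I expect is the geometric bookkeeping that makes the two comparisons literally compatible: verifying that points at equal side-parameter in $T_Y$ and $T_{Y'}$ really do share a time coordinate and corresponding fiber projections (so that the monotonicity step isolates the fiber distance alone), and checking the causality and maximality conditions needed to realize valid timelike triangles uniformly over a full neighborhood rather than for one triangle. Once the shared-data argument fixing the common parametrization and the strict monotonicity of the cone's time-separation function are secured, the sandwich inequality and its translation to the fiber are immediate, and the reversed choice of bounds delivers the ``above'' statement.
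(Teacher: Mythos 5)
Your proposal is correct and takes essentially the same route as the paper's proof: lift the fiber triangle and its $\mathbb{M}^2(K)$ comparison triangle to timelike triangles in $Y$ and $Y'$ with equal side lengths via fiber independence (the careful lifting you defer to ``bookkeeping'' is exactly the paper's Lemma \ref{lem-lif}), sandwich $\tau_Y \leq \tau_{\mathbb{L}^2(K')} \leq \tau_{Y'}$ against the common comparison triangle using the two opposing curvature hypotheses, and convert back to a fiber-distance inequality by monotonicity of $\tau$ in the fiber distance (the paper's Lemma \ref{lem-tau-vs-d-conv}). The only cosmetic difference is that you compare a side point with the opposite vertex whereas the paper allows two arbitrary points on sides; these are standard equivalent formulations of the Alexandrov comparison condition.
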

Moreover, the first result above allows us to generate an abundance of examples of \LLSn s with timelike curvature bounds.

We then apply our techniques in Section \ref{sec:sin-thm} to show that non-positive lower timelike curvature bounds imply the existence of incomplete timelike geodesics. That is, we provide \emph{synthetic singularity theorems} for generalized cones. To be precise, we prove the following:

\setcounter{section}{6}
\setcounter{thm}{3}

\begin{cor} Let $X$ be a geodesic length space, $Y=I\times_f X$ with $I=(a,b),\,f:I\to (0,\infty)$ smooth. Assume that $Y$ has timelike curvature bounded below by $K$. Then:
	\begin{enumerate}
		\item[(i)] If $K<0$, then $a>-\infty$ and $b<\infty$ and hence the time separation function $\tau_Y $ of $Y$ is bounded by $b-a$. Thus any such $Y$ is timelike geodesically incomplete.
		\item[(ii)] If $K=0$ and $f$ is non-constant, then $a>-\infty $ 
		or $b<\infty$ 
		and hence $Y$ is past or future timelike geodesically incomplete.
	\end{enumerate}
\end{cor}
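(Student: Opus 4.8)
The plan is to reduce the synthetic hypothesis to a differential inequality for $f$ and then to read off the geometry of $I$ from that inequality. First I would extract from ``$Y$ has timelike curvature bounded below by $K$'' the pointwise inequality
\[
f''\le K\,f\qquad\text{on }I .
\]
To see this, fix a unit-speed geodesic $\sigma$ in the geodesic length space $X$; the slice $I\times_f\sigma$ is then isometric to a two-dimensional \wpd, i.e.\ a smooth Lorentzian surface with line element $-\dd t^2+f(t)^2\,\dd s^2$ and Gauss curvature $f''/f$. Its timelike triangles are timelike triangles of $Y$ and its maximizers stay in the slice (moving off $\sigma$ only increases the fibre distance and hence shortens causal curves), so the slice inherits the lower bound $K$. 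Since the Lorentzian triangle-comparison sign is reversed, a synthetic \emph{lower} bound $K$ on a smooth Lorentzian surface is equivalent to the pointwise \emph{upper} Gauss-curvature bound $f''/f\le K$; alternatively this is exactly the comparison with $I\times_f\mathbb{M}^2(K)$ supplied by Section~\ref{sec-cb}. This yields $f''\le Kf$.

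For (i), with $K<0$, set $\omega=\sqrt{-K}>0$, so that $f''+\omega^2 f\le 0$ while $f>0$. I would run a Sturm--Wronskian comparison against $u(t)=\sin(\omega(t-t_0))$: on any subinterval where $u\ge 0$ the Wronskian $W=f'u-f u'$ satisfies $W'=(f''+\omega^2 f)\,u\le 0$, yet the vanishing of $u$ at two consecutive zeros a distance $\pi/\omega$ apart forces $W$ to change sign, a contradiction. Hence $f$ cannot stay positive on an interval longer than $\pi/\omega$, so $b-a\le \pi/\omega<\infty$, giving $a>-\infty$ and $b<\infty$. The time-separation formula for generalized cones from Section~\ref{sec:gen_cone} then bounds, for future-directed $p=(t_1,x)$, $q=(t_2,y)$,
\[
\tau_Y(p,q)=\sup_{\gamma}\int \sqrt{\dot t^2-f^2|\dot c|^2}\;\le\; t_2-t_1\;\le\; b-a ,
\]
so $\tau_Y\le b-a$. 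Finally the radial curve $t\mapsto(t,x_0)$ is a timelike geodesic parametrized by proper time $t$, inextendible with domain $(a,b)$ of finite length; it is therefore timelike incomplete, and $Y$ is timelike geodesically incomplete.

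For (ii), with $K=0$ and $f$ non-constant, the inequality becomes $f''\le 0$, i.e.\ $f$ is concave. A positive, concave, non-constant function cannot be defined on all of $\R$: assuming $I=\R$, pick $t_1$ with $f'(t_1)\ne 0$ (possible since $f$ is non-constant, else $f'\equiv 0$). If $f'(t_1)>0$ then concavity gives $f'(t)\ge f'(t_1)>0$ for all $t\le t_1$, forcing $f(t)\to-\infty$ as $t\to-\infty$; if $f'(t_1)<0$ then $f(t)\to-\infty$ as $t\to+\infty$. Either way this contradicts $f>0$, so $(a,b)\ne\R$, i.e.\ $a>-\infty$ or $b<\infty$. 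In that time direction the radial geodesic is inextendible in finite proper time, so $Y$ is past or future timelike geodesically incomplete.

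The Sturm comparison and the concavity dichotomy are routine, as are the facts that radial curves are proper-time geodesics and that $\tau_Y$ is controlled by the $t$-increment (both from Sections~\ref{sec:gen_cone}--\ref{sec:gen_cones_as_lls}). The main obstacle is the first step: passing rigorously from the synthetic lower timelike curvature bound to $f''\le Kf$. This hinges on (a) the reversed sign convention of Lorentzian triangle comparison, and (b) justifying that comparison may be restricted to the smooth totally geodesic slices $I\times_f\sigma$ --- that maximizers between slice points remain in the slice and that the induced comparison is the smooth surface one. Both are provided by the causality and curvature machinery developed earlier; once $f''\le Kf$ is in hand, the corollary follows from the elementary ODE analysis above.
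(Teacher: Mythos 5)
Your route coincides with the paper's in its overall architecture: first reduce the synthetic hypothesis to the pointwise inequality $f''\le Kf$, then conclude by elementary ODE analysis, a bound on $\tau_Y$, and incompleteness of the radial curves $t\mapsto(t,x_0)$. The ODE half of your argument is correct and technically different from the paper's: you use a Sturm--Wronskian comparison against $\sin\bigl(\sqrt{-K}(t-t_0)\bigr)$ for $K<0$ and a concavity dichotomy for $K=0$, whereas the paper sets $u:=f'/f$ and shows that the Riccati inequality $u'\le -u^2+K$ forces $u$ to blow up to $-\infty$ in finite time, contradicting smoothness and positivity of $f$ on an infinite interval. Your version even yields the sharper quantitative estimate $b-a\le\pi/\sqrt{|K|}$ (the Lorentzian Myers bound, which the paper only discusses in the remark after the corollary). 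The auxiliary facts you invoke --- $\tau_Y\le b-a$ because $L(\gamma)=\int\sqrt{\dot\alpha^2-f^2v_\beta^2}\le\int\dot\alpha$, and the radial curves being maximizing, inextendible timelike geodesics of finite $\tau$-length --- are also correct.

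The genuine gap is in your first step, the passage from the curvature hypothesis to $f''\le Kf$. That implication is exactly Theorem \ref{thm:singthm}, the result immediately preceding this corollary; the paper's proof of the corollary simply cites it, and proving it is the main work of Section \ref{sec:sin-thm}. Your justification does not substitute for that proof. The assertion that for a smooth Lorentzian surface a synthetic \emph{lower} bound $K$ in the sense of \cite{KS:18} is \emph{equivalent} to the Gauss-curvature bound $f''/f\le K$ cannot be cited as a sign-convention fact: the known implication (\cite[Ex.~4.9]{KS:18}, based on \cite[Thm.~1.1]{AB:08}) runs only from sectional curvature bounds to timelike triangle comparison, while the converse --- that comparison over \emph{timelike} triangles alone pins down the sectional curvature --- is precisely what Theorem \ref{thm:singthm} establishes, by a contradiction argument combining fiber independence (Theorem \ref{thm-structure-of-geod}) with the strict model-space comparison of Lemma \ref{lem:tauinmodel}, and the paper's remark following the corollary stresses that such converse statements are not known in general even for smooth spacetimes. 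Your fallback claim, that the inequality is ``exactly the comparison with $I\times_f\mathbb{M}^2(K)$ supplied by Section~\ref{sec-cb}'', is wrong: Theorems \ref{thm-X-cb-Y-cb} and \ref{thm-Y-cb-X-cb} and Corollary \ref{cor-f-smo-cb} relate timelike curvature bounds of $Y$ to metric curvature bounds of the fiber $X$; concavity or convexity of $f$ enters there only as a hypothesis, never as a conclusion. So, as a self-contained argument your proposal is incomplete at its decisive step; it becomes a correct proof once that step is replaced by a citation of the preceding theorem, which is exactly how the paper argues.
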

These results may be viewed as sectional curvature analogues of the Lorentzian Bonnet-Myers' theorem and of Hawking's singularity theorem in the setting of generalized cones. Also, we relate timelike curvature bounds to \emph{big bang} and \emph{big crunch} 
singularities in Corollary \ref{cor-big-ban}.

In the appendix we describe a general approach to what we call \emph{Lorentzian length structures}, analogous to the theory of length structures in metric geometry (cf.\ \cite[Ch.\ 2]{BBI:01}) based on which several basic results shown in Sections \ref{sec-con} and \ref{sec:gen_cone} can be shown to hold in greater generality.

\setcounter{section}{1}

\subsection{Alexandrov spaces}\label{subsec-ale}
We briefly recall the basic definitions of Alexandrov spaces, i.e., metric spaces with curvature bounds. For comprehensive introductions see \cite{AKP:19, BBI:01, BH:99}.

A metric space $(X,d)$ is a \emph{length space} if for all $x,y\in X$ one has $d(x,y)=\inf\{L^d(\gamma): \gamma $ continuous and connects $x,y\}$, where $L^d(\gamma)$ is the length of $\gamma$. A \emph{geodesic} in a metric space is a continuous curve $\gamma\colon[0,l]\rightarrow X$ such that $d(\gamma(t),\gamma(s))=|t-s|$ for all $t,s\in [0,l]$. A metric space is \emph{geodesic} if any two points can be joined by a geodesic.

We use $\mathbb{M}^2(K)$ to denote the Riemannian model space of constant sectional curvature $K$, i.e.,
\begin{equation}\label{eq:Riem_model_spaces}
\mathbb{M}^2(K) = \left\{ \begin{array}{ll}
\mathbb{S}^2(r) & K=\frac{1}{r^2}\\
\R^2 & K=0\\
\hyp^2(r) & K= -\frac{1}{r^2}\,.
\end{array}
\right.
\end{equation}
Moreover, a \emph{triangle} $\Delta$ in a metric space $(X,d)$ is a triple of points $\Delta=(x,y,z)$ and a choice of geodesic segments joining $x,y,z$, i.e., its \emph{sides} $[xy]$, $[xz]$ and $[yz]$. A \emph{comparison triangle} of $\Delta$ is a triangle $\bar\Delta=(\bar x,\bar y,\bar z)$ in some model space $\mathbb{M}^2(K)$ (for some $K\in\R$) that has the same side lengths as $\Delta$, i.e., $d(x,y)=\bar d(\bar x,\bar y)$, $d(x,z)=\bar d(\bar x,\bar z)$ and $d(y,z)=\bar d(\bar y,\bar z)$, where $\bar d$ is the metric on $\mathbb{M}^2(K)$.

A length space $(X,d)$ has curvature bounded below/above by $K\in\R$ if every point $x_0\in X$ has a neighborhood $U$ such that for any triangle $\Delta=(x,y,z)$ in $U$ and any point $w$ on the side $[yz]$ the following holds: Let $\bar\Delta=(\bar x,\bar y,\bar z)$ be a comparison triangle for $\Delta$ in $\mathbb{M}^2(K)$ and let $\bar w$ on the side $[\bar y\bar z]$ with the same distance to $y$ (or $z$), i.e., $d(y,w)=\bar d(\bar y,\bar w)$, where $\bar d$ is the metric on $\mathbb{M}^2(K)$. Then
\begin{equation}
 d(x,w)\geq \bar d(\bar x,\bar w)\qquad / \qquad d(x,w)\leq \bar d(\bar x,\bar w)\,.
\end{equation}

\subsection{\LLSn s}\label{subsec-lls}
Here we give a very brief introduction to the theory of \LLSn s, as developed in \cite{KS:18}, at the same time 
fixing some notations and terminology.

Let $Y$ be a set endowed with a preorder $\leq$ and a transitive relation $\ll$ contained in $\leq$. If $x\ll y$ 
or $x\le y$, we call $x$ and $y$ timelike or causally related, respectively.  If $Y$ is, in addition, equipped with a 
metric $d$ and a lower semicontinuous map $\tau \colon Y\times Y \to [0, \infty]$ that satisfies the reverse triangle 
inequality $\tau(x,z)\geq \tau(x,y) + \tau(y,z)$ (for all $x\leq y\leq z$), as well as $\tau(x,y)=0$ if $x\nleq y$ and 
$\tau(x,y)>0 \Leftrightarrow x\ll y$, then \Yll is called a \emph{Lorentzian pre-length space\/} and $\tau$ is called the 
\emph{time separation function\/} (or \emph{Lorentzian distance}) of $Y$.

A curve $\gamma \colon I\rightarrow Y$ ($I$ an interval) that is non-constant on any sub-interval of $I$ is called 
(future-directed) \emph{causal (timelike)} if $\gamma$ is locally Lipschitz continuous and if for all 
$t_1,t_2\in I$ with $t_1<t_2$ we have $\gamma(t_1)\leq\gamma(t_2)$ ($\gamma(t_1)\ll\gamma(t_2)$). It
is called \emph{null\/} if, in addition to being causal, no two points on the curve are related with respect 
to $\ll$. Note that in General Relativity such curves are called \emph{achronal}. For strongly causal continuous Lorentzian metrics, this notion of causality
coincides with the usual one (\cite[Prop.\ 5.9]{KS:18}). In analogy to the theory
of metric length spaces, the length of a causal curve is defined via the time separation
function:  For $\gamma \colon [a,b]\rightarrow Y$ future-directed causal we set
\[
L_\tau(\gamma):=
\inf\Big\{\sum_{i=0}^{N-1} \tau(\gamma(t_i),\gamma(t_{i+1})): a=t_0<t_1<\ldots<t_N=b,\ N\in\N\Big\}.
\ 
\]
For smooth and strongly causal spacetimes $(M,g)$ this notion of length coincides with the usual one:
$L_\tau(\gamma)=L_g(\gamma)$ (\cite[Prop.\ 2.32]{KS:18}).
A future-directed causal curve $\gamma \colon [a,b]\rightarrow Y$ is 
\emph{maximal\/} if it realizes the time separation, i.e., if $L_\tau(\gamma) = \tau(\gamma(a),\gamma(b))$. Standard 
causality conditions can also be imposed on Lorentzian pre-length spaces, and substantial parts of the causal ladder 
(\cite{MS:08, Min:19b}) continue to hold in this general setting, cf.\ \cite[Subsec.\ 3.5]{KS:18}.

Lorentzian length spaces are close analogues of metric length spaces in the sense that the time separation function
can be calculated from the length of causal curves connecting causally related points. A \LpLS that satisfies some 
additional technical assumptions (cf.\ \cite[Def. 3.22]{KS:18}) is called a \emph{\LLSn\/} if
$\tau = \mathcal{T}$, where for any  $x,y\in Y$ we set
\begin{equation*}
\mathcal{T}(x,y):= \sup\{L_\tau(\gamma):\gamma \text{ future-directed causal from }x \text{ to } y\}\,, 
\end{equation*}
if the set of future-directed causal curves from $x$ to $y$ is not empty. Otherwise let $\mathcal{T}(x,y):=0$.

Any smooth strongly causal spacetime is an example of a Lorentz\-ian length space. More generally,
spacetimes with low regularity metrics and certain Lorentz-Finsler spaces \cite{Min:19a} provide further examples, cf.\ \cite[Sec.\ 5]{KS:18}.

Finally, by a \emph{timelike geodesic triangle\/} we mean a triple $(x,y,z)\in Y^3$ 
with $x\ll y \ll z$ such that $\tau(x,z)<\infty$ and such that the sides are realized by future-directed causal curves 
(that is, there exist future directed causal curves $\alpha, \beta,\gamma$ from $x$ to $y$, from $y$ to $z$ and from $x$ to $z$, respectively, with $L_\tau(\alpha)=\tau(x,y)$, $L_\tau(\beta)=\tau(y,z)$ and $L_\tau(\gamma)=\tau(x,z)$). 
Curvature bounds are formulated by comparing such triangles with triangles of the same side lengths in one of the Lorentzian 
model spaces $\mathbb{L}^2(K)$ of constant sectional curvature $K$. Here, 
\begin{equation}\label{eq:model_spaces}
\mathbb{L}^2(K) = \left\{ \begin{array}{ll}
\tilde S^2_1(r) & K=\frac{1}{r^2}\\
\R^2_1 & K=0\\
\tilde H^2_1(r) & K= -\frac{1}{r^2}\,,
\end{array}
\right.
\end{equation}
where  $\tilde S^2_1(r)$ is the simply connected covering manifold of the two-dimensional Lorentzian pseudosphere
$S^2_1(r)$ (de Sitter space), $\R^2_1$ is two-dimensional Minkowski space, and $\tilde H^2_1(r)$ is the simply connected covering manifold
of the two-dimensional Lorentzian pseudohyperbolic space (anti-de Sitter space). In order to guarantee the existence of comparison 
triangles in one of the model spaces, one needs to impose size restrictions on the sides, see \cite[Lem.\ 4.6]{KS:18}.

Using this terminology, a Lorentzian pre-length space \Yll is said to have timelike curvature bounded below (above) by 
$K\in\R$ if every point in $Y$ has a neighborhood $U$ such that:
\begin{enumerate}[label=(\roman*)] 
	\item $\tau|_{U\times U}$ is finite and continuous.
	\item Whenever $x,y \in U$ with $x \ll y$, there exists a causal curve $\alpha$ in $U$ with $L_\tau(\alpha) = 
\tau(x,y)$.
	\item If $(x,y,z)$ is a timelike geodesic triangle in $U$, realized by maximal causal curves $\alpha, 
\beta, \gamma$ whose side lengths satisfy the appropriate size restrictions, and if $(x',y',z')$ is a 
comparison triangle of 	$(x,y,z)$ in $\mathbb{L}^2(K)$ realized by timelike geodesics $\alpha '$, 
$\beta '$, $\gamma '$, then whenever $p$, $q$ are points on the sides of $(x,y,z)$ and $p', q'$ 
are corresponding points\footnote{This means that $p'$ lies on the side corresponding to the side containing $p$ at the same time separation of the vertex (i.e., e.g.\ if $p$ lies on the side $xy$ then $\tau(x,p)=\tau'(x',p')$, etc.). Similarly for $q'$.} of $(x',y',z')$, we have $\tau(p,q)\le \tau '(p',q')$ 
$($respectively $\tau(p,q)\ge \tau '(p',q'))$.
\end{enumerate}
We call such a $U$ a comparison neighborhood.

\section{Minkowski cones over metric spaces}\label{sec-con}
As a first explicit example we consider cones over metric spaces. 
Such spaces are very well-behaved and allow direct calculations even of spacelike distances. However, here we consider cones exclusively as \LLSn s, providing more details than in \cite{Ale:19}, where such cones are considered in the setting of \emph{Lorentzian pseudometric spaces}. In particular, they furnish instances of \emph{generalized cones} as defined in Section \ref{sec:gen_cone} (cf.\ Example \ref{ex:conesaregencones}).
\medskip

Proceeding by analogy with the metric geometry notion of cones over metric spaces (cf.\ \cite[Subsec.\ 3.6.2]{BBI:01}) we introduce the following:
For $X$ a geodesic length space, the \emph{Minkowski cone} $Y=\mathrm{Cone}(X)$ is defined as the quotient of $[0,\infty)\times X$ resulting from identifying all points of the form $(0,p)$. We equip $Y$ with the cone metric $d_c$ as in \cite[Def.\ 3.6.16]{BBI:01} (however this choice is not important, as it suffices to pick some background metric on $Y$ that induces the quotient topology on $[0,\infty)\times X$, to turn it into a Lorentzian pre-length space, see below). The equivalence class of $\{0\}\times X$ in $Y$ is called the \emph{vertex} of $Y$ and is denoted by $0_Y$.

\begin{rem}\label{model}
As a preparation for the following definition of the time separation function, consider $n$-dimensional Minkowski-space $\R^n_1$,
with scalar product $\langle x,y \rangle = -x_0 y_0 + \sum_{i=1}^{n-1}x_i y_i$. Then $(n-1)$-dimensional hyperbolic space $\hyp^{n-1}$ is isometrically embedded into $\R^n_1$ as $\{x\in\R^n_1\mid \langle x,x\rangle = -1,\ x_0>0  \}=\{x\in\R^n_1 \mid \tau_{\R^n_1}(0,x)=1,\  x_0>0  \}=:\Sigma $, where $\tau_{\R^n_1}$ is the time separation function on $\R^n_1$. Let us denote this embedding by $\psi$. The induced Riemannian distance function on $\hyp^{n-1}$ is uniquely determined by $\cosh d_{\hyp^{n-1}} (x,y) = -\langle \psi(x),\psi(y)\rangle$ for $x,y\in \hyp^{n-1}$.
Suppose now that $x,y\in \hyp^{n-1}$ and let $s,t>0$. Then $\psi(x)=(\sqrt{1+|x'|^2},x')$ for some $x'\in \R^{n-1}$, and analogously
for $y$. Setting $\theta:=d_{\hyp^{n-1}} (x,y)$ we calculate
\begin{align*}
\langle t\psi(y)-s\psi(x),t\psi(y)-s\psi(x)\rangle &=-t^2 -s^2 -2st \langle \psi(x),\psi(y)\rangle  \\ &= -(s^2+t^2 - 2st \cosh \theta).
\end{align*}
This shows that the quotient $Y$ of $[0,\infty)\times \hyp^{n-1}$ modulo $(0,x)\sim (0,y)$ for all $x,y$
can be identified with the cone $I^+(0)\cup\{0\}\subseteq \R^n_1$ via $(s,x)\mapsto s\psi(x)$, and restricting this identification, we see that $(0,\infty)\times \hyp^{n-1}$ corresponds to $I^+(0)\subseteq \R^n_1$. Pulling back the causal structure and time separation of $\R^n_1$ we make the following definitions: Two points $(s,x)$ and $(t,y)$ in $Y$ are said to satisfy $(s,x) \leq_Y (t,y)$ if and only if $s\le t$ and $s^2+t^2 - 2st \cosh \theta \ge 0$, which is equivalent to $sx \le ty$ in $\R^n_1$. In addition, for $(s,x) \le_Y (t,y)$ the Minkowski cone time separation function $\tau_Y $ is defined by
$\tau_Y((s,x),(t,y))=\tau_{\R^n_1}(s\psi(x),t\psi(y))= \sqrt{s^2+t^2 - 2st \cosh \theta}$, and otherwise $\tau_Y((s,x),(t,y))=0$.

For future reference, let us briefly remark that the Minkowski cone time separation $\tau_Y$ defined above induces a time separation $\tau_C$ on $C:=(0,\infty)\times \hyp^{n-1}$ via restriction and this restriction equals the time separation $\tau_W$ of the Lorentzian warped product manifold $W:=(0,\infty)\times_\mathrm{id} \hyp^{n-1}=\bigl((0,\infty)\times \hyp^{n-1},g:=-dt^2+t^2\langle .,. \rangle_{\hyp^{n-1}}\bigr)$: 

It suffices to show that the map $\varphi : W \to I^+(0)\subseteq \R^n_1$, $(s,x)\mapsto s\psi(x)$ is an isometry,
as this will imply $\tau_W((s,x),(t,y))=\tau_{\R^n_1}(s\psi(x),t\psi(y))=\tau_C((s,x),(t,y))$. We have 
$D\varphi|_{(r,z)}=r \, D\psi|_{z}\circ \mathrm{pr}_{T\hyp^{n-1}}+\psi(z) \mathrm{pr}_{T\R_+}$,
hence for vectors $(S,X),(T,Y)\in T_r\R_+ \times T_z\hyp^{n-1}$ we get
\begin{align*}g((S,X),(T,Y))=&-ST+r^2 \langle X,Y\rangle_{\hyp^{n-1}}=-ST+r^2 \langle D\psi X, D\psi Y\rangle_{\R^n_1}\\
=& \langle S\psi(z),T\psi(z) \rangle_{\R^n_1} +\langle r D\psi|_z X, r D\psi|_z Y\rangle_{\R^n_1} \\
=& \langle D\varphi (S,X), D\varphi (T,Y)\rangle_{\R^n_1},
\end{align*}
because $\langle \psi(z),D\psi|_z X \rangle_{\R^n_1}=0$ for any $X\in T_z\hyp^{n-1}$.
So $\varphi $ is an isometry, as claimed.

\end{rem} 
Coming back to the general case, to equip the cone $Y$ that results from identifying all points with first component $0$ in
$[0,\infty)\times X$ as defined above with a time separation function, we proceed analogously, with the metric
$d_X$ of $X$ taking over the role of $\theta=d_\mathbb{H}$ from Remark \ref{model}. Thus we say that $(s,p)\le (t,q)$
(resp.\ $(s,p)\ll (t,q)$) if $s\le t$ and $s^2+t^2 - 2st \cosh d_X(p,q) \ge 0$ (resp.\ $>0$ in both cases).
Then 
\begin{equation}\label{taudef}
\tau((s,p),(t,q)):= \sqrt{s^2+t^2 - 2st \cosh d_X(p,q)},
\end{equation}
and $\tau((s,p),(t,q)) := 0$ otherwise.

\begin{prop}\label{Minklpls} $(Y,d_c,\ll,\le,\tau)$ is a Lorentzian pre-length space. Moreover, $\tau$ is continuous.
\end{prop}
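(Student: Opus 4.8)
The plan is to verify the axioms of a Lorentzian pre-length space by reducing every statement that concerns only finitely many points to the corresponding classical fact in Minkowski space, exploiting Remark \ref{model}. Throughout, for $(s,p),(t,q)\in Y$ with $s\le t$ I write $Q((s,p),(t,q)):=s^2+t^2-2st\cosh d_X(p,q)$, so that by definition $\tau=\sqrt Q$ precisely when $s\le t$ and $Q\ge 0$, and $\tau=0$ otherwise.

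First I would dispose of the statements concerning a single pair of points. Reflexivity of $\le$ is immediate since $Q((s,p),(s,p))=2s^2(1-\cosh 0)=0\ge 0$, and $\ll\,\subseteq\,\le$ holds because the strict inequalities defining $\ll$ imply the non-strict ones. Since $\tau$ is set to $0$ whenever $(s,p)\nleq(t,q)$, the axiom $\tau=0$ for unrelated points is built in, and it remains to observe that $\tau((s,p),(t,q))>0$ is equivalent to $s\le t$ together with $Q>0$; but if $s=t$ then $Q=2s^2(1-\cosh d_X(p,q))\le 0$, so $Q>0$ forces $s<t$, whence $\tau>0\Leftrightarrow(s,p)\ll(t,q)$ exactly as required. (This incidentally shows the clause ``$>0$ in both cases'' is unambiguous.) Well-definedness on the quotient is checked at the vertex: for $s=0$ one has $Q=t^2$ and $\tau((0,p),(t,q))=t$ independently of the representative $p$, so the relations and $\tau$ descend to $Y$.

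The heart of the argument is transitivity of $\le$ and of $\ll$ together with the reverse triangle inequality, each of which involves only three points $(s_1,p_1),(s_2,p_2),(s_3,p_3)$ of $Y$. Here I would use that the three numbers $d_X(p_1,p_2),\,d_X(p_2,p_3),\,d_X(p_1,p_3)$ satisfy the triangle inequality, so that by the hyperbolic law of cosines there exist points $\tilde p_1,\tilde p_2,\tilde p_3\in\hyp^2$ with the same pairwise distances (the degenerate, collinear case being equally admissible). Mapping these into $\R^3_1$ by $(\sigma,\tilde p)\mapsto\sigma\psi(\tilde p)$ as in Remark \ref{model}, the causal relations and the values of $\tau$ among the three image points coincide with those among the $(s_i,p_i)$, since both depend only on the radial coordinates $s_i$ and on the hyperbolic distances, which have been matched. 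Consequently transitivity of $\le$ and of $\ll$, as well as the reverse triangle inequality $\tau(x,z)\ge\tau(x,y)+\tau(y,z)$ for $x\le y\le z$, are inherited verbatim from the corresponding classical properties of Minkowski space.

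It remains to prove that $\tau$ is continuous (which in particular yields the required lower semicontinuity). The map $((s,p),(t,q))\mapsto Q$ is continuous by continuity of $d_X$, hence so is $((s,p),(t,q))\mapsto\sqrt{\max(Q,0)}$, and one always has $0\le\tau\le\sqrt{\max(Q,0)}$. Away from the locus $\{s=t\}$ the sign of $s-t$ is locally constant, so $\tau$ agrees locally either with $\sqrt{\max(Q,0)}$ or with $0$ and is therefore continuous there. The only delicate point is a limit with $s=t$: then $Q\le 0$, so $\sqrt{\max(Q,0)}=0$, and the squeeze $0\le\tau\le\sqrt{\max(Q,0)}\to 0$ forces continuity at such points as well. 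I expect this boundary behaviour along $\{s=t\}$ to be the main technical obstacle, everything else following painlessly from the reduction to $\R^3_1$. Finally, since the chosen metric $d$ induces the quotient topology on $[0,\infty)\times X$, this continuity holds with respect to $d$, completing the verification.
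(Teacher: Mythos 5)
Your proposal is correct and takes essentially the same route as the paper: the three-point statements (transitivity and the reverse triangle inequality) are reduced to $\R^3_1$ by choosing comparison points in $\hyp^2\subseteq\R^3_1$ realizing the pairwise fiber distances and pulling back the corresponding Minkowski facts via Remark \ref{model}, which is exactly the paper's argument. You are merely more explicit than the paper about the one- and two-point axioms, the vertex, and the continuity of $\tau$ (including the squeeze along $\{s=t\}$ and the role of the quotient topology), all of which the paper's proof dispatches as ``clearly''.
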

\begin{proof}
Since the causal and timelike relations are defined via $\tau$, and since $\tau$ is clearly continuous with respect
to $d_c$, it only remains to check the reverse triangle inequality for $\tau$. So let
$(s,p), (t,q), (u,r) \in Y$ and fix three comparison points $\tilde p,\tilde q,\tilde r$ in $\hyp^2 \subseteq \R^3_1$ with $d_{\hyp^2}(\tilde p,\tilde q) = d_X(p,q)$, and so on (note that some of these distances might be zero). Then from Remark
\ref{model} and the definition of $\tau$ it follows that (denoting by $\tilde \tau$ the time separation function in 
$\R^3_1$), 
$\tilde \tau(s\tilde p, t\tilde q) = \tau((s,p),(t,q))$, etc. The reverse triangle inequality for $\tau$ therefore is immediate from that of $\tilde \tau$.
\end{proof}

\begin{lem}\label{curve_rel} Suppose that $0_Y \neq (s,p)\ll (t,q)\in Y$.
\begin{itemize}
\item[(i)]  Let $\gamma: [0,a] \to Y$, $\gamma(\lambda)=(r(\lambda),\sigma(\lambda))$ be a maximizing timelike curve from 
$(s,p)$ to $(t,q)$. Then $\sigma$ is a minimizing geodesic from $p$ to $q$ in $X$.
\item[(ii)] Conversely, suppose that $\sigma$ is a minimizing geodesic from $p$ to $q$ in $X$.
Let $\tilde y_0$ and $\tilde y_1$ be points in $I^+(0)\subseteq\R^2_1$ with distance $r(0):=s$ resp.\ $r(a):=t$ from $0$ and 
such that the hyperbolic angle $\mathrm{arcosh}(-\frac{\langle \tilde y_0,\tilde y_1\rangle}{s t})$ between them is $d_X(p,q)$.
For $\lambda\in [0,a]$, let $r(\lambda)$ be the distance of the intersection of the straight line
connecting $\tilde y_0$ to $\tilde y_1$ with the half-ray in $I^+(0)$ that has hyperbolic
angle $d_X(p,\sigma(\lambda))=\lambda$ with the half-ray through $\tilde y_0$. Then $\lambda\mapsto 
(r(\lambda),\sigma(\lambda))$ is a $\tau$-realizing curve from $(s,p)$ to $(t,q)$ in $Y$.
\end{itemize}
\end{lem}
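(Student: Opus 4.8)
The plan is to reduce both parts to the two–dimensional Minkowski picture of Remark \ref{model}, exploiting that the only fibre datum entering $\tau$ in \eqref{taudef} is the distance $d_X$, which plays exactly the role of the hyperbolic angle (rapidity) in $\R^2_1$. I identify $I^+(0)\subseteq\R^2_1$ with pairs $(r,\phi)$, $r>0$, where $r$ is the Minkowski distance from $0$ and $\phi$ the rapidity measured from a fixed ray, so that $\tau_{\R^2_1}\big((r_1,\phi_1),(r_2,\phi_2)\big)=\sqrt{r_1^2+r_2^2-2r_1r_2\cosh|\phi_1-\phi_2|}$. The single analytic input is that $\cosh$ is even and increasing on $[0,\infty)$, so replacing $d_X(\sigma(\lambda),\sigma(\mu))$ by the a priori smaller quantity $|d_X(p,\sigma(\lambda))-d_X(p,\sigma(\mu))|$ can only increase the corresponding $\tau$-value.

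For (i), write $\ell(\lambda):=d_X(p,\sigma(\lambda))$ and let $\hat z(\lambda)\in I^+(0)$ have radial coordinate $r(\lambda)$ and rapidity $\ell(\lambda)$. Since $\gamma$ is causal and $(s,p)\neq 0_Y$, the radial coordinate is non-decreasing with $r(\lambda)\ge s>0$, so no vertex issues arise. For any partition $0=\lambda_0<\dots<\lambda_N=a$, the reverse metric triangle inequality gives $d_X(\sigma(\lambda_i),\sigma(\lambda_{i+1}))\ge|\ell(\lambda_{i+1})-\ell(\lambda_i)|$, whence, by monotonicity of $\cosh$ and positivity of the radii,
\[
\tau(\gamma(\lambda_i),\gamma(\lambda_{i+1}))\le\tau_{\R^2_1}(\hat z(\lambda_i),\hat z(\lambda_{i+1})).
\]
The points $\hat z(\lambda_0)\le\dots\le\hat z(\lambda_N)$ form a causal chain in $\R^2_1$ (the radii increase and the radicands are non-negative), so the reverse triangle inequality in $\R^2_1$ yields $\sum_i\tau_{\R^2_1}(\hat z(\lambda_i),\hat z(\lambda_{i+1}))\le\tau_{\R^2_1}(\hat z(\lambda_0),\hat z(\lambda_N))=\tau((s,p),(t,q))$. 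Passing to the infimum over partitions gives $L_\tau(\gamma)\le\tau((s,p),(t,q))$; since $\gamma$ is maximal this is an equality, forcing every inequality above to be an equality for every partition. Term-wise equality gives $d_X(\sigma(\lambda_i),\sigma(\lambda_{i+1}))=|\ell(\lambda_{i+1})-\ell(\lambda_i)|$, while equality in the Minkowski reverse triangle inequality forces the $\hat z(\lambda_i)$ to lie in order on the straight segment $\hat z(\lambda_0)\hat z(\lambda_N)$, so $\ell$ is monotone. Combining, $L_X(\sigma)=\lim_{\mathrm{mesh}\to0}\sum_i d_X(\sigma(\lambda_i),\sigma(\lambda_{i+1}))=\lim\sum_i|\ell(\lambda_{i+1})-\ell(\lambda_i)|=|\ell(a)-\ell(0)|=d_X(p,q)$, i.e.\ $\sigma$ is minimizing.

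For (ii) I run the correspondence in reverse. I parametrize $\sigma$ so that $\ell(\lambda)=d_X(p,\sigma(\lambda))$ increases from $0$ to $d_X(p,q)$, and let $\hat z(\lambda)$ be the point of the straight segment $\tilde y_0\tilde y_1$ at rapidity $\ell(\lambda)$; its radial coordinate is precisely the $r(\lambda)$ of the statement, and $\hat z$ traverses the segment monotonically because the rapidity varies monotonically along a timelike segment (the ratio of the two affine coordinate functions is monotone). As $\sigma$ is minimizing its subarcs are minimizing, so $d_X(\sigma(\lambda_i),\sigma(\lambda_{i+1}))=|\ell(\lambda_{i+1})-\ell(\lambda_i)|$ for every partition, and hence $\tau(\gamma(\lambda_i),\gamma(\lambda_{i+1}))=\tau_{\R^2_1}(\hat z(\lambda_i),\hat z(\lambda_{i+1}))$ identically. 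Summing and taking the infimum, $L_\tau(\gamma)=L_{\tau_{\R^2_1}}(\hat z)=\tau_{\R^2_1}(\tilde y_0,\tilde y_1)=\tau((s,p),(t,q))$, the middle equality holding because $\hat z$ is a reparametrization of the $\tau_{\R^2_1}$-maximizing segment. The same identity of $\tau$-values shows consecutive points are timelike related with positive separation, so $\gamma$ is future-directed timelike and $\tau$-realizing.

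The main obstacle is the equality discussion in (i): all the geometry is packed into the single comparison $\tau(\gamma_i,\gamma_{i+1})\le\tau_{\R^2_1}(\hat z_i,\hat z_{i+1})$, and the work lies in extracting \emph{both} the metric triangle-equality (radial alignment of $\sigma$ with respect to $p$) and Minkowski collinearity (monotonicity of $\ell$), then threading these through the partition limits while keeping track that $L_\tau$ is an infimum whereas $L_X$ is a supremum, and that the comparison points genuinely form a causal chain in $\R^2_1$. I would also pause to confirm the local Lipschitz regularity of the $r(\lambda)$ constructed in (ii).
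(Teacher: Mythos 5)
Your proof is correct and rests on the same core mechanism as the paper's: comparison with $\R^2_1$ via points whose radii are the $r(\lambda)$ and whose rapidities encode $d_X$-distances, combined with monotonicity of $\cosh$, maximality of $\gamma$, and the equality case of the reverse triangle inequality in Minkowski space. The only difference is organizational: the paper argues per triple $\lambda_1<\lambda_2<\lambda_3$, choosing comparison points whose \emph{consecutive} angles equal the consecutive $d_X$-distances (so the slack sits in the long side, and collinearity immediately yields the metric triangle equality), whereas you use a single global development with rapidity $\ell(\lambda)=d_X(p,\sigma(\lambda))$ and arbitrary partitions (slack in each short side, exactness at the endpoints), which obliges you to additionally establish monotonicity of $\ell$ before telescoping; in compensation, your part (ii), including the flagged Lipschitz check for $r(\lambda)$, supplies details the paper dismisses as straightforward.
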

\begin{proof}
(i) Let $\lambda_1<\lambda_2<\lambda_3 \in [0,a]$, let $y_i:=(r(\lambda_i),\sigma(\lambda_i))$ 
and pick points $\tilde y_i$ ($i=1,2,3$) in $I^+(0)\subseteq\R^2_1$ such that their distance from $0$ is $r(\lambda_i)$, 
the hyperbolic angle  between $\tilde y_1$ and $\tilde y_2$ is
$d_X(\sigma(\lambda_1),\sigma(\lambda_2))$, and the hyperbolic angle  between $\tilde y_2$ and $\tilde y_3$ is
$d_X(\sigma(\lambda_2),\sigma(\lambda_3))$. This means that $\tau(y_1,y_2)=
\tilde\tau(\tilde y_1,\tilde y_2)$, as well as $\tau(y_2,y_3)=
\tilde\tau(\tilde y_2,\tilde y_3)$. Now by assumption, $\tau(y_1,y_3)=\tau(y_1,y_2)+\tau(y_2,y_3)$,
and the ensuing equality for $\tilde \tau$ implies that the $\tilde y_i$ 
must lie on a straight line in $\R^2_1$. Consequently,
their hyperbolic angles must add up, i.e., $d_X(\sigma(\lambda_1),\sigma(\lambda_2))+
d_X(\sigma(\lambda_2),\sigma(\lambda_3))=d_X(\sigma(\lambda_1),\sigma(\lambda_3))$. It 
follows that $\sigma$ is indeed distance-realizing.

\noindent(ii) This is straightforward from the definition of $\tau$ and Remark \ref{model}.
\end{proof}
As an immediate consequence of Lemma \ref{curve_rel} (ii) (and the obvious fact that $s\mapsto (s,q)$ is a realizing geodesic from $0_Y\equiv (0,q)$ to $(t,q)$ for all $t>0,q\in X$) we obtain:
\begin{cor}
Any two causally related points in $Y$ can be connected by a realizing geodesic, i.e., $Y$ is \emph{geodesic}.
\end{cor}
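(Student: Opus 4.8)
The plan is to fix two causally related points $(s,p)\le(t,q)$ of $Y$ and to split into two cases according to whether the earlier point is the vertex $0_Y$. By the definition of $\le$ the relation $(s,p)\le(t,q)$ already forces $s\le t$; moreover $(s,p)\le 0_Y$ would force $s\le 0$ (the defining inequality reducing to $s^2\ge 0$ with $s\le 0$), hence $s=0$ and $(s,p)=0_Y$. Thus in any nondegenerate causal pair the vertex can only occur as the earlier of the two points, which reduces the problem to the two cases below; in each I either write down an explicit realizing curve or quote Lemma~\ref{curve_rel}(ii).

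First I would treat the case in which the earlier point is the vertex, i.e.\ $0_Y$ and $(t,q)$ with $t>0$. Here the candidate is the radial curve $\lambda\mapsto(\lambda,q)$, $\lambda\in[0,t]$. Using the formula \eqref{taudef} together with $\cosh 0=1$ one checks that $\tau((\lambda_1,q),(\lambda_2,q))=\lambda_2-\lambda_1$ for $0\le\lambda_1\le\lambda_2\le t$; the time separations along any partition of $[0,t]$ therefore add up to exactly $t=\tau(0_Y,(t,q))$, so $L_\tau$ of this curve equals $\tau(0_Y,(t,q))$ and the curve is realizing. This is the ``obvious fact'' announced before the statement.

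In the remaining case neither endpoint is the vertex, so $s>0$. Since $X$ is a geodesic length space I may choose a minimizing geodesic $\sigma$ from $p$ to $q$. If $(s,p)\ll(t,q)$, then Lemma~\ref{curve_rel}(ii) applied to $\sigma$ directly yields a $\tau$-realizing curve from $(s,p)$ to $(t,q)$, which is the desired geodesic. If instead the two points are causally but not timelike related, so that $\tau((s,p),(t,q))=0$, I would observe that the construction of Lemma~\ref{curve_rel}(ii) still applies: the comparison points $\tilde y_0,\tilde y_1\in I^+(0)\subseteq\R^2_1$ at time separations $s$ and $t$ from $0$ and hyperbolic angle $d_X(p,q)$ are now null related in $\R^2_1$, the straight segment joining them is null, and its image under the radial construction is a null curve in $Y$ with $L_\tau=0=\tau((s,p),(t,q))$, hence realizing.

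The statement is essentially immediate from Lemma~\ref{curve_rel}(ii), and I do not expect a genuine obstacle; the only points needing a line of care are the two mild extensions of what is literally quoted. One is the verification that the radial curve in the vertex case is $\tau$-realizing, which is immediate from \eqref{taudef}. The other is that the hypothesis $\ll$ in Lemma~\ref{curve_rel}(ii) may be weakened to $\le$ in order to cover the null case; this is justified by Remark~\ref{model}, where the causal relation and $\tau$ on the cone are pulled back from $\R^2_1$, so that null-related cone points correspond to null-related points of $\R^2_1$ and the same segment-and-projection construction goes through.
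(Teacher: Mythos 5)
Your proof is correct and takes essentially the same route as the paper, which obtains the corollary as an immediate consequence of Lemma \ref{curve_rel}(ii) together with the observation that the radial curve $s\mapsto(s,q)$ is realizing from the vertex. Your explicit treatment of the null case (extending the segment construction of Lemma \ref{curve_rel}(ii) to points that are causally but not timelike related, where $\tau=0$ makes any connecting causal curve automatically realizing) fills in a detail the paper leaves implicit in the word ``immediate.''
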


The following result establishes, in the present setting, a relation between metric curvature bounds in the Alexandrov space $X$ and timelike curvature bounds in the Minkowski cone $Y$ over $X$, foreshadowing analogous results for generalized cones in Section \ref{sec-cb}. In particular, the following theorem is a special case of Theorems \ref{thm-Y-cb-X-cb} and \ref{thm-X-cb-Y-cb} below (and is analogous to the result in the metric case, cf.\ \cite[Thm.\ 4.7.1]{BBI:01}).

\begin{thm}
Let $Y=\mathrm{Cone}(X)$ be the Minkowski cone over a geodesic length space $X$. Then $Y$ has timelike curvature
bounded below (above) by $0$ if and only if $X$ is an Alexandrov space of curvature bounded below (above) by $-1$.
\end{thm}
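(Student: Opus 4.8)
The plan is to prove both directions of the equivalence by relating the timelike geodesic triangle comparison in $Y=\mathrm{Cone}(X)$ to Alexandrov triangle comparison in $X$, using the explicit structure provided by Lemma \ref{curve_rel} and Remark \ref{model}. The key observation is that when $K'=0$ the Lorentzian comparison model is $\R^2_1$, and by Remark \ref{model} the two-dimensional model cone $[0,\infty)\times\hyp^1$ embeds isometrically into $\R^2_1$ as $I^+(0)\cup\{0\}$. This means that a timelike geodesic triangle in $Y$ with vertices at distances (from $0_Y$) $s,t,u$ and whose fiber-sides have $X$-lengths $\theta_1,\theta_2,\theta_3$ can be compared to a triangle in $\R^2_1$ built from a comparison triangle in $\hyp^1$ with those same angles. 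The bound by $0$ in $Y$ thus translates, via the $\cosh$-law $\tau((s,p),(t,q))^2 = s^2+t^2-2st\cosh\theta$, into a comparison statement about the fiber, where $\hyp^2$ (the model space $\mathbb{M}^2(-1)$ of curvature $-1$) plays the role of the comparison surface for $X$.

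First I would set up the correspondence between geodesic triangles. Given a timelike geodesic triangle $(x,y,z)$ in $Y$ with $x\ll y\ll z$, Lemma \ref{curve_rel}(i) tells us that the fiber-projections of its maximal sides are minimizing geodesics in $X$, so they form a genuine geodesic triangle in $X$ with side lengths $\theta_1=d_X(p_x,p_y)$, etc. Conversely, given a geodesic triangle in $X$, Lemma \ref{curve_rel}(ii) lets us lift it to a timelike geodesic triangle in $Y$ by a suitable choice of radial coordinates $r(\lambda)$. I would then build the Alexandrov comparison triangle for the $X$-triangle in $\hyp^2=\mathbb{M}^2(-1)$, take the cone over it — which by Remark \ref{model} is exactly a flat cone sitting inside $\R^2_1$ — and identify this with the Lorentzian comparison triangle in $\mathbb{L}^2(0)=\R^2_1$. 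The crucial computational input is that the function $(s,t,\theta)\mapsto\sqrt{s^2+t^2-2st\cosh\theta}$ is, for fixed $s,t$, monotonically decreasing in $\theta$; hence comparing $\tau$-distances between corresponding points on the cone reduces to comparing hyperbolic distances $\theta$ between the corresponding fiber points, which is precisely the Alexandrov condition.

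For the forward direction (timelike curvature of $Y$ bounded below/above by $0$ $\Rightarrow$ $X$ has curvature bounded below/above by $-1$), I would take an arbitrary comparison configuration in $X$ — a geodesic triangle with points $p,q$ on its sides — lift it to a timelike geodesic triangle in $Y$ via Lemma \ref{curve_rel}(ii), apply the assumed $\tau$-comparison inequality in $Y$, and then use the monotonicity of the $\cosh$-law in $\theta$ to convert the $\tau$-inequality back into the $\hyp^2$-distance inequality defining the Alexandrov bound on $X$. The radial coordinates of the lifted points must be matched so that corresponding points in the sense of the Lorentzian definition (equal $\tau$-separation from the vertex along each side) correspond to the Alexandrov-corresponding points (equal arclength from the vertex along each side) in the fiber; verifying this matching is the technical heart. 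For the reverse direction I would run essentially the same dictionary backward: start from a timelike geodesic triangle in $Y$, project to a geodesic triangle in $X$ via Lemma \ref{curve_rel}(i), invoke the assumed $\hyp^2$-comparison in $X$, and push it up to the desired $\R^2_1$-comparison in $Y$ using the same monotone relationship.

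The main obstacle I expect is the careful bookkeeping of the "corresponding points" condition across the two comparison frameworks, together with verifying that the neighborhood/size-restriction hypotheses in the definition of timelike curvature bounds are met so that the comparison triangles actually exist. Concretely, one must check that for points lying on the timelike sides of the $Y$-triangle, the notion of "same $\tau$-separation from a vertex" aligns with "same $\hyp$-angle" (equivalently, same $X$-arclength) under the cone identification, so that applying the $Y$-comparison at corresponding points yields exactly the $X$-comparison at corresponding points. This hinges on the fact that along a fiber-geodesic the hyperbolic angle parametrizes the side proportionally, and that the radial reparametrization introduced in Lemma \ref{curve_rel}(ii) does not disturb the monotone dependence of $\tau$ on $\theta$. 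Once this dictionary is pinned down, both implications follow from the strict monotonicity of $\sqrt{s^2+t^2-2st\cosh\theta}$ in $\theta$, and the two-sided equivalence (below $\leftrightarrow$ below, above $\leftrightarrow$ above) comes out automatically because reversing an inequality in $\tau$ reverses the corresponding inequality in $\theta$ in the same direction.
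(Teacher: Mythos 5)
Your proposal is correct and follows essentially the same route as the paper: both directions are obtained by lifting/projecting triangles between $X$ and $Y$ via Lemma \ref{curve_rel} (ii)/(i), identifying the Lorentzian comparison triangle in $\mathbb{L}^2(0)$ with the scaled vertices of an $\hyp^2$-comparison triangle for the fiber triangle, matching corresponding points through the radial coordinates of the lift, and converting $\tau$-inequalities into $d$-inequalities via the monotonicity of $\sqrt{s^2+t^2-2st\cosh\theta}$ in $\theta$. The only imprecision is that the cone over the $\hyp^2$-comparison triangle sits in $\R^3_1$, not $\R^2_1$; the paper fixes this by noting that the three scaled vertices $s\tilde p, t\tilde q, u\tilde r$ span a totally geodesic two-dimensional Minkowski plane, which serves as the flat comparison space.
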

\begin{proof}
We observe that timelike comparison triangles for $Y$ and comparison triangles for $X$ can be related in
the following way: Let $(s,p) \ll (t,q) \ll (u,r) \in Y$ be the vertices of a timelike triangle $\Delta$ in $Y$.  If $(s,p)\neq 0_Y$, choose three comparison points $\tilde p,\tilde q,\tilde r$ in 
$\hyp^2 \subseteq \R^3_1$ with $d_{\hyp^2}(\tilde p,\tilde q) = d_X(p,q)$, and so on (note that $\tilde p,\tilde q,\tilde r$ need not be pairwise distinct). If $(s,p) = 0_Y$, choose two points $\tilde{q},\tilde{r}\in \hyp^2$ with $d_{\hyp^2}(\tilde p,\tilde q) = d_X(p,q)$. Then by definition
of $\tau$, the points $s\cdot \tilde p,\ t\cdot \tilde q$ and $u\cdot \tilde r$ in $\R^3_1$ form a timelike comparison triangle $\tilde 
\Delta$ for $((s,p), (t,q), (u,r))$ (note that $s\cdot \tilde{p}=0$ if $(s,p)=0_Y$ and that the points $s\cdot \tilde p,\ t\cdot \tilde q,\ u\cdot\tilde{r}$ will always be pairwise distinct if $(s,p), (t,q), (u,r)$ are, even if $\tilde p,\tilde q,\tilde r$ are not).
Indeed, their $\tilde\tau$-side lengths in $\R^3_1$ are exactly the $\tau$-lengths of the original triangle in $Y$, and so equivalently we may use the two-dimensional Minkowski space $M$ spanned by $s\cdot \tilde p, t\cdot \tilde q, u\cdot \tilde r$ as a (flat) comparison space for $\Delta$: 
$M$ is clearly totally geodesic in $\R^3_1$, so its time separation function is precisely the restriction of $\tilde \tau$ to 
$M\times M$. Thus $\tilde\Delta$ can just as well be viewed as
a subset of $M$. 

Suppose now, first, that $Y$ has timelike curvature bounded below by $0$. Let $p_0\in X$ and let $V\subseteq  Y$ be a neighborhood of $(1,p_0)$ on which timelike comparison holds. Then there exists $\eps >0$ and a neighborhood $U\subseteq X$ of $p_0$ such that $(1-\eps,1+\eps)\times U\subseteq V$ and any triangle $(p,q,r)$ in $U$ can be lifted to a timelike triangle $(s,p)\ll (t,q) \ll (u,r)$ in $V$ (the last requirement follows from \eqref{taudef} and Lemma \ref{curve_rel} (ii) by shrinking $U$ but keeping $\eps $ fixed). Let $(p,q,r)$ form a triangle in $U$. Also, let $m,n$ be points on the sides of $(p,q,r)$ and $\tilde m, \tilde n$ be corresponding points on the 
	sides of a comparison triangle $(\tilde p, \tilde q,\tilde{r})$ in $\hyp^2$. W.l.o.g.~(renaming points if necessary) $m$ lies on the side from $p$ to $q$ and $n$ on the side from $q$ to $r$.
 Given realizing geodesics in $U$ for the edges of $(p,q,r)$, by Lemma \ref{curve_rel} (ii) we obtain
corresponding realizing geodesics for the sides of the triangle $\Delta=((s,p),(t,q),(u,r))$
in $V$. Note that the points $M=(r_{pq}(\lambda_m), m)$ and $N=(r_{qr}(\lambda_n),n)$ on these geodesics satisfy $(s,p)\leq M \leq (t,q) \leq N \leq (u,r) $  and are timelike related (or equal). Let $\tilde M = \ell_{\tilde{m}} \cdot \tilde{m}$, $\tilde N = \ell_{\tilde{n}}  \cdot \tilde{n}$ be points in $\R^3_1$ on the sides of the comparison triangle $\tilde \Delta:=(s\cdot \tilde p,\ t\cdot \tilde q,\ u\cdot\tilde{r})$ in $\R^3_1$. From the construction of $r(\lambda) $ in Lemma \ref{curve_rel} (ii) we see that $r_{pq}(\lambda_m)=\ell_{\tilde{m}}$ and $r_{qr}(\lambda_n)=\ell_{\tilde{n}}$. So $M, N \in \Delta$ and $\tilde M , \tilde N \in \tilde \Delta$ are corresponding points and by
 \eqref{taudef} and the monotonicity of $\cosh$ it then follows that 
$d_X(m,n)\ge d_{\hyp^2}(\tilde m,\tilde n)$, because $\tau(M,N)\le \tilde\tau(\tilde M,\tilde N)$.

Conversely, if $X$ has curvature bounded below by $-1$, then a similar (in fact, easier) argument, this time based on Lemma 
\ref{curve_rel} (i), shows that $Y$ has timelike curvature bounded below
by $0$.

Bounds from above can be treated analogously.
\end{proof}

\section{Generalized cones}\label{sec:gen_cone}
In this section, we introduce a generalization of warped products of metric spaces to the 
Lorentzian setting.

\begin{defi}
For $(X,d)$ a metric space and $I\subseteq\R$ an open interval, set $Y:=I\times X$ and put the product metric on 
$Y$, i.e., $D((t,x),(t',x')) = |t-t'| + d(x,x')$ for $(t,x),(t',x')\in Y$. Let $f\colon I\rightarrow (0,\infty)$ be 
continuous. Then $Y\equiv I\times_fX$ is called a \emph{generalized cone} and $f$ is called \emph{warping 
function}. With this notation, i.e., $I\times_f X$, we indicate that the Lorentzian structure (to be introduced below) on the product $I\times X$ can be thought of as ``$-dt^2 + f^2 d_X^2$''.
\end{defi}
Alternatively, generalized cones can also be called \emph{(Lorentzian) warped products 
with one-dimensional base}. Henceforth, all topological notions refer to the metric topology induced by $D$. Note,
however that the concrete form of the metric on $I\times X$ plays no role as long as it induces the given
metric structures on $I$ and $X$, respectively.

We first turn to the question of introducing an appropriate Lorentzian structure on a \wpd. 
To this end, we have to define causal curves.
\begin{defi}\label{def-cc}
 Let $Y=I\times_f X$ be a \wpd\ and let $\gamma\colon J\rightarrow Y$ be an absolutely continuous curve (with respect to 
$D$). Such a curve has components $\gamma=(\alpha,\beta)$, where $\alpha\colon J\rightarrow I$ and $\beta\colon 
J\rightarrow X$ are both absolutely continuous, and the metric derivative of $\beta$, $v_\beta$, exists 
almost everywhere (cf.\ \cite[Thm.\ 1.1.2]{AGS:05}). We additionally require that $\alpha$ is strictly monotonous. The curve $\gamma$ is called
\begin{equation}
\begin{cases}
 \text{timelike}\\
 \text{null}\\
 \text{causal}
 \end{cases}
 \text{\quad if \qquad}
 -\dot\alpha^2 + (f\circ \alpha)^2 v_{\beta}^2\quad
 \begin{cases}
  < 0\\
  = 0\\
  \leq 0\,,
 \end{cases}
\end{equation}
almost everywhere. It is called \emph{future/past directed causal} if $\alpha$ is strictly monotonically 
increasing/decreasing, i.e., $\dot\alpha>0$ or $\dot\alpha<0$ almost everywhere.
\end{defi}

\begin{rem}
 So far in the development of the theory of \LLSn s, locally Lipschitz continuous curves were used as causal curves. 
However, as we shall establish in Lemma \ref{lem-ac-lip} below, every absolutely continuous causal curve has a parametrization 
as a Lipschitz curve. So using absolutely continuous curves is compatible with the previous works \cite{KS:18, GKS:19}. 
Moreover, parametrizing a timelike curve with respect to arclength anyways only gives an absolutely continuous curve in general 
--- an issue that also necessitated a special treatment in \cite[Subsec.\ 3.7]{KS:18}. Analogous questions arise for spacetimes with continuous metrics, in which case we refer to \cite{GKSS:20}.
\end{rem}

\begin{defi}(Length of a causal curve)\label{def-len}
Let $\gamma=(\alpha,\beta)\colon [a,b]\rightarrow Y$ be a causal curve. Its \emph{length} $L(\gamma)$ is defined as 
\begin{equation}
 L(\gamma):= \int_a^b \sqrt{\dot\alpha^2 - (f\circ\alpha)^2 v_\beta^2}\,.
\end{equation}
\end{defi}

\begin{rem}\label{rem-len-int-ac}
 Note that $\sqrt{\dot\alpha^2 - (f\circ\alpha)^2 v_\beta^2}$ is integrable as $\alpha$ is absolutely continuous, 
$f$ is bounded on the compact image of $\alpha$ and the metric derivative is integrable by \cite[Thm.\ 1.1.2]{AGS:05}. 
Moreover, from this it follows that the map $t\mapsto L(\gamma\rvert_{[a,t]}) = \int_a^t \sqrt{\dot\alpha^2 - 
(f\circ\alpha)^2 v_\beta^2}$ is absolutely continuous.
\end{rem}

\begin{lem}\label{lem:reparametrization}
 Let $(Z,\rho)$ be a metric space, $J, J'$ intervals, $\lambda\colon J\rightarrow Z$ an absolutely continuous curve and 
$\phi\colon J'\rightarrow J$ strictly monotonous and such that 
both $\phi$ and $\phi^{-1}$ are absolutely continuous. 
Then $\xi:=\lambda\circ\phi$ is absolutely continuous and
\begin{equation}
 v_\xi = (v_\lambda\circ\phi) |\phi'|\,.
\end{equation}
\end{lem}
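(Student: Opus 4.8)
The plan is to reduce everything to the integral characterization of absolutely continuous curves in metric spaces from \cite[Thm.\ 1.1.2]{AGS:05}: saying that $\lambda$ is absolutely continuous means that $v_\lambda\in L^1_{\mathrm{loc}}(J)$ exists almost everywhere and $\rho(\lambda(t),\lambda(s))\le\int_s^t v_\lambda$ for all $s\le t$ in $J$, with $v_\lambda$ the pointwise minimal such density. After replacing $\phi$ by $t\mapsto\phi(-t)$ if necessary, I would assume throughout that $\phi$ is strictly increasing; the strictly decreasing case is identical up to reversing the orientation of the limits of integration, which is exactly what produces the absolute value $|\phi'|$.

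First I would establish absolute continuity of $\xi$ together with the inequality $v_\xi\le(v_\lambda\circ\phi)\,|\phi'|$. For $s'\le t'$ in $J'$, set $s=\phi(s')\le t=\phi(t')$ and estimate $\rho(\xi(t'),\xi(s'))=\rho(\lambda(t),\lambda(s))\le\int_s^t v_\lambda$. Applying the change-of-variables formula for the monotone absolutely continuous map $\phi$ (legitimate after choosing a Borel representative of $v_\lambda$, so that $v_\lambda\circ\phi$ is Borel measurable) turns this into $\rho(\xi(t'),\xi(s'))\le\int_{s'}^{t'}(v_\lambda\circ\phi)\,\phi'$. Since the same change of variables shows $(v_\lambda\circ\phi)\,\phi'=(v_\lambda\circ\phi)\,|\phi'|\in L^1_{\mathrm{loc}}(J')$, this integral bound is precisely the assertion that $\xi$ is absolutely continuous, and minimality of the metric derivative gives $v_\xi\le(v_\lambda\circ\phi)\,|\phi'|$ almost everywhere.

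For the reverse inequality, and hence equality, I would argue separately on $\{\phi'=0\}$ and $\{\phi'\neq0\}$. On $\{\phi'=0\}$ the right-hand side vanishes, so the inequality just proved together with $v_\xi\ge0$ forces $v_\xi=0$ almost everywhere there, matching the right-hand side. On $\{\phi'\neq0\}$ I would compute the metric derivative pointwise: at a point $u_0$ where $\phi'(u_0)$ exists and is nonzero and where $v_\lambda$ exists at $\phi(u_0)$, writing $k(h):=\phi(u_0+h)-\phi(u_0)$ I would factor $\frac{\rho(\xi(u_0+h),\xi(u_0))}{|h|}=\frac{\rho(\lambda(\phi(u_0)+k),\lambda(\phi(u_0)))}{|k|}\cdot\frac{|k|}{|h|}$, where $k\to0$ and $k\neq0$ for small $h\neq0$ since $k/h\to\phi'(u_0)\neq0$; the first factor then tends to $v_\lambda(\phi(u_0))$ and the second to $|\phi'(u_0)|$. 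This yields $v_\xi(u_0)=(v_\lambda\circ\phi)(u_0)\,|\phi'(u_0)|$ at such $u_0$, and combining the two cases gives the claimed identity almost everywhere.

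The main obstacle is purely measure-theoretic, and it is exactly where the hypothesis that $\phi^{-1}$ be absolutely continuous is used. In the pointwise step I need $v_\lambda$ to exist at $\phi(u_0)$ for almost every $u_0$; the exceptional set $N\subseteq J$ where $v_\lambda$ fails to exist is Lebesgue-null, and its preimage $\phi^{-1}(N)$ is null precisely because $\phi^{-1}$ is absolutely continuous and hence maps null sets to null sets (Luzin's property (N)). Without this hypothesis the composition could fail to be absolutely continuous and the formula could break down on a set of positive measure, so carrying out the change-of-variables and Luzin-(N) bookkeeping, rather than the limit computation itself, is the delicate part.
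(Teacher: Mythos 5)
Your proof is correct, and its key mechanism --- securing a full-measure set of good parameters via the Lusin property (N) of the absolutely continuous inverse $\phi^{-1}$, then computing the metric derivative of $\xi$ through a pointwise difference-quotient argument --- is exactly the paper's. The packaging differs, however. The paper obtains absolute continuity of $\xi$ by citing Natanson's theorem on compositions, and then runs a single asymptotic expansion: writing $\phi(s+h)-\phi(s)=h\phi'(s)+o(|h|)$ and inserting this into $\rho(\lambda(t+k),\lambda(t))=|k|\,v_\lambda(t)+o(|k|)$ gives $v_\xi(s)=v_\lambda(\phi(s))\,|\phi'(s)|$ in one stroke, with no case distinction, since the $o(|h|)$ bookkeeping is valid also when $\phi'(s)=0$. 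You instead first prove absolute continuity of $\xi$ together with the one-sided bound $v_\xi\le(v_\lambda\circ\phi)\,|\phi'|$, using the integral characterization of absolutely continuous curves and the change-of-variables formula for monotone AC maps (a tool the paper never invokes), and only then do the pointwise computation on $\{\phi'\ne 0\}$, disposing of $\{\phi'=0\}$ by the upper bound. Your route is more self-contained --- the integral characterization does double duty, so no composition theorem for AC functions is needed --- at the cost of a case split that is in fact avoidable: since $\phi$ is strictly monotone, $k=\phi(u_0+h)-\phi(u_0)\ne 0$ for all $h\ne 0$, so your factorization works equally well at points where $\phi'(u_0)=0$ (the product of the two factors then tends to $v_\lambda(\phi(u_0))\cdot 0=0$), which would render your first step logically dispensable for the formula itself. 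One small correction to your closing remark: for strictly monotone $\phi$ the composition $\lambda\circ\phi$ cannot fail to be absolutely continuous even if $\phi^{-1}$ is not AC --- your own first step establishes this without ever using that hypothesis; what absolute continuity of $\phi^{-1}$ actually buys, in your proof and in the paper's alike, is only that $v_\lambda$ exists at $\phi(u_0)$ for almost every $u_0$.
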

\begin{pr}
 That $\xi$ is absolutely continuous follows as in \cite[Thm.\ 3, Ch.\ IX, \textsection 1]{Nat:55}. Moreover, the metric 
derivative of $\lambda$ exists almost everywhere (cf.\ \cite[Thm.\ 1.1.2]{AGS:05}), so let $t\in J$ be such a point. Then 
for any $h\in\R$ such that $t+h\in J$ we have 
\begin{equation}\label{eq-met-der-rep}
 \rho(\lambda(t+h),\lambda(t)) = |h|v_\lambda(t) + r(h)\,,
\end{equation}
where the remainder term satisfies $r(h)=o(|h|)$. Next, let $s\in J'$ such that both $\phi'(s)$ and $\lambda'(\phi(s))$ exist. 
The set of all such $s$ has full measure in $J'$ because the absolutely continuous function $\phi^{-1}$ maps sets of measure zero to 
sets of measure zero (Lusin's property, cf.\ e.g.\ \cite[Thm.\ 3.4.3]{AT:04}). 
Furthermore, let $h\in\R$ with $s+h\in J'$. We conclude that
\begin{align}
 &\rho(\xi(s+h),\xi(s))\\
 &=\rho(\lambda(\phi(s+h)),\lambda(\phi(s))) \mathop{=}^{\eqref{eq-met-der-rep}} |h\phi'(s) + 
r(h)|v_\lambda(\phi(s)) + r(h\phi'(s)+r(h))\\
&=|h||\phi'(s)|v_\lambda(\phi(s)) + \underbrace{(|h\phi'(s) + 
r(h)|-|h||\phi'(s)|)}_{=o(|h|)}v_\lambda(\phi(s)) + o(|h|) \\
&= |h||\phi'(s)|v_\lambda(\phi(s)) + o(|h|)\,,
\end{align}
yielding the claim.
\end{pr}

A direct corollary of the above lemma is that the length of causal curves is invariant under reparametrizations.
\begin{cor}\label{cor:reparametrization}
 The length $L$ is reparametrization invariant, i.e., if $\gamma=(\alpha,\beta)$ is a causal curve defined on some interval 
$J$ and $\phi:J'\to J$ is strictly increasing 
and such that $\vphi$ and $\vphi^{-1}$ are absolutely continuous, 
then $\gamma\circ\phi$ is a causal 
curve of the same length and time orientation.
\end{cor}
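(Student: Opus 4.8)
The plan is to apply Lemma \ref{lem:reparametrization} separately to the two components of $\gamma=(\alpha,\beta)$ and then change variables in the length integral. Writing $\gamma\circ\phi=(\alpha\circ\phi,\beta\circ\phi)$, the first assertion of Lemma \ref{lem:reparametrization} already guarantees that both components, and hence $\gamma\circ\phi$, are absolutely continuous. Applying the lemma to the real-valued curve $\alpha$ (for which $v_\alpha=|\dot\alpha|$) and to $\beta$ yields, for almost every $s\in J'$,
\begin{equation*}
(\alpha\circ\phi)'(s)^2=\dot\alpha(\phi(s))^2\,\phi'(s)^2,\qquad v_{\beta\circ\phi}(s)^2=v_\beta(\phi(s))^2\,\phi'(s)^2.
\end{equation*}
Since $\phi$ is strictly increasing we have $\phi'\ge 0$ a.e., so $|\phi'|=\phi'$. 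Substituting into the causal-character expression gives, for almost every $s$,
\begin{equation*}
(\alpha\circ\phi)'^2-(f\circ\alpha\circ\phi)^2\,v_{\beta\circ\phi}^2=\phi'(s)^2\Big(\dot\alpha(\phi(s))^2-f(\alpha(\phi(s)))^2\,v_\beta(\phi(s))^2\Big).
\end{equation*}

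First I would establish that $\gamma\circ\phi$ is causal. The factor $\phi'(s)^2$ is nonnegative, so the sign of the right-hand side is governed by the bracketed term evaluated at $\phi(s)$. By Definition \ref{def-cc}, causality of $\gamma$ means this bracket is nonnegative for all $t$ in a full-measure set $J\setminus N$. The point is to transfer this to $J'$: I need the bracket to be nonnegative at $\phi(s)$ for almost every $s$, i.e.\ that $\phi^{-1}(N)$ is a null set. This is exactly where the absolute continuity of $\phi^{-1}$ enters, via Lusin's property N (the same fact, \cite[Thm.\ 3.4.3]{AT:04}, used in the proof of Lemma \ref{lem:reparametrization}): an absolutely continuous function maps null sets to null sets, so $\phi^{-1}(N)$ is null and the causal inequality holds almost everywhere on $J'$. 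I regard this measure-theoretic transfer as the main — and essentially only — obstacle; everything else is bookkeeping.

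With causality in hand, the length identity follows by taking square roots and using $\phi'\ge 0$:
\begin{equation*}
\sqrt{(\alpha\circ\phi)'^2-(f\circ\alpha\circ\phi)^2\,v_{\beta\circ\phi}^2}=\phi'(s)\,\sqrt{\dot\alpha(\phi(s))^2-f(\alpha(\phi(s)))^2\,v_\beta(\phi(s))^2}.
\end{equation*}
Setting $g(t):=\sqrt{\dot\alpha(t)^2-(f\circ\alpha)(t)^2\,v_\beta(t)^2}$, which is integrable by Remark \ref{rem-len-int-ac}, the right-hand side equals $g(\phi(s))\,\phi'(s)$, and the standard change-of-variables formula for absolutely continuous monotone maps, $\int_{J'}g(\phi(s))\,\phi'(s)\,ds=\int_{J}g(t)\,dt$, gives $L(\gamma\circ\phi)=L(\gamma)$. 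Finally, the time orientation is preserved because $\alpha$ and $\phi$ are both strictly increasing, hence so is $\alpha\circ\phi$; strict monotonicity of $\alpha\circ\phi$ also shows that $\gamma\circ\phi$ is non-constant on every subinterval, so it is indeed a future-directed causal curve in the sense of Definition \ref{def-cc}.
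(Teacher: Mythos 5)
Your proof is correct and takes the same route as the paper, whose entire proof is the remark that the corollary is "a direct corollary of the above lemma": you have simply filled in the details that this proof-by-reference leaves implicit (componentwise application of Lemma \ref{lem:reparametrization}, the Lusin-property transfer of the almost-everywhere causal inequality through $\phi^{-1}$, and the change-of-variables formula for the length integral). No gaps.
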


\begin{rem}
	Note that this means that $Y$ with these future/past directed causal/timelike curves and this length functional is an example of a \emph{Lorentzian length structure} as defined in the Appendix, see Definition \ref{def-LLStr}.
\end{rem}
To establish that the length functional $L$ is upper semicontinuous (with respect to pointwise convergence) we need to 
describe the length in a variational way. As we show below, the variational length is the same as the length defined above via 
the (metric) derivative of the curve. 
\begin{defi}
 Let $\gamma=(\alpha,\beta)\colon[a,b]\rightarrow Y$ be a causal curve. For $s,t\in I$, $s \leq t$,  
set $m_{s,t}:=\min_{r\in[s,t]} f(r)>0$. Then the \emph{variational length} of $\gamma$ is defined as $\lv(\gamma):=$
\begin{equation}
 \inf_{a=t_0<t_1<\ldots<t_N=b}\sum_{i=0}^{N-1} \sqrt{(\alpha(t_{i+1})-\alpha(t_i))^2 - 
m_{\alpha(t_i),\alpha(t_{i+1})}^2 d(\beta(t_i),\beta(t_{i+1}))^2}\,.
\end{equation}
\end{defi}
To see that $\lv$ is well-defined we need the following lemma.
\begin{lem}\label{lem-der-var-geq-0}
Let $(X,d)$ be a metric space and let $\gamma=(\alpha,\beta)\colon[a,b]\rightarrow Y$ be a causal curve. Then for
any $a\leq s \leq t \leq b$ we have:
 \begin{equation}
(\alpha(t)-\alpha(s))^2 - m_{\alpha(s),\alpha(t)}^2 d(\beta(s),\beta(t))^2 \geq 0\,.
 \end{equation}
\end{lem}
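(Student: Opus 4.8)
The plan is to derive the inequality from the pointwise causality condition by integrating over $[s,t]$ and then comparing the integral of the metric derivative of $\beta$ with the distance $d(\beta(s),\beta(t))$. By Definition~\ref{def-cc} the function $\alpha$ is strictly monotonic, so $\gamma$ is either future-directed ($\dot\alpha>0$ a.e.) or past-directed ($\dot\alpha<0$ a.e.); I may treat the future-directed case and note that the past-directed one is identical, since only $|\alpha(t)-\alpha(s)|$ and the endpoint distance enter, both of which are insensitive to the time orientation (and $m_{\alpha(s),\alpha(t)}$ is symmetric in its two endpoints).

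First I would record the consequence of the causality condition: almost everywhere on $[a,b]$ one has $\dot\alpha^2 \ge (f\circ\alpha)^2 v_\beta^2$, and since $\dot\alpha$ has constant sign a.e., taking square roots yields $|\dot\alpha| \ge (f\circ\alpha)\, v_\beta$ a.e. (using $v_\beta\ge 0$). The second ingredient is the elementary bound $f(\alpha(\lambda)) \ge m_{\alpha(s),\alpha(t)}$ for every $\lambda\in[s,t]$: indeed, by monotonicity of $\alpha$ the value $\alpha(\lambda)$ lies in the interval with endpoints $\alpha(s),\alpha(t)$, over which $m_{\alpha(s),\alpha(t)}$ is by definition the minimum of $f$. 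Combining the two gives $|\dot\alpha(\lambda)| \ge m_{\alpha(s),\alpha(t)}\, v_\beta(\lambda)$ a.e.\ on $[s,t]$.

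Next I would integrate over $[s,t]$. Using absolute continuity of $\alpha$ together with the fixed sign of $\dot\alpha$,
\[
 |\alpha(t)-\alpha(s)| = \left| \int_s^t \dot\alpha \right| = \int_s^t |\dot\alpha| \ge m_{\alpha(s),\alpha(t)} \int_s^t v_\beta .
\]
Finally, since $\int_s^t v_\beta$ is precisely the length of $\beta|_{[s,t]}$ (cf.\ \cite[Thm.\ 1.1.2]{AGS:05}) and length dominates the distance between endpoints, we have $\int_s^t v_\beta \ge d(\beta(s),\beta(t))$. Hence $|\alpha(t)-\alpha(s)| \ge m_{\alpha(s),\alpha(t)}\, d(\beta(s),\beta(t)) \ge 0$, and squaring both (nonnegative) sides yields the claim.

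The argument is short, so there is no serious obstacle; the two points that genuinely need care are (a) passing from the squared causality inequality to $|\dot\alpha| \ge (f\circ\alpha)v_\beta$, which relies on $\dot\alpha$ having constant sign — guaranteed by strict monotonicity of $\alpha$ — and (b) the identity $\int_s^t v_\beta = L(\beta|_{[s,t]}) \ge d(\beta(s),\beta(t))$, i.e.\ that the metric derivative integrates to the length of $\beta$ and that length dominates the chord distance. Both are standard facts from metric geometry but should be stated explicitly, since the entire purpose of this lemma is to guarantee that the radicands appearing in the definition of $\lv$ are nonnegative, so that the variational length is well-defined.
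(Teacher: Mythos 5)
Your proof is correct and follows essentially the same route as the paper's: both deduce the pointwise inequality $(f\circ\alpha)\,v_\beta\le|\dot\alpha|$ from causality, bound $f\circ\alpha$ from below by $m_{\alpha(s),\alpha(t)}$ along the curve, integrate over $[s,t]$, and finish with $d(\beta(s),\beta(t))\le L^d(\beta\rvert_{[s,t]})=\int_s^t v_\beta$. The only cosmetic difference is that you pull the minimum inside the pointwise estimate before integrating, while the paper factors it out of the integral of $(f\circ\alpha)v_\beta$; the content is identical.
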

\begin{pr}
 Without loss of generality let $\gamma$ be future directed, i.e., $\dot\alpha>0$ almost everywhere. Since $\gamma$ is causal 
we have $(f\circ \alpha)^2 v_\beta^2\leq \dot\alpha^2$ and, as all involved quantities are non-negative, in fact $(f\circ 
\alpha) v_\beta\leq \dot\alpha$. Denote by $L^d$ the length functional of $(X,d)$, then
\begin{align}
 m_{\alpha(s),\alpha(t)}\, L^d(\beta\rvert_{[s,t]}) = m_{\alpha(s),\alpha(t)} \int_s^t v_\beta \leq \int_s^t (f\circ\alpha) 
v_\beta \leq \int_s^t\dot\alpha = \alpha(t)-\alpha(s)\,.
\end{align}
Finally, as $d(\beta(s),\beta(t))\leq L^d(\beta\rvert_{[s,t]})$ we conclude that $m_{\alpha(s),\alpha(t)}^2 
d(\beta(s),\beta(t))^2$ $\leq (\alpha(t)-\alpha(s))^2$.
\end{pr}
Also note that the variational length is invariant under reparametrizations as it is defined via partitions, cf.\ e.g.\ the 
proof of \cite[Prop.\ 1.1.8]{Pap:14}. Additionally, $\lv$ is additive which is easily inferred from the inequality (ii) in the next Lemma.
\medskip

\begin{lem}\label{lem-rev-tri-ine}
 Let $a,b\in I$ with $a\leq s\leq t\leq u\leq b$ and $x,y,z\in X$ such that
 \begin{align}
  \label{eq-rev-tri-ine-st} &(t-s)^2 - m_{s,t}^2 d(x,y)^2\geq 0\,,\\
  \label{eq-rev-tri-ine-tu} &(u-t)^2 - m_{t,u}^2 d(y,z)^2\geq 0\,.
 \end{align}
Then
\begin{enumerate}
 \item[(i)] $(u-s)^2 - m_{s,u}^2 d(x,z)^2\geq 0$, and
 \item[(ii)] $\sqrt{(t-s)^2 - m_{s,t}^2 d(x,y)^2} + \sqrt{(u-t)^2 - m_{t,u}^2 d(y,z)^2}$\\
 $\leq \sqrt{(u-s)^2 - m_{s,u}^2 d(x,z)^2}$.
\end{enumerate}
\end{lem}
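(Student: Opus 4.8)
The plan is to reduce part (ii) to the reverse triangle inequality for future-directed causal vectors in two-dimensional Minkowski space $\R^2_1$, and to obtain part (i) directly from the metric triangle inequality together with the monotonicity of $f$. The single bookkeeping fact underlying both parts is that, since $[s,u]=[s,t]\cup[t,u]$, one has $m_{s,u}=\min(m_{s,t},m_{t,u})$, and in particular $m_{s,u}\le m_{s,t}$ and $m_{s,u}\le m_{t,u}$.

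For part (i), taking square roots in the hypotheses \eqref{eq-rev-tri-ine-st}--\eqref{eq-rev-tri-ine-tu} (legitimate since all quantities are non-negative, cf.\ Lemma \ref{lem-der-var-geq-0}) gives $m_{s,t}\,d(x,y)\le t-s$ and $m_{t,u}\,d(y,z)\le u-t$. The metric triangle inequality $d(x,z)\le d(x,y)+d(y,z)$ then yields
\[
m_{s,u}\,d(x,z)\le m_{s,u}\,d(x,y)+m_{s,u}\,d(y,z)\le m_{s,t}\,d(x,y)+m_{t,u}\,d(y,z)\le (t-s)+(u-t)=u-s,
\]
and squaring this gives (i).

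For part (ii) I would argue in three steps, writing $m:=m_{s,u}$. The first step uses $m\le m_{s,t},m_{t,u}$ to replace the individual minima on the left-hand side by the common value $m$: this only enlarges the two radicands (which stay $\ge 0$ by the hypotheses), so the left-hand side is bounded by $\sqrt{(t-s)^2-m^2 d(x,y)^2}+\sqrt{(u-t)^2-m^2 d(y,z)^2}$. The second, decisive step observes that, with the common coefficient $m$, these two summands are exactly the Lorentzian lengths of the future-directed causal vectors $v_1=(t-s,\,m\,d(x,y))$ and $v_2=(u-t,\,m\,d(y,z))$ in $\R^2_1$, whose sum is $(u-s,\,m\,(d(x,y)+d(y,z)))$; the reverse triangle inequality in $\R^2_1$ therefore gives
\[
\sqrt{(t-s)^2-m^2 d(x,y)^2}+\sqrt{(u-t)^2-m^2 d(y,z)^2}\le \sqrt{(u-s)^2-m^2\bigl(d(x,y)+d(y,z)\bigr)^2}.
\]
The third step applies $d(x,z)\le d(x,y)+d(y,z)$ once more, which can only enlarge the radicand, to bound the right-hand side by $\sqrt{(u-s)^2-m^2 d(x,z)^2}$; chaining the three inequalities proves (ii).

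What remains is the planar inequality $\sqrt{A_1^2-B_1^2}+\sqrt{A_2^2-B_2^2}\le\sqrt{(A_1+A_2)^2-(B_1+B_2)^2}$ for $A_i\ge B_i\ge 0$, which I would prove by squaring twice: squaring once and cancelling the common terms reduces the claim to $\sqrt{(A_1^2-B_1^2)(A_2^2-B_2^2)}\le A_1A_2-B_1B_2$, where the right-hand side is non-negative precisely because $A_i\ge B_i\ge 0$; squaring a second time reduces everything to the manifestly true $(A_1B_2-A_2B_1)^2\ge 0$. The only thing requiring care throughout is the sign bookkeeping: at each step one must verify that every radicand remains $\ge 0$ (which is exactly what Lemma \ref{lem-der-var-geq-0} and the hypotheses guarantee) so that monotonicity of $t\mapsto\sqrt{t}$ applies, and that $A_1A_2-B_1B_2\ge 0$ before the second squaring. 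No obstacle beyond this routine verification is expected.
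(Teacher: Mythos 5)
Your proof is correct, but it takes a recognizably different (and more self-contained) route than the paper's. The paper assumes without loss of generality that $m_{s,u}=m_{s,t}$, picks a Euclidean comparison triangle $(X,Y,Z)$ for $(x,y,z)$ in $\R^2$, and regards it inside the scaled Minkowski space $(\R^3,\eta_{m_{s,t}^2})$, where $\eta_c=-(dx^0)^2+c\,((dx^1)^2+(dx^2)^2)$: part (i) is then just transitivity of the causal relation of that flat spacetime, and part (ii) is the reverse triangle inequality for its time separation function $P$, after enlarging the second summand using $m_{s,t}\le m_{t,u}$. Since the comparison triangle realizes all three distances exactly (in particular $\|X-Z\|=d(x,z)$), the paper never needs to invoke $d(x,z)\le d(x,y)+d(y,z)$ explicitly. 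You instead collapse everything to two dimensions: passing to the common factor $m=m_{s,u}$ (which avoids the WLOG/case distinction altogether), you treat $(t-s,\,m\,d(x,y))$ and $(u-t,\,m\,d(y,z))$ as future-directed causal vectors in $\R^2_1$, apply the reverse triangle inequality there, and then use the metric triangle inequality explicitly --- once for (i), and once more in (ii) to pass from $d(x,y)+d(y,z)$ to $d(x,z)$. What your route buys is elementarity: you prove the required planar inequality from scratch by squaring twice, with the sign checks ($A_i\ge B_i\ge 0$, hence $A_1A_2-B_1B_2\ge 0$) done correctly so both squarings are legitimate, whereas the paper quotes the causality theory of flat Lorentzian manifolds; what the paper's route buys is brevity and an explicit comparison-triangle picture that matches how the lemma is deployed later. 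One cosmetic point: your citations of Lemma \ref{lem-der-var-geq-0} are superfluous --- the non-negativity of the various radicands follows directly from the hypotheses \eqref{eq-rev-tri-ine-st}--\eqref{eq-rev-tri-ine-tu}, from $m\le m_{s,t},m_{t,u}$, from the part (i) computation (for the vector sum), and from monotonicity of the radicand in the last step; that lemma concerns causal curves and plays no role here.
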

\begin{pr}
 Clearly, $m_{s,u}= \min(m_{s,t},m_{t,u})$ and without loss of generality we may assume that $m_{s,u}=m_{s,t}$. Let $(X,Y,Z)$ be a 
comparison triangle of $(x,y,z)$ in the plane $\R^2$, i.e., $\|X-Y\| = d(x,y)$, $\|X-Z\|=d(x,z)$ and $\|Y-Z\|=d(y,z)$. For 
$c>0$ define the scaled Minkowski metric $\eta_c$ on $\R^3$ as $\eta_c:= -(dx^0)^2 + c ((dx^1)^2 + (dx^2)^2)$. We claim that 
$(s,X)\leq (t,Y)\leq (u,Z)$ in $(\R^3,\eta_{m_{s,t}^2})$. That $(s,X)\leq (t,Y)$ follows directly from 
\eqref{eq-rev-tri-ine-st}, and that $(t,Y)\leq (u,Z)$ follows from \eqref{eq-rev-tri-ine-tu} and $m_{s,t}\leq m_{t,u}$. Thus 
$(t,X)\leq (u,Z)$ by the transitivity of the causal relation $\leq$ in $(\R^3,\eta_{m_{s,t}^2})$, giving (i). 

To show (ii), denote by $P$ the time 
separation function of $(\R^3,\eta_{m_{s,t}^2})$. Then  
\begin{align}
 P((s,X),(t,Y)) &= \sqrt{(t-s)^2 - m_{s,t}^2 d(x,y)^2},\,\text{ and}\\
 P((t,Y),(u,Z)) &= \sqrt{(u-t)^2 - m_{s,t}^2 d(y,z)^2} \geq \sqrt{(u-t)^2 - m_{t,u}^2 d(y,z)^2}\,.
\end{align}
Consequently, by the reverse triangle inequality for $P$ we obtain
\begin{align}
 &\sqrt{(t-s)^2 - m_{s,t}^2 d(x,y)^2} + \sqrt{(u-t)^2 - m_{t,u}^2 d(y,z)^2}\\
 &\qquad\leq P((s,X),(t,Y)) + P((t,Y), (u,Z)) \leq P((s,X),(u,Z))\\
 &\qquad= \sqrt{(u-s)^2 - m_{s,u}^2 d(x,z)^2}\,,
\end{align}
as $m_{s,u} = m_{s,t}$.
\end{pr}

\begin{rem}
 The above Lemma \ref{lem-rev-tri-ine} shows that the function \[T((t,x),(s,y)):=(t-s)^2 - m_{s,t}^2 d(x,y)^2,\] if 
non-negative, and $T((t,x),(s,y)):=0$ otherwise, satisfies the reverse triangle inequality. So in principle it could also be
used to define a Lorentzian (pre-)length space. However, as it only involves the minimum of $f$ on the interval $[s,t]$ it 
does not contain the full information of $f$ on this interval and it is not compatible with the smooth case (i.e., if $X$ is 
a smooth Riemannian manifold and $f$ is smooth). Despite this, it proves very useful when handling the variational length because the Lorentzian (pre-)length space definition of length in $(I\times X,\ll,\leq,T)$ coincides with the variational length $\lv$ defined above. We will show in Proposition \ref{prop-l-lv} that the variational length equals the length defined in Definition \ref{def-len}.
\end{rem}
\begin{lem}\label{lem-ac-lip}
Every future directed causal curve has a reparametrization that is locally Lipschitz 
continuous. In particular, any future directed causal curve defined on a compact interval $\gamma\colon[a,b]\rightarrow Y$ 
has a reparametrization $\tilde\gamma$ such that $\tilde\gamma(t)=(t,\tilde\beta(t))$. Similarly, $\gamma $ has a reparametrization $\gamma'=(\alpha',\beta')$ such that $\beta':[0,L^d(\beta')]\to X$ is parametrized with respect to arc length.
\end{lem}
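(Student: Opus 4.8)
The plan is to obtain all three assertions by reparametrizing $\gamma=(\alpha,\beta)$ with a suitably chosen strictly increasing map $\phi$ and then invoking Lemma \ref{lem:reparametrization} and Corollary \ref{cor:reparametrization}; the whole point is to check that the reparametrizing maps \emph{and their inverses} are (locally) absolutely continuous, so that these results apply and the reparametrized objects are again causal curves of the same length and time orientation. For the first two claims the natural parameter is the base coordinate itself, $\phi=\alpha^{-1}$, and for the last it is the fibre arc length $\ell(t):=L^d(\beta\rvert_{[a,t]})=\int_a^t v_\beta$.

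First I would show that $\alpha^{-1}$ is (locally) absolutely continuous. By Definition \ref{def-cc}, future direction means $\dot\alpha>0$ almost everywhere, and $\alpha$ is a strictly increasing absolutely continuous homeomorphism onto its image. The change-of-variables formula for monotone absolutely continuous functions gives $|\alpha(E)|=\int_E\dot\alpha$ for every measurable $E$; since $\dot\alpha>0$ a.e., $\alpha$ sends positive-measure sets to positive-measure sets, whence $\alpha^{-1}$ satisfies Luzin's property (N) (cf.\ \cite[Thm.\ 3.4.3]{AT:04}), and a continuous monotone function with property (N) is absolutely continuous. Thus $\phi=\alpha^{-1}$ meets the hypotheses of Lemma \ref{lem:reparametrization} on every compact subinterval, and $\tilde\gamma:=\gamma\circ\alpha^{-1}$ is a causal curve of the same length and orientation whose first component is the identity, i.e.\ $\tilde\gamma(t)=(t,\tilde\beta(t))$ with $\tilde\beta=\beta\circ\alpha^{-1}$.

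To upgrade this to (local) Lipschitz continuity I would bound the metric speed of $\tilde\beta$. By Lemma \ref{lem:reparametrization}, $v_{\tilde\beta}=(v_\beta\circ\alpha^{-1})\,|(\alpha^{-1})'|$, where $(\alpha^{-1})'=1/(\dot\alpha\circ\alpha^{-1})$ almost everywhere (the exceptional set is the $\alpha$-image of $\{\dot\alpha=0\}$, which is null by property (N)). Substituting $t=\alpha^{-1}(r)$ into the causality inequality $(f\circ\alpha)\,v_\beta\le\dot\alpha$ established inside the proof of Lemma \ref{lem-der-var-geq-0}, and using $\alpha(\alpha^{-1}(r))=r$, yields $v_{\tilde\beta}(r)\le 1/f(r)$ almost everywhere. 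As $f$ is continuous and positive it is bounded below by some $m>0$ on any compact subinterval, so $v_{\tilde\beta}\le 1/m$ there; a curve with essentially bounded metric speed is Lipschitz, and together with the identity first component this makes $\tilde\gamma$ Lipschitz on compact subintervals, hence locally Lipschitz (and, for a compact parameter interval, globally so, which is exactly the graph form claimed).

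For the arc-length claim I would invoke the standard arc-length reparametrization of absolutely continuous metric-space curves (cf.\ \cite[Thm.\ 1.1.2]{AGS:05}): with $\ell$ nondecreasing and absolutely continuous onto $[0,L]$, $L:=L^d(\beta)=L^d(\beta')$, there is a unique unit-speed $\beta'\colon[0,L]\to X$ with $\beta=\beta'\circ\ell$. Exactly as in the first step, \emph{provided} $\dot\ell=v_\beta>0$ almost everywhere, the map $\ell$ is strictly increasing with absolutely continuous inverse, so $\gamma':=\gamma\circ\ell^{-1}$ is a genuine reparametrization whose fibre component is the arc-length parametrization $\beta'$. I expect the genuine obstacle to be precisely the degenerate case $v_\beta=0$ on a positive-measure set, i.e.\ $\beta$ constant on some subinterval: there $\ell$ is not injective, and since $\alpha$ is \emph{strictly} increasing no strictly monotone reparametrization can render the fibre part unit speed, so $\gamma\circ\ell^{-1}$ is not literally defined. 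Consequently the arc-length statement is to be read for curves with $v_\beta>0$ a.e.\ (equivalently $\ell$ strictly increasing), or with ``reparametrization'' taken in the generalized monotone-surjective sense; under either reading the construction above applies verbatim, and isolating this hypothesis cleanly is the subtle point of the proof.
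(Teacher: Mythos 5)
Your argument is correct and, for the first two assertions, is essentially the paper's own proof: where you rederive the absolute continuity of $\alpha^{-1}$ from the change-of-variables identity $|\alpha(E)|=\int_E\dot\alpha$, Luzin's property (N) and the Banach--Zarecki theorem, the paper simply cites this as a theorem of Zareckii (from Natanson's book), and the Lipschitz bound is obtained the same way in both: the paper reads $f^2v_{\tilde\beta}^2\le\dot{\tilde\alpha}^2=1$ directly off Corollary \ref{cor:reparametrization}, while you reach the equivalent bound $v_{\tilde\beta}\le 1/f$ via the chain rule of Lemma \ref{lem:reparametrization}; a positive lower bound $C$ for $f$ on a compact subinterval then gives the Lipschitz constant $1/C$ in either version.

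Where you genuinely add something is the third assertion: the paper's proof never expands the ``Similarly'', and your diagnosis of the degenerate case is accurate. If $v_\beta=0$ on a set of positive measure (e.g.\ for the timelike curve $t\mapsto(t,x_0)$, or any causal curve whose fiber component is constant on a subinterval), then for every admissible reparametrization $\phi$ one has $v_{\beta\circ\phi}=(v_\beta\circ\phi)\,|\phi'|=0$ on a set of positive measure, so the fiber component can never be made unit speed; the claim must indeed be read under the hypothesis $v_\beta>0$ a.e., in which case $\ell(t)=\int_a^t v_\beta$ is strictly increasing and the same Zareckii-type argument applies to $\ell^{-1}$ as to $\alpha^{-1}$. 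One correction, however: your proposed fallback reading via ``generalized monotone-surjective'' reparametrizations does not rescue the general case either, since factoring $\gamma=\gamma'\circ\ell$ through a non-injective $\ell$ would force $\alpha=\alpha'\circ\ell$ to be constant on the constancy intervals of $\ell$, contradicting the strict monotonicity required of base components in Definition \ref{def-cc}; so restricting to $v_\beta>0$ a.e.\ is the only viable reading. This restriction is harmless where the paper actually uses the claim: in Corollary \ref{cor-J+-closed} the curve in question satisfies $L(\gamma)=0$ by push-up (Proposition \ref{prop-cau-pla}), hence $\dot\alpha=(f\circ\alpha)v_\beta$ a.e.\ and thus $v_\beta>0$ a.e., exactly the hypothesis you isolated.
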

\begin{pr}
 As this is a local question we may restrict to the case of compact intervals. 
Let $\gamma=(\alpha,\beta): [a,b]\rightarrow Y$ be future 
directed causal, 
then $\dot\alpha>0$ almost everywhere.  Thus by a theorem of Zareckii (cf.\ \cite[p.\ 271]{Nat:55}), 
$\alpha^{-1}$ is absolutely continuous, hence can serve as an admissible parametrization for $\gamma$. 
Then $\tilde\gamma:=\gamma\circ\alpha^{-1}$ satisfies 
$\tilde\gamma(t)=(t,\tilde\beta(t))$, where 
$\tilde\beta:=\beta\circ\alpha^{-1}$. 
By Corollary \ref{cor:reparametrization}, $\tilde\gamma$ is future directed causal and so we have 
$f^2v_{\tilde\beta}^2\leq 1$. 
Thus $v_{\tilde\beta}\leq \frac{1}{C}$, where $C:=\min_{r\in[\alpha(a),\alpha(b)]}f(r)>0$. This implies that $\tilde\beta$ is 
Lipschitz 
continuous, due to
\begin{align}
 d(\tilde\beta(s),\tilde\beta(t))\leq L^d(\tilde\beta\rvert_{[s,t]}) = \int_s^t v_{\tilde\beta}\leq \frac{1}{C} |t-s|\,,
\end{align}
where $\alpha(a)\leq s < t\leq \alpha(b)$.
\end{pr}

\begin{prop}\label{prop-l-lv}
Let $(X,d)$ be a metric space. Then the variational length of any causal curve $\gamma$ in $Y=I\times_fX$ 
agrees with its length, i.e., $L(\gamma)=\lv(\gamma)$.
\end{prop}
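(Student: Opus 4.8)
The plan is to fix a convenient parametrization and then prove the two inequalities $\lv(\gamma)\ge L(\gamma)$ and $\lv(\gamma)\le L(\gamma)$ separately, the first via the reverse triangle inequality and the second via an absolute‑continuity/differentiation argument. First I would reduce to a normal form: since $L$ is reparametrization invariant (Corollary \ref{cor:reparametrization}) and $\lv$ is reparametrization invariant as well (being defined through partitions), Lemma \ref{lem-ac-lip} lets me assume without loss of generality that $\gamma$ is future directed with $\alpha=\mathrm{id}$, i.e.\ $\gamma(t)=(t,\beta(t))$ on $[a,b]$, so that $f\circ\alpha=f$ and $L(\gamma)=\int_a^b\sqrt{1-f^2 v_\beta^2}$.

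For the lower bound $\lv\ge L$ I would estimate each summand of an arbitrary partition from below by the integral over the corresponding subinterval. On $[t_i,t_{i+1}]$ put $m_i:=m_{t_i,t_{i+1}}$ and $\ell_i:=\int_{t_i}^{t_{i+1}}v_\beta=L^d(\beta\rvert_{[t_i,t_{i+1}]})$. Using $d(\beta(t_i),\beta(t_{i+1}))\le\ell_i$ together with $m_i\le f$ on the subinterval and the monotonicity of $\sqrt{\,\cdot\,}$, I would obtain
\[
\sqrt{(\Delta t_i)^2 - m_i^2\, d(\beta(t_i),\beta(t_{i+1}))^2}\ \ge\ \sqrt{(\Delta t_i)^2 - m_i^2\ell_i^2}\ \ge\ \int_{t_i}^{t_{i+1}}\!\sqrt{1-m_i^2 v_\beta^2}\ \ge\ \int_{t_i}^{t_{i+1}}\!\sqrt{1-f^2 v_\beta^2}.
\]
The middle inequality is the integral form of the reverse triangle inequality of Lemma \ref{lem-rev-tri-ine} (equivalently, the concavity and positive homogeneity of the scaled Minkowski norm $(p,q)\mapsto\sqrt{p^2-m_i^2 q^2}$ on its causal cone, applied to $\int_{t_i}^{t_{i+1}}(1,v_\beta)=(\Delta t_i,\ell_i)$); causality guarantees $m_i v_\beta\le f v_\beta\le 1$ a.e., so the relevant points lie in the cone. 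Summing over the partition gives $\Sigma\ge L(\gamma)$ for every partition, hence $\lv(\gamma)\ge L(\gamma)$.

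For the upper bound $\lv\le L$ I would introduce $V(t):=\lv(\gamma\rvert_{[a,t]})$, which is non‑decreasing and additive. Evaluating the defining infimum on the trivial partition and invoking Lemma \ref{lem-der-var-geq-0} yields, for $s\le t$,
\[
0\ \le\ V(t)-V(s)\ =\ \lv(\gamma\rvert_{[s,t]})\ \le\ \sqrt{(t-s)^2 - m_{s,t}^2\, d(\beta(s),\beta(t))^2}\ \le\ t-s,
\]
so $V$ is $1$‑Lipschitz, hence absolutely continuous with $V(b)=\int_a^b V'$. It remains to bound $V'$ almost everywhere. At a point $t_0$ where $V'(t_0)$ and the metric derivative $v_\beta(t_0)=\lim_{h\to0}d(\beta(t_0+h),\beta(t_0))/|h|$ both exist (a.e.\ $t_0$, by \cite[Thm.\ 1.1.2]{AGS:05}), I would divide the displayed upper bound with $s=t_0$, $t=t_0+h$ by $h>0$ and let $h\to0^+$; continuity of $f$ gives $m_{t_0,t_0+h}\to f(t_0)$ and metric differentiability gives $d(\beta(t_0+h),\beta(t_0))/h\to v_\beta(t_0)$, whence $V'(t_0)\le\sqrt{1-f(t_0)^2 v_\beta(t_0)^2}$. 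Integrating, $\lv(\gamma)=V(b)\le\int_a^b\sqrt{1-f^2 v_\beta^2}=L(\gamma)$, which combined with the first part proves $\lv(\gamma)=L(\gamma)$.

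The hard part will be the upper bound $\lv\le L$. A naive refinement argument fails: the summands use the chord distance $d(\beta(t_i),\beta(t_{i+1}))$ rather than the length $\ell_i$, and near null directions a small metric defect $\ell_i-d(\beta(t_i),\beta(t_{i+1}))$ can change a single summand by a large amount (the Lorentzian norm is steep near the light cone), so $\inf_P\Sigma_P$ cannot be controlled term by term as $\lvert P\rvert\to0$. The absolute‑continuity argument for $V$ is designed precisely to circumvent this: it replaces the delicate comparison of chords with a pointwise limit in which the metric derivative directly governs the difference quotient, so the near‑null obstruction never needs to be estimated uniformly.
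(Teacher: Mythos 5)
Your proposal is correct, and while the easy direction coincides with the paper's argument, your proof of the hard direction $\lv\le L$ takes a genuinely different route. For $L\le\lv$ the paper also works in the parametrization $\gamma(t)=(t,\beta(t))$ and derives exactly your per-interval estimate $\int_s^t\sqrt{1-f^2v_\beta^2}\le\sqrt{(t-s)^2-m_{s,t}^2\,d(\beta(s),\beta(t))^2}$, only via two scalar applications of Jensen's inequality rather than your packaging through concavity and homogeneity of the Minkowski norm on its cone; these are interchangeable. For $\lv\le L$, however, the paper proceeds by an averaging argument: it integrates $\tfrac1h\sqrt{h^2-m_{t,t+h}^2\,d(\beta(t),\beta(t+h))^2}$ over $[a,b-\eps]$, uses additivity of $\lv$ and the trivial-partition bound \eqref{eq-lv-leq} to bound this average from below by $\lv(\gamma)$ minus boundary terms, and then invokes dominated convergence as $h\searrow0$ followed by $\eps\searrow0$. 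You instead observe that $V(t):=\lv(\gamma\rvert_{[a,t]})$ is $1$-Lipschitz (using the same two ingredients, additivity from Lemma \ref{lem-rev-tri-ine}(ii) and the trivial-partition bound), hence absolutely continuous, and bound $V'\le\sqrt{1-f^2v_\beta^2}$ a.e.\ by letting $h\to0^+$ in the difference quotient, where continuity of $f$ and metric differentiability of $\beta$ give the limit; integrating $V'$ finishes the proof. Your version is the more elementary one — it replaces the shift-substitution and dominated-convergence step by the standard fundamental-theorem argument for Lipschitz functions, in the spirit of the proof that the length of an absolutely continuous curve equals the integral of its metric derivative — while the paper's averaging argument keeps everything at the level of integrals and never needs to differentiate the partial variational length. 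Both rest on precisely the same structural facts about $\lv$, so the two proofs are of equal strength.
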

\begin{pr}
 Let $\gamma$ be a (without loss of generality) future directed causal curve. As $\lv$ and $L$ are invariant under 
reparametrizations, using Lemma \ref{lem-ac-lip} we may assume that $\gamma\colon[a,b]\rightarrow Y$ is parametrized as 
$\gamma(t)=(t,\beta(t))$ (where $a,b\in I$). Let $a\leq s< t\leq b$, then $d(\beta(s),\beta(t))\leq 
L^d(\beta\rvert_{[s,t]}) = \int_s^t v_\beta$ and so
\begin{align}
 \frac{1}{(t-s)^2}\, d(\beta(s),\beta(t))^2 \leq \frac{1}{(t-s)^2}\, (\int_s^t v_\beta)^2\leq \frac{1}{t-s} \int_s^t 
v_\beta^2\,,
\end{align}
where in the last step we used Jensen's inequality. Thus we obtain
\begin{equation}\label{eq-d-vb}
 \frac{1}{t-s}\, d(\beta(s),\beta(t))^2 \leq \int_s^t v_\beta^2\,.
\end{equation}
Again using Jensen's inequality we estimate
\begin{align}
 \Bigl(\frac{1}{t-s}\, \int_s^t \sqrt{1-f^2 v_\beta^2}\Bigr)^2 &\leq \frac{1}{t-s}\, \int_s^t (1-f^2 v_\beta^2) = 1 - 
\int_s^t f^2 \frac{v_\beta^2}{t-s}\\
&\leq 1-m_{s,t}^2\int_s^t \frac{v_\beta^2}{t-s}\mathop{\leq}^{\eqref{eq-d-vb}} 1 - m_{s,t}^2 
\frac{d(\beta(s),\beta(t))^2}{(t-s)^2}\,,
\end{align}
which yields 
\begin{equation}\label{eq-int-leq-sqr}
\begin{split}
 \int_s^t \sqrt{1-f^2 v_\beta^2} 
 &\leq (t-s)\sqrt{1 - m_{s,t}^2 \frac{d(\beta(s),\beta(t))^2}{(t-s)^2}}\\ 
 &= \sqrt{(t-s)^2 - m_{s,t}^2 d(\beta(s),\beta(t))^2}\,.
\end{split}
\end{equation}
Now let $a=t_0<t_1<\ldots<t_N=b$ be a partition of $[a,b]$, then
\begin{align}
 L(\gamma)&=\int_a^b \sqrt{1-f^2 v_\beta^2} = \sum_{i=0}^{N-1} 
\int_{t_i}^{t_{i+1}}\sqrt{1-f^2v_\beta^2}\\
&\mathop{\leq}^{\eqref{eq-int-leq-sqr}} \sum_{i=0}^{N-1}\sqrt{(t_{i+1}-t_i)^2 - 
m_{t_i,t_{i+1}}^2 d(\beta(t_i),\beta(t_{i+1}))^2}\,,
\end{align}
and taking the infimum over all partitions of $[a,b]$ gives $L(\gamma)\leq \lv(\gamma)$.
\medskip

For the reverse inequality, note that we have by definition of $\lv$ as the infimum over all partitions of the interval that
\begin{equation}\label{eq-lv-leq}
 \lv(\gamma\rvert_{[s,t]})\leq \sqrt{(t-s)^2 - m_{s,t}^2 d(\beta(s),\beta(t))^2}\,,
\end{equation}
for all $a\leq s<t\leq b$. Let $0<\eps<b-a$, set $\tilde b:=b-\eps>a$, $h:=\frac{\tilde b-a}{N}$ where $N\in\N$ is such that 
$h\leq \eps$ and set $t_i:=a + i h$ for $i=0,\ldots,N$. Then for $t\in[a,\tilde b]$ we have $t+h\in[a,b]$ and hence by Lemma 
\ref{lem-der-var-geq-0} that $h^2-m_{t,t+h}^2 d(\beta(t),\beta(t+h))^2\geq 0$ . Consequently we get
\begin{align}
 \frac{1}{h}\int_a^{\tilde b}&\sqrt{h^2-m_{t,t+h}^2 d(\beta(t),\beta(t+h))^2}\,\dd t\\
 & = \frac{1}{h}\sum_{i=0}^{N-1} \int_{t_i}^{t_{i+1}}\sqrt{h^2-m_{t,t+h}^2 d(\beta(t),\beta(t+h))^2}\,\dd t\\
 & \mathop{=}^{(*)} \frac{1}{h}\int_0^h \sum_{i=0}^{N-1} \sqrt{h^2-m_{t_i+t,t_{i+1}+t}^2 
d(\beta(t_i+t),\beta(t_{i+1}+t))^2}\,\dd t\\
&\geq \frac{1}{h}\int_0^h \lv(\gamma\rvert_{[a+t,\tilde b+t]})\,\dd t\\
& = \frac{1}{h}\int_0^h\Bigl(\lv(\gamma) - \underbrace{\lv(\gamma\rvert_{[a,a+t]})}_{\leq \lv(\gamma\rvert_{[a,a+h]})} - 
\underbrace{\lv(\gamma\rvert_{[\tilde b+t,b]})}_{\leq \lv(\gamma\rvert_{[\tilde b,b]})}\Bigr)\,\dd t\\
&\geq \lv(\gamma) - \lv(\gamma\rvert_{[a,a+h]}) - \lv(\gamma\rvert_{[\tilde b,b]})\\
&\mathop{\geq}^{\eqref{eq-lv-leq}} \lv(\gamma) 
- \sqrt{h^2-m_{a,a+h}^2 d(\beta(a),\beta(a+h))^2}\\
&\hspace*{10em} - \sqrt{\eps^2-m_{b -\eps,b}^2 d(\beta(b-\eps),\beta(b))^2}\,,
\end{align}
where we used additivity of $\lv$ (cf.~Lemma \ref{lem-rev-tri-ine}) and, in $(*)$, the substitution $t'=t+a+ih=t+t_i$. As $0\leq \frac{1}{h} \sqrt{h^2-m_{t,t+h}^2 d(\beta(t),\beta(t+h))^2}\leq 
1\in L^1([a,b])$ for all $h\geq 0$ such that $t+h\in[a,b]$, we have by dominated convergence and the above inequality that
\begin{align}
\int_a^{\tilde b} \sqrt{1-f^2 v_\beta^2} &= \lim_{h\searrow 0} \int_a^{\tilde b}\sqrt{1-m_{t,t+h}^2 
\frac{d(\beta(t),\beta(t+h))^2}{h^2}}\,\dd t\\
&\geq \lv(\gamma) - \sqrt{\eps^2-m_{b-\eps,b}^2 d(\beta(b-\eps),\beta(b))^2}\,.
\end{align}
Thus
\begin{align}
L(\gamma) &= L(\gamma\rvert_{[a,b-\eps]}) + L(\gamma\rvert_{[b-\eps,b]})\\
&\geq \lv(\gamma) - \sqrt{\eps^2-m_{b-\eps,b}^2 
d(\beta(b-\eps),\beta(b))^2} + L(\gamma\rvert_{[b-\eps,b]})\,,
\end{align}
and letting $\eps\searrow 0$ finishes the proof.
\end{pr}

\begin{prop}\label{prop-L-usc}
Let $(X,d)$ be a metric space and let $\gamma_n,\gamma$ $(n\in\N)$ be causal curves defined on the interval $[a,b]$ such 
that $\gamma_n\to\gamma$ pointwise. Then
 \begin{equation}
  \limsup_n L(\gamma_n)\leq L(\gamma)\,,
 \end{equation}
i.e., $L$ is upper semicontinuous.
\end{prop}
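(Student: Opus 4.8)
The plan is to reduce everything to the variational length $\lv$, which by Proposition \ref{prop-l-lv} coincides with $L$, and then to exploit that $\lv$ is defined as an infimum over partitions of functionals that are continuous under pointwise convergence. Since an infimum of continuous functions is automatically upper semicontinuous, this gives the claim. Concretely, since $L(\gamma)=\lv(\gamma)$ and $L(\gamma_n)=\lv(\gamma_n)$, it suffices to prove $\limsup_n \lv(\gamma_n)\le\lv(\gamma)$.

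First I would normalize the time orientation. Without loss of generality $\gamma$ is future directed, so $\alpha$ is strictly increasing and $\alpha(a)<\alpha(b)$. By pointwise convergence $\alpha_n(a)\to\alpha(a)$ and $\alpha_n(b)\to\alpha(b)$, hence $\alpha_n(a)<\alpha_n(b)$ for all but finitely many $n$, and for these the causal curve $\gamma_n$ is future directed as well. As $\limsup_n\lv(\gamma_n)$ depends only on the tail of the sequence, I may discard the finitely many remaining indices and assume every $\gamma_n=(\alpha_n,\beta_n)$ is future directed, so that $\alpha_n(t_i)\le\alpha_n(t_{i+1})$ whenever $t_i\le t_{i+1}$; in particular the warping symbols $m_{\alpha_n(t_i),\alpha_n(t_{i+1})}$ are well defined.

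Next I would fix a partition $P\colon a=t_0<\dots<t_N=b$ and consider the partition functional
\[
S_P(\gamma):=\sum_{i=0}^{N-1}\sqrt{(\alpha(t_{i+1})-\alpha(t_i))^2 - m_{\alpha(t_i),\alpha(t_{i+1})}^2\, d(\beta(t_i),\beta(t_{i+1}))^2}
\]
for $\gamma=(\alpha,\beta)$, so that $\lv(\gamma)=\inf_P S_P(\gamma)$ by definition; the analogous quantity for $\gamma_n=(\alpha_n,\beta_n)$ is denoted $S_P(\gamma_n)$. Each summand is non-negative by Lemma \ref{lem-der-var-geq-0}, so $S_P$ is well defined, and I claim $S_P$ is continuous with respect to pointwise convergence. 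Indeed $S_P$ depends on $\gamma$ only through the finitely many values $\alpha(t_i)$ and $\beta(t_i)$: if $\gamma_n\to\gamma$ pointwise then $\alpha_n(t_i)\to\alpha(t_i)$, and by continuity of the metric $d$ also $d(\beta_n(t_i),\beta_n(t_{i+1}))\to d(\beta(t_i),\beta(t_{i+1}))$. The one term requiring care is the warping factor $m_{\alpha_n(t_i),\alpha_n(t_{i+1})}$, and here I would invoke continuity of the map $(s,t)\mapsto m_{s,t}=\min_{r\in[s,t]}f(r)$ on $\{s\le t\}$: this follows from continuity of $f$, since projecting a minimizer of one interval into a nearby interval and using the (local uniform) continuity of $f$ yields both upper and lower semicontinuity of $m_{s,t}$. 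Composing these convergences with the continuous square root gives $S_P(\gamma_n)\to S_P(\gamma)$.

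Finally I would assemble the upper semicontinuity in the standard way. Given $\eps>0$, choose a partition $P$ with $S_P(\gamma)<\lv(\gamma)+\eps$, which is possible since $\lv(\gamma)=\inf_P S_P(\gamma)$. Because $\lv(\gamma_n)\le S_P(\gamma_n)$ for every $n$, the continuity just established yields
\[
\limsup_n\lv(\gamma_n)\le\limsup_n S_P(\gamma_n)=S_P(\gamma)<\lv(\gamma)+\eps .
\]
Letting $\eps\searrow 0$ proves $\limsup_n\lv(\gamma_n)\le\lv(\gamma)$, and hence $\limsup_n L(\gamma_n)\le L(\gamma)$. The only genuinely non-formal point, and the one I would write out carefully, is the continuity of the minimum-warping factor $(s,t)\mapsto m_{s,t}$; everything else is the elementary fact that an infimum of continuous functionals is upper semicontinuous, together with the identification $L=\lv$ from Proposition \ref{prop-l-lv}.
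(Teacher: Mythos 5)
Your proof is correct and follows essentially the same route as the paper: the paper likewise writes $\lv$ as the infimum over partitions of the functionals $\Phi_\sigma$ (your $S_P$), notes these are continuous under pointwise convergence, and concludes upper semicontinuity of $L=\lv$ via Proposition \ref{prop-l-lv}. The only difference is that you spell out what the paper calls ``clearly continuous'' --- in particular the continuity of $(s,t)\mapsto m_{s,t}$ and the normalization of time orientation --- which are worthwhile details but not a different argument.
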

\begin{pr}
 Let $\sigma=(a=t_0<t_1<\ldots<t_N = b)$ be a partition of $[a,b]$, then the map $\Phi_\sigma$ defined on the space of 
causal curves $\lambda=(\alpha,\beta)\colon[a,b]\rightarrow Y$ given by
 \begin{equation}
  \Phi_\sigma(\lambda):=\sum_{i=0}^{N-1} \sqrt{(\alpha(t_{i+1})-\alpha(t_i))^2 - 
m_{\alpha(t_i),\alpha(t_{i+1})}^2 d(\beta(t_i),\beta(t_{i+1}))^2}
 \end{equation}
is clearly continuous with respect to pointwise convergence. Then $\lv(\lambda) = \inf_{\sigma} \Phi_\sigma(\lambda)$ and so 
$L=\lv$ (by Proposition \ref{prop-l-lv}) is upper semicontinuous as the infimum of continuous functions (cf., e.g., 
\cite[Lem.\ 2.41]{AB:06}).
\end{pr}

\begin{thm}(Limit curve theorem)\label{thm-lim-cur}
Let $(X,d)$ be a metric space and let $\gamma_n=(\alpha_n,\beta_n)$ ($n\in \N$), $\gamma =(\alpha,\beta)\colon[a,b]\rightarrow Y$  be 
absolutely continuous curves such that each 
$\gamma_n$ is future/past directed causal. Moreover, let $\dot\alpha\neq0$ almost everywhere and let 
$\gamma_n\to\gamma$ pointwise. Then $\gamma$ is causal.
\end{thm}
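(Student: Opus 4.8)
The statement to prove is the Limit Curve Theorem (Theorem 3.21 in the paper's numbering). We are given a pointwise limit $\gamma = (\alpha, \beta)$ of future/past directed causal curves $\gamma_n = (\alpha_n, \beta_n)$, with the extra hypothesis that $\dot\alpha \neq 0$ almost everywhere, and we must show $\gamma$ is causal — that is, $\dot\alpha^2 - (f\circ\alpha)^2 v_\beta^2 \geq 0$ almost everywhere. The natural plan is to avoid working with the (fragile) almost-everywhere derivative condition directly and instead route everything through the variational/integral reformulations already established: the characterization via $\lv$ (Proposition \ref{prop-l-lv}), the reverse triangle inequality for the quantity $T$ (Lemma \ref{lem-rev-tri-ine}), and Lemma \ref{lem-der-var-geq-0}. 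The crucial point is that causality of $\gamma$ is equivalent to the pointwise inequality $(\alpha(t)-\alpha(s))^2 - m_{\alpha(s),\alpha(t)}^2 d(\beta(s),\beta(t))^2 \geq 0$ holding for all $s \leq t$ (this is Lemma \ref{lem-der-var-geq-0} in one direction; the converse follows by letting the interval shrink and using that the derivatives recover the metric derivative and $\dot\alpha$ almost everywhere).

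\textbf{Main steps.} First I would establish that each $\gamma_n$, being causal, satisfies $(\alpha_n(t)-\alpha_n(s))^2 - m_{\alpha_n(s),\alpha_n(t)}^2\, d(\beta_n(s),\beta_n(t))^2 \geq 0$ for all $a \leq s \leq t \leq b$, which is exactly Lemma \ref{lem-der-var-geq-0}. Next I would pass to the pointwise limit: since $\gamma_n(s) \to \gamma(s)$ and $\gamma_n(t) \to \gamma(t)$, and since $d$ is continuous and $f$ is continuous so that $m_{\alpha_n(s),\alpha_n(t)} \to m_{\alpha(s),\alpha(t)}$ (using continuity of $r \mapsto \min_{[\cdot,\cdot]} f$ in the endpoints, which holds because $f$ is continuous), the inequality survives in the limit:
\begin{equation*}
(\alpha(t)-\alpha(s))^2 - m_{\alpha(s),\alpha(t)}^2\, d(\beta(s),\beta(t))^2 \geq 0 \qquad \text{for all } a \leq s \leq t \leq b.
\end{equation*}
At this stage the limit curve $\gamma$ satisfies the integrated reverse-triangle-type inequality on \emph{every} subinterval. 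The final step is to convert this back into the pointwise differential inequality defining causality: for a Lebesgue point $t$ at which $\dot\alpha(t)$ and $v_\beta(t)$ both exist, I would divide the displayed inequality by $(t-s)^2$ with $s \nearrow t$ (or symmetrically), so that $\frac{(\alpha(t)-\alpha(s))^2}{(t-s)^2} \to \dot\alpha(t)^2$, while $\frac{d(\beta(s),\beta(t))^2}{(t-s)^2}$ relates to $v_\beta(t)^2$ via the metric derivative, and $m_{\alpha(s),\alpha(t)} \to f(\alpha(t))$ by continuity of $f$. This yields $\dot\alpha(t)^2 - (f\circ\alpha)(t)^2 v_\beta(t)^2 \geq 0$ almost everywhere, which is precisely causality.

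\textbf{Where the obstacle lies.} Two technical points deserve care. First, one must confirm that $\gamma$ is genuinely absolutely continuous with strictly monotone $\alpha$ so that the derivatives in the causality definition make sense — here the hypothesis $\dot\alpha \neq 0$ almost everywhere is what guarantees $\alpha$ is strictly monotone, and absolute continuity of the limit is part of the standing assumption in the theorem statement. The genuinely delicate step is the conversion from the integrated inequality to the pointwise one: the inequality $d(\beta(s),\beta(t)) \leq L^d(\beta|_{[s,t]}) = \int_s^t v_\beta$ goes the ``wrong way'' for directly extracting a lower bound on $v_\beta(t)^2$ from an upper bound on $d(\beta(s),\beta(t))^2$. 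I expect the cleanest route is to use, at a common Lebesgue point $t$, that $\lim_{s \to t} \frac{d(\beta(s),\beta(t))}{|t-s|} = v_\beta(t)$ (the metric derivative is itself defined as this limit, which exists almost everywhere by \cite[Thm.\ 1.1.2]{AGS:05}), so the chord-length quotient converges to exactly $v_\beta(t)$ and no Jensen-type slack is lost in the limit $s \to t$. Taking $s \to t$ in the integrated inequality divided by $(t-s)^2$ then gives the desired pointwise bound directly, and this is the heart of the argument.
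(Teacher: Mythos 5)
Your proposal follows essentially the same route as the paper's proof: apply Lemma \ref{lem-der-var-geq-0} to each $\gamma_n$, pass the resulting inequality to the pointwise limit, then divide by $(t-s)^2$ and shrink the interval at a point where $\dot\alpha$ and $v_\beta$ both exist, using that the metric derivative is precisely the a.e.\ limit of the chord quotient $d(\beta(s),\beta(t))/|t-s|$. One small correction: the hypothesis $\dot\alpha\neq 0$ a.e.\ alone does \emph{not} guarantee that $\alpha$ is strictly monotone (consider $t\mapsto|t|$); as in the paper, one must first pass the (non-strict) monotonicity of the $\alpha_n$ to the limit to get $\dot\alpha\geq 0$ (resp.\ $\dot\alpha\leq 0$), and only then combine this with $\dot\alpha\neq 0$ a.e.\ to conclude that $\alpha$ is strictly monotone and that $\gamma$ is future (resp.\ past) directed.
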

\begin{pr}
 Let $a\leq s<t\leq b$ be such that $\dot\alpha(s)$ and $v_\beta(s)$ exist. For every $n\in\N$ we have by Lemma 
\ref{lem-der-var-geq-0}
 \begin{align}
  (\alpha_n(t)-\alpha_n(s))^2 - m_{\alpha_n(s),\alpha_n(t)}^2 d(\beta_n(s),\beta_n(t))^2 \geq 0.
 \end{align}
Taking the limit $n\to\infty$ yields
 \begin{align}
  (\alpha(t)-\alpha(s))^2 - m_{\alpha(s),\alpha(t)}^2 d(\beta(s),\beta(t))^2 \geq 0\,,
 \end{align}
so 
 \begin{align}
  \Bigl(\frac{\alpha(t)-\alpha(s)}{t-s}\Bigr)^2 - m_{\alpha(s),\alpha(t)}^2 \frac{d(\beta(s),\beta(t))^2}{(t-s)^2} \geq 
0\,.
 \end{align}
Now letting $t\searrow s$ we get
 \begin{align}
\dot\alpha(s)^2 - f(\alpha(s))^2 v_\beta(s)^2\geq 0\,.
 \end{align}
 Moreover, similarly one shows that $\dot\alpha\geq 0$ (if each $\gamma_n$ is future directed) or $\dot\alpha\leq 0$ (if each 
$\gamma_n$ is past directed), which yields $\dot\alpha>0$ or $\dot\alpha<0$ almost everywhere. Thus $\gamma$ is a future or 
past directed causal curve.
 \end{pr}
 
At this point we define a natural time separation function on $Y$, which directly generalizes the spacetime case. Some of 
the results could have been obtained in an even more general setting as they follow just from the existence of a causal 
structure and a length functional --- a fact that was already indicated in \cite[Rem.\ 5.11(i)]{KS:18}. For the interested 
reader we sketch this approach in Appendix \ref{app}, but it is not needed in the following.

\begin{defi}(Time separation function)\label{def-tau}
The time separation function (or Lorentzian distance) $\tau\colon Y\times Y \rightarrow [0,\infty]$ is defined as
\begin{equation}
 \tau(y,y'):=\sup\{L(\gamma):\gamma \text{ future directed causal curve from } y \text{ to } y'\}\,,
\end{equation}
if this set is non-empty, and $\tau(y,y'):= 0$ otherwise.
\end{defi}

\begin{defi}(Causal relations)
 Let $y, y'\in Y$, then $y$ and $y'$ are \emph{chronologically} related, denoted by $y\ll y'$, if there exists a future 
directed timelike curve from $y$ to $y'$. Moreover, $y$ and $y'$ are \emph{causally} related, denoted by $y\leq y'$ 
if there exists a future directed causal curve from $y$ to $y'$ or $y=y'$.

Moreover, we define the chronological and causal future and past of a point as
\begin{align}
&I^+(y):=\{y'\in Y: y\ll y'\}\,,\qquad  I^-(y):=\{y'\in Y: y'\ll y\}\,,\\
&J^+(y):=\{y'\in Y: y\leq y'\}\,,\qquad  J^-(y):=\{y'\in Y: y'\leq y\}\,.
\end{align}
\end{defi}

\begin{lem}\label{lem-cau-rel}
 The relations $\ll$ and $\leq$ are transitive, $\leq$ is reflexive and $\ll\,\subseteq\,\leq$.
\end{lem}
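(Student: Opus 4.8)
The plan is to establish each of the four properties of the relations $\ll$ and $\leq$ directly from their definitions in terms of future directed timelike and causal curves. The key fact I would exploit throughout is that causal (resp.\ timelike) curves can be concatenated: given a future directed causal curve from $y$ to $y'$ and another from $y'$ to $y''$, joining them at $y'$ should again yield a future directed causal curve, and similarly for timelike curves.

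First I would prove transitivity of $\ll$. Suppose $y\ll y'$ and $y'\ll y''$, so there are future directed timelike curves $\gamma_1$ from $y$ to $y'$ and $\gamma_2$ from $y'$ to $y''$. I would concatenate them into a single curve $\gamma$ from $y$ to $y''$. The point to check is that the concatenation is again an admissible future directed timelike curve in the sense of Definition \ref{def-cc}: after reparametrizing (using Corollary \ref{cor:reparametrization}) so that both pieces are defined on adjacent intervals and $\alpha$ is strictly increasing across the join, the curve $\gamma=(\alpha,\beta)$ is absolutely continuous, its first component $\alpha$ is strictly monotonically increasing, and the timelike inequality $-\dot\alpha^2+(f\circ\alpha)^2v_\beta^2<0$ holds almost everywhere simply because it holds a.e.\ on each piece. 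Transitivity of $\leq$ follows by the same concatenation argument, now using causal curves and allowing the degenerate cases where $y=y'$ or $y'=y''$, in which one simply keeps the single remaining curve. Reflexivity of $\leq$ is immediate from the definition, since the clause ``or $y=y'$'' directly gives $y\leq y$.

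Finally, the inclusion $\ll\,\subseteq\,\leq$ holds because every future directed timelike curve is in particular a future directed causal curve: the strict inequality $-\dot\alpha^2+(f\circ\alpha)^2v_\beta^2<0$ a.e.\ implies the non-strict inequality $\leq 0$ a.e., so a witness to $y\ll y'$ is automatically a witness to $y\leq y'$.

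I expect the main obstacle to be the careful verification that the concatenation of two causal (timelike) curves is again an admissible causal (timelike) curve, since the definition requires absolute continuity of the joined curve and strict monotonicity of the reparametrized first component $\alpha$ across the junction point. The a.e.\ causality/timelike condition transfers without difficulty once the curves are placed on adjacent parameter intervals, but one must reparametrize the two pieces (invoking Corollary \ref{cor:reparametrization} to preserve causal character and time orientation) so that the images of $\alpha$ match up monotonically at the join and the resulting curve is genuinely absolutely continuous and strictly monotone in its first component; aside from this bookkeeping the argument is routine.
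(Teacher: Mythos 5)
Your proof is correct and takes essentially the same route as the paper: transitivity via concatenation of timelike (resp.\ causal) curves, reflexivity of $\leq$ directly from the definition, and $\ll\,\subseteq\,\leq$ because the strict inequality defining timelike curves implies the non-strict one defining causal curves. The paper's proof is a one-line version of exactly this argument; your additional bookkeeping about admissibility of the concatenation (absolute continuity, strict monotonicity of $\alpha$ across the join, the a.e.\ condition surviving the measure-zero junction) merely fills in details the paper leaves implicit.
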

\begin{pr}
Transitivity follows by concatenating curves. 
Reflexivity of the causal relation $\le$ as well as the fact that every timelike curve
is causal hold by definition. Thus $\ll\,\subseteq\,\leq$.
\end{pr}

This can be summarized as:
\begin{rem}\label{rem-tau-pro}
 The time separation function $\tau$ has the following properties:
 \begin{enumerate}
  \item[(i)] $\tau(y,y')=0$ if $y'\not\leq y$ and
  \item[(ii)] $\tau(y,y')>0$ if $y\ll y'$.
 \end{enumerate}
\end{rem}

\begin{lem}(Reverse triangle inequality)\label{lem-rev-tri}
 Let $y_1,y_2,y_3\in Y$ with $y_1\leq y_2\leq y_3$, then
 \begin{equation}
  \tau(y_1,y_2) + \tau(y_2,y_3)\leq \tau(y_1,y_3)\,.
 \end{equation}
\end{lem}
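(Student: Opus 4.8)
The plan is to prove the reverse triangle inequality $\tau(y_1,y_2)+\tau(y_2,y_3)\leq\tau(y_1,y_3)$ by concatenating near-maximizing causal curves for the first two segments into a single causal curve from $y_1$ to $y_3$, and then invoking the additivity of the length functional $L$ together with the definition of $\tau$ as a supremum. The only nontrivial inputs are: (a) that concatenation of two future directed causal curves is again future directed causal, which follows from transitivity of $\le$ (Lemma \ref{lem-cau-rel}) together with Definition \ref{def-cc} applied piecewise; and (b) that $L$ is additive under concatenation, which is immediate from Definition \ref{def-len} since the defining integral splits over subintervals (this is the same additivity already used for $\lv$ via Lemma \ref{lem-rev-tri-ine}).

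First I would dispose of the degenerate cases. If either $y_1 \not\le y_2$ or $y_2\not\le y_3$, then by the definition of $\le$ (via existence of a causal curve) combined with transitivity, one checks that the relevant $\tau$-values behave consistently; but since the hypothesis is $y_1\le y_2\le y_3$, both segments admit future directed causal curves unless a segment is a trivial $y_i=y_{i+1}$, in which case $\tau$ on that segment is $0$ and the inequality is trivial. If $\tau(y_1,y_3)=\infty$ there is nothing to prove, so I may assume it is finite, and likewise I may assume both $\tau(y_1,y_2)$ and $\tau(y_2,y_3)$ are finite (otherwise I would argue that the concatenation produces curves of arbitrarily large length from $y_1$ to $y_3$, forcing $\tau(y_1,y_3)=\infty$).

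Next, the main argument: fix $\eps>0$ and choose future directed causal curves $\gamma_1$ from $y_1$ to $y_2$ and $\gamma_2$ from $y_2$ to $y_3$ with
\begin{equation}
L(\gamma_1) > \tau(y_1,y_2)-\tfrac{\eps}{2}, \qquad L(\gamma_2) > \tau(y_2,y_3)-\tfrac{\eps}{2}.
\end{equation}
After reparametrizing (using Corollary \ref{cor:reparametrization}) so that $\gamma_1$ is defined on $[a,c]$ and $\gamma_2$ on $[c,b]$ with matching endpoint $\gamma_1(c)=\gamma_2(c)=y_2$, I form the concatenation $\gamma:=\gamma_1*\gamma_2$ on $[a,b]$. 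The component $\alpha$ of $\gamma$ is strictly increasing on each piece and continuous at $c$, hence strictly increasing on $[a,b]$; the causal inequality $-\dot\alpha^2+(f\circ\alpha)^2 v_\beta^2\le 0$ holds almost everywhere on each piece and therefore almost everywhere on $[a,b]$, so $\gamma$ is future directed causal. Then additivity of $L$ gives $L(\gamma)=L(\gamma_1)+L(\gamma_2) > \tau(y_1,y_2)+\tau(y_2,y_3)-\eps$, and since $\gamma$ is a future directed causal curve from $y_1$ to $y_3$, the definition of $\tau$ as a supremum yields $\tau(y_1,y_3)\ge L(\gamma) > \tau(y_1,y_2)+\tau(y_2,y_3)-\eps$. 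Letting $\eps\searrow 0$ completes the proof.

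I expect the main technical point to be the legitimacy of the concatenation as an \emph{absolutely continuous} future directed causal curve in the sense of Definition \ref{def-cc}: one must confirm that the concatenated curve is absolutely continuous across the join (it is, since each piece is and they agree at $c$), that the strict monotonicity requirement on $\alpha$ survives the gluing, and that the almost-everywhere causality condition is unaffected by the single junction point (a null set). None of these is deep, but they are the steps where a careless concatenation could fail to land in the correct class of curves. Everything else is a routine $\eps$-argument.
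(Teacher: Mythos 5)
Your proposal is correct and follows essentially the same argument as the paper: concatenate $\eps$-near-maximizing causal curves for the two segments, use additivity of $L$ and the definition of $\tau$ as a supremum, and handle the degenerate case $y_i=y_{i+1}$ separately. The extra verification that the concatenation is a genuine future directed causal curve in the sense of Definition \ref{def-cc} (absolute continuity across the join, monotonicity of $\alpha$, a.e.\ causality) is left implicit in the paper but is exactly the right point to check, so your write-up is, if anything, slightly more careful.
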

\begin{pr}
This follows from the standard proof from Lorentzian geometry: Let $y_1,y_2,y_3\in Y$ with $y_1\leq y_2\leq y_3$ and assume first that there are future directed causal curves from $y_1$ 
to $y_2$ and from $y_2$ to $y_3$. Then, given $\eps>0$ we can find future directed causal curves $\gamma_1$ from $y_1$ to 
$y_2$ and $\gamma_2$ from $y_2$ to $y_3$ such that $L(\gamma_i) > \tau(y_i,y_{i+1}) - \frac{\eps}{2}$ for $i=1,2$. 
Consequently,
\begin{equation}
 \tau(y_1,y_2) + \tau(y_2,y_3) < L(\gamma_1) + L(\gamma_2) + \eps \leq \tau(y_1,y_3) + \eps\,,
\end{equation}
as the concatenation of $\gamma_1$ and $\gamma_2$ is a future directed causal curve from $y_1$ to $y_3$. Since
$\eps>0$ was arbitrary the claim follows. In the remaining case where there are no future directed causal curves, say from 
$y_1$ to $y_2$, we have $\tau(y_1,y_2)=0$ and $y_1=y_2$, which implies the claim.
\end{pr}

Spacetimes of low regularity (below Lipschitz) can exhibit the unwanted phenomenon of \emph{causal bubbling}, as shown in 
\cite{CG:12}(cf.\ also \cite{GKSS:20}) for spacetimes with continuous metrics. However, the additional structure of a \wpd\ 
excludes such pathologies.

For the formulation of the following result, we recall some terminology from \cite{CG:12}: $Y$
is said to possess the push-up property if the following holds: Whenever $\gamma \colon [a,b]\to M$
is a future/past directed causal curve from $p=\gamma(a)$ to $q=\gamma(b)$ with $L(\gamma)>0$, 
there exists a future/past directed timelike curve connecting $p$ and $q$.

\begin{prop}\label{prop-cau-pla}(Push-up and openness of $I^\pm$)
Every \wpd\ $Y=I\times_f X$ such that $(X,d)$ is a length space has the property that $p\ll q$ if and only if there 
exists a future directed causal curve from $p$ to $q$ of positive length, i.e., push-up holds. 
Moreover, $I^\pm(p)$ is open for any  $p\in Y$. 
\end{prop}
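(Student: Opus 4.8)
The plan is to prove a sharp description of the chronological relation and then read off both assertions from it. Concretely, I would show that for $p=(t_p,x_p)$ and $q=(t_q,x_q)$ one has $p\ll q$ if and only if
\[
t_p<t_q\quad\text{and}\quad d(x_p,x_q)<\int_{t_p}^{t_q}\frac{dr}{f(r)}.
\]
The forward implication of push-up is then immediate: if $p\ll q$ there is a future directed timelike curve, which by Definition~\ref{def-cc} is in particular causal, and by Definition~\ref{def-len} its length is the integral of $\sqrt{\dot\alpha^2-(f\circ\alpha)^2v_\beta^2}$, an a.e.\ strictly positive integrand, so $L>0$. For the substantive converse I would normalize the parametrization: by Lemma~\ref{lem-ac-lip} any future directed causal curve from $p$ to $q$ can be written as $\gamma(t)=(t,\beta(t))$ on $[t_p,t_q]$, and by Corollary~\ref{cor:reparametrization} this preserves both causal character and length.

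The key step is to convert positivity of length into a \emph{strict} metric inequality in the fiber. In the normalized parametrization the causal condition reads $f(t)^2 v_\beta(t)^2\le 1$, i.e.\ $v_\beta\le 1/f$ a.e. Since $L(\gamma)=\int_{t_p}^{t_q}\sqrt{1-f^2v_\beta^2}>0$, the integrand is positive on a set of positive measure, so $v_\beta<1/f$ strictly there. Hence $\int_{t_p}^{t_q} v_\beta\,dt<\int_{t_p}^{t_q}1/f\,dt$, and combining with $d(x_p,x_q)\le L^d(\beta)=\int_{t_p}^{t_q}v_\beta\,dt$ yields the desired strict inequality $d(x_p,x_q)<\int_{t_p}^{t_q}1/f\,dt=:B$.

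Conversely, given $t_p<t_q$ and $d(x_p,x_q)<B$, I would build an explicit timelike curve. Since $(X,d)$ is a length space and $d(x_p,x_q)<B$, there is a rectifiable curve $\sigma$ from $x_p$ to $x_q$ of length $\ell_0<B$; parametrize it by arclength, $\sigma\colon[0,\ell_0]\to X$, so that $v_\sigma\equiv 1$ a.e. Put $\lambda:=\ell_0/B<1$ and $\phi(t):=\tfrac{\ell_0}{B}\int_{t_p}^{t}1/f(r)\,dr$, which is $C^1$, strictly increasing with $\phi(t_p)=0$, $\phi(t_q)=\ell_0$, and $\phi'=\lambda/f$ bounded away from $0$, so $\phi^{-1}$ is Lipschitz; thus both $\phi$ and $\phi^{-1}$ are absolutely continuous. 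Then $\tilde\gamma(t):=(t,\sigma(\phi(t)))$ is absolutely continuous, future directed, and by Lemma~\ref{lem:reparametrization} its fiber speed is $v_{\sigma\circ\phi}=(v_\sigma\circ\phi)\,|\phi'|=\lambda/f$, whence $-1+f^2 v_{\sigma\circ\phi}^2=-1+\lambda^2<0$ everywhere. So $\tilde\gamma$ is timelike from $p$ to $q$ and $p\ll q$, completing both the characterization and push-up.

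Finally, openness of $I^\pm(p)$ follows formally from the characterization. The function $q=(t_q,x_q)\mapsto h(q):=\int_{t_p}^{t_q}1/f(r)\,dr-d(x_p,x_q)$ is continuous (the integral is continuous in $t_q$ as $1/f$ is continuous, and $d(x_p,\cdot)$ is continuous), and since $d\ge 0$ the condition $h(q)>0$ already forces $t_q>t_p$; thus $I^+(p)=\{q:h(q)>0\}=h^{-1}((0,\infty))$ is open, and the symmetric argument handles $I^-(p)$. I expect the main obstacle to be the timelike construction: the crux is recognizing that $L(\gamma)>0$ does not merely give $d(x_p,x_q)\le B$ but the strict inequality $d(x_p,x_q)<B$, which provides exactly the slack needed to produce a uniformly timelike lift with the constant scaling factor $\lambda<1$; the length-space hypothesis on $X$ is essential here to supply a connecting fiber curve of length below $B$.
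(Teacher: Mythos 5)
Your proposal is correct and follows essentially the same route as the paper's own proof: the paper likewise establishes the explicit description of $I^+((p_0,\bar p))$ --- phrased via the function $h_{p_0}$, the inverse of $t\mapsto\int_{p_0}^{t}\frac{dr}{f(r)}$, rather than via the integral itself --- constructs timelike curves by uniformly rescaling an almost-minimizing fiber curve against the null bound, and obtains the converse from exactly your observation that $L(\gamma)>0$ forces $v_\beta<1/f$ on a set of positive measure, so that $d(x_p,x_q)\le\int v_\beta<\int 1/f$. The only differences are bookkeeping: working with $\int 1/f$ directly lets you avoid the paper's two-case split (according to whether $d(\bar p,\bar q)<b_{p_0}$ or not), while on your side you should record the degenerate case $x_p=x_q$, where $\ell_0=0$, $\lambda=0$ and $\phi$ is not a valid reparametrization in the sense of Lemma \ref{lem:reparametrization}; it is handled trivially by the curve $t\mapsto(t,x_p)$, which is timelike since its fiber speed vanishes.
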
 
\begin{pr} For each $p_0 \in I\equiv (a,b)$ we define the function $h_{p_0}\colon(a_{p_0},b_{p_0})\rightarrow (a,b) $ as the 
unique maximal solution of the ODE $\frac{d}{ds}h_{p_0}= f \circ h_{p_0}$ with $h_{p_0}(0)=p_0 $ on $I$. Here $a_{p_0} 
=\int_{p_0}^a \frac{1}{f(s)} ds$ and 
$b_{p_0} =\int_{p_0}^b \frac{1}{f(s)} ds$, and $h_{p_0}$ is the inverse of $r\mapsto \int_{p_0}^r\frac{1}{f(s)}ds$. The function $h_{p_0}$ is strictly increasing, bijective and $C^1$. We 
are going to show that
\begin{equation} \label{I+}
I^+((p_0,\bar{p}))=\{(q_0,\bar{q})\in Y :\, d(\bar{p},\bar{q})< b_{p_0}\,\text{and}\,q_0> h_{p_0}(d(\bar{p},\bar{q}))\},
\end{equation}
which is clearly open. In proving this we will also see that $q\in I^+(p)$ if there exists a causal curve $\gamma $ from $p$ 
to $q$ with $L(\gamma )>0$ (i.e., push-up holds).

We first show that $A(p):=\{(q_0,\bar{q})\in Y :\,  d(\bar{p},\bar{q})< b_{p_0}\,\text{and}\, q_0> 
h_{p_0}(d(\bar{p},\bar{q}))\} \subseteq I^+(p)$. Let $q\in A(p)$ and pick  
an almost minimizing unit-speed curve $\beta\colon[0,d(\bar{p},\bar{q})+\eps]\rightarrow X $ 
from $\bar{p}$ to $\bar{q}$ in $X$, as well as $c>0$ such that $d(\bar{p},\bar{q})+\eps+c<b_{p_0}$ and 
$q_0=h_{p_0}(d(\bar{p},\bar{q})+\eps+c)$. We define $\alpha(s):=h_{p_0}(s+\frac{c}{d(\bar{p},\bar{q})+\eps}s)$. Then $\gamma =(\alpha 
,\beta )$ is a future directed timelike curve from $p$ to $q$, since 
\begin{align}
\dot{\alpha}(s)&=\Big(1+\frac{c}{d(\bar{p},\bar{q})+\eps}\Big)\dot{h}_{p_0}\Big(s+\frac{c}{d(\bar{p},\bar{q})+\eps}s\Big)\\
&>
\dot{h}_{p_0}\Big(s+\frac{c}{d(\bar{p},\bar{q})+\eps}s\Big)
=f\Big(h_{p_0}\Big(s+\frac{c}{d(\bar{p},\bar{q})+\eps}s\Big)\Big)=f(\alpha(s)).
\end{align}

Now we show that if $q\notin A(p)$, then there cannot exist any future directed causal curve $\gamma$ from $p$ to $q$ with 
$L(\gamma )>0$.  Assume to the contrary that such a curve exists and is parametrized so that 
$\gamma\colon[p_0,q_0]\rightarrow Y$ and $\gamma(s)=(s,\bar{\gamma}(s))$. We start with the case where 
$d(\bar{p},\bar{q})<b_{p_0}$ but $q_0\leq h_{p_0}(d(\bar{p},\bar{q}))$. 
Let $\beta_\eps:[0,d(\bar{p},\bar{q})+\eps]\to X$ ($\eps>0$) be an almost minimizing 
unit-speed curve  in $X$ from $\bar{p}$ to $\bar{q}$ and set $\tilde{\beta}_\eps:=\beta_\eps \circ 
h_{p_0}^{-1}\rvert_{[p_0,h_{p_0}(d(\bar{p},\bar{q})+\eps)]}$ and $n_\eps(s):=(s,\tilde{\beta}_\eps(s))$. Then $n_\eps \colon 
[p_0,h_{p_0}(d(\bar{p},\bar{q})+\eps)]\rightarrow Y$ is a null curve, or equivalently, $v_{\tilde{\beta}_\eps}(s)=\frac{1}{f(s)}$. Thus, we have 
\begin{align}
L^d(\tilde{\beta}_\eps )=\int_{p_0}^{h_{p_0}(d(\bar{p},\bar{q})+\eps)} v_{\tilde{\beta}_\eps} =\int_{p_0}^{h_{p_0}(d(\bar{p},\bar{q})+\eps)} \frac{1}{f}.
\end{align}
So letting $\eps \to 0$ gives $d(\bar{p},\bar{q})=\int_{p_0}^{h_{p_0}(d(\bar{p},\bar{q}))} \frac{1}{f} \geq \int_{p_0}^{q_0} \frac{1}{f}$.
Since  $\gamma$ is causal, we have $v_{\bar{\gamma}}\leq \frac{1}{f}$ and furthermore since $L(\gamma)>0$ it must be strictly less than $\frac{1}{f}$ on some subset of $[p_0,q_0]$ having 
non-zero measure. So, 
\begin{align}
d(\bar{p},\bar{q})=\int_{p_0}^{h_{p_0}(d(\bar{p},\bar{q}))} \frac{1}{f} \geq \int_{p_0}^{q_0} \frac{1}{f}>  \int_{p_0}^{q_0} v_{\bar{\gamma}} =L^d(\bar{\gamma}),
\end{align}
a contradiction.

Finally, we treat the case where $d(\bar{p},\bar{q})\geq b_{p_0}$. We again assume that $\gamma\colon[p_0,q_0]\rightarrow Y$ with parametrization $\gamma(s)=(s,\bar{\gamma}(s))$ is a future directed causal curve from $p$ to $q$. 
Since $q_0<b$ and $h_{p_0}(s)\to b$ as $s\nearrow b_{p_0}$ we can choose $\eps >0$ such that $q_0<h_{p_0}(b_{p_0}-\eps )$. Let further 
$x:=\gamma(x_0)=(x_0,\bx)$ be the point on $\gamma $ such that $b_{p_0}-\eps = d(\bar{p},\bar{x})$. So $\gamma|_{[0,x_0]}$ is 
a causal curve from $p$ to $(x_0,\bx)$ with $d(\bar{p},\bar{x})<b_{p_0}$ and $x_0<q_0<h_{p_0}(b_{p_0}-\eps )= 
h_{p_0}(d(\bar{p},\bar{x}))$. Hence, as above, 
\begin{align}
d(\bar{p},\bar{x})=\int_{p_0}^{h_{p_0}(d(\bar{p},\bar{x}))} \frac{1}{f} > \int_{p_0}^{q_0} \frac{1}{f}\geq  \int_{p_0}^{q_0} v_{\bar{\gamma}} =L^d(\bar{\gamma})
\end{align}
 leads to a contradiction. 
\end{pr}

\begin{rem}\label{rem-cau-pla}
The preceding result can be understood as establishing that generalized cones are \emph{causally plain}, i.e., there is no causal bubbling. This notion of causal plainness is however not the same as the one in \cite[Def.\ 1.16]{CG:12} for spacetimes with continuous metrics. The reason is that we cannot speak about approximating smooth metrics, and hence have no notion of timelike curves 
for approximating metrics which have their lightcones inside those of the original Lorentzian metric. However, as shown in \cite{GKSS:20} our notion 
of causal plainness (i.e., the condition that push-up holds) is equivalent to the absence of \emph{external bubbling}, cf.\ \cite[Thm.\ 2.12]{GKSS:20}. Furthermore, if $X$ is a Riemannian manifold with continuous metric $h$ (i.e., if $Y$ is a 
Lorentzian manifold with continuous metric $g=-\dd t^2 +f(t)^2 h$) it can be seen from the description \eqref{I+} of $I^+$ that $I^+=\check{I}^+$ and locally $\partial I^+=\partial J^+$, so that in this case $Y$ is indeed causally plain as defined in 
\cite[Def.\ 1.16]{CG:12}. Moreover, the preceding result also sheds some light on the causality of the so-called \emph{Colombini-Spagnolo metrics} (cf.\ \cite[Sec.\ 2.1]{CG:12}, \cite{CS:89}), i.e., metrics on $\R\times S^1$ of the form $-\dd t^2 + f(t,x)\dd x^2$, where $f(t,x)=\frac{F(t)}{F(x)}$, for a specific continuous positive function $F$.
\end{rem}

\begin{cor} \label{cor-J+-closed} Let $(X,d)$ be a length space. The following description, analogous to \eqref{I+}, holds for $J^+$:
\begin{equation} \label{J+}
\begin{split}
 J^+((p_0,\bar{p}))&=I^+((p_0,\bar{p}))\, \cup \, \{(q_0,\bar{q})\in Y :\,\exists \, \text{a minimizing curve in} \\ 
 & \hspace*{2em} X\, \text{from}\ \bar{p}\,\, \text{to}\,\, \bar{q}\ \text{and}\ d(\bar{p},\bar{q})< b_{p_0}\,\text{and}\ q_0= h_{p_0}(d(\bar{p},\bar{q}))\}.
\end{split}
\end{equation}
Further, if $X$ 
is geodesic, then $J^\pm (p)$ is closed.
\end{cor}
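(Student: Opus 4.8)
The plan is to establish the set equality \eqref{J+} by double inclusion and then read off closedness of $J^\pm(p)$ in the geodesic case. For the inclusion $\supseteq$ I would argue in two pieces. First, $I^+(p)\subseteq J^+(p)$ is immediate from $\ll\,\subseteq\,\le$ (Lemma \ref{lem-cau-rel}). Second, for a point $(q_0,\bar q)$ admitting a minimizing curve $\beta$ in $X$ from $\bar p$ to $\bar q$ with $d(\bar p,\bar q)<b_{p_0}$ and $q_0=h_{p_0}(d(\bar p,\bar q))$, I would reparametrize: taking $\beta$ at unit speed on $[0,d(\bar p,\bar q)]$ and setting $\tilde\beta:=\beta\circ h_{p_0}^{-1}$, Lemma \ref{lem:reparametrization} gives $v_{\tilde\beta}(s)=1/f(s)$, so $s\mapsto(s,\tilde\beta(s))$ is a future directed null curve from $(p_0,\bar p)$ to $(q_0,\bar q)$, exactly as in the proof of Proposition \ref{prop-cau-pla}. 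This witnesses $(q_0,\bar q)\in J^+(p)$.

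For the inclusion $\subseteq$, I would take $q=(q_0,\bar q)\in J^+(p)$; the case $q=p$ lands in the second set on the right of \eqref{J+} via the constant curve. Otherwise fix a future directed causal curve $\gamma$ from $p$ to $q$ and split on its length. If $L(\gamma)>0$, push-up (Proposition \ref{prop-cau-pla}) places $q\in I^+(p)$. If $L(\gamma)=0$, I would reparametrize by Lemma \ref{lem-ac-lip} as $\gamma(s)=(s,\bar\gamma(s))$ on $[p_0,q_0]$; vanishing of $\int_{p_0}^{q_0}\sqrt{1-f^2v_{\bar\gamma}^2}$ forces $v_{\bar\gamma}=1/f$ a.e., whence $L^d(\bar\gamma)=\int_{p_0}^{q_0}1/f$ and $q_0=h_{p_0}(L^d(\bar\gamma))$, with $L^d(\bar\gamma)<b_{p_0}$ because $q_0<b$. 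Since $L^d(\bar\gamma)\ge d(\bar p,\bar q)$ and $h_{p_0}$ is increasing, there are two possibilities: if $L^d(\bar\gamma)>d(\bar p,\bar q)$ then $q_0>h_{p_0}(d(\bar p,\bar q))$ and $q\in I^+(p)$ by \eqref{I+}; if $L^d(\bar\gamma)=d(\bar p,\bar q)$ then $\bar\gamma$ is minimizing and $q_0=h_{p_0}(d(\bar p,\bar q))$, so $q$ lies in the second set. This completes \eqref{J+}.

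For closedness, I would use that geodesic $X$ makes minimizing curves always available, so \eqref{J+} simplifies to
\[
J^+(p)=\{(q_0,\bar q)\in Y:\ d(\bar p,\bar q)<b_{p_0}\ \text{and}\ q_0\ge h_{p_0}(d(\bar p,\bar q))\}.
\]
Given $(q_0^n,\bar q^n)\in J^+(p)$ converging to $(q_0,\bar q)\in Y$, I would pass to the limit in $d(\bar p,\bar q^n)\to d(\bar p,\bar q)\le b_{p_0}$ and in $q_0^n\ge h_{p_0}(d(\bar p,\bar q^n))$. The hard (and only genuinely delicate) point will be excluding the borderline radius $d(\bar p,\bar q)=b_{p_0}$: here I would invoke that $h_{p_0}$ is continuous, increasing, and satisfies $h_{p_0}(s)\to b$ as $s\nearrow b_{p_0}$, so the inequality would push $q_0=\lim q_0^n\ge b$, contradicting $q_0\in(a,b)$. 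Thus $d(\bar p,\bar q)<b_{p_0}$, and continuity of $h_{p_0}$ then yields $q_0\ge h_{p_0}(d(\bar p,\bar q))$, i.e.\ $(q_0,\bar q)\in J^+(p)$. The statement for $J^-(p)$ follows by the same argument after reversing the time orientation. Beyond this boundary-radius subtlety, the main conceptual obstacle is the case distinction for null curves --- recognizing that a null curve whose spatial projection is strictly longer than $d(\bar p,\bar q)$ already lands inside $I^+(p)$, so that the genuine ``$J^+$-minus-$I^+$'' points are exactly those reached by null curves over minimizing spatial geodesics.
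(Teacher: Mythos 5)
Your proof is correct, and its skeleton --- double inclusion, a null-curve lift over a minimizing geodesic for $\supseteq$, push-up plus an analysis of null curves for $\subseteq$, and a limit argument excluding the boundary radius $d(\bar{p},\bar{q})=b_{p_0}$ for closedness --- matches the paper's. The one place where you genuinely diverge is the treatment of $J^+(p)\setminus I^+(p)$ in the inclusion $\subseteq$. The paper parametrizes the causal curve by fiber arclength, identifies $\alpha=h_{p_0}$ from the null condition via the ODE $\dot\alpha=f\circ\alpha$, and then argues by contradiction: if $\beta$ were not minimizing, a strictly shorter connecting curve reparametrized on the same interval would have speed $<1$ and produce a timelike curve from $p$ to $q$, contradicting $q\notin I^+(p)$; this uses only the push-up property (plus the length-space axiom to produce the shorter curve). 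You instead parametrize over the time coordinate, compute $L^d(\bar\gamma)=\int_{p_0}^{q_0}\tfrac{1}{f}=h_{p_0}^{-1}(q_0)$, and dispose of the non-minimizing case by feeding $L^d(\bar\gamma)>d(\bar{p},\bar{q})$ directly into the explicit description \eqref{I+}, so that such a $q$ is classified as lying in $I^+(p)$ without constructing any auxiliary curve. This avoids both the ODE identification and the contradiction step, at the cost of leaning on the full formula \eqref{I+} rather than on push-up alone; both routes rest on Proposition \ref{prop-cau-pla}, and yours is arguably the more economical assembly. In the closedness part you are in fact slightly more careful than the paper: since a point of $J^+(p)$ may lie in $I^+(p)$, one only has $q_0^n\ge h_{p_0}(d(\bar{p},\bar{q}^n))$ (the paper writes equality at the corresponding step), and your unified description $J^+(p)=\{(q_0,\bar{q}):d(\bar{p},\bar{q})<b_{p_0},\ q_0\ge h_{p_0}(d(\bar{p},\bar{q}))\}$ for geodesic $X$ is the clean way to run the limit; you also handle the degenerate point $q=p$ explicitly, which the paper glosses over.
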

\begin{proof}
That $J^\pm(p)$ is closed if $X$ is geodesic follows from \eqref{J+}: Let $q_k=(q_{0k},\bar q_k)$ be elements
of the right hand side of \eqref{J+} that converge to $q=(q_0,\bar q)$. To see that also $q$ is an element of this
set it suffices to exclude the case where $d(\bar p,\bar q_k)\to b_{p_0} = d(\bar p,\bar q)$. However,
in this case we would have $q_{0k} = h_{p_0}(d(\bar p,\bar q_k))\to b$, resulting in $q_0=b$, a contradiction to
$q_0\in I = (a,b)$. 

It remains to show \eqref{J+}.
First, let $q=(q_0,\bar q)$ be an element of the right hand side of \eqref{J+}, let $\beta:[0,d(\bar p,\bar q)]\to X$ be a
minimizing unit speed curve and set $\gamma: [0,d(\bar p,\bar q)] \to Y$, $\gamma(s):=(h_{p_0}(s),\beta(s))$. Then 
$\gamma$ is a null curve connecting  $p$ and $q$, so $q\in J^+(p)$.

Conversely, we have to show that for any $(q_0,\bar{q})\in J^+(p)\setminus I^+(p)$ 
there must exist a minimizing curve in $X$ from $\bar{p}$ to $\bar{q}$ with 
$d(\bar{p},\bar{q})<b_{p_0}$ and $q_0=h_{p_0}(d(\bar{p},\bar{q}))$. 
Let $\gamma =(\alpha, 
\beta)$ be a causal curve from $p$ to $q$ with $\beta:[0,d]\to X$ parametrized by arc-length. 
By Proposition \ref{prop-cau-pla}, since $q\notin I^+(p)$, we must have $L(\gamma)=0$, i.e., $
\dot{\alpha}^2=f^2v_\beta^2=f^2$ a.e., so $\dot \alpha = f\circ \alpha$. Since $\alpha(0)=p_0$
it follows that $\alpha = h_{p_0}$.  If $\beta$ were not minimizing, i.e., if $d(\bar p,\bar q)<d$,
there would exist a curve $\bar{\beta}:[0,d]\to X$ from $\bar{p}$ to $\bar{q}$, parametrized proportional to 
arclength, which is strictly shorter than
$\beta$, hence satisfies $v_{\bar{\beta}} < 1=v_\beta $ a.e. Then 
$\bar{\gamma}:=(\alpha, \bar{\beta})$ is a timelike curve from $p$ to $q$, contradicting the fact that 
$q\notin I^+(p)$. Consequently, $\beta $ must be minimizing. Thus $d=d(\bar p,\bar q)$, so that 
$q_0=\alpha(d(\bar{p},\bar{q}))=h_{p_0}(d(\bar{p},\bar{q}))$.
\end{proof}

\begin{lem}\label{lem-tau-lsc}
  Let $Y=I\times_f X$ be a \wpd, where $(X,d)$ is a  
  length space. Then the time separation function $\tau$ is 
lower semi-continuous (with respect to $D$).
\end{lem}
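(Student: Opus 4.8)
The plan is to verify the sequential form of lower semicontinuity: given $y_n\to y$ and $y_n'\to y'$ in $(Y,D)$, I must show $\liminf_n \tau(y_n,y_n')\ge \tau(y,y')$. Since $\tau\ge 0$, the inequality is trivial when $\tau(y,y')=0$ (in particular when no future directed causal curve joins $y$ to $y'$). For the remaining case it suffices to prove the following: for \emph{every} future directed causal curve $\gamma$ from $y$ to $y'$ with $L(\gamma)>0$ and every $\eps\in(0,L(\gamma))$, one has $\liminf_n \tau(y_n,y_n')\ge L(\gamma)-\eps$. Letting $\eps\searrow 0$ and taking the supremum over all such $\gamma$ then gives $\liminf_n \tau(y_n,y_n')\ge \tau(y,y')$ directly from Definition \ref{def-tau}, and this formulation also covers the possibility $\tau(y,y')=\infty$.

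The central idea is to trim $\gamma$ away from its two endpoints and then reattach the perturbed endpoints $y_n,y_n'$ using openness of the chronological relation. Concretely, parametrise $\gamma$ on $[a,b]$ and recall from Remark \ref{rem-len-int-ac} that $\ell(t):=L(\gamma\rvert_{[a,t]})$ is continuous and nondecreasing with $\ell(a)=0$ and $\ell(b)=L(\gamma)$. By the intermediate value theorem I would pick parameters $a<t_0<t_1<b$ with $\ell(t_0)>0$, with $\ell(b)-\ell(t_1)>0$, and with $\ell(t_1)-\ell(t_0)>L(\gamma)-\eps$. Additivity of $L$ then yields $L(\gamma\rvert_{[a,t_0]})=\ell(t_0)>0$ and $L(\gamma\rvert_{[t_1,b]})=\ell(b)-\ell(t_1)>0$, so push-up (Proposition \ref{prop-cau-pla}) upgrades the endpoint relations to the \emph{strict} relations $y\ll\gamma(t_0)$ and $\gamma(t_1)\ll y'$, while the middle piece satisfies $L(\gamma\rvert_{[t_0,t_1]})>L(\gamma)-\eps$.

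Next I would invoke openness of $I^\pm$ from Proposition \ref{prop-cau-pla}. From $y\ll\gamma(t_0)$ we have $y\in I^-(\gamma(t_0))$, an open set, so $y_n\in I^-(\gamma(t_0))$ for all large $n$, i.e.\ $y_n\ll\gamma(t_0)$; symmetrically $\gamma(t_1)\ll y'$ gives $y'\in I^+(\gamma(t_1))$, so $y_n'\in I^+(\gamma(t_1))$ for large $n$, i.e.\ $\gamma(t_1)\ll y_n'$. Hence for all large $n$ there are future directed (timelike) causal curves from $y_n$ to $\gamma(t_0)$ and from $\gamma(t_1)$ to $y_n'$, and their concatenation with $\gamma\rvert_{[t_0,t_1]}$ is again a future directed causal curve from $y_n$ to $y_n'$ (concatenation preserves future directed causality, as used in Lemma \ref{lem-cau-rel}). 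Since $L$ is additive and non-negative, this curve has length at least $L(\gamma\rvert_{[t_0,t_1]})>L(\gamma)-\eps$, so $\tau(y_n,y_n')>L(\gamma)-\eps$ for all large $n$, which is the required bound on the $\liminf$.

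The step demanding the most care is the trimming: one must ensure the interior points can be chosen so that $y\ll\gamma(t_0)$ and $\gamma(t_1)\ll y'$ (strictly), since an arbitrary near-maximising causal curve could a priori be null on an initial or terminal segment, leaving only a causal relation there. This is precisely what the continuity of $\ell$ together with push-up resolves—positivity of the length of the initial (respectively terminal) sub-curve is exactly what converts the causal relation into a chronological one—so no separate argument that near-maximisers may be taken timelike is needed. All remaining ingredients, namely openness of $I^\pm$, additivity and non-negativity of $L$, and the fact that concatenations of future directed causal curves are again future directed causal, are supplied by the preceding results.
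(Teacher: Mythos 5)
Your proof is correct and follows essentially the same route as the paper: both trim a causal curve of (near-)maximal length away from its endpoints using the continuity of $t\mapsto L(\gamma\rvert_{[a,t]})$ from Remark \ref{rem-len-int-ac}, use push-up and openness of $I^\pm$ (Proposition \ref{prop-cau-pla}) to relate the perturbed endpoints chronologically to the trimmed curve, and then bound $\tau$ from below by the length of the middle segment. The only cosmetic difference is that where the paper invokes the reverse triangle inequality (Lemma \ref{lem-rev-tri}), you concatenate the connecting curves explicitly and use additivity of $L$ --- which is precisely how that inequality is proved anyway.
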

\begin{pr}
As the standard proof from Lorentzian geometry only uses openness of $I^+$ and the reverse triangle inequality it still works in our setting: Let $y,y'\in Y$ and first assume that $0<\tau(y,y')<\infty$ (in the case $\tau(y,y')=0$ there is nothing to show). Let 
$0<\eps<\tau(y,y')$, then by definition of $\tau$ there exists a future directed causal curve $\gamma\colon[a,b]\rightarrow 
Y$ from $y$ to $y'$ with $L(\gamma)\geq \tau(y,y')-\frac{\eps}{2}>0$. By Remark \ref{rem-len-int-ac} there are $0<t_1\leq 
t_2<b$ such that $0<L(\gamma\rvert_{[a,t_1]})<\frac{\eps}{4}$ and $0<L(\gamma\rvert_{[t_2,b]})<\frac{\eps}{4}$. Setting 
$y_1:=\gamma(t_1)$, $y_2:=\gamma(t_2)$ and $U:=I^-(y_1)$, $V:=I^+(y_2)$ we obtain that $\tau(y,y_1)\geq 
L(\gamma\rvert_{[a,t_1]})>0$, thus $y\ll y_1$ and hence $y\in U$, which by Proposition \ref{prop-cau-pla} is 
an open neighborhood of $y$. Analogously we get 
that $y'$ is in the open set $V$. At this point let $(r,r')\in U\times V$, then $r\ll y_1\leq y_2\ll r'$ and thus by the 
reverse triangle inequality (Lemma \ref{lem-rev-tri}) we obtain
\begin{align}
 \tau(r,r')&\geq \underbrace{\tau(r,y_1)}_{\geq 0} + \tau(y_1,y_2) + \underbrace{\tau(y_2,r')}_{\geq 0} \geq \tau(y_1,y_2) 
\geq L(\gamma\rvert_{[t_1,t_2]})\\
  &= L(\gamma) - L(\gamma\rvert_{[a,t_1]}) - L(\gamma\rvert_{[t_2,b]}) \\
  &\geq \tau(y,y') - 
\frac{\eps}{2} - \frac{\eps}{4} - \frac{\eps}{4} = \tau(y,y') - \eps\,,
 \end{align}
which finishes this case. For the case $\tau(y,y')=\infty$ the above construction shows the existence of arbitrarily long 
future directed causal curves from $r$ to $r'$, so $\tau$ attains arbitrarily large values on suitable neighborhoods of $(y,y')$.
\end{pr}

\begin{prop}\label{prop-wpd-lpls}
  Let $Y=I\times_f X$ be a \wpd, where $(X,d)$ is a  
  length space. Then $(Y,D,\ll,\leq,\tau)$ is a \LpLSn.
\end{prop}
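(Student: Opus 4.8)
The plan is to verify one by one the defining properties of a \LpLSn\ recalled in Subsection \ref{subsec-lls}; the point is that essentially all of the work has already been carried out in the preceding results, so the proof amounts to assembling them. First I would note that $D$ is a genuine metric by the definition of a \wpd, that $\tau\colon Y\times Y\to[0,\infty]$ is well-defined by Definition \ref{def-tau}, and that $\le$ is a preorder while $\ll$ is transitive with $\ll\,\subseteq\,\le$ --- this is exactly the content of Lemma \ref{lem-cau-rel}.

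Next I would collect the further properties of $\tau$. Lower semicontinuity with respect to $D$ is Lemma \ref{lem-tau-lsc}; the reverse triangle inequality $\tau(y_1,y_3)\ge\tau(y_1,y_2)+\tau(y_2,y_3)$ for $y_1\le y_2\le y_3$ is Lemma \ref{lem-rev-tri}; and the vanishing condition follows directly from Definition \ref{def-tau}, since $y\nleq y'$ means that there is no future directed causal curve from $y$ to $y'$, so the defining supremum is over the empty set and $\tau(y,y')=0$ (cf.\ Remark \ref{rem-tau-pro}).

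The only property requiring genuine input is the equivalence $\tau(y,y')>0\Leftrightarrow y\ll y'$. The implication ``$\Leftarrow$'' is Remark \ref{rem-tau-pro}(ii), since a future directed timelike curve from $y$ to $y'$ has strictly positive length. For ``$\Rightarrow$'', if $\tau(y,y')>0$ then by definition of the supremum there exists a future directed causal curve from $y$ to $y'$ of positive length, and the push-up property established in Proposition \ref{prop-cau-pla} then yields $y\ll y'$. I expect this equivalence to be the crux of the statement: the direction $\tau>0\Rightarrow\,\ll$ relies entirely on push-up, which is precisely the nontrivial result that rules out the causal bubbling otherwise possible for continuous Lorentzian metrics. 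With all these items in hand, $(Y,D,\ll,\le,\tau)$ satisfies every axiom of a \LpLSn, which completes the proof.
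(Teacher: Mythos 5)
Your proposal is correct and follows essentially the same route as the paper: the paper's proof likewise assembles Lemma \ref{lem-cau-rel} (causal space structure), Lemma \ref{lem-tau-lsc} (lower semicontinuity), and Remark \ref{rem-tau-pro} together with the push-up Proposition \ref{prop-cau-pla} for the remaining properties of $\tau$, citing the definition in \cite{KS:18}. Your write-up is in fact slightly more explicit than the paper's, in that it also names Lemma \ref{lem-rev-tri} for the reverse triangle inequality and spells out why push-up is exactly what yields the implication $\tau(y,y')>0\Rightarrow y\ll y'$.
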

\begin{pr}
 By Lemma \ref{lem-cau-rel} $(Y,\ll,\leq)$ is a causal space and by Lemma \ref{lem-tau-lsc} the time separation function is 
lower semi-continuous. Finally, Remark \ref{rem-tau-pro} and Proposition \ref{prop-cau-pla} give the required properties of 
$\tau$, cf.\ \cite[Def.\ 2.8]{KS:18}. 
\end{pr}

\begin{ex}\label{ex-warped-product-mf} Let $f:I\to (0,\infty)$ be continuous and let $(X,h)$ be a Riemannian manifold.
Then considered as a Lorentzian pre-length space, the warped product $I\times_f X$, i.e.,
the product manifold $I\times X$ endowed with the continuous Lorentzian metric
$-dt^2 + f^2 h$ coincides with the generalized cone $I\times_f X$, as is immediate
from Definitions \ref{def-len} and \ref{def-tau}. Therefore, there is no ambiguity
in our notation.
\end{ex}

\begin{defi}
 Let $Y=I\times_fX$ be a \wpd\ and let $\gamma=(\alpha,\beta)\colon[a,b]\rightarrow Y$ be a causal curve. Then the \emph{energy} of 
$\gamma$ is defined as
\begin{equation}
 E(\gamma):=\frac{1}{2}\int_a^b \dot\alpha^2 -(f\circ\alpha)^2 v_\beta^2\,.
\end{equation}
\end{defi}
Contrary to the length, the energy of a curve depends on its parametrization. Nevertheless it will turn out to be a useful tool.

The following is an analogue of \cite[Thm.\ 3.1]{AB:98} in the Riemannian case.
\begin{thm} \label{thm-structure-of-geod}
 Let $(X,d)$ be a geodesic length space and let $\gamma=(\alpha,\beta)\colon[0,b]$ $\rightarrow Y = I\times_f X$ be future 
directed causal and maximal. Then:
 \begin{enumerate}[label=(\roman*)] 
  \item The fiber component $\beta$ is minimizing in $X$.
  \item\label{thm-structure-of-geod-fib-ind} Fiber independence holds, i.e., the base component $\alpha$ is independent of $\beta$, i.e., $\alpha$ depends only on the length of $\beta$. More precisely, let $(X',d')$ be another geodesic length space, $\beta'$ minimizing in $X'$ with 
$L^{d'}(\beta')=L^d(\beta)$ and the same speed as $\beta$, i.e, $v_\beta=v_{\beta'}$. Then $\gamma':=(\alpha,\beta')$ is a 
future directed maximal causal curve in $Y':=I\times_f X'$, which is timelike if $\gamma$ is timelike in $Y$.
  \item If $\gamma$ is timelike, then it has an (absolutely continuous) parametrization with respect to arclength, i.e., 
$-\dot\alpha^2 + (f\circ\alpha)^2 v_\beta^2 = -1$ almost everywhere. 
 \item If $\gamma$ is timelike and parametrized with respect to arclength (so $b=L(\gamma)$), then the energy of $\gamma$, 
$E(\gamma)$, is minimal under all reparametrizations of $\gamma$ on $[0,b]$.
 \item If $\gamma$ is timelike and parametrized with respect to arclength, then $v_\beta$ is proportional to 
$\frac{1}{(f\circ\alpha)^2}$.
\item If $\gamma$ is timelike, it has an (absolutely continuous) parametrization proportional to arclength such that 
$-\dot\alpha^2 + \frac{1}{(f\circ\alpha)^2}$ is constant.
 \end{enumerate}
\end{thm}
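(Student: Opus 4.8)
The plan is to prove the six items in the order given, using (i) as the engine for (ii) and then treating (iii)--(vi) as reparametrization and variational statements about the single remaining degree of freedom, the fibre speed $v_\beta$.

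For (i) I would argue by a single perturbation. Suppose $\beta$ is not minimizing; since $X$ is a geodesic length space there is a minimizing geodesic $\sigma$ from $\bar p:=\beta(0)$ to $\bar q:=\beta(b)$ with $L^d(\sigma)=d(\bar p,\bar q)<L^d(\beta)=:\ell$. Reparametrizing $\sigma$ by the absolutely continuous map $t\mapsto \sigma\big(\tfrac{d(\bar p,\bar q)}{\ell}\,L^d(\beta\rvert_{[0,t]})\big)$ yields $\tilde\sigma\colon[0,b]\to X$ from $\bar p$ to $\bar q$ with $v_{\tilde\sigma}=c\,v_\beta$, where $c=d(\bar p,\bar q)/\ell<1$ (Lemma \ref{lem:reparametrization}). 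Then $\gamma':=(\alpha,\tilde\sigma)$ shares the endpoints of $\gamma$, is future directed causal (since $v_{\tilde\sigma}\le v_\beta$ gives $-\dot\alpha^2+(f\circ\alpha)^2v_{\tilde\sigma}^2\le 0$), and, as $x\mapsto\sqrt{\dot\alpha^2-(f\circ\alpha)^2x^2}$ is strictly decreasing where positive, $L(\gamma')>L(\gamma)$ on the positive-measure set where $v_\beta>0$. If $L(\gamma)>0$ this contradicts maximality directly; if $L(\gamma)=0$ then $L(\gamma')>0$, so push-up (Proposition \ref{prop-cau-pla}) gives $\gamma(0)\ll\gamma(b)$ and hence $\tau(\gamma(0),\gamma(b))>0=L(\gamma)$, again contradicting maximality. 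Thus $\beta$ is minimizing.

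The heart of the theorem is (ii), and here the key observation I would isolate is that, for a geodesic fibre, $\tau_{I\times_f X}\big((p_0,\bar p),(q_0,\bar q)\big)$ depends only on $p_0,q_0$ and $d(\bar p,\bar q)$. Indeed, by the perturbation of (i) any causal curve can be replaced, without decreasing length and keeping its endpoints, by one whose fibre component is a scaled minimizing geodesic; so the supremum defining $\tau$ may be taken over curves $(\mu,\nu)$ with $\nu$ a minimizing geodesic from $\bar p$ to $\bar q$. For these $L=\int_0^b\sqrt{\dot\mu^2-(f\circ\mu)^2v_\nu^2}$ sees $\nu$ only through the speed profile $v_\nu\ge 0$ with $\int_0^b v_\nu=d(\bar p,\bar q)$, and any such profile is realized by reparametrizing a unit-speed minimizing geodesic (Lemma \ref{lem:reparametrization}); hence the supremum is a function $F(p_0,q_0,d(\bar p,\bar q))$, independent of the space. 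Granting this, (ii) is immediate: $\gamma'=(\alpha,\beta')$ is causal (timelike iff $\gamma$ is) with $L(\gamma')=L(\gamma)$ because the integrand only sees $v_{\beta'}=v_\beta$; and since $\beta,\beta'$ are minimizing, $d'(\beta'(0),\beta'(b))=L^{d'}(\beta')=L^{d}(\beta)=d(\bar p,\bar q)$ by (i), so $\tau_{Y'}(\gamma'(0),\gamma'(b))=F=\tau_Y(\gamma(0),\gamma(b))=L(\gamma)=L(\gamma')$, i.e.\ $\gamma'$ is maximal.

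The remaining items treat a timelike $\gamma$ and are analytic. For (iii) the arclength function $s(t):=L(\gamma\rvert_{[0,t]})$ is absolutely continuous (Remark \ref{rem-len-int-ac}) with $s'=\sqrt{\dot\alpha^2-(f\circ\alpha)^2v_\beta^2}>0$ a.e.; by Zareckii's theorem (as in Lemma \ref{lem-ac-lip}) $s^{-1}$ is absolutely continuous, so Corollary \ref{cor:reparametrization} applies and $\gamma\circ s^{-1}$ is future directed causal with unit speed, forcing $-\dot\alpha^2+(f\circ\alpha)^2v_\beta^2=-1$ a.e. For (iv), any reparametrization $\gamma\circ\phi$ on $[0,b]$ has the same length $b=L(\gamma)$, so Cauchy--Schwarz gives $b=\int_0^b\sqrt{I}\le\sqrt{b}\,\sqrt{\int_0^b I}=\sqrt{2b\,E(\gamma\circ\phi)}$ with $I$ the causal integrand, whence $E(\gamma\circ\phi)\ge b/2=E(\gamma)$, equality holding exactly for constant $I$, i.e.\ for arclength. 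For (v) I would fix $\alpha$ and note that, $\beta$ being minimizing by (i), any $v\ge 0$ with $\int_0^b v=\ell:=L^d(\beta)$ and $v\le\dot\alpha/(f\circ\alpha)$ arises as $v_{\beta_v}$ for a reparametrization $\beta_v$ of $\beta$ (Lemma \ref{lem:reparametrization}), giving a causal competitor $(\alpha,\beta_v)$ with the same endpoints; so maximality forces $v_\beta$ to maximize $\int_0^b\sqrt{\dot\alpha^2-(f\circ\alpha)^2v^2}$ under $\int_0^b v=\ell$, and strict concavity of the integrand in $v$ plus a two-point exchange (timelikeness keeping $v\le\dot\alpha/(f\circ\alpha)$ inactive) forces $\tfrac{(f\circ\alpha)^2v_\beta}{\sqrt{\dot\alpha^2-(f\circ\alpha)^2v_\beta^2}}$ to be a.e.\ constant, which in the arclength parametrization reads $v_\beta\propto 1/(f\circ\alpha)^2$. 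Finally (vi) follows by taking the arclength parametrization of (iii), writing $v_\beta=C/(f\circ\alpha)^2$ from (v), and applying the linear reparametrization $t\mapsto t/C$ (when $C>0$): this stays proportional to arclength while normalizing $v_\beta=1/(f\circ\alpha)^2$, so $(f\circ\alpha)^2v_\beta^2=1/(f\circ\alpha)^2$ and constancy of the speed makes $-\dot\alpha^2+1/(f\circ\alpha)^2$ constant. I expect the main obstacle to be making (ii) precise, namely the reduction of the supremum defining $\tau$ to minimizing-geodesic fibres together with the realizability of arbitrary admissible speed profiles, since this is exactly what decouples the geometry of $X$ from the warped structure; once that space-independence is in place, (iii)--(vi) are routine, with the exchange argument in (v) the only further point needing care.
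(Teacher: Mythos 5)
Your proposal is correct in substance, but it follows a genuinely different route than the paper in three of the six items, so a comparison is in order. For (i) you fix the base $\alpha$ and slow down the fibre (replacing $\beta$ by a minimizing geodesic reparametrized to speed $c\,v_\beta$, $c<1$), whereas the paper keeps both fibres at unit speed and instead compresses the base parametrization ($\bar\alpha(s)=\alpha(\tfrac{b}{T}s)$); both give a strictly longer competitor, and your case split invoking push-up (Proposition \ref{prop-cau-pla}) when $L(\gamma)=0$ is unnecessary, since $\tau\geq L(\gamma')>L(\gamma)$ already contradicts maximality. For (ii) the paper argues by contradiction: it takes a hypothetically longer curve $\tilde\gamma=(\tilde\alpha,\tilde\beta)$ in $Y'$, normalizes $v_{\tilde\beta}$ proportional to $v_{\beta'}$, uses minimality of $\beta'$ to get $v_{\tilde\beta}\geq v_{\beta'}$, and transports $(\tilde\alpha,\beta)$ back to $Y$ to contradict maximality of $\gamma$. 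You instead prove the stronger statement that for geodesic fibres $\tau_{I\times_f X}\bigl((p_0,\bar p),(q_0,\bar q)\bigr)$ is a function of $p_0,q_0,d(\bar p,\bar q)$ alone, by reducing the supremum to scaled-geodesic fibres and noting that only the speed profile enters; (ii) then follows by equality of the two suprema rather than by contradiction. This is a clean conceptual gain: it isolates exactly the fibre-independence of $\tau$ that the paper later uses implicitly (e.g.\ in Example \ref{ex:conesaregencones} and in Section \ref{sec-cb}). For (v) the paper derives the conclusion from the energy-minimality of item (iv) via the equidistribution property $\int_I(f\circ\alpha)^2v_\beta=\int_J(f\circ\alpha)^2v_\beta$ for equal-length intervals (following the cited Riemannian argument), whereas you run a direct two-point exchange on the speed profile under the constraint $\int v=\ell$; this is valid, though the measure-theoretic details (choosing positive-measure subsets on which the causality constraint has uniform slack and on which $v_\beta$ is bounded below before perturbing) need to be written out. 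Items (iii) and (iv) coincide with the paper's proofs.

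Two loose ends should be repaired. First, you repeatedly invoke Lemma \ref{lem:reparametrization} for maps of the form $\phi(t)=c\int_0^t v$, which need not be strictly increasing and need not have an absolutely continuous inverse, so the lemma does not apply as stated; the fix is to observe that a unit-speed minimizing geodesic $\sigma$ satisfies $d(\sigma(s),\sigma(s'))=|s-s'|$, so $v_{\sigma\circ\phi}=|\phi'|$ follows by direct computation without any chain-rule lemma. Second, in (vi) your rescaling $t\mapsto t/C$ breaks down when $C=0$, i.e.\ when the fibre component is constant; the paper treats this degenerate case separately with the reparametrization $\phi(t)=\int_0^t f\circ\alpha$, which yields $-\dot{\tilde\alpha}^2+\tfrac{1}{(f\circ\tilde\alpha)^2}=0$, and your proof should do likewise.
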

\begin{pr}
 \begin{enumerate}
\item[(i)] Assume that $\beta$ is not minimal (and hence not constant). 
We may suppose that $\beta$ is parametrized with 
respect to arclength, i.e., $v_\beta=1$ almost everywhere and $\gamma\colon[0,b]\rightarrow X$, where $b=L^d(\beta)$. 
Since $\beta $ is not minimal, there exists another curve $\bar \beta $ from $\beta(0)$ to 
$\beta(b)$, parametrized with respect to arclength, with $0< L^d(\bar\beta) < L^d(\beta)= b$. We set $T:= L^d(\bar\beta) < b $ and define $\bar \gamma = (\bar 
\alpha,\bar\beta)\colon[0,T]\rightarrow Y$ by setting 
$\bar\alpha(s):=\alpha(\frac{b}{T}s)$ for $s\in[0,T]$. Then clearly $\bar\gamma$ is timelike and using the reparametrization 
$\bar s = \frac{b}{T} s$ we get
\begin{align}
 L(\bar\gamma) &= \int_0^T \sqrt{(\dot{\bar\alpha}(s))^2 - f(\bar\alpha(s))^2}\,\dd s\\
  &= \frac{T}{b}\int_0^b \sqrt{\Big(\frac{
 b}{T}\dot{\alpha}(\bar s)\Big)^2 - f(\alpha(\bar s))^2}\,\dd \bar s \\
 &= \int_0^b \sqrt{\dot{\alpha}(\bar s)^2 - \Big(\frac{T}{b}f(\alpha(\bar s))\Big)^2}\,\dd \bar s
> L(\gamma)\,,
\end{align}
a contradiction.
\item[(ii)] Let $\beta'$ be minimizing in $X'$, defined on $[0,b]$ with $L^{d'}(\beta')=L^d(\beta)$ and $v_\beta=v_{\beta'}$. Set 
$\gamma':=(\alpha,\beta')\colon[0,b]\rightarrow Y'$. Then  $\gamma'$ is future directed causal and
\begin{align}
 L(\gamma') = \int_0^b\sqrt{\dot\alpha^2 - (f\circ\alpha)^2 v_{\beta'}^2} = \int_0^b\sqrt{\dot\alpha^2 - (f\circ\alpha)^2 
v_{\beta}^2} = L(\gamma)\,. 
\end{align}
It remains to show that $\gamma'$ is maximal in $Y'$ from $(\alpha(0),\beta'(0))=:(t_0,x'_0)=:y'_0$ to 
$(\alpha(b),\beta'(b))=:(t_1,x'_1)=:y_1'$. To this end assume to the contrary that there is a 
$\tilde\gamma=(\tilde\alpha,\tilde\beta)\colon[0,b]\rightarrow Y'$ that is future directed causal from $y'_0$ to $y'_1$ and 
longer than $\gamma'$, i.e., $L(\tilde\gamma)>L(\gamma')$. Without loss of generality we may assume that $\tilde{\gamma}$ is parametrized such that $\tilde\beta$ has speed $v_{\tilde{\beta}}$ proportional to $v_{\beta'}$. 
By minimality of $\beta'$ this implies that $v_{\tilde\beta}\geq  v_{\beta'}$. Set 
$\bar\gamma:=(\tilde\alpha,\beta')\colon[0,b]\rightarrow Y'$, then $\bar\gamma$ is future directed causal 
from $y_0'$ to $y_1'$ and $L(\tilde\gamma)\leq L(\bar\gamma)$. Furthermore, we obtain
\begin{align}
L(\bar\gamma) = \int_0^b\sqrt{\dot{\tilde\alpha}^2 - (f\circ\tilde\alpha)^2 v_{\beta'}^2} = 
\int_0^b\sqrt{\dot{\tilde\alpha}^2 - (f\circ\tilde\alpha)^2 v_{\beta}^2} = L((\tilde\alpha,\beta))\,.
\end{align}
Consequently, $L((\tilde\alpha,\beta))= L(\bar\gamma)\geq L(\tilde\gamma) > L(\gamma') = L(\gamma)$, contradicting the 
maximality of $\gamma$, as $\tilde\alpha$ and $\alpha$ have the same endpoints.
\item[(iii)] Let $\gamma$ be timelike and define $\phi(s):=\int_0^s\sqrt{\dot\alpha^2 - (f\circ\alpha)^2 v_\beta^2}$ for 
$s\in[0,b]$. Then $\phi\colon[0,b]\rightarrow[0,L(\gamma)]$ is absolutely continuous and strictly monotonically increasing. 
Moreover, $\phi^{-1}$ exists and is absolutely continuous as $\dot\phi>0$ almost everywhere, (\cite[p.\ 271]{Nat:55}. 
Consequently, $\tilde\gamma:=\gamma\circ\phi^{-1}\equiv (\tilde \alpha,\tilde\beta)$ 
is absolutely continuous by~\cite[Thm.\ 3, Ch.\ IX]{Nat:55} and 
satisfies $-\dot{\tilde\alpha}^2 + (f\circ\tilde\alpha)^2 v_{\tilde\beta}^2 = -1$ almost everywhere. 

\item[(iv)] This follows directly from the Cauchy-Schwartz inequality applied to the length of any reparametrization of $\gamma$ on 
$[0,b]$.

\item[(v)] The claim follows as in the proof of \cite[Thm.\ 3.1]{AB:98} by establishing that $\int_I (f\circ\tilde\alpha)^2 
v_{\tilde\beta} = \int_J (f\circ\tilde\alpha)^2 v_{\tilde\beta}$ for all intervals $I,J\subseteq [0,b]$ of the same length, 
where one uses point (iv) above.

\item[(vi)] By the previous two points we can assume that $\gamma$ is parametrized with respect to arclength, i.e., 
$-\dot{\alpha}^2 + (f\circ\alpha)^2 v_{\beta}^2 = -1$ almost everywhere and that $v_\beta=\frac{c}{(f\circ\alpha)^2}$ for 
some constant $c$. For $c\neq 0$, the reparametrization $\tilde\gamma(s):=\gamma(\frac{s}{c})$ does the job, and for $c=0$ 
(i.e., $v_\beta=0$) the reparametrization $\tilde\gamma=(\tilde\alpha,\tilde\beta):=\gamma\circ\phi^{-1}$, where $\phi(t):=\int_0^t f\circ\alpha$ yields $-\dot{\tilde\alpha}^2 + \frac{1}{(f\circ\tilde\alpha)^2} =0$.
\end{enumerate}
\end{pr}

As a first consequence of fiber-independence we obtain:
\begin{cor}\label{cor-f-lip} Let $X$ be a geodesic length space and let $Y=I\times_f X$. Then any maximizing causal curve $\gamma = (\al,\beta): [-b,b]\to Y$ has a causal character,
i.e., $\gamma$ is either timelike or null.
\end{cor}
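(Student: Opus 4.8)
The plan is to reduce, via the fiber independence of Theorem~\ref{thm-structure-of-geod}, to the model fiber $X'=\R$, and then to analyse the resulting one‑variable variational problem directly. Since $\gamma$ is maximal causal, part~(i) of Theorem~\ref{thm-structure-of-geod} shows that its fiber component $\beta$ is minimizing in $X$. Applying part~\ref{thm-structure-of-geod-fib-ind} with $(X',d')=(\R,|\cdot|)$ and a minimizing segment $\beta'$ of the same length and speed, I obtain a future directed maximal causal curve $\gamma'=(\al,\beta')$ in $Y'=I\times_f\R$. Because the base component $\al$ is unchanged and $v_{\beta'}=v_\beta$ almost everywhere, the sign of $-\dot\al^2+(f\circ\al)^2v_{\beta'}^2$ agrees pointwise with that of $-\dot\al^2+(f\circ\al)^2 v_\beta^2$, so the null and timelike sets of $\gamma'$ and $\gamma$ coincide. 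It therefore suffices to prove the statement in $Y'=I\times_f\R$, where $\beta'$ is a monotone segment of $\R$.

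Next I would fix a convenient parametrization. By Lemma~\ref{lem-ac-lip}, together with reparametrization invariance (Corollary~\ref{cor:reparametrization}, which preserves length, orientation, causal character, and hence maximality), I may assume $\gamma'(s)=(s,\beta'(s))$ for $s\in[p_0,q_0]$. Writing $w:=v_{\beta'}$, causality reads $0\le w\le 1/f$ almost everywhere, while $\int_{p_0}^{q_0}w=D$ is fixed, $D$ being the distance in $\R$ of the two fiber endpoints, and $L(\gamma')=\int_{p_0}^{q_0}\sqrt{1-f^2w^2}$. Thus $\gamma'$ is timelike exactly when $w<1/f$ a.e.\ and null exactly when $w=1/f$ a.e. If $L(\gamma')=0$, the non‑negative integrand forces $w=1/f$ a.e., so $\gamma'$ is null and the proof is complete.

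The substantial case is $L(\gamma')>0$, where I must rule out that the null set $A:=\{\,w=1/f\,\}$ has positive measure. Suppose it does; since $L(\gamma')>0$ there is also $\eta>0$ with $B_\eta:=\{\,w\le 1/f-\eta\,\}$ of positive measure (otherwise $w=1/f$ a.e.\ and $L(\gamma')=0$). Choosing subsets $A_0\subseteq A$, $B_0\subseteq B_\eta$ of positive measure, I would form the competitor speed $w_\eps:=w-\eps\,\mathbb{1}_{A_0}+c_\eps\,\mathbb{1}_{B_0}$ with $c_\eps|B_0|=\eps|A_0|$, so that $\int w_\eps=D$ is unchanged; as $f$ lies between positive constants on the compact base interval and $c_\eps\to 0$, for small $\eps$ one has $0\le w_\eps\le 1/f$, giving a causal curve $\gamma_\eps$ from $\gamma'(p_0)$ to $\gamma'(q_0)$ over the same base. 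On $A_0$, stepping off the null boundary gains length of order $\sqrt\eps$, since $\sqrt{1-(1-f\eps)^2}=\sqrt{f\eps(2-f\eps)}\ge C_1\sqrt\eps$; on $B_0$, where the integrand is $C^1$ in $w$ with derivative bounded by some $C_2$ (as $1-f^2w^2$ is bounded below there), the compensating increase costs only order $\eps$. Hence $L(\gamma_\eps)\ge L(\gamma')+|A_0|\,(C_1\sqrt\eps-C_2\eps)>L(\gamma')$ for $\eps$ small, contradicting maximality. Therefore $A$ is null, $w<1/f$ a.e., and $\gamma'$—and with it $\gamma$—is timelike.

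The main obstacle is exactly this final perturbation: the asymmetry between the $\sqrt\eps$ gain obtained by retreating from the null cone $w=1/f$ and the $O(\eps)$ price paid, by compensating on a genuinely timelike region, to keep the fiber endpoints fixed. It rests on the square‑root integrand having infinite slope at the null boundary while staying Lipschitz away from it, which is what makes any mixed causal character strictly suboptimal; the rest of the argument is bookkeeping enabled by fiber independence and the reparametrization lemmas.
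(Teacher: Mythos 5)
Your proposal is correct, and it takes a genuinely different route in its second half. The first step is identical to the paper's: both proofs invoke fiber independence (Theorem \ref{thm-structure-of-geod}, \ref{thm-structure-of-geod-fib-ind}) to replace the fiber by $\R$ and work in $Y'=I\times_f\R$. From there the paper adapts the argument of \cite[Thm.\ 1.1]{GL:18}: it assumes mixed causal character, picks a parameter where $\dot\gamma$ exists and is timelike, rescales the entire fiber speed by $\sqrt{1-\eps}$ (gaining $c\sqrt{\eps}$ in length from the null set $N$ but displacing the endpoint by $O(\eps)$), and then reconnects to the original curve by a short timelike straight segment $g_k$, paying only $O(\eps)$; this requires both the existence of a differentiability point with timelike derivative and a small geometric construction with auxiliary lines. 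You avoid all of that by keeping the endpoints fixed from the start: you redistribute speed, subtracting $\eps$ on a positive-measure subset of the null set $\{w=1/f\}$ and adding $c_\eps=O(\eps)$ on a positive-measure subset where $w\le 1/f-\eta$, so the fiber displacement $\int w$ is unchanged. The gain $\sim C_1\sqrt{\eps}$ beats the cost $\sim C_2\eps$, contradicting maximality directly, with no concatenation and no differentiability hypothesis. The one ingredient your argument uses that the paper's does not need explicitly is that in the one-dimensional fiber every admissible speed profile with prescribed integral is realized by a monotone curve with the same endpoints (take the primitive of $w_\eps$) --- this is exactly what makes the mass-transfer perturbation legitimate, and it is available precisely because fiber independence has already reduced matters to $\R$. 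Both proofs ultimately rest on the same analytic mechanism, the infinite slope of $u\mapsto\sqrt{1-f^2u^2}$ at the null boundary versus its Lipschitz behaviour away from it; your version isolates this as a clean one-variable variational fact, while the paper's version stays closer to the known smooth/low-regularity Lorentzian argument. Two minor points you gloss over, both harmless: preservation of causal character under the reparametrizations of Lemma \ref{lem-ac-lip} and Corollary \ref{cor:reparametrization} uses that $\phi'>0$ a.e.\ (which follows from absolute continuity of $\phi^{-1}$ via Lusin's property), and for small $\eps$ one should note $c_\eps\le\eta/2$ so that the perturbed speed on $B_0$ stays uniformly away from the null bound, keeping the derivative bound $C_2$ valid.
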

\begin{pr} Denote by $Y'$ the Lorentzian warped product $I\times_f \R$, i.e., the manifold
$I\times \R$ endowed with the continuous metric $-dt^2 + f^2 dx^2$. 
Let $\beta$ be parametrized by arclength and set $\beta': [-b,b]\to \R$, $\beta'(t):=t$.
Then by Theorem \ref{thm-structure-of-geod},\ref{thm-structure-of-geod-fib-ind} (and Example \ref{ex-warped-product-mf}), 
$\gamma':=(\alpha,\beta')$
is a causal maximizer in $Y'$. 
We now use the same basic ideas as in the proof of \cite[Thm.\ 1.1]{GL:18} (the difference being that the construction of the relevant curves is different, due to the metric being not locally Lipschitz but having a warped product structure) to show that $\gamma'$ is either timelike or null. The same must therefore be true for $\gamma$.

Since we exclusively work in $Y'$ from now on, we will drop the $'$ from our notation. Assume $\gamma $ is neither null nor timelike. Without loss of generality, we may assume $\gamma(0)=(\alpha(0),0)=(0,0)$, $\dot{\gamma}(0)$ exists and is timelike and $N:=\{s\in [-b,0]:\,\dot{\gamma}(s) \text{ is null} \}$ has non-zero measure. Let $\eps $ be positive and define $\gamma_\eps : [-b,0]\to I\times \R$ by \[\gamma_\eps (s)=(\alpha(s),\beta_\eps(s)):=(\alpha(s),\sqrt{1-\eps}\,s+b\,\sqrt{1-\eps}-b).\]
Then $\gamma_{\eps}(-b)=\gamma(-b)=(\alpha(-b),-b)$ and $\gamma_\eps(0)=(\alpha(0),\beta_\eps(0))=(0,b\,\sqrt{1-\eps}-b)$. Note that for $\eps<\frac{1}{2}$, there exists $C>0$ such that $|\beta_\eps(0)|=|b\,\sqrt{1-\eps}-b| \leq C \eps$. We estimate
\begin{align} L(\gamma_\eps)&=\int_{-b}^0\sqrt{\dot{\alpha}^2-(f\circ\alpha)^2 (1-\eps)}=\int_{[-b,0]\setminus N} \sqrt{\dot{\alpha}^2-(f\circ\alpha)^2 (1-\eps)} +\\
&+\sqrt{\eps} \underbrace{\int_{N}f\circ\alpha}_{=:c>0} \geq \int_{[-b,0]\setminus N} \sqrt{\dot{\alpha}^2-(f\circ\alpha)^2}+ c \sqrt{\eps}=L(\gamma|_{[-b,0]})+c \sqrt{\eps}.
\end{align}
Next, note that there exist $\eta_0>0$ and $C_0>0$ such that $f(t)<C_0<\dot{\alpha}(0)$ for $t\in [-\eta_0,\eta_0]$ and since $\alpha(s)=(\dot{\alpha}(0)+h(s))s$ with $h(s)\to 0$ as $s\to 0$, there also exist $0<\eta_1$ and $0<C_0<C_1<\dot{\alpha}(0)<C_2$ such that $C_2 s> \alpha(s)>C_1 s$ for $s\in [0,\eta_1]$. Fix $k$ with $C_0<k<C_1$ and set $s_\eps:=\frac{-k}{C_1-k} \beta_\eps(0)$. Then $0<s_\eps < \frac{Ck}{C_1-k} \eps$ for $\eps <\frac{1}{2}$. Let $\eps $ small enough such that $s_\eps+|\beta_\eps(0)|<\min\{\eta_1,\frac{\eta_0}{C_1}\}$. Then the straight lines $h : s\mapsto (C_1 s,s)$ and $g_k : s\mapsto (ks-k\beta_\eps(0),s)$ are timelike on $[\beta_\eps(0),s_\eps]$ and intersect each other in $s_\eps>0$. Further, $\alpha(s)>C_1 s$ on $(0,s_\eps]$, so $(\alpha(0),0)=(0,0)$ lies strictly below $g_k$ but $(\alpha(s_\eps),s_\eps)$ lies strictly above $g_k$ (since it lies strictly above $h(s_\eps)$ which is equal to $g_k(s_\eps)$) and hence $s\mapsto (\alpha(s),s)=\gamma(s) $ intersects $g_k$ in some $0<\bar{s}< s_\eps$. Note that $g_k|_{[\beta_\eps(0),\bar{s}]}$ is a future directed timelike curve from $\gamma_\eps(0)$ to $\gamma(\bar{s})$. Now we estimate the length of the concatenation as follows:
\begin{align} L(\gamma_\eps*g_k|_{[\beta_\eps(0),\bar{s}]})&> L(\gamma_\eps)\geq c \sqrt{\eps}+L(\gamma|_{[-b,0]})\\
&=L(\gamma|_{[-b,\bar{s}]})+c \sqrt{\eps}-L(\gamma|_{[0,\bar{s}]}) \\
&\geq L(\gamma|_{[-b,\bar{s}]})+c \sqrt{\eps}- \alpha(\bar{s})\\
&> L(\gamma|_{[-b,\bar{s}]})+c \sqrt{\eps}- C_2 \bar{s} \\
&> L(\gamma|_{[-b,\bar{s}]})+c \sqrt{\eps}- C_2 s_\eps \\
& \ge  L(\gamma|_{[-b,\bar{s}]})+c \sqrt{\eps}- C_2 \frac{Ck}{C_1-k} \eps > L(\gamma|_{[-b,\bar{s}]})
\end{align}
for $\eps$ small. This contradicts the maximality of $\gamma$.
\end{pr}

\begin{ex}\label{ex:conesaregencones}(Minkowski cones as generalized cones.) 
Here we show that Minkowski cones as defined 
in Section \ref{sec-con} can equivalently be viewed as generalized cones. Let $X$ be a geodesic length space, 
let $Y:=\mathrm{Cone}(X)$ be the Minkowski cone over $X$, with relations $\ll_Y, \leq_Y$ and time separation function 
$\tau_Y$. Let $G:=(0,\infty)\times_\mathrm{id} X$ be the generalized cone with warping function $f=\mathrm{id}$ over $X$. 
Since we did not explicitly treat generalized cones of the form $I\times_f X$ with a non-open interval $I$ and a function $f$ 
that might be zero at the endpoints (though, as indicated in Remark~\ref{rem:closedintervals} below, these cases could 
be included relatively straightforwardly), we will compare the Lorentzian pre-length space $(G,D,\ll_G,\leq_G,\tau_G)$ 
with the Lorentzian pre-length space $Y':=Y\setminus \{0\}=(0,\infty)\times X$ with metric $D$
(which is equivalent to the restriction of the cone metric $d_c$), relations 
$\ll_{Y'}:=\ll_Y|_{Y'\times Y'}$, $\leq_{Y'}:=\leq_Y|_{Y'\times Y'}$ and time separation function 
$\tau_{Y'}:=\tau_Y|_{Y'\times Y'}$.
	
Let $x=(x_0,\bx),y=(y_0,\by)\in G$, then by the description of $I^+$ in \eqref{I+}  $x\ll_G y$ if and only 
if for corresponding points $x'=(x_0,\bx')\in W:=(0,\infty)\times_\mathrm{id} \hyp^{n-1} $ and  $y'=(y_0,\by')\in W$
 with $d_X(\bx,\by)=d_{\hyp^{n-1}}(\bx',\by')$ one has $x'\ll_{W} y'$. Similarly from \eqref{J+} we see that, since both $X$ and $\hyp^{n-1}$ are geodesic,  the same holds for $\leq$. Lastly, by fiber independence (Theorem \ref{thm-structure-of-geod},\ref{thm-structure-of-geod-fib-ind}) we also have that $\tau_G(x,y)=\tau_{W}(x',y')$. 
 By the last two paragraphs in Remark \ref{model}, we have $\leq_{W}=\leq_C$, $\ll_{W}=\ll_C$ and $\tau_{W}=\tau_C$ (where $C:=(0,\infty)\times \hyp^{n-1}$ with the Minkowski cone structure as in Remark \ref{model}). 
 Now since by definition $\leq ,\ll,\tau$ for Minkowski cones are clearly fiber independent as well, we have  $x\ll_{Y'} y$  
 if and only if $x'\ll_C y'$ if and only if $x\ll_G y$. The same holds for $\leq $. Also clearly 
 $\tau_{Y'}(x,y)=\tau_C(x',y')=\tau_G(x,y)$. So the Lorentzian pre-length spaces $(G,D,\leq_G,\ll_G,\tau_G)$ and $(Y',D,\leq_{Y'}.\ll_{Y'},\tau_{Y'} )$ can be identified.
\end{ex}
\begin{rem}\label{rem:closedintervals} We have confined ourselves in this section to generalized cones
	$I\times_f X$ with $I$ an open interval, but note that
	general intervals $I$ could be treated in complete analogy. One could also consider the case where $f$ has isolated zeros (either in the interior of the interval or at the interval boundaries) with the additional assumption that the improper integrals of $\frac{1}{f}$ coming from both sides of each zero diverge. 
	If $f(t)=0$ for some $t\in I$, we identify $(t,x)\sim (t,x')$ for $x,x'\in X$ to a point denoted by $t_Y$. Defining all concepts analogously to the case where $f>0$, it is easy to see that for any $(x_0,x)\in Y$ with $x_0<t$ and $x\in X$ arbitrary we have $(x_0,x)\ll t_Y$ and $\gamma: s\mapsto (x_0+s,x)$ is a future directed timelike curve from $(x_0,x)$ to $t_Y$ with $\tau((x_0,x),t_Y)=L(\gamma)$. Further, $I^+(t_Y)=(I\cap (t,\infty))\times X$ and $I^-(t_Y)= (I\cap (-\infty , t))\times X$. So any two points $(x_0,x),(y_0,y)$, $x_0<y_0$ with $f$ having a zero on $[x_0,y_0]$ are trivially timelike related. Therefore considering $f$ having zeros in the interior of $I$ largely reduces to the problem of allowing $f$ to vanish at the boundary. Divergence of the integral of $\frac{1}{f}$ as one approaches the zeroes of $f$ ensures that $I^\pm$ remains open (see Proposition \ref{prop-cau-pla}) and thus, with some modifications, the main results in Sections 
	\ref{sec:gen_cone} and \ref{sec:gen_cones_as_lls} should remain valid, but this would need to be investigated more carefully.
\end{rem}

\section{Generalized cones as \LLSn s} \label{sec:gen_cones_as_lls}
We already established that every \wpd\ $Y=I\times_fX$, where $(X,d)$ is a length space, is a \LpLS in Proposition \ref{prop-wpd-lpls}. 
Here we will show that such spaces are in fact \LLSn s if $X$ is locally compact. To this end we need the following auxiliary results.

\begin{lem}\label{lem-jpq}
	Let $(X,d)$ be a metric space and let $p=(t_0,\bar p)$, $q=(t_1,\bar q)\in Y$, then for the causal diamond $J(p,q):=J^+(p)\cap J^-(q)$ we have
	\begin{align}
	J(p,q)\subseteq \{(t,\bar r)\in Y: t_0\leq t\leq t_1,\ \bar r\in \bar B^d_{\frac{t-t_0}{m_{t_0,t}}}(\bar 
	p)\cap \bar B^d_{\frac{t_1-t}{m_{t,t_1}}}(\bar q)\}\,,
	\end{align}
	where $\bar B^d_\delta(x)=\{x'\in X: d(x,x')\leq \delta\}$ denotes the closed ball of radius $\delta$ in $X$.
\end{lem}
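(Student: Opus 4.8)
The plan is to obtain the inclusion as a direct consequence of the fundamental inequality established in Lemma~\ref{lem-der-var-geq-0}, read off at the endpoints of the connecting causal curves. First I would fix a point $(t,\bar{r})\in J(p,q)=J^+(p)\cap J^-(q)$; if the diamond is empty the inclusion is vacuous, so we may assume such a point exists. By definition of the causal relation there is a future directed causal curve $\gamma_1=(\alpha_1,\beta_1)\colon[a_1,b_1]\to Y$ from $p=(t_0,\bar{p})$ to $(t,\bar{r})$ (unless $(t,\bar{r})=p$), and likewise a future directed causal curve $\gamma_2=(\alpha_2,\beta_2)\colon[a_2,b_2]\to Y$ from $(t,\bar{r})$ to $q=(t_1,\bar{q})$ (unless $(t,\bar{r})=q$). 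Since every future directed causal curve has $\dot\alpha>0$ almost everywhere, its base component is strictly increasing; thus $\alpha_1$ runs from $t_0$ up to $t$ and $\alpha_2$ from $t$ up to $t_1$, yielding the time ordering $t_0\leq t\leq t_1$ (with equality exactly in the degenerate cases).

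Next I would apply Lemma~\ref{lem-der-var-geq-0} to each curve over its full parameter interval. For $\gamma_1$ the lemma reads $(\alpha_1(b_1)-\alpha_1(a_1))^2 - m_{\alpha_1(a_1),\alpha_1(b_1)}^2\, d(\beta_1(a_1),\beta_1(b_1))^2 \geq 0$; inserting $\alpha_1(a_1)=t_0$, $\alpha_1(b_1)=t$, $\beta_1(a_1)=\bar{p}$ and $\beta_1(b_1)=\bar{r}$ gives $(t-t_0)^2 - m_{t_0,t}^2\, d(\bar{p},\bar{r})^2\geq 0$. Since $m_{t_0,t}=\min_{r\in[t_0,t]}f(r)>0$ and $t-t_0\geq 0$, this rearranges to $d(\bar{p},\bar{r})\leq\frac{t-t_0}{m_{t_0,t}}$, i.e.\ $\bar{r}\in\bar B^d_{(t-t_0)/m_{t_0,t}}(\bar{p})$. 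The identical computation applied to $\gamma_2$ produces $d(\bar{r},\bar{q})\leq\frac{t_1-t}{m_{t,t_1}}$, i.e.\ $\bar{r}\in\bar B^d_{(t_1-t)/m_{t,t_1}}(\bar{q})$. Combined with $t_0\leq t\leq t_1$, this places $(t,\bar{r})$ in the asserted set.

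I do not expect a genuine obstacle here: the entire content is already packaged in Lemma~\ref{lem-der-var-geq-0}, and only two minor points require a moment's care. The first is that the minimum $m_{\alpha_i(a_i),\alpha_i(b_i)}$ appearing in the lemma coincides with $m_{t_0,t}$ (respectively $m_{t,t_1}$) in the target set, which is immediate since it depends only on the base values at the endpoints and not on the particular connecting curve. The second is the degenerate cases: if $(t,\bar{r})=p$ then $t=t_0$ and $\bar{r}=\bar{p}$, so $d(\bar{p},\bar{r})=0=\frac{t-t_0}{m_{t_0,t}}$ and the first ball condition holds trivially, while the second condition still follows by applying the lemma to $\gamma_2$ (now running from $p$ to $q$); the case $(t,\bar{r})=q$ is symmetric. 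This completes the plan.
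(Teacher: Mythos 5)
Your proof is correct and follows essentially the same route as the paper's: both reduce the claim to the inequality of Lemma~\ref{lem-der-var-geq-0} applied to the two causal segments from $p$ to $(t,\bar r)$ and from $(t,\bar r)$ to $q$, with monotonicity of the base component giving $t_0\leq t\leq t_1$. The only cosmetic differences are that the paper works with a single concatenated curve through $(t,\bar r)$ and quotes the linear inequality from the lemma's proof, whereas you invoke the lemma's squared statement and take square roots, which is equally valid.
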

\begin{pr}
	Let $r=(t,\bar r)\in J(p,q)$, $p<r<q$, and let $\gamma=(\alpha,\beta)\colon[0,b]\rightarrow Y$ be a future directed causal curve from $p=\gamma(0)$ to 
	$r=\gamma(t^*)$ to $q=\gamma(b)$. Then $\dot\alpha>0$ almost everywhere. We get
	$t_0=\alpha(0)\leq \alpha(t^*) = t \leq \alpha(b) = t_1$. From the proof of Lemma \ref{lem-der-var-geq-0} we conclude that
	\begin{align}
	t-t_0 = \alpha(t^*)-\alpha(0) \geq m_{t_0,t}\,d(\bar p,\bar r)\,, 
	\end{align}
	and analogously $t_1-t\geq m_{t,t_1}d(\bar r,\bar q)$.
\end{pr}

\begin{lem}\label{lem-causally-convex-nbhds}
	Let $(X,d)$ be a metric space. Then any $p=(p_0,\bar p)\in Y$ has a basis of open, \emph{causally convex} neighborhoods, i.e., neighborhoods such that any causal curve with endpoints in that neighborhood is contained in it. This also shows that such a generalized cone is \emph{strongly causal}. Moreover, the map $Y\rightarrow~I\colon (t,x)\mapsto~t$ is a \emph{time function}, i.e., $t$ is  continuous and strictly increasing along any future directed causal curve.
\end{lem}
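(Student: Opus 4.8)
The time-function claim is immediate, so I would dispose of it first. The projection $t\colon Y\to I$, $(t,x)\mapsto t$, is $1$-Lipschitz (hence continuous) for the product metric $D$, and by Definition \ref{def-cc} the base component $\alpha$ of every future directed causal curve $\gamma=(\alpha,\beta)$ is strictly increasing; thus $t\circ\gamma=\alpha$ is strictly increasing, so $t$ is a time function. Likewise, strong causality of a Lorentzian pre-length space is by definition the existence at each point of a neighbourhood basis of causally convex open sets, so it follows at once from the first assertion. Hence the entire content lies in producing such a basis at an arbitrary $p=(p_0,\bar p)$.

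The key decision is to use causal diamonds rather than product boxes. A naive box $(p_0-\delta,p_0+\delta)\times B^d_\eps(\bar p)$ is in fact \emph{not} causally convex: a causal curve satisfies $v_\beta\le\dot\alpha/(f\circ\alpha)$, so its fibre component may travel a $d$-distance up to roughly $2\delta/\min f$ away from an endpoint before returning, an excursion that does not become negligible compared to $\eps$. This is the main obstacle, and it forces the diamond construction together with push-up. For $r_0<p_0<s_0$ in $I$ I put $r:=(r_0,\bar p)$, $s:=(s_0,\bar p)$ and $D_{r,s}:=I^+(r)\cap I^-(s)$. The vertical segment $u\mapsto(u,\bar p)$ has $v_\beta=0<\dot\alpha/(f\circ\alpha)$ and is therefore timelike, so $r\ll p\ll s$ and $p\in D_{r,s}$; moreover $D_{r,s}$ is open because $I^\pm$ are open by Proposition \ref{prop-cau-pla}.

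Next I would verify that $D_{r,s}$ is causally convex, which rests on push-up. Let $\gamma$ be a causal curve with both endpoints in $D_{r,s}$; since causal convexity concerns only the point set traced out, I may assume $\gamma\colon[0,1]\to Y$ is future directed, and let $c=\gamma(\lambda)$ be an intermediate point, so $\gamma(0)\le c\le\gamma(1)$. Since $r\ll\gamma(0)$, concatenating a future directed timelike curve from $r$ to $\gamma(0)$ (which has positive length) with $\gamma|_{[0,\lambda]}$ yields a future directed causal curve from $r$ to $c$ of positive length; by Proposition \ref{prop-cau-pla} this gives $r\ll c$. The dual argument applied to $c\le\gamma(1)\ll s$ gives $c\ll s$, whence $c\in D_{r,s}$ and $\gamma\subseteq D_{r,s}$.

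Finally I would show that these diamonds shrink to $p$. By \eqref{I+}, a point $(q_0,\bar q)\in I^+(r)$ satisfies $q_0>r_0$ and $d(\bar p,\bar q)<\int_{r_0}^{q_0}\tfrac1f$ (since $h_{r_0}$ inverts $r\mapsto\int_{r_0}^{r}\tfrac1f$); applying \eqref{I+} with base point $(q_0,\bar q)$ to the equivalent condition $s\in I^+((q_0,\bar q))$ shows that $(q_0,\bar q)\in I^-(s)$ forces $q_0<s_0$ and $d(\bar p,\bar q)<\int_{q_0}^{s_0}\tfrac1f$. Using $\min(A,B)\le\tfrac12(A+B)$ for $A,B\ge0$, this yields
\[
D_{r,s}\subseteq (r_0,s_0)\times \bar B^d_{\frac12\int_{r_0}^{s_0}1/f}(\bar p).
\]
As $r_0\nearrow p_0$ and $s_0\searrow p_0$ the interval $(r_0,s_0)$ shrinks to $\{p_0\}$ and, $1/f$ being continuous hence locally bounded near $p_0$, the integral $\int_{r_0}^{s_0}\tfrac1f$ tends to $0$; so every neighbourhood of $p$ contains some $D_{r,s}$. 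The sets $D_{r,s}$ therefore form a basis of open causally convex neighbourhoods of $p$, and strong causality follows.
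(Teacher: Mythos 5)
Your argument is internally correct, but it proves a weaker statement than the lemma asserts, and the loss of generality is exactly the point of the paper's proof. Both Proposition \ref{prop-cau-pla} (push-up and openness of $I^\pm$) and the description \eqref{I+} of $I^+$, on which your diamonds $D_{r,s}=I^+(r)\cap I^-(s)$ rely at every step (openness of $D_{r,s}$, the push-up argument for causal convexity, and the shrinking estimate via $\int_{r_0}^{s_0}\tfrac{1}{f}$), are only available when $(X,d)$ is a \emph{length space}: their proofs use almost-minimizing curves in $X$. The lemma, however, is stated for an arbitrary metric space $X$, and this generality is genuinely used later: Lemma \ref{lem-cc-rcc}, Proposition \ref{prop-loc-ex-max-cc} and Lemma \ref{lem-loc-cau-clo} invoke causally convex neighborhoods and the time function for (locally compact) metric fibers that need not be length spaces. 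For such $X$ the sets $I^\pm$ can be badly behaved (there may be no timelike curves at all issuing from a point in some directions), so your neighborhoods $D_{r,s}$ need not even contain $p$ in the interior in the required uniform sense, and push-up is unavailable.

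Your diagnosis that naive product boxes fail is right, but the conclusion that this ``forces the diamond construction together with push-up'' is where the proposal goes astray. The paper instead uses purely metric lens-shaped sets
\[
U_\eps=\Big\{ (t,\bar r)\in Y:\ p_0-\eps<t<p_0+\eps,\ \bar r\in B^d_{\frac{t-(p_0-\eps)}{m_{p_0-\eps,p_0+\eps}}}(\bar p)\cap B^d_{\frac{p_0+\eps-t}{m_{p_0-\eps,p_0+\eps}}}(\bar p)\Big\},
\]
i.e.\ boxes with cone constraints tied to the two time endpoints. Causal convexity of $U_\eps$ follows from the elementary speed bound of Lemma \ref{lem-der-var-geq-0} (as in Lemma \ref{lem-jpq}): for a causal curve, $m_{s,t}\,d(\beta(s),\beta(t))\le \alpha(t)-\alpha(s)$, and the triangle inequality in $X$ then propagates the two cone conditions from the endpoints to every intermediate point. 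This needs no push-up, no openness of $I^\pm$, and no existence of (almost) minimizing curves in $X$, so it works for every metric space. Within the length-space setting your causal-diamond route is a valid and natural alternative (it is the standard spacetime argument), but to repair the proposal as a proof of the lemma as stated you would have to replace the diamonds by a construction of this metric type.
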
	
\begin{proof}
	Using the same arguments as in the proof of the previous Lemma one easily checks that the family
	\[ \bigg\{ (t,\bar r)\in Y: p_0-\eps < t< p_0+\eps ,\ \bar r\in  B^d_{\frac{t-(p_0-\eps )}{m_{p_0-\eps ,p_0+\eps }}}(\bar 
	p)\cap B^d_{\frac{p_0+\eps -t}{m_{p_0-\eps ,p_0+\eps }}}(\bar p) \bigg\}_{\eps >0 } \]
	satisfies the claim.
\end{proof}

\begin{lem}\label{lem-D-com}
  Every \wpd\ has the property that for every point $y$ there is a neighborhood $U$ of $y$ 
and a constant $C>0$ such that the (metric) $D$-arclength of every causal curve which is contained in $U$ is bounded by $C$, i.e., 
$L^D(\gamma)\leq C$.
\end{lem}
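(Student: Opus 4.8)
The plan is to reduce everything to two elementary estimates, one on the base component and one on the fibre component, exploiting that on a sufficiently small neighbourhood the warping function $f$ is bounded away from zero. First I would fix $y=(y_0,\bar p)$ and choose $\eps>0$ so small that the compact interval $[y_0-\eps,y_0+\eps]$ is contained in $I$; then I set $U:=\{z\in Y: D(z,y)<\eps\}$, so that every $z=(t,x)\in U$ satisfies $|t-y_0|<\eps$ and hence $t\in(y_0-\eps,y_0+\eps)$. Put $m:=\min_{r\in[y_0-\eps,y_0+\eps]}f(r)$, which is strictly positive by continuity and positivity of $f$ on a compact interval.

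Next I would take an arbitrary causal curve $\gamma=(\alpha,\beta)\colon[a,b]\to Y$ contained in $U$, say future directed (so $\dot\alpha>0$ almost everywhere), and compute its $D$-length componentwise. Since $D$ is the sum metric and $\alpha$ is monotone, for every partition the base increments telescope to $\alpha(b)-\alpha(a)$, independently of the partition, so that $L^D(\gamma)=(\alpha(b)-\alpha(a))+L^d(\beta)$, where $L^d$ is the length functional of $(X,d)$. Because $\gamma$ stays in $U$, its base values lie in $(y_0-\eps,y_0+\eps)$, whence $\alpha(b)-\alpha(a)<2\eps$.

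For the fibre term I would invoke causality in the form $(f\circ\alpha)\,v_\beta\le\dot\alpha$ almost everywhere, exactly as derived in the proof of Lemma \ref{lem-der-var-geq-0}, together with $f\circ\alpha\ge m$, to estimate
\begin{equation*}
L^d(\beta)=\int_a^b v_\beta\le\frac1m\int_a^b (f\circ\alpha)\,v_\beta\le\frac1m\int_a^b\dot\alpha=\frac1m\bigl(\alpha(b)-\alpha(a)\bigr)<\frac{2\eps}{m}.
\end{equation*}
Combining the two bounds yields $L^D(\gamma)<2\eps\,(1+\tfrac1m)=:C$, a constant depending only on $y$ (through $U$ and $m$) and not on the particular curve $\gamma$. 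The past directed case is identical upon replacing $\dot\alpha>0$ by $\dot\alpha<0$ and $\alpha(b)-\alpha(a)$ by $\alpha(a)-\alpha(b)$.

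There is essentially no hard step here; the only point that genuinely requires care is the choice of $U$, which must be small enough in the base direction that its time-projection has closure inside a compact subinterval of $I$ on which $f$ admits a positive minimum $m$. This is precisely what makes the fibre length $L^d(\beta)$ controllable via the causality inequality, and it is also the reason the statement is only local rather than global.
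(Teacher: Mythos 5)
Your proof is correct and follows essentially the same route as the paper's: restrict to a neighborhood whose time-projection lies in a compact subinterval of $I$, use positivity of $\min f$ there together with the causality inequality $(f\circ\alpha)v_\beta\le\dot\alpha$ to bound the fiber speed by the base speed, and bound the base increment by the length of that interval. The only (harmless) cosmetic differences are that the paper takes $U=I'\times X$ rather than a $D$-ball, and writes $L^D(\gamma)=\int\sqrt{\dot\alpha^2+v_\beta^2}$, whereas your additive decomposition $L^D(\gamma)=(\alpha(b)-\alpha(a))+L^d(\beta)$ is in fact the one matching the sum metric $D$ as defined in the paper.
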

\begin{pr}
Let $y=(t,p)\in Y$ and let $I'\subseteq I$ be a compact interval containing $t$. Set $C':=\diam(I')$ and $C:=\min_{r\in I'}f(r)>0$. 
Moreover, let $\gamma=(\alpha,\beta)\colon[a,b]\rightarrow Y$ be a (without loss of generality) future directed causal curve that is contained 
in $U:=I'\times X$. Then since $C^2 v_\beta^2\leq (f\circ\alpha)^2 v_\beta^2 \leq \dot\alpha^2$ we get
\begin{align}
 L^D(\gamma) = \int_a^b\sqrt{\dot\alpha^2 + v_\beta^2} \leq \int_a^b\dot\alpha\sqrt{1+\frac{1}{C^2}} \leq \sqrt{1+\frac{1}{C^2}}\,C'\,,
\end{align}
as required.
\end{pr}

We want to establish that \wpds\ are \LLSn s. For this we first need to show that the different notions of causal curves and 
their length agree with the ones in the setting of \LLSn s.

In the following result (and thereafter), when comparing the different notions of causal curves, it will always be 
understood that parametrizations are chosen in which the respective curves are never locally constant
(cf.\ \cite[Ex.\ 2.5.3]{BBI:01}).
\begin{lem}\label{lem-cc-rcc}
 The notion of causal curves for a \wpd\ agrees with the notion of a causal curves with respect to the relation $\leq$ (cf.\ 
\cite[Def.\ 2.18]{KS:18}).
\end{lem}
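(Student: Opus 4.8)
The plan is to establish the equivalence of Definition \ref{def-cc} with \cite[Def.\ 2.18]{KS:18} in both directions, throughout identifying curves up to the reparametrizations permitted by the convention fixed just above the lemma (so that no curve is locally constant).

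For the direction from a causal curve of a \wpd\ (Definition \ref{def-cc}) to a causal curve with respect to the relation $\leq$, let $\gamma=(\alpha,\beta)$ be future directed causal in the sense of Definition \ref{def-cc}. By Lemma \ref{lem-ac-lip} together with Corollary \ref{cor:reparametrization} we may pass to a locally Lipschitz parametrization that remains future directed causal; since $\alpha$ is strictly increasing, this parametrization is non-constant on every subinterval. Finally, for any $t_1<t_2$ the restriction $\gamma|_{[t_1,t_2]}$ is itself a future directed causal curve of the \wpd, whence $\gamma(t_1)\leq\gamma(t_2)$ by the very definition of $\leq$. This is precisely what \cite[Def.\ 2.18]{KS:18} demands.

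For the converse, suppose $\gamma=(\alpha,\beta)$ is locally Lipschitz, non-constant on subintervals, and satisfies $\gamma(t_1)\leq\gamma(t_2)$ for all $t_1<t_2$. Local Lipschitz continuity makes $\gamma$ absolutely continuous, so $\dot\alpha$ and $v_\beta$ exist almost everywhere. Applying Lemma \ref{lem-der-var-geq-0} to any causal curve realizing $\gamma(t_1)\leq\gamma(t_2)$ gives $\alpha(t_1)\leq\alpha(t_2)$ together with $\alpha(t_2)-\alpha(t_1)\geq m_{\alpha(t_1),\alpha(t_2)}\,d(\beta(t_1),\beta(t_2))$ (equivalently one may invoke Lemma \ref{lem-jpq}). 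The first inequality shows $\alpha$ is non-decreasing; if $\alpha$ were constant on some subinterval, the second inequality would force $\beta$, and hence $\gamma$, to be constant there, contradicting non-constancy, so $\alpha$ is in fact strictly increasing.

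It remains to recover the pointwise causal inequality, which I expect to be the main obstacle. Fixing an $s$ at which both $\dot\alpha(s)$ and $v_\beta(s)$ exist, I would insert $t_1=s$, $t_2=s+h$ into the displayed bound, divide by $h>0$, and let $h\searrow0$; continuity of $f$ ensures $m_{\alpha(s),\alpha(s+h)}\to f(\alpha(s))$, while $h^{-1}d(\beta(s),\beta(s+h))\to v_\beta(s)$, yielding $\dot\alpha(s)\geq f(\alpha(s))\,v_\beta(s)\geq0$ and, after squaring, $-\dot\alpha(s)^2+f(\alpha(s))^2 v_\beta(s)^2\leq0$. Hence $\gamma$ is future directed causal in the sense of Definition \ref{def-cc}. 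The delicate point is this simultaneous passage from the integrated relational estimate to the infinitesimal differential inequality, handling the minimum defining $m_{s,t}$ and the metric derivative at once; it is, however, essentially the same limiting argument already carried out in the proof of the limit curve theorem (Theorem \ref{thm-lim-cur}).
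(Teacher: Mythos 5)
Your proof is correct, but for the non-trivial direction it takes a genuinely different route from the paper's. The paper, after deducing that $\alpha$ is strictly increasing from the fact that $t$ is a time function (Lemma \ref{lem-causally-convex-nbhds}), constructs for each partition $\sigma$ of $[a,b]$ a concatenation $\gamma^\sigma$ of causal curves (in the sense of Definition \ref{def-cc}) joining consecutive points $\gamma(t_i)$, shows via causally convex neighborhoods that these concatenations converge pointwise to $\gamma$ as the partition norm tends to zero, and then invokes the limit curve theorem (Theorem \ref{thm-lim-cur}) as a black box. You instead apply Lemma \ref{lem-der-var-geq-0} to the connecting causal curves themselves, so that the integrated inequality $\alpha(t_2)-\alpha(t_1)\ge m_{\alpha(t_1),\alpha(t_2)}\,d(\beta(t_1),\beta(t_2))\ge 0$ holds directly for $\gamma$, obtain strict monotonicity of $\alpha$ from the never-locally-constant convention rather than from the time function, and then re-run for $\gamma$ itself the differentiation argument that constitutes the core of the proof of Theorem \ref{thm-lim-cur}. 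In effect you inline that theorem's computation and dispense with the approximating sequence altogether; this is shorter and avoids the pointwise-convergence argument, whereas the paper's detour has the side benefit that the concatenation-along-partitions construction is reused almost verbatim in the proof of Proposition \ref{prop-L-Ltau}. Two minor points, neither fatal: (a) when $\gamma(t_1)\le\gamma(t_2)$ holds because $\gamma(t_1)=\gamma(t_2)$, there is no connecting causal curve to which Lemma \ref{lem-der-var-geq-0} applies, but the inequality is then trivial (the paper glosses over the same degenerate case); (b) your limit yields $\dot\alpha\ge f(\alpha)\,v_\beta\ge 0$ a.e.\ together with strict monotonicity of $\alpha$, which matches the future-directedness clause of Definition \ref{def-cc} only under the paper's own identification of ``strictly monotonically increasing'' with ``$\dot\alpha>0$ a.e.''---but the paper's proof rests on exactly the same identification in order to fulfil the hypothesis $\dot\alpha\neq 0$ a.e.\ of Theorem \ref{thm-lim-cur}, so your argument is no weaker on this point than the original.
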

\begin{pr}
 Clearly, every future or past directed curve in the sense of Definition \ref{def-cc} is causal with respect to $\leq$. For 
the converse, note that since this is a local question it suffices to consider segments of causal curves. So let 
$\gamma=(\alpha,\beta)\colon[a,b]\rightarrow Y$ be a (without loss of generality) future directed causal curve with respect 
to $\leq$, i.e., $\forall a\leq s\leq t \leq b$: $\gamma(s)\leq\gamma(t)$. Thus for any $a\leq s < t\leq b$ there is a future 
directed causal curve (in the sense of Definition \ref{def-cc}) $\gamma_{s,t}=(\alpha_{s,t},\beta_{s,t})\colon[0,1]\rightarrow Y$ from 
$\gamma(s)$ to $\gamma(t)$. This implies that $\alpha$ is strictly monotonically increasing as $t$ is a time function (cf.\ Lemma 
\ref{lem-causally-convex-nbhds}).

We now want to construct a sequence of future directed causal curves (in the sense of Definition \ref{def-cc}) that 
converges pointwise to $\gamma$. For $\sigma:=(a=t_0<t_1,\ldots,t_N=b)$ a partition of $[a,b]$, 
denote by $\gamma^\sigma$ the future 
directed causal curve $\gamma^\sigma:=\gamma_{t_0,t_1}*\ldots*\gamma_{t_{N-1},t_N}$ obtained by concatenating the 
curves $\gamma_{t_i,t_{i+1}}$ ($0\le i\le N-1$).  Let $\sigma_k$ be a sequence of such partitions whose
norms (maximal length of a subinterval) tend to zero as $k\to \infty$. We show that $\gamma_k := \gamma^{\sigma_k}$
converges pointwise to $\gamma$.
Let $t\in[a,b]$ and let $U$ be a neighborhood of $\gamma(t)$. By Lemma \ref{lem-causally-convex-nbhds} there exists a causally convex neighborhood $V$ of $\gamma(t)$ such that $V\subseteq U$. As $\gamma^{-1}(V)$ is a neighborhood of $t$ in $[a,b]$, for $k$ large any sub-interval of $\sigma_k$ containing
$t$ lies entirely in $V$, so in particular $\gamma_k(t)\in U$.
Consequently, $\gamma_k\to\gamma$ pointwise and thus by the limit curve theorem \ref{thm-lim-cur} 
$\gamma$ is a (future directed) causal curve in the sense of Definition \ref{def-cc}.  
\end{pr}

\begin{prop}[Local existence of maximal causal curves] \label{prop-loc-ex-max-cc}
 Let $(X,d)$ be a locally compact metric space. Then every point in $Y=I\times_f X$ has a neighborhood $U$ such that any two causally related points in $U$ can be connected by a maximal causal curve.
\end{prop}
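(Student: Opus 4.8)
The plan is to realize a maximizer as the limit of a length--maximizing sequence, combining the compactness furnished by local compactness of $X$ with the upper semicontinuity of the length functional (Proposition \ref{prop-L-usc}) and the limit curve theorem (Theorem \ref{thm-lim-cur}).

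First I would choose the neighborhood $U$. Fix $y=(y_0,\bar p)\in Y$. Using local compactness of $X$, pick $\rho>0$ such that the closed ball $\bar B^d_\rho(\bar p)$ is compact, and a compact interval $I'=[y_0-\delta,y_0+\delta]\subseteq I$ with $C:=\min_{r\in I'}f(r)>0$. After shrinking $\delta$ and choosing $\eta>0$ with $2\delta/C+\eta\le\rho$, set $U:=(y_0-\delta,y_0+\delta)\times B^d_\eta(\bar p)$. By Lemma \ref{lem-jpq}, for any $p=(t_0,\bar p_0)$, $q=(t_1,\bar q_0)\in U$ with $p\le q$, every point $(t,\bar r)\in J(p,q)$ satisfies $t\in I'$ and $d(\bar p_0,\bar r)\le (t-t_0)/m_{t_0,t}\le 2\delta/C$, so that $\bar r\in\bar B^d_\rho(\bar p)$. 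Hence $J(p,q)$ is contained in the compact set $\mathcal K:=I'\times\bar B^d_\rho(\bar p)$, and this is the only place where local compactness is needed.

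Next I would set up the maximizing sequence. For $p\le q$ in $U$ with $p\ne q$ we have $t_0<t_1$ (since $t$ is a time function by Lemma \ref{lem-causally-convex-nbhds}), and any future directed causal curve from $p$ to $q$, reparametrized so that $\alpha=\mathrm{id}$, has length $\int_{t_0}^{t_1}\sqrt{1-(f\circ\alpha)^2 v_\beta^2}\le t_1-t_0$, whence $\tau(p,q)\le t_1-t_0<\infty$. Choose future directed causal curves $\gamma_n$ from $p$ to $q$ with $L(\gamma_n)\to\tau(p,q)$. By Lemma \ref{lem-ac-lip} I may reparametrize each as $\gamma_n(t)=(t,\beta_n(t))$ on $[t_0,t_1]$; causality then yields $v_{\beta_n}\le 1/(f\circ\alpha)\le 1/C$, so the $\beta_n$ are uniformly $1/C$-Lipschitz, and their images lie in the compact set $\bar B^d_\rho(\bar p)$. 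By the Arzel\`a--Ascoli theorem for maps into a compact metric space, a subsequence $\beta_{n_k}$ converges uniformly to a $1/C$-Lipschitz curve $\beta\colon[t_0,t_1]\to X$, and consequently $\gamma_{n_k}\to\gamma:=(\mathrm{id},\beta)$ pointwise.

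Finally I would extract maximality. The limit $\gamma$ is Lipschitz, hence absolutely continuous, with $\dot\alpha\equiv 1\neq 0$, so Theorem \ref{thm-lim-cur} applies and $\gamma$ is a future directed causal curve from $p$ to $q$. Upper semicontinuity (Proposition \ref{prop-L-usc}) gives $\tau(p,q)=\lim_k L(\gamma_{n_k})=\limsup_k L(\gamma_{n_k})\le L(\gamma)$, while $L(\gamma)\le\tau(p,q)$ by definition of $\tau$; hence $L(\gamma)=\tau(p,q)$ and $\gamma$ is maximal (for $p=q$ there is nothing to prove). I expect the main obstacle to be the first step, namely guaranteeing that \emph{all} causal diamonds $J(p,q)$ with $p,q\in U$ lie in one fixed compact set: this is exactly where local compactness of $X$ must be combined quantitatively with the ball estimate of Lemma \ref{lem-jpq}. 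Once this compactness is secured, the Arzel\`a--Ascoli/limit-curve/upper-semicontinuity machinery proceeds as in the smooth case.
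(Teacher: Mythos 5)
Your proof is correct, and its core strategy coincides with the paper's: take a length-maximizing sequence, extract a uniform limit via Arzel\`a--Ascoli (this is exactly where local compactness of $X$ enters), verify that the limit is a future directed causal curve, and conclude maximality from upper semicontinuity of $L$ (Proposition \ref{prop-L-usc}). The differences are in the supporting lemmas, and they are worth noting. The paper builds $U$ as a causally convex neighborhood (Lemma \ref{lem-causally-convex-nbhds}) inside $I'\times W$ with $W$ a compact neighborhood of $\bar p$, bounds the $D$-arclength of causal curves via Lemma \ref{lem-D-com}, and reparametrizes the maximizing sequence proportional to $D$-arclength; the resulting Arzel\`a--Ascoli limit could a priori be locally constant, and the paper has to reparametrize it before asserting it is causal --- a step it treats rather tersely. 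You instead secure compactness directly from the quantitative ball estimate of Lemma \ref{lem-jpq}, so that every connecting causal curve lies in $J(p,q)\subseteq\mathcal{K}$ (which also replaces the paper's ``local maximality implies global maximality'' remark based on causal convexity), and you parametrize the curves as graphs $(t,\beta_n(t))$ via Lemma \ref{lem-ac-lip}. That parametrization buys a genuine simplification at the delicate point: since $\dot\alpha\equiv 1$, the hypothesis $\dot\alpha\neq 0$ a.e.\ of the limit curve theorem (Theorem \ref{thm-lim-cur}) is automatic, so the limit is immediately a future directed causal curve with no reparametrization or local-constancy issues. Both routes are sound; yours is tightest exactly where the paper's argument is vaguest.
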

\begin{pr}
Let $p\in Y$,  $U'=I'\times X$ and $C>0$ be given by Lemma \ref{lem-D-com}, let $W\subseteq X$ be a compact neighborhood of $\bar p$ in $X$ and set $V:=I' \times W$. Further, let $U \subseteq V \subseteq U'$ be causally convex in $V$ (cf.\ Lemma \ref{lem-causally-convex-nbhds}). Let $y,z\in U$ with $y<z$, and note that any causal curve from $y$ to $z$ has to be contained in $U$. So local maximality in $U$ implies global maximality. Let $\gamma_n\colon[a,b]\rightarrow Y$ be a sequence of future directed causal curves from $y$ to $z$ such that $L(\gamma_n)\to\tau(y,z)$. Then, by Lemma \ref{lem-D-com}, $L^D(\gamma_n)\leq C$ and so reparametrizing each $\gamma_n$ proportional to $D$-arclength on $[a,b]$ yields a sequence of uniformly $D$-Lipschitz curves $\tilde\gamma_n$ each of which is future directed causal. By the theorem of Arzela-Ascoli (the sequence  $\gamma_n$ is contained in the compact set $V$) we obtain a subsequence $(\tilde\gamma_{n_k})_k$ of $(\tilde\gamma_n)_n$ that convergences uniformly to a Lipschitz curve $\gamma$ from $y$ to $z$. As $y<z$ this curve cannot be constant and so by possibly reparametrizing $\gamma$ such that it is never 
locally constant we obtain a future directed causal curve $\gamma$ from $y$ to $z$ that is contained in $U$. Moreover, by Proposition \ref{prop-L-usc} we get that
\begin{align}
 L(\gamma)\leq\tau(y,z) = \limsup_k L(\gamma_{n_k})\leq L(\gamma)\,,
\end{align}
so $\gamma$ is maximal.
\end{pr}

A similar argument gives that $Y$ is \emph{locally causally closed} (\cite[Def.\ 3.4]{KS:18}):
\begin{lem}\label{lem-loc-cau-clo}
 Let $(X,d)$ be a locally compact metric space. Then every point in $Y$ has a neighborhood $U$ such that for any $y_n,z_n\in Y$ with $y_n\to y\in\bar U$, $z_n\to z\in\bar U$ 
and $y_n\leq z_n$ for all $n\in\N$, it follows that $y\leq z$.
\end{lem}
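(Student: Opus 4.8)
The plan is to mimic the proof of Proposition \ref{prop-loc-ex-max-cc}, replacing the upper-semicontinuity argument for maximality with the limit curve theorem. First I would fix $p=(p_0,\bar p)\in Y$ and invoke Lemma \ref{lem-D-com} to obtain a neighborhood $U'=I'\times X$ (with $I'\subseteq I$ a compact interval containing $p_0$) together with a constant $C_1>0$ bounding the $D$-arclength of every causal curve contained in $U'$; set also $C:=\min_{r\in I'}f(r)>0$. Using local compactness of $X$ I would pick a compact neighborhood $W$ of $\bar p$ and put $V:=I'\times W$, which is compact in $Y$. Finally, using the causally convex basis of Lemma \ref{lem-causally-convex-nbhds} I would select two such neighborhoods $U\subseteq\hat U$ with $\bar U\subseteq\hat U\subseteq V$, and declare $U$ to be the neighborhood in the statement. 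The role of the larger set $\hat U$ is to absorb the sequences: since $y,z\in\bar U\subseteq\hat U$ and $\hat U$ is open, the hypotheses $y_n\to y$, $z_n\to z$ give $y_n,z_n\in\hat U$ for all large $n$, and causal convexity of $\hat U$ then forces every causal curve joining them to lie in $\hat U\subseteq V$.

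Next I would split into cases. If $y=z$ there is nothing to prove, since $\leq$ is reflexive (Lemma \ref{lem-cau-rel}). So assume $y\neq z$, hence $D(y,z)>0$. For large $n$ I would choose a future directed causal curve $\gamma_n=(\alpha_n,\beta_n)$ from $y_n$ to $z_n$; by the previous paragraph $\gamma_n\subseteq V$, and by Lemma \ref{lem-D-com} its $D$-length is bounded by $C_1$. Reparametrizing each $\gamma_n$ proportionally to $D$-arclength on $[0,1]$ turns it into a curve with Lipschitz constant $\ell_n:=L^D(\gamma_n)\leq C_1$, so the family is uniformly Lipschitz and takes values in the compact set $V$. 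The theorem of Arzela-Ascoli then yields a subsequence converging uniformly to a Lipschitz curve $\gamma=(\alpha,\beta)\colon[0,1]\to V$ with $\gamma(0)=y$ and $\gamma(1)=z$.

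To apply the limit curve theorem \ref{thm-lim-cur} I still need $\dot\alpha\neq 0$ almost everywhere for the limit, and this is where $y\neq z$ enters. Along each (causal, $D$-arclength-parametrized) $\gamma_n$ one has $v_{\beta_n}\leq\dot\alpha_n/C$, whence $\ell_n^2=\dot\alpha_n^2+v_{\beta_n}^2\leq\dot\alpha_n^2(1+C^{-2})$ and therefore $\dot\alpha_n\geq \ell_n\,C/\sqrt{C^2+1}$ almost everywhere. Since $\ell_n\geq D(y_n,z_n)\to D(y,z)>0$, for large $n$ this gives a uniform bound $\dot\alpha_n\geq c_0>0$, i.e.\ $\alpha_n(t)-\alpha_n(s)\geq c_0(t-s)$ for $s<t$; passing to the uniform limit yields $\alpha(t)-\alpha(s)\geq c_0(t-s)$ and hence $\dot\alpha\geq c_0>0$ almost everywhere. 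Theorem \ref{thm-lim-cur} now applies and shows that the limit $\gamma$ is a future directed causal curve from $y$ to $z$, so $y\leq z$, as desired.

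The main obstacle is the bookkeeping that keeps the connecting curves inside a fixed compact set: because the statement only assumes $y_n\to y\in\bar U$ (with $y_n$ possibly outside $U$), one cannot use causal convexity of $U$ itself and must interpose the slightly larger causally convex neighborhood $\hat U$. The second delicate point is that the limit curve theorem requires the time component of the limit curve to have nonvanishing derivative; this fails precisely in the degenerate case $y=z$, which is exactly why the case split is needed and why the uniform lower bound on $\dot\alpha_n$ (available only when $D(y,z)>0$) is the crux of the causal part of the argument.
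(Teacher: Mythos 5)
Your proposal is correct and is essentially the paper's own argument: the paper proves this lemma only by the remark that ``a similar argument'' to Proposition \ref{prop-loc-ex-max-cc} applies, i.e.\ exactly your combination of Lemma \ref{lem-D-com}, a compact set $V=I'\times W$ from local compactness, causally convex neighborhoods (Lemma \ref{lem-causally-convex-nbhds}) to trap the connecting curves, Arzel\`a--Ascoli after reparametrization by $D$-arclength, and the limit curve theorem \ref{thm-lim-cur}; your extra bookkeeping (the larger causally convex set $\hat U$, the case split $y=z$ versus $y\neq z$, and the uniform lower bound on $\dot\alpha_n$) supplies precisely the details the paper leaves implicit. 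One cosmetic slip: $D$ is the sum metric, so the $D$-speed of $\gamma_n=(\alpha_n,\beta_n)$ is $\dot\alpha_n+v_{\beta_n}$ rather than $\sqrt{\dot\alpha_n^2+v_{\beta_n}^2}$, which changes your estimate to $\dot\alpha_n\geq \ell_n\,C/(C+1)$ almost everywhere but leaves the argument intact.
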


The next step is to establish that the length of a causal curve agrees with the $\tau$-length introduced in \cite[Def.\ 2.24]{KS:18}.
Recall that the $\tau$-length, $L_\tau(\gamma)$, is defined as
\begin{align}
 L_\tau(\gamma):=\inf\{\sum_{i=0}^{N-1}\tau(\gamma(t_i),\gamma(t_{i+1})): a=t_0<t_1<\ldots<t_N=b\}\,,
\end{align}
where $\gamma$ is a future directed causal curve (and by Lemma \ref{lem-cc-rcc} this is the same as causal with respect to $\leq$).
\begin{prop}\label{prop-L-Ltau}
 Let $(X,d)$ be a locally compact metric space. If $\gamma\colon[a,b]\rightarrow Y$ is  a future directed causal curve, then $L(\gamma)=L_\tau(\gamma)$.
\end{prop}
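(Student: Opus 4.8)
The plan is to prove the two inequalities $L(\gamma)\le L_\tau(\gamma)$ and $L_\tau(\gamma)\le L(\gamma)$ separately, in both cases exploiting the variational description $L=\lv$ from Proposition~\ref{prop-l-lv} together with the reverse triangle inequalities available for $\tau$ and for the auxiliary quantity $T((u,x),(u',x')):=(u'-u)^2-m_{u,u'}^2\,d(x,x')^2$.

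First, the lower bound $L(\gamma)\le L_\tau(\gamma)$ is immediate. For any partition $a=t_0<\dots<t_N=b$ the restriction $\gamma|_{[t_i,t_{i+1}]}$ is a future directed causal curve from $\gamma(t_i)$ to $\gamma(t_{i+1})$, so by Definition~\ref{def-tau} one has $\tau(\gamma(t_i),\gamma(t_{i+1}))\ge L(\gamma|_{[t_i,t_{i+1}]})$. Summing and using additivity of $L$ gives $\sum_i \tau(\gamma(t_i),\gamma(t_{i+1}))\ge L(\gamma)$; since this holds for every partition, passing to the infimum yields $L_\tau(\gamma)\ge L(\gamma)$.

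The reverse inequality is the crux, and I would first establish the pointwise bound: for causally related $y=(s,p)\le y'=(t,q)$,
\[
\tau(y,y')\le \sqrt{(t-s)^2-m_{s,t}^2\,d(p,q)^2}=\sqrt{T(y,y')}.
\]
To see this, take any future directed causal curve $\delta=(\tilde\alpha,\tilde\beta)$ from $y$ to $y'$. By Proposition~\ref{prop-l-lv}, $L(\delta)=\lv(\delta)$ is the infimum over partitions of the corresponding sums of $\sqrt{T}$-terms, each of which is well defined (non-negative radicand) by Lemma~\ref{lem-der-var-geq-0}. Iterating the super-additivity of $\sqrt{T}$ from Lemma~\ref{lem-rev-tri-ine}(ii)—repeatedly combining two adjacent subintervals, invoking part~(i) to keep the radicands non-negative at each step—shows that \emph{every} partition sum for $\delta$ is bounded above by $\sqrt{T(y,y')}$. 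Hence $L(\delta)=\lv(\delta)\le\sqrt{T(y,y')}$, and taking the supremum over all such $\delta$ gives the claimed bound on $\tau(y,y')$.

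Finally I would assemble the upper bound: applying the pointwise estimate to each subinterval of an arbitrary partition gives
\[
\sum_{i=0}^{N-1}\tau(\gamma(t_i),\gamma(t_{i+1}))\le \sum_{i=0}^{N-1}\sqrt{(\alpha(t_{i+1})-\alpha(t_i))^2-m_{\alpha(t_i),\alpha(t_{i+1})}^2\,d(\beta(t_i),\beta(t_{i+1}))^2}.
\]
Taking the infimum over all partitions of both sides—the left-hand side is by definition $L_\tau(\gamma)$ and the right-hand side is $\lv(\gamma)=L(\gamma)$ by Proposition~\ref{prop-l-lv}—yields $L_\tau(\gamma)\le L(\gamma)$, and combined with the lower bound this gives $L(\gamma)=L_\tau(\gamma)$. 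The main obstacle is the pointwise estimate $\tau\le\sqrt{T}$: although $\tau$ a priori encodes the full warping function $f$ along a curve, the reverse triangle inequality for the coarser quantity $T$—which only registers the minimum of $f$—nevertheless dominates it once $L=\lv$ is used to convert the super-additivity of $\sqrt{T}$ into a bound on the supremum defining $\tau$. I note that local compactness of $X$ is not in fact needed for this particular argument.
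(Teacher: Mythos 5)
Your proof is correct, and for the substantive inequality $L_\tau(\gamma)\le L(\gamma)$ it takes a genuinely different route from the paper's. The paper covers $\gamma$ by neighborhoods in which maximal causal curves exist (Proposition \ref{prop-loc-ex-max-cc}, which is exactly where local compactness and an Arzela--Ascoli argument enter), concatenates these maximizers along ever finer partitions to obtain causal curves $\gamma_k\to\gamma$ pointwise with $L(\gamma_k)\ge L_\tau(\gamma)$, and concludes via upper semicontinuity of $L$ (Proposition \ref{prop-L-usc}). You instead establish the pointwise estimate $\tau(y,y')\le\sqrt{T(y,y')}$ for causally related points directly from $L=\lv$ (Proposition \ref{prop-l-lv}); note that your iteration of the super-additivity in Lemma \ref{lem-rev-tri-ine}(ii) is not actually needed for this step, since the trivial two-point partition in the infimum defining $\lv(\delta)$ already gives $L(\delta)=\lv(\delta)\le\sqrt{T(y,y')}$ for every causal curve $\delta$ from $y$ to $y'$, and the supremum over $\delta$ then bounds $\tau$. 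Summing this estimate over an arbitrary partition of $[a,b]$ and taking infima (the left-hand infimum being $L_\tau(\gamma)$, the right-hand one being $\lv(\gamma)=L(\gamma)$) is a valid monotonicity argument for infima, so the assembly is sound. This realizes the idea hinted at in the remark following Lemma \ref{lem-rev-tri-ine}, that the coarser functional $T$, which only sees the minimum of $f$, nevertheless controls the variational length, and it buys a genuine strengthening: your argument uses no local compactness, no limit-curve machinery, and no semicontinuity, so the identity $L=L_\tau$ holds for generalized cones over arbitrary metric space fibers, as you observe. The paper's heavier toolkit is not wasted, since the same localization results are needed elsewhere (e.g., for localizability in Theorem \ref{thm-wpd-lls}), but for this particular proposition your proof is both simpler and more general; your first inequality $L(\gamma)\le L_\tau(\gamma)$ coincides with the paper's argument.
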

\begin{pr}
Let $a=t_0<t_1\ldots<t_N=b$ be a partition of $[a,b]$, then
\begin{align}
 \sum_{i=0}^{N-1}\tau(\gamma(t_i),\gamma(t_{i+1}))\geq \sum_{i=0}^{N-1} L(\gamma\rvert_{[t_i,t_{i+1}]}) = L(\gamma)\,,
\end{align}
as $L$ is additive. Taking the infimum over all partitions of $[a,b]$ yields $L(\gamma)\leq L_\tau(\gamma)$.

For the reverse inequality we cover $\gamma([a,b])$ by neighborhoods $U_0,\ldots,U_N$ 
as in Proposition \ref{prop-loc-ex-max-cc} and choose a 
partition $\sigma:=(a=t_0<t_1<\ldots<t_{N+1}=b)$ such that $\gamma(t_{i+1})\in U_i\cap U_{i+1}$ for every $i=0,\ldots,N-1$. Consequently, 
there are future directed maximal causal curves $\gamma_i^\sigma$ from $\gamma(t_i)$ to $\gamma(t_{i+1})$ for $i=0,\ldots,N$. The future 
directed causal curve $\gamma^\sigma:=\gamma_0^\sigma*\ldots*\gamma_N^\sigma$ has length
\begin{align}
 L(\gamma^\sigma) = \sum_{i=0}^N L(\gamma_i^\sigma) = \sum_{i=0}^N \tau(\gamma(t_i),\gamma(t_{i+1})) \geq L_\tau(\gamma)\,.
\end{align}
By shrinking the cover $(U_i)_i$ and adapting the partition $\sigma$ accordingly we get a sequence $\gamma_k$ of future directed causal curves which, by an argument as in the proof of Lemma \ref{lem-cc-rcc}
converges pointwise to $\gamma$, and satisfies $L(\gamma_k)\geq L_\tau(\gamma)$ for all $k\in\N$. As $L$ is upper semicontinuous by 
Proposition \ref{prop-L-usc} we get $L(\gamma)\geq L_\tau(\gamma)$ and this finishes the proof.
\end{pr}
Thus there is no need to distinguish between $L$ and $L_\tau$ and the different notions of causal curves also agree, so when applying the 
theory of \LLSn s to \wpds\ we will always use the notions of the latter.

\begin{thm}\label{thm-wpd-lls}
 Any \wpd\ $I\times_f X$, where $(X,d)$ is a locally compact length space, is a strongly causal \LLSn.
\end{thm}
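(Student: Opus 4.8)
The plan is to verify directly that $(Y,D,\ll,\leq,\tau)$ satisfies every clause of the definition of a strongly causal Lorentzian length space (\cite[Def.\ 3.22]{KS:18}), assembling the auxiliary results established above. Most clauses are already in hand: $Y$ is a Lorentzian pre-length space by Proposition \ref{prop-wpd-lpls}; it is strongly causal by Lemma \ref{lem-causally-convex-nbhds}; and it is locally causally closed by Lemma \ref{lem-loc-cau-clo} (here local compactness of $X$ enters). Causal path-connectedness holds by construction, since $\ll$ and $\leq$ were defined precisely through the existence of a future directed timelike, respectively causal, curve. Finally the intrinsicness $\tau=\mathcal{T}$ is immediate: by Proposition \ref{prop-L-Ltau} we have $L(\gamma)=L_\tau(\gamma)$ for every future directed causal curve, so
\[
\mathcal{T}(y,y') = \sup\{L_\tau(\gamma)\} = \sup\{L(\gamma)\} = \tau(y,y'),
\]
the last equality being Definition \ref{def-tau}.

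The remaining and central task is to establish \emph{localizability} in the sense of \cite[Def.\ 3.16]{KS:18}. Given $p\in Y$, I would choose a neighborhood $\Omega$ that is simultaneously (i) causally convex (Lemma \ref{lem-causally-convex-nbhds}), (ii) contained in a set on which the $D$-arclength of causal curves is bounded by a constant $C$ (Lemma \ref{lem-D-com}), and (iii) small enough that any two causally related points of $\Omega$ are joined by a maximal causal curve (Proposition \ref{prop-loc-ex-max-cc}); it is again local compactness of $X$ that makes (iii) available, via the Arzel\`a--Ascoli step in that proof. The $D$-length bound from Lemma \ref{lem-D-com} is exactly condition (i) of localizability. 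For condition (ii) I set $\omega:=\tau|_{\Omega\times\Omega}$: because $\Omega$ is causally convex, every causal curve between points of $\Omega$ stays in $\Omega$, so $\omega$ coincides with the intrinsic time separation of $\Omega$ as a pre-length space, and the required maximal realizers are supplied by Proposition \ref{prop-loc-ex-max-cc}.

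The one genuinely new point — and the step I expect to be the main obstacle — is that $\omega=\tau|_{\Omega\times\Omega}$ must be \emph{continuous}, whereas only lower semicontinuity is known so far (Lemma \ref{lem-tau-lsc}). I would obtain upper semicontinuity on $\Omega$ by a limit-curve argument mirroring the proof of Proposition \ref{prop-loc-ex-max-cc}: given $y_n\to y$ and $y'_n\to y'$ in $\Omega$, pass to a subsequence realizing $\limsup_n \tau(y_n,y'_n)$; indices with $y_n\not< y'_n$ contribute $0$, so we may assume $y_n< y'_n$, take maximal curves $\gamma_n=(\alpha_n,\beta_n)$ with $L(\gamma_n)=\tau(y_n,y'_n)$, reparametrize them proportionally to $D$-arclength so that they are uniformly Lipschitz and lie in a fixed compact set, and extract a uniformly convergent subsequence with limit $\gamma$ from $y$ to $y'$. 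If $y<y'$, then $\gamma$ is a non-constant causal curve by local causal closedness, and upper semicontinuity of $L$ (Proposition \ref{prop-L-usc}) gives $\limsup_n \tau(y_n,y'_n)=\limsup_n L(\gamma_n)\leq L(\gamma)\leq \tau(y,y')$. The degenerate case $y=y'$ is disposed of by the elementary bound $L(\gamma_n)\leq \alpha_n(b)-\alpha_n(a)=t(y'_n)-t(y_n)\to 0$, using that the base projection $t$ is a time function (Lemma \ref{lem-causally-convex-nbhds}). Combined with Lemma \ref{lem-tau-lsc}, this yields continuity of $\omega$, completes the verification of localizability, and hence proves that $Y$ is a strongly causal Lorentzian length space.
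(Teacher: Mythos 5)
Your proof is correct and takes essentially the same route as the paper: the identical decomposition (Proposition \ref{prop-wpd-lpls}, Lemma \ref{lem-loc-cau-clo}, causal path-connectedness by definition, $\tau=\mathcal{T}$ via Proposition \ref{prop-L-Ltau}) and the identical localizing neighborhoods ($\omega=\tau|_{U\times U}$ on a causally convex set chosen via Lemma \ref{lem-causally-convex-nbhds}, Lemma \ref{lem-D-com} and Proposition \ref{prop-loc-ex-max-cc}), where your explicit limit-curve argument for upper semicontinuity of $\omega$ is precisely what the paper obtains by adapting the proof of \cite[Thm.\ 3.28]{KS:18}. The only minor points where the paper is more careful: the identification $\tau=\mathcal{T}$ also invokes Lemma \ref{lem-cc-rcc} to match the two classes of causal curves, localizability in \cite[Def.\ 3.16]{KS:18} additionally requires $I^\pm(y)\cap U\neq\emptyset$ (trivial since $U$ is open and $s\mapsto (t\pm s,x)$ is timelike), and strong causality in the sense of \cite[Def.\ 2.35(iv)]{KS:18} is deduced from Lemma \ref{lem-causally-convex-nbhds} via \cite[Thm.\ 3.26(iv)]{KS:18} rather than directly.
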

\begin{pr}
 By Proposition \ref{prop-wpd-lpls} and Lemma \ref{lem-loc-cau-clo} $Y$ is a locally causally closed \LpLSn. Moreover, by 
definition of the causal relations it is \emph{causally path connected}.

Since the different notions of causal curves agree by Lemma \ref{lem-cc-rcc} and as $L_\tau=L$ 
by Proposition \ref{prop-L-Ltau}, we directly obtain that $\tau=\Tau$, where
\begin{align}
 \Tau(y,z)=\sup\{L_\tau(\gamma):\gamma \text{ future-directed causal from }y \text{ to } z\}\,,
\end{align}
 if the set is non-empty and $\Tau(y,z)=0$ otherwise.\footnote{Note that this could also be inferred from the more general Theorem \ref{thm:LLS-bareLorLength}.}
 
It remains to show that $Y$ is \emph{localizable} (\cite[Def.\ 3.16]{KS:18}), i.e., we need to show that every point $y\in Y$ has an open neighborhood $\Omega$ such that
\begin{itemize}
 \item $L^d(\gamma)\leq C$ for some $C>0$ and all causal curves $\gamma$ contained in $\Omega$,
 \item there is a continuous $\omega\colon\Omega\times\Omega\rightarrow [0,\infty)$ such that $(\Omega,d\rvert_{\Omega\times\Omega},\ll\rvert_{\Omega\times\Omega},\leq\rvert_{\Omega\times\Omega},\omega)$ is a \LpLS with $I^\pm(o)\cap\Omega\neq\emptyset$ for all $o\in\Omega$, and
 \item for all $o,o'\in \Omega$ with $o<o'$ there is a future directed causal curve $\gamma$ from $o$ to $o'$ that is maximal in $\Omega$ and $L(\gamma)=\omega(o,o')\leq\tau(o,o')$.
\end{itemize}

To this end we apply the argument of the proof 
of \cite[Lem.\ 4.3]{GKS:19} to see that we can use $\omega:=\tau\rvert_{U\times U}$ for a suitable neighborhood $U$ of a 
point $y=(t_0,x)\in Y$. Such a suitable neighborhood can be chosen by taking $U$ to be one of the causally convex neighborhoods from Lemma \ref{lem-causally-convex-nbhds} that is contained in the neighborhoods of Lemma \ref{lem-D-com} and Proposition 
\ref{prop-loc-ex-max-cc}. Thus $\omega$ is finite and lower semicontinuous. To see that $\omega$ is also upper semicontinuous 
note that we can adapt the proof of \cite[Thm.\ 3.28]{KS:18} to the simpler local situation in $U$ by using the local 
existence of maximal causal curves (Proposition \ref{prop-loc-ex-max-cc}) and the upper semi-continuity of $L$ (Proposition 
\ref{prop-L-usc}). Moreover, since $U$ is open, one has $I^\pm((t_0,x))\cap U\neq\emptyset$.

This yields that $Y$ is localizable and hence by the above is a \LLSn. It also implies that $Y$ is strongly causal in the sense 
of \cite[Def.\ 2.35(iv)]{KS:18} by using the result for \LLSn s \cite[Thm.\ 3.26(iv)]{KS:18} and Lemma \ref{lem-causally-convex-nbhds}.
\end{pr}

Further, the \LLS $I\times_f X$ is \emph{regular}, i.e., maximal causal curves have a causal character (cf.\ \cite[Def.\ 3.22]{KS:18}). Thus by Proposition \ref{thm-wpd-lls} and Corollary \ref{cor-f-lip} we immediately get the following:
\begin{cor}
  Any \wpd\ $I\times_f X$, where $(X,d)$ is a geodesic locally compact length space, is a strongly causal and regular \LLSn.
\end{cor}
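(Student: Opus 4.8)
The plan is to obtain this corollary directly by combining two results already established, with essentially no new content to supply. First I would invoke Theorem~\ref{thm-wpd-lls}: since a geodesic locally compact length space is in particular a locally compact length space, that theorem immediately yields that $Y=I\times_f X$ is a strongly causal \LLSn. This already settles the ``strongly causal'' half of the claim, so the only remaining task is to verify regularity.

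Recall that regularity (cf.\ \cite[Def.\ 3.22]{KS:18}) requires precisely that every maximal causal curve has a causal character, i.e.\ is either timelike or null. Here I would appeal to Corollary~\ref{cor-f-lip}, which asserts exactly this for maximizing causal curves in $Y=I\times_f X$ over a geodesic length space. Two small compatibility checks are in order. On the one hand, ``maximal'' in the sense of \LLSn s means $L_\tau(\gamma)=\tau(\gamma(a),\gamma(b))$, which by Proposition~\ref{prop-L-Ltau} ($L=L_\tau$) coincides with the notion of a maximizing curve used in Corollary~\ref{cor-f-lip}. On the other hand, Corollary~\ref{cor-f-lip} is phrased for the symmetric domain $[-b,b]$, but since length and causal character are invariant under reparametrization (Corollary~\ref{cor:reparametrization}), the same conclusion carries over to a maximizing causal curve defined on any compact interval.

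Putting these together, every maximal causal curve in $Y$ is timelike or null, so $Y$ is regular; combined with the strong causality and \LLSn\ structure from Theorem~\ref{thm-wpd-lls}, this gives that $I\times_f X$ is a strongly causal and regular \LLSn, as claimed. The only conceivable obstacle is the minor bookkeeping needed to match the hypotheses and parametrization conventions of the two cited results, but both points reduce to the reparametrization invariance and the identification $L=L_\tau$ established earlier, so there is no genuinely new analytic work required.
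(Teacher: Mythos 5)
Your proposal is correct and is exactly the paper's argument: the paper likewise obtains this corollary by combining Theorem~\ref{thm-wpd-lls} (strongly causal \LLSn) with Corollary~\ref{cor-f-lip} (maximal causal curves have a causal character, i.e.\ regularity). The compatibility checks you spell out ($L=L_\tau$ via Proposition~\ref{prop-L-Ltau} and reparametrization invariance) are exactly the identifications the paper relies on implicitly, having already noted that the notions of causal curves and lengths agree.
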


Lemma \ref{lem-jpq} shows that if $X$ is proper the causal diamonds $J(p,q)$ are pre-compact. Moreover, by Lemma \ref{lem-causally-convex-nbhds} any \wpd\ is strongly causal. This is already close to the usual notion of global hyperbolicity. In the next Proposition we will show that \wpds, where $X$ is proper and geodesic, are in fact globally hyperbolic (as defined for \LLSn s in \cite[Def.\ 2.35(v)]{KS:18}).

\begin{prop}
  Let $I\times_f X$ be a \wpd, where $(X,d)$ is a geodesic length space that is a proper metric space. Then $I\times_f X$ is 
globally hyperbolic.
\end{prop}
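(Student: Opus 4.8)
The plan is to verify the two conditions that define global hyperbolicity for Lorentzian length spaces in \cite[Def.\ 2.35(v)]{KS:18}: that $Y=I\times_f X$ is non-totally imprisoning and that every causal diamond $J(p,q)=J^+(p)\cap J^-(q)$ is compact. The first condition comes essentially for free: Lemma \ref{lem-causally-convex-nbhds} already shows that $Y$ is strongly causal, and strong causality lies above non-total imprisonment in the causal ladder for Lorentzian length spaces (cf.\ \cite[Thm.\ 3.26]{KS:18}), so $Y$ is in particular non-totally imprisoning. Hence the whole content of the proof lies in showing that $J(p,q)$ is compact, and the strategy is the classical one of realizing it as a closed subset of a compact set.

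To this end I would fix $p=(t_0,\bar p)$ and $q=(t_1,\bar q)$, noting that we may assume $t_0\le t_1$ since otherwise $J(p,q)=\emptyset$. Lemma \ref{lem-jpq} already confines $J(p,q)$ to the set of points $(t,\bar r)$ with $t\in[t_0,t_1]$ and $\bar r$ lying in an intersection of closed $d$-balls around $\bar p$ and $\bar q$ of radii $\tfrac{t-t_0}{m_{t_0,t}}$ and $\tfrac{t_1-t}{m_{t,t_1}}$. The decisive point is that these radii are uniformly bounded in $t$: since $m_{t_0,t}=\min_{r\in[t_0,t]}f(r)\ge \min_{r\in[t_0,t_1]}f(r)=:m>0$ for every $t\in[t_0,t_1]$, the first radius never exceeds $(t_1-t_0)/m$, so every point of $J(p,q)$ has fiber component in the single closed ball $\bar B^d_{(t_1-t_0)/m}(\bar p)$. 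As $X$ is proper this ball is compact, whence
\[
J(p,q)\subseteq [t_0,t_1]\times \bar B^d_{(t_1-t_0)/m}(\bar p),
\]
a compact subset of $Y$.

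Finally, because $X$ is geodesic, Corollary \ref{cor-J+-closed} shows that $J^+(p)$ and $J^-(q)$ are closed, hence so is $J(p,q)=J^+(p)\cap J^-(q)$; being a closed subset of a compact set, $J(p,q)$ is then compact. Together with non-total imprisonment this establishes global hyperbolicity. Since Lemma \ref{lem-jpq} and Corollary \ref{cor-J+-closed} already carry out the genuine geometric work, the only step demanding care is the uniform bound on the ball radii — it is exactly the positivity of $\min_{r\in[t_0,t_1]}f(r)$ that traps all fiber components in one fixed compact ball instead of a $t$-dependent family, and this is what converts the diamond estimate of Lemma \ref{lem-jpq} into an honest compact enclosure.
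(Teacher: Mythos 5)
Your proposal is correct and follows essentially the same route as the paper's proof: non-total imprisonment via strong causality and the causal ladder of \cite[Thm.\ 3.26]{KS:18}, precompactness of $J(p,q)$ from Lemma \ref{lem-jpq} together with properness of $X$, and closedness from Corollary \ref{cor-J+-closed}. The only difference is that you make explicit the uniform bound $m_{t_0,t}\geq m_{t_0,t_1}>0$ on the ball radii, which the paper leaves implicit when asserting that Lemma \ref{lem-jpq} traps the diamond in a compact set.
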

\begin{pr}
From Theorem \ref{thm-wpd-lls} we know that $Y$ is a strongly causal \LLS and hence non-totally imprisoning by \cite[Thm.\ 
3.26(iii)]{KS:18}. Moreover, 
from Corollary  \ref{cor-J+-closed} we know that $J^\pm(p)$ is closed for every $p\in Y$ and Lemma \ref{lem-jpq} implies that for all $p,q\in Y$ 
the causal diamond $J(p,q)$ is contained in a compact set. Thus $J(p,q)$ is compact and so $Y$ is globally hyperbolic
in the sense of \cite[Def.\ 2.35(v)]{KS:18}.
\end{pr}

As any complete and locally compact length space is proper and geodesic (by the Hopf-Rinow-Cohn-Vossen theorem) we obtain 
the following corollary.
\begin{cor}
 Let $I\times_f X$ be a \wpd, where $X$ is a locally compact, complete length space. Then $I\times_f X$ is globally hyperbolic.
\end{cor}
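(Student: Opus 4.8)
The plan is to deduce the corollary directly from the preceding Proposition by checking that its hypotheses are met. That Proposition already establishes global hyperbolicity of a \wpd\ $I\times_f X$ whenever the fiber $X$ is a \emph{proper} metric space that is also a \emph{geodesic} length space; hence it suffices to argue that local compactness together with completeness of $X$ forces both of these properties.

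Concretely, first I would invoke the Hopf--Rinow--Cohn-Vossen theorem for length spaces (cf.\ \cite[Thm.\ 2.5.28]{BBI:01}). In the generality of length spaces this classical result states that if $(X,d)$ is locally compact and complete, then $X$ is proper (every closed bounded subset, equivalently every closed metric ball, is compact) and, as a consequence, $X$ is a geodesic length space (any two points are joined by a minimizing geodesic). Thus, under the hypotheses of the corollary, the fiber $X$ satisfies exactly the two standing assumptions of the preceding Proposition.

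With properness and the geodesic property in hand, the conclusion is immediate: applying the preceding Proposition to $X$ yields that $Y=I\times_f X$ is globally hyperbolic in the sense of \cite[Def.\ 2.35(v)]{KS:18}, which is the assertion. (Tracing through the inputs, properness feeds the precompactness of causal diamonds via Lemma \ref{lem-jpq}, while the geodesic property feeds the closedness of $J^\pm$ via Corollary \ref{cor-J+-closed}; strong causality, hence non-total imprisonment, is already available from Theorem \ref{thm-wpd-lls}.)

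Since the statement is a pure specialization of an already-proved result, I do not anticipate any real obstacle. The only point demanding care is to use the \emph{length-space} version of Hopf--Rinow---where completeness plus local compactness, rather than any smooth or manifold structure, is what simultaneously delivers properness and the existence of minimizing geodesics---so that both hypotheses of the Proposition are genuinely covered.
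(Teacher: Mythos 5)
Your proposal is correct and is exactly the paper's argument: the corollary is deduced by applying the Hopf--Rinow--Cohn-Vossen theorem (complete plus locally compact length space implies proper and geodesic) and then invoking the preceding Proposition on global hyperbolicity for proper geodesic fibers. No discrepancies to report.
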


Recall that a \LpLS is called \emph{geodesic} (\cite[Def.\ 3.27]{KS:18}) if any two causally related points can be joined by 
a maximal causal curve. As any globally hyperbolic \LLS is geodesic (Avez-Seifert, cf.\ \cite[Thm.\ 3.30]{KS:18}), 
we conclude by the above 
that every \wpd\ is geodesic if $X$ is proper and geodesic (in the metric space sense). This implies the following stronger result.
\begin{cor}\label{cor:geodesicifffibergeod}
 Let $X$ be geodesic, then $I\times_f X$ is geodesic. Furthermore, any two timelike related points can be connected 
by a timelike geodesic.
\end{cor}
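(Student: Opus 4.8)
The plan is to reduce the general (merely geodesic) case to the already-established case of a proper fiber via fiber independence, using the real line as an auxiliary fiber. Recall first that by Proposition \ref{prop-wpd-lpls} the space $Y=I\times_f X$ is a \LpLS for any length-space fiber, so being \emph{geodesic} (every causally related pair joined by a maximal causal curve) is a meaningful claim without local compactness. Fix $p=(p_0,\bar p)\le q=(q_0,\bar q)$ in $Y$ and put $d_0:=d(\bar p,\bar q)$. Since $X$ is geodesic, choose a unit-speed minimizing geodesic $\bar\sigma\colon[0,d_0]\to X$ from $\bar p$ to $\bar q$. Introduce the auxiliary \wpd\ $Y'=I\times_f\R$ (whose fiber $\R$ is proper and geodesic) together with $p'=(p_0,0)$ and $q'=(q_0,d_0)$, so that $d_\R(0,d_0)=d_0=d(\bar p,\bar q)$.

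The first step is to observe that the causal relations transfer between $Y$ and $Y'$: since both $X$ and $\R$ are geodesic, the descriptions \eqref{I+} of $I^+$ and \eqref{J+} of $J^+$ depend on the fibers only through $d_0$ (the minimizing-curve clause in \eqref{J+} is automatic in both fibers), so $p\le q$ (resp.\ $p\ll q$) in $Y$ if and only if $p'\le q'$ (resp.\ $p'\ll q'$) in $Y'$. In particular $p'\le q'$. As $\R$ is proper and geodesic, $Y'$ is geodesic by the global hyperbolicity of generalized cones over proper geodesic fibers established above, together with Avez--Seifert (\cite[Thm.\ 3.30]{KS:18}); hence there is a maximal causal curve $\gamma'=(\alpha',\beta')$ from $p'$ to $q'$. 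By Theorem \ref{thm-structure-of-geod}(i) the fiber part $\beta'$ is minimizing in $\R$, traversing the segment $[0,d_0]$ monotonically.

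To transport $\gamma'$ back to $Y$, let $\rho(s):=\int_a^s v_{\beta'}$ be the arc-length function of $\beta'$ and set $\beta:=\bar\sigma\circ\rho$. Because $\bar\sigma$ is a unit-speed minimizer, $d(\bar\sigma(u),\bar\sigma(v))=|u-v|$ along it, so $d(\beta(s),\beta(s'))=|\rho(s)-\rho(s')|$; thus $\beta$ is a minimizing curve in $X$ from $\bar p$ to $\bar q$ with $L^d(\beta)=d_0=L^{d_\R}(\beta')$ and $v_\beta=v_{\beta'}$. Fiber independence (Theorem \ref{thm-structure-of-geod},\ref{thm-structure-of-geod-fib-ind}, applied with the roles of the two fibers interchanged) then yields that $\gamma:=(\alpha',\beta)$ is a future directed maximal causal curve in $Y$ from $p$ to $q$, proving that $Y$ is geodesic. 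For the timelike statement, if $p\ll q$ then $p'\ll q'$, so $\tau(p',q')>0$ and therefore $L(\gamma')>0$; by Corollary \ref{cor-f-lip} (applicable since $\R$ is geodesic) the maximizer $\gamma'$ has a causal character, hence is timelike, and fiber independence makes $\gamma$ timelike as well.

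The point that most needs care is the \emph{direction} of fiber independence: one uses the auxiliary cone $Y'$ — where maximizers are guaranteed by global hyperbolicity — as the \emph{source}, and transports the maximizer into $Y$, which is legitimate precisely because $\beta$ can be built to match both the length and the speed of $\beta'$. The remaining technical care is the speed-matching reparametrization $\beta=\bar\sigma\circ\rho$ and the identity $v_\beta=v_{\beta'}$ (the degenerate case $d_0=0$, where $\bar p=\bar q$ and $\beta$ is constant, is subsumed trivially); everything else reduces to the fiber-only dependence of $\ll$ and $\le$ recorded in \eqref{I+} and \eqref{J+}.
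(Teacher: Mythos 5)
Your proof is correct and takes essentially the same route as the paper's: reduce to an auxiliary generalized cone over a proper geodesic fiber (the paper uses the compact interval $[0,d(\bar p,\bar q)]$ where you use $\R$), obtain a maximizer there via global hyperbolicity and Avez--Seifert, and transport it back to $Y$ by fiber independence (Theorem \ref{thm-structure-of-geod}, \ref{thm-structure-of-geod-fib-ind}), settling the timelike case with Corollary \ref{cor-f-lip}. Your explicit verification that causal/chronological relatedness transfers between $Y$ and $Y'$ via \eqref{I+} and \eqref{J+}, and your speed-matching reparametrization $\beta=\bar\sigma\circ\rho$, are details the paper leaves implicit (it simply takes both fiber components parametrized by arc-length), but they do not constitute a different approach.
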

\begin{proof}
	Let $(x_0,\bar{x}),(y_0, \bar{y})\in Y=I\times_f X$. Because $X$ is geodesic there exists a minimal curve $\beta :[0,d_X(\bar x,\bar y)]\to X$, parametrized by arc-length,  from $\bar x$ to $\bar y$. Let $X'=[0,d_X(\bar x,\bar y)]$ (with the standard metric) and $Y'=I\times_f X'$. Since $X'$ is proper and geodesic, $Y'$ is geodesic and there exists a maximizing curve $\gamma'=(\alpha',\beta'): [0,d_X(\bar x,\bar y)]\to Y'$, with $\beta' $ parametrized by arc-length, from $(x_0,0)$ to $(y_0, d_X(\bar x,\bar y))$. Then $\gamma :=(\alpha',\beta)$ is maximizing from $(x_0,\bar{x})$ to $(y_0, \bar{y})$ in $Y$ by 
	Theorem \ref{thm-structure-of-geod} (ii). The second claim follows from Corollary \ref{cor-f-lip}.
\end{proof}

\section{Curvature bounds}\label{sec-cb}
In this Section we generalize the results of Section \ref{sec-con} to \wpds, i.e., we relate (metric) curvature bounds of the fiber $X$ to timelike curvature bounds of the \wpd~$Y=I\times_f X$, and vice versa.

\begin{lem} \label{lem:taudownifdup} Let $(X,d)$ and $(X',d')$ be two geodesic length spaces. Let $Y:=I\times_f X$ and $Y':=I\times_f X'$. 
	Then for any two pairs of points $x=(x_0,\bar{x}),y=(y_0,\bar{y})\in Y$ and $x'=(x_0,\bar{x}'),y'=(y_0,\bar{y}')\in Y'$ with $d_X(\bx,\by)\geq 
	d'_{X'}(\bx ',\by ')$ one has $\tau(x,y)\leq \tau'(x',y')$.
\end{lem}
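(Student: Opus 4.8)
The plan is to treat the trivial case first and then, in the nontrivial case, to manufacture an explicit causal competitor in $Y'$ out of a maximizer in $Y$. If $x\not\le y$ in $Y$, then $\tau(x,y)=0\le\tau'(x',y')$ and there is nothing to prove, so assume $x\le y$. Since $X$ is geodesic, Corollary \ref{cor:geodesicifffibergeod} gives that $Y=I\times_f X$ is geodesic, so there is a future directed maximal causal curve $\gamma=(\alpha,\beta)$ from $x$ to $y$ with $L(\gamma)=\tau(x,y)$. By Theorem \ref{thm-structure-of-geod}(i) its fiber component $\beta$ is minimizing, hence $L^d(\beta)=d_X(\bar x,\bar y)=:\ell$, and by Lemma \ref{lem-ac-lip} (together with the reparametrization invariance of $L$, Corollary \ref{cor:reparametrization}) we may assume $\beta$ is parametrized by arclength on $[0,\ell]$, so that $v_\beta=1$ a.e. Causality of $\gamma$ then reads $\dot\alpha^2\ge (f\circ\alpha)^2$ a.e.\ and $L(\gamma)=\int_0^\ell\sqrt{\dot\alpha^2-(f\circ\alpha)^2}$.

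The key idea is to retain the \emph{same} base component $\alpha$ but to replace $\beta$ by a slower fiber curve in $X'$. Set $\ell':=d'_{X'}(\bar x',\bar y')\le\ell$. If $\ell=0$, then $\bar x=\bar y$ and $\ell'=0$, so $\bar x'=\bar y'$ and the vertical curve $s\mapsto(\alpha(s),\bar x')$ already settles the claim; assume therefore $\ell>0$. Since $X'$ is geodesic, choose a minimizing curve $\beta'$ from $\bar x'$ to $\bar y'$ and parametrize it on $[0,\ell]$ proportionally to arclength, so that $v_{\beta'}=\ell'/\ell\le 1$ is constant, and put $\gamma':=(\alpha,\beta')\colon[0,\ell]\to Y'$. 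Because the warping function and the base interval coincide for $Y$ and $Y'$, the curve $\gamma'$ has the same strictly increasing $\alpha$ and is thus future directed, and using $v_{\beta'}\le 1=v_\beta$ we get
\[
-\dot\alpha^2+(f\circ\alpha)^2 v_{\beta'}^2\le -\dot\alpha^2+(f\circ\alpha)^2\le 0\quad\text{a.e.},
\]
so $\gamma'$ is a future directed causal curve from $x'=(x_0,\bar x')$ to $y'=(y_0,\bar y')$ in $Y'$.

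It then remains to compare lengths pointwise. Since $v_{\beta'}^2=(\ell'/\ell)^2\le 1$, decreasing the fiber speed only increases the integrand, whence
\[
L(\gamma')=\int_0^\ell\sqrt{\dot\alpha^2-(f\circ\alpha)^2 v_{\beta'}^2}\ \ge\ \int_0^\ell\sqrt{\dot\alpha^2-(f\circ\alpha)^2}=L(\gamma)=\tau(x,y).
\]
As $\gamma'$ is an admissible competitor in the definition of $\tau'$ (Definition \ref{def-tau}), this yields $\tau'(x',y')\ge L(\gamma')\ge\tau(x,y)$, as claimed. The one point that genuinely uses the structure theory developed above — and which I expect to be the only real obstacle — is ensuring that the maximizer $\gamma$ has a \emph{minimizing}, arclength-parametrized fiber component, so that the slower fiber curve $\gamma'$ can be built on the same parameter interval with the same base part and is manifestly causal; the remaining length comparison is just the elementary monotonicity of the integrand in $v_{\beta'}$.
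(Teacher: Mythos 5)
Your proof is correct and takes essentially the same approach as the paper's: both keep the base component $\alpha$ of a maximizer in $Y$ and pair it with a constant-speed minimizing fiber curve $\beta'$ in $X'$ on the same parameter interval, then compare the length integrands pointwise using $v_{\beta'}\le v_\beta=1$. The only (harmless) differences are bookkeeping: the paper first fixes the minimizing unit-speed fiber $\beta$ and invokes Corollary \ref{cor:geodesicifffibergeod} to produce a maximizer of the form $(\alpha,\beta)$, whereas you start from an arbitrary maximizer and use Theorem \ref{thm-structure-of-geod}(i) and Lemma \ref{lem-ac-lip} to arrange the same configuration, additionally treating the degenerate case $\bar x=\bar y$ explicitly.
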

\begin{proof}
	If $\tau(x,y)=0$ this obviously holds, so assume $x\ll y$. Let $\beta :[a,b]\to X $ be a minimizing unit-speed geodesic from $\bx$ to $\by$ in $X$. Then by Corollary \ref{cor:geodesicifffibergeod}, there exists a timelike curve $\gamma\equiv (\alpha,\beta):[a,b]\to Y$ from $x$ to $y$ with $L(\gamma)= \tau(x,y)$. Further, let $\beta':[a,b]\to X'$ be a curve from $\bx'$ to $\by'$ in $X'$ such that $L_{X'}(\beta')= d'_{X'}(\bx',\by') $ and $v_{\beta'}$ is constant. Then $L(\beta)=d(\bx,\by)\geq d'_{X'}(\bx',\by') =L(\beta')$ implies $v_{\beta'}\leq v_\beta=1$, so the curve $\gamma':=(\alpha,\beta'):[a,b]\to Y'$ is timelike  and 
	\begin{align*}
	\tau(x,y)&=L(\gamma)=\int_a^b\sqrt{\dot{\alpha}^2-(f\circ\alpha)^2}\leq \int_a^b\sqrt{\dot{\alpha}^2-(f\circ\alpha)^2v_{\beta'}^2}\\
	&=L(\gamma')\leq \tau'(x',y').
	\end{align*}
\end{proof}

Moreover, for causally related points also strict inequalities are preserved in the above Lemma, i.e., if $d_X(\bx,\by)< d'_{X'}(\bx ',\by ')$ then 
$\tau(x,y)<\tau'(x',y')$. From this one obtains immediately the following converse:
\begin{lem}\label{lem-tau-vs-d-conv}
	Let $(X,d)$ and $(X',d')$ be two geodesic length spaces. Let $Y:=I\times_f X$ and $Y':=I\times_f X'$. Then for any two pairs of causally 
	related points $x=(x_0,\bar{x}),y=(y_0,\bar{y})\in Y$ and $x'=(x_0,\bar{x}'),y'=(y_0,\bar{y}')\in Y'$ with $\tau(x,y)\leq \tau'(x',y')$ one 
	has $d_{X'}(\bx',\by')\leq d_X(\bx,\by)$.
\end{lem}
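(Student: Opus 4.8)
The plan is to prove the statement by contraposition, which reduces it to the strict form of Lemma~\ref{lem:taudownifdup} with the roles of $X$ and $X'$ interchanged. Concretely, I would assume that both pairs are causally related and that $d_{X'}(\bx',\by')>d_X(\bx,\by)$, and aim to show $\tau(x,y)>\tau'(x',y')$; the assertion then follows immediately upon contraposition. A convenient feature of the setup is that $x=(x_0,\bx)$, $y=(y_0,\by)$ and $x'=(x_0,\bx')$, $y'=(y_0,\by')$ share the \emph{same} base coordinates $x_0,y_0$, so the comparison really only concerns the two fibre distances against a common base behaviour governed by $f$.

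In the main case $x'\ll y'$ I would invoke Corollary~\ref{cor:geodesicifffibergeod} and Theorem~\ref{thm-structure-of-geod} to pick a maximal (future directed) timelike curve $\gamma'=(\alpha',\beta')\colon[a,b]\to Y'$ from $x'$ to $y'$ with $L(\gamma')=\tau'(x',y')$ and minimizing fibre component $\beta'$; by reparametrization invariance (Corollary~\ref{cor:reparametrization}) I may arrange $v_{\beta'}\equiv d_{X'}(\bx',\by')/(b-a)$ to be constant. Since $X$ is geodesic, I then choose a constant-speed minimizing geodesic $\beta\colon[a,b]\to X$ from $\bx$ to $\by$, so that $v_\beta\equiv d_X(\bx,\by)/(b-a)<v_{\beta'}$, and set $\gamma:=(\alpha',\beta)$. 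As $f>0$ and $v_\beta\le v_{\beta'}$, the curve $\gamma$ is again future directed timelike from $x$ to $y$, and
\begin{align*}
\tau(x,y)\ \ge\ L(\gamma)&=\int_a^b\sqrt{(\dot\alpha')^2-(f\circ\alpha')^2v_\beta^2}\\
&>\int_a^b\sqrt{(\dot\alpha')^2-(f\circ\alpha')^2v_{\beta'}^2}=L(\gamma')=\tau'(x',y').
\end{align*}

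I would then separately dispose of the degenerate case $x'\not\ll y'$, where $\tau'(x',y')=0$. Using the explicit descriptions \eqref{I+} and \eqref{J+}, the relations $x'\le y'$ and $x'\not\ll y'$ (together with $d_{X'}(\bx',\by')>d_X(\bx,\by)\ge 0$, which rules out $x'=y'$) force the boundary case $\int_{x_0}^{y_0}\tfrac1f=d_{X'}(\bx',\by')$. Since this exceeds $d_X(\bx,\by)$, the description \eqref{I+} places $(y_0,\by)$ in $I^+((x_0,\bx))$, so $x\ll y$ and $\tau(x,y)>0=\tau'(x',y')$, as required.

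The step that I expect to demand the most care is the strictness of the integral inequality in the main case; this is the one place where the hypothesis must be used sharply rather than merely as $\ge$. Because $v_\beta<v_{\beta'}$ are \emph{constants} (coming from $d_X(\bx,\by)<d_{X'}(\bx',\by')$), because $f>0$, and because $\dot\alpha'>0$ almost everywhere, the two integrands differ on a set of full measure, and strict monotonicity of the square root upgrades the pointwise inequality to a strict inequality of integrals. The only subtlety is to confirm that this also covers $d_X(\bx,\by)=0$ (where $\beta$ is constant and $v_\beta\equiv 0$), which is harmless since $v_{\beta'}>0$ there. Everything else is a direct transcription of the proof of Lemma~\ref{lem:taudownifdup} with $X$ and $X'$ swapped.
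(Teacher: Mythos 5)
Your proof is correct and follows essentially the same route as the paper: the paper obtains the lemma by contraposition from the strict form of Lemma \ref{lem:taudownifdup} for causally related points (asserted there in a single line, with the roles of $X$ and $X'$ interchanged), which is exactly your reduction. Your write-up merely supplies the details the paper leaves implicit, namely the strict integral inequality obtained by swapping in a slower constant-speed minimizing fiber geodesic, and the null boundary case $x'\le y'$, $x'\not\ll y'$ disposed of via the descriptions \eqref{I+} and \eqref{J+}.
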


We now turn to the relation between (metric) curvature bounds in the fiber $X$ (in the sense of \cite[Def.~4.6.2]{BBI:01}, cf.\ also Subsection \ref{subsec-ale})
and timelike curvature bounds in the \wpd\ $I\times_f X$ as defined in \cite[Def.~4.7]{KS:18}, cf.\ also Subsection \ref{subsec-lls}.

\begin{thm}\label{thm-X-cb-Y-cb}
	Let $K, K'\in \R$ and let $(X,d)$ be a geodesic length space with curvature bounded below/above by $K$. 
	Then $Y=I\times_fX$ has timelike curvature bounded below/above by $K'$ if $I\times_f \mathbb{M}^2(K)$ has timelike curvature bounded below/above by $K'$.
\end{thm}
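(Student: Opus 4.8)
The plan is to establish both the lower and the upper bound simultaneously, via a \emph{two-step comparison}: first transfer the metric curvature bound on the fiber $X$ to the model fiber $\mathbb{M}^2(K)$, and then feed the result into the assumed timelike bound on $Y':=I\times_f\mathbb{M}^2(K)$. Fix a point $(t_0,\bar p_0)\in Y$ and let $U_X$ be an Alexandrov comparison neighborhood of $\bar p_0$ in $X$ for the bound $K$. I would take the comparison neighborhood in $Y$ to be a causally convex neighborhood $U\subseteq I_0\times U_X$ (such neighborhoods exist by Lemma \ref{lem-causally-convex-nbhds}), with $I_0$ a small interval around $t_0$ and everything chosen small enough that the triangles produced below land inside a fixed timelike comparison neighborhood $V'$ of a corresponding point $(t_0,\bar p_0')\in Y'$ and that the $\mathbb{L}^2(K')$-comparison triangles exist. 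Condition (ii) of the definition --- existence of maximizers inside $U$ --- is then immediate, since $Y$ is geodesic by Corollary \ref{cor:geodesicifffibergeod} and $U$ is causally convex; finiteness and continuity of $\tau|_{U\times U}$ (condition (i)) follow from fiber independence (Theorem \ref{thm-structure-of-geod},\ref{thm-structure-of-geod-fib-ind}), which expresses $\tau((t,\bar x),(t',\bar x'))$ as a function of $t,t'$ and $d_X(\bar x,\bar x')$ alone, together with continuity of $\tau'$ on the locally compact space $Y'$ (Theorem \ref{thm-wpd-lls}).

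For the comparison condition (iii), let $\Delta=(x,y,z)$ with $x\ll y\ll z$ be a timelike geodesic triangle in $U$, realized by maximal causal curves whose fiber components are minimizing geodesics in $X$ by Theorem \ref{thm-structure-of-geod}(i). I would pass to the fiber: let $(\bar x',\bar y',\bar z')$ be a comparison triangle in $\mathbb{M}^2(K)$ for $(\bar x,\bar y,\bar z)$, and lift it to the triangle $\tilde\Delta=(x',y',z')$ in $Y'$ keeping the base coordinates, i.e.\ $x'=(x_0,\bar x')$ and so on. By fiber independence the same base component realizes corresponding sides, so $\tilde\Delta$ is again a timelike geodesic triangle and its $\tau'$-side lengths coincide \emph{exactly} with the $\tau$-side lengths of $\Delta$. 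Consequently $\Delta$ and $\tilde\Delta$ admit one and the same comparison triangle $\hat\Delta$ in $\mathbb{L}^2(K')$, and for a point $p=(p_0,\bar p)$ on a side of $\Delta$, its lift $p'=(p_0,\bar p')$ on the matching side of $\tilde\Delta$, and the point $\hat p$ on $\hat\Delta$, all three are mutually corresponding (at equal $\tau$-distance from the relevant vertex).

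Now the two comparisons chain. Given points $p,q$ on the sides of $\Delta$, with lifts $p',q'$ and $\mathbb{L}^2(K')$-counterparts $\hat p,\hat q$: the curvature bound on $X$ gives $d_X(\bar p,\bar q)\ge d_{\mathbb{M}^2(K)}(\bar p',\bar q')$ in the lower case (and $\le$ in the upper case); since $p,p'$ share the base coordinate $p_0$ and $q,q'$ share $q_0$, Lemma \ref{lem:taudownifdup} turns this into $\tau(p,q)\le\tau'(p',q')$ (respectively $\tau(p,q)\ge\tau'(p',q')$). Combining with the assumed timelike bound on $Y'$, which reads $\tau'(p',q')\le\tau_{\mathbb{L}}(\hat p,\hat q)$ in the lower case (respectively $\ge$), I obtain $\tau(p,q)\le\tau_{\mathbb{L}}(\hat p,\hat q)$ (respectively $\ge$), where $\tau_{\mathbb{L}}$ denotes the time separation on $\mathbb{L}^2(K')$. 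This is precisely the timelike curvature bound below (above) by $K'$ for $Y$.

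I expect the main obstacle to be the bookkeeping rather than any single estimate: one must verify that the three notions of corresponding points --- on $\Delta$ in $Y$, on $\tilde\Delta$ in $Y'$, and on $\hat\Delta$ in $\mathbb{L}^2(K')$ --- are mutually compatible, which is exactly what makes the two inequalities composable, and one must choose $U$, $U_X$ and $V'$ simultaneously so that every timelike triangle in $U$ lifts into $V'$ with its fiber comparison triangle sitting inside the comparison data of $U_X$, all while respecting the $\mathbb{L}^2(K')$ size restrictions. The conceptual engine that makes this work is the \emph{exact} preservation of $\tau$-side lengths under the fiber-independent lift, since this is what permits a single $\mathbb{L}^2(K')$-comparison triangle to serve both $\Delta$ and $\tilde\Delta$.
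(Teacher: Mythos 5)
Your proposal is correct and follows essentially the same route as the paper's proof: project the timelike triangle to the fiber, use the Alexandrov bound on $X$ to produce a comparison triangle in $\mathbb{M}^2(K)$, lift it to $Y'=I\times_f\mathbb{M}^2(K)$ keeping the base coordinates so that fiber independence (Theorem \ref{thm-structure-of-geod}) preserves all $\tau$-side lengths, convert the fiber-distance inequality into $\tau(p,q)\le\tau_{Y'}(p',q')$ via Lemma \ref{lem:taudownifdup}, and chain with the assumed timelike bound on $Y'$ against a common $\mathbb{L}^2(K')$ comparison triangle. The only cosmetic difference is that the paper resolves the bookkeeping issue you flag --- ensuring the lifted triangle lands in a fixed comparison neighborhood $U'$ of $Y'$ --- explicitly, by invoking the homogeneity of $\mathbb{M}^2(K)$ to anchor the comparison triangle at a fixed basepoint $\bar{w}'$.
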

\begin{proof}
	As in the proof of Theorem~\ref{thm-wpd-lls}, for any $w\in Y$ we can choose a causally convex neighborhood $U\subseteq Y$ 
	according to 
	Lemma~\ref{lem-causally-convex-nbhds} such that $\tau|_{U\times U}$ is continuous and any two points $x,y\in U$ with $x\ll y$ 
	can be connected by a maximal future-directed timelike curve $\gamma $ in $U$ with $L(\gamma)=\tau(x,y)$. We may further 
	assume that $U$ was chosen small enough to satisfy the following conditions:
	\begin{itemize}
		\item[(i)] There is an open set $V\subseteq X$ on which triangle comparison with 
		$\mathbb{M}^2(K)$ holds and such that for all 
		$\bx \in X$ for which there exists $x_0 \in \R$ such that $(x_0,\bx) \in U$ we have $\bx \in V$.
		\item[(ii)] $U\subseteq [u_0,u_1]\times B^{d_X}_\eps(\bar{w})$, where $\eps $ and $|u_0-u_1|$ are so small that, for 
		some (fixed) $\bar{w}'\in \mathbb{M}^2(K)$ we have that $[u_0,u_1]\times B^{d_{\mathbb{M}^2(K)}}_{2\eps}(\bar{w}')\subseteq 
		I\times_f\mathbb{M}^2(K)$ is contained in a neighborhood $U'$ on which timelike triangle comparison with $\mathbb{L}^2(K')$ 
		holds.
	\end{itemize}
	
	Let $\Delta=(x,y,z)$ be a timelike geodesic triangle in $U$, realized by maximal timelike curves 
	$\gamma_{xy},\gamma_{yz},\gamma_{xz}$ whose side lengths $a,b,c$ satisfy timelike size bounds for $K'$, i.e. $c\geq a+b$ and 
	if $c=a+b$ and $K'>0$ or $c>a+b$ and $K'<0$ then $c<\frac{\pi}{\sqrt{|K'|}}$.
	
	To establish that $Y$ has timelike curvature bounded below by $K'$ we have to show that if $\Delta''=(x'',y'',z'')$ is a comparison triangle of $\Delta=(x,y,z)$ in $\mathbb{L}^2(K')$, then for all points $p,q$ on the sides of $\Delta=(x,y,z)$ and corresponding points $p'',q''$ on the sides of $\Delta''=(x'',y'',z'')$, we have $\tau(p,q)\leq \tau_{\mathbb{L}^2(K')}(p'',q'')$. (To show that $Y$ has timelike curvature bounded above by $K'$ we have to show that $\tau(p,q)\geq \tau_{\mathbb{L}^2(K')}(p'',q'')$.)
	
	We do this in two steps. First, we construct a comparison triangle $\Delta'=(x',y',z')$ in $Y':=I\times_f \mathbb{M}^2(K)$ with $\tau(p,q)\leq \tau_{Y'}(p',q')$ (respectively $\geq$ in case of curvature bounded above).

	The projection $\bar{\Delta}=(\bx,\by,\bz)$ of $\Delta=(x,y,z)$ onto $X$ is a geodesic triangle $\bar{\Delta}$ in $X$ (which can be degenerate) and if $\gamma_{xy}=(\alpha_{xy},\beta_{xy}), \gamma_{yz}=(\alpha_{yz},\beta_{yz})$ and $\gamma_{xz}=(\alpha_{xz},\beta_{xz})$ are the sides of $\Delta=(x,y,z)$, then 
	the sides of $\bar{\Delta}=(\bx,\by,\bz)$ are the minimizing curves $\beta_{xy},\beta_{yz},\beta_{xz}$ and are contained in 
	$V$. Because $V$ was a neighborhood in $X$ on which triangle comparison with $\mathbb{M}^2(K)$ holds, there exists a triangle 
	$\bar{\Delta}'=(\bx',\by',\bz')$ in $\mathbb{M}^2(K)$ such that $d_X(\bar{p},\bar{q})\geq
	d_{\mathbb{M}^2(K)}(\bar{p}',\bar{q}')$ (respectively 
	$\leq$ in case of curvature bounded above). This triangle $\bar{\Delta}'$ in $\mathbb{M}^2(K)$ can be lifted to a triangle 
	$\Delta'=(x',y',z')$ in $Y'=I\times_f \mathbb{M}^2(K)$ given by $x':=(x_0,\bx'),y':=(y_0,\by'),z':=(z_0,\bz')$. By fiber 
	independence (Theorem \ref{thm-structure-of-geod},\ref{thm-structure-of-geod-fib-ind}) $\Delta'$ is a triangle with the same side lengths as $\Delta$ 
	and the points $p'$ and $q'$ corresponding to $p$ and $q$ are 
	exactly $(p_0,\bar{p}')$ and $(q_0,\bar{q}')$. Thus, by Lemma \ref{lem:taudownifdup} $\tau(p,q)\leq \tau_{Y'}(p',q')$ (respectively $\geq$ in case of curvature bounded above).
	
	Now because $\Delta$ satisfies timelike size bounds, the same is true for $\Delta'$. Further, because of the symmetries of 
	$\mathbb{M}^2(K)$ we may additionally suppose that our comparison triangle $\Delta'$ was chosen such that $\bx'=\bar{w}'$, hence 
	$\Delta'\subseteq U'$ by our choice of $U$ (cf.~item (ii)). So, by the timelike curvature bound of 
	$Y'=I\times_f\mathbb{M}^2(K)$ there exists a timelike comparison triangle $\Delta''=(x'',y'',z'')$ of $(x',y',z')$ in 
	$\mathbb{L}^2(K')$ such that for the points $p'',q''$ on $ \Delta''$ corresponding to $p',q'$, we have $\tau_{Y'}(p',q')\leq 
	\tau_{\mathbb{L}^2(K')}(p'',q'')$ (respectively, $\tau_{Y'}(p',q')\geq \tau_{\mathbb{L}^2(K')}(p'',q'')$ for a timelike curvature 
	bound from above). Moreover, by construction and fiber independence (cf.\ 
	Theorem \ref{thm-structure-of-geod}) $\Delta''$ must be a comparison triangle for $\Delta$.
	
	Together with the inequality $\tau(p,q)\leq \tau_{Y'}(p',q')$ (respectively $\geq$) from before we get $\tau(p,q)\leq 
	\tau_{\mathbb{L}^2(K')}(p'',q'')$ (respectively $\geq$), concluding the proof.
\end{proof}

In the case of a smooth warping function $f$ we can give sufficient conditions so that $Y=I\times_f X$ has timelike curvature bounded by $K'$.

\begin{cor}\label{cor-f-smo-cb}
	Let $f\colon I\rightarrow (0,\infty)$ be smooth and let $Y=I\times_f X$, $Y'=I\times_f \mathbb{M}^2(K)$. If $f$ is 
	$K'$-concave (convex), i.e., $f''-K'f\leq 0$ ($f''-K'f\geq 0$) and $K=\sup (K' f^2-(f')^2)$ ($K=\inf (K' f^2 -(f')^2)$) and 
	$X$ has curvature bounded below (above) by $K$, then $Y$ has timelike curvature bounded below (above) by $K'$.
\end{cor}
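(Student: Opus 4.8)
The plan is to reduce the statement immediately to the smooth model fibre by means of Theorem~\ref{thm-X-cb-Y-cb}. Since $X$ is assumed to have curvature bounded below (above) by $K$, that theorem tells us it suffices to verify that the \emph{smooth} Lorentzian warped product $Y'=I\times_f\mathbb{M}^2(K)$ has timelike curvature bounded below (above) by $K'$. The whole role of the hypotheses on $f$ and $K$ is to guarantee exactly this for the model, so the corollary becomes a statement about a single smooth spacetime. First I would record that $Y'$ is genuinely smooth: as $f$ is smooth and $\mathbb{M}^2(K)$ is a smooth Riemannian manifold of constant curvature $K$, the space $Y'$ carries the smooth Lorentzian metric $-dt^2+f(t)^2 h_K$, where $h_K$ is the constant-curvature-$K$ metric on the fibre. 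For such a smooth spacetime the synthetic timelike curvature bound of \cite{KS:18} used here coincides with the triangle-comparison curvature bound analysed by Alexander and Bishop in \cite{AB:08} (the notions of causal curve, length and $\tau$ all agree with the smooth ones, cf.\ Example~\ref{ex-warped-product-mf}), so I may appeal to their characterisation of curvature bounds for Lorentzian warped products with one-dimensional base.

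Next I would compute the two relevant sectional curvatures of $Y'$ by the standard O'Neill warped-product formulas (cf.\ \cite{ONe:83}), keeping in mind that the base direction $\partial_t$ is \emph{timelike}, so that $\nabla f=-f'\partial_t$ has negative square $-(f')^2$. This yields, for a plane spanned by $\partial_t$ and a fibre direction, the ``mixed'' sectional curvature $f''/f$, and for a plane tangent to the fibre the sectional curvature $\bigl(K+(f')^2\bigr)/f^2$; a sanity check is de Sitter space $I\times_{\cosh t}\mathbb{M}^2(1)$, where both expressions equal $1$. The $K'$-concavity hypothesis $f''-K'f\le 0$ is then precisely $f''/f\le K'$, while $K=\inf_t\bigl(K'f^2-(f')^2\bigr)$ is precisely the statement that $\bigl(K+(f')^2\bigr)/f^2\le K'$ for every $t$ (indeed $K\le K'f^2-(f')^2$ rearranges to this). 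In the convex/above case all inequalities reverse, giving both families of sectional curvatures $\ge K'$. Thus the hypotheses say exactly that both sectional curvatures of $Y'$ lie on the side of $K'$ prescribed by \cite{AB:08} for the timelike curvature bound below (above) by $K'$.

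Finally I would invoke the Alexander--Bishop characterisation to conclude that $Y'=I\times_f\mathbb{M}^2(K)$ has timelike curvature bounded below (above) by $K'$, and then feed this, together with the assumed curvature bound on $X$, into Theorem~\ref{thm-X-cb-Y-cb} to obtain the claimed bound on $Y=I\times_f X$. I expect the only genuinely delicate point to be the sign bookkeeping. Because the base is timelike, the fibre term enters as $+(f')^2$ rather than $-(f')^2$, and the correspondence between the comparison bound ``below/above by $K'$'' and the \emph{direction} of the sectional-curvature inequalities is reversed relative to the Riemannian intuition (for timelike planes larger sectional curvature means more divergence of timelike geodesics, hence larger $\tau$). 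I would therefore double-check the two translations — concavity $\leftrightarrow$ mixed planes and the infimum condition $\leftrightarrow$ fibre planes — against a constant-curvature example such as de Sitter, where $Y'$ is itself a model-type space and the bound must hold with equality, to make sure both the signs and the below/above assignment come out correctly before citing \cite{AB:08}.
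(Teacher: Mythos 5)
Your outline is the same as the paper's: reduce via Theorem~\ref{thm-X-cb-Y-cb} to the smooth model $Y'=I\times_f\mathbb{M}^2(K)$, establish for $Y'$ a sectional curvature bound in the sense of \cite{AB:08} (the paper simply cites \cite[Prop.\ 7.1]{AB:08}; you re-derive it from the O'Neill formulas), and transfer it to the synthetic timelike bound. Your two curvature formulas are correct (mixed planes: $f''/f$; fiber planes: $(K+(f')^2)/f^2$). The gap sits exactly at the point you flagged as delicate: you apply the Lorentzian reversal of inequalities to \emph{both} families of planes. In \cite{AB:08} the reversal is governed plane by plane by the sign of $\langle v,v\rangle\langle w,w\rangle-\langle v,w\rangle^2$, not by the signature of the manifold: curvature bounded below by $K'$ (``$R\geq K'$'') means sectional curvature $\leq K'$ on \emph{timelike} planes but $\geq K'$ on \emph{spacelike} planes --- the latter direction is forced by consistency with the Riemannian case, where $R\geq K'$ must reduce to ordinary $\mathrm{sec}\geq K'$. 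The fiber planes of $Y'$ are spacelike, so $R\geq K'$ requires $(K+(f')^2)/f^2\geq K'$ at every $t$, i.e.\ $K\geq\sup_I\bigl(K'f^2-(f')^2\bigr)$; the hypothesis $K=\inf_I\bigl(K'f^2-(f')^2\bigr)$ yields only $(K+(f')^2)/f^2\leq K'$, the wrong direction, so your verification of the model bound does not go through. (A smaller inaccuracy: for smooth spacetimes only the implication ``\cite{AB:08} bound implies \cite{KS:18} timelike bound'' is known, via \cite[Ex.\ 4.9]{KS:18}, not the coincidence you assert --- cf.\ the remark following Corollary~\ref{cor:singthm}; but you only use the valid direction, so this alone would be harmless.)

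This is not a sign-bookkeeping issue you can repair in place, because with ``both sectional curvatures $\leq K'$'' the conclusion is actually false; what your computation really uncovers is that the corollary as printed has $\inf$ and $\sup$ interchanged. Concretely, take $K'=0$ and a smooth concave $f$ with $f'\equiv1$ on one subinterval and $f'\equiv0$ on another, so that $K=\inf\bigl(-(f')^2\bigr)=-1$, and take $X=\hyp^2$ (curvature bounded below by $-1$). Where $f$ is constant, $Y$ is locally a Lorentzian product $\R\times X'$ with $X'$ a rescaled hyperbolic plane, and $\tau\bigl((s,\bx),(t,\by)\bigr)^2=(t-s)^2-d_{X'}(\bx,\by)^2$. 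For a nondegenerate triangle $x=(0,\bx)$, $y=(h,\by)$, $z=(2h,\bz)$ with $d_{X'}(\bx,\by)=d_{X'}(\by,\bz)$, comparing the vertex $x$ with the $\tau$-midpoint $q$ of the side $yz$ against $\mathbb{L}^2(0)$ reduces, after a short computation, to requiring that the distance from $\bx$ to the midpoint of the fiber side $\by\bz$ be \emph{at least} the corresponding Euclidean median; in hyperbolic space the strict opposite holds (it is CAT(0)), so $\tau(x,q)>\tau'(x',q')$ and timelike curvature bounded below by $0$ fails on that region. Neither the paper's table nor your de Sitter check can detect this, because in all those examples $K'f^2-(f')^2$ is constant, so $\inf=\sup$ and every inequality degenerates to equality. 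The correct hypotheses, matching \cite[Prop.\ 7.1]{AB:08} and what the paper's own citation actually needs, are $K=\sup\bigl(K'f^2-(f')^2\bigr)$ in the bounded-below case and $K=\inf\bigl(K'f^2-(f')^2\bigr)$ in the bounded-above case; with that reading, your argument --- with the spacelike inequality turned the right way --- is precisely the paper's proof made explicit.
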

\begin{pr}
	This follows directly from Theorem \ref{thm-X-cb-Y-cb} and \cite[Prop.\ 7.1]{AB:08} in the special case that the base is one-dimensional and from the fact 
	that sectional curvature bounds in the sense of \cite{AB:08} imply timelike curvature bounds in the sense of \LLSn s
	(cf.\ \cite[Ex.\ 4.9]{KS:18}).
\end{pr}

We apply the preceding corollary to specific spaces and warping functions to obtain:

\begin{cor}
	Let $X$ be a geodesic length space with curvature bounded below/above by $K$ (third column). With the interval $I$ given in the first column and the warping function given in the second column, $I\times_f X$ has timelike curvature bounded below/above by $K'$ (forth column):
	
	\begin{table}[h!]
\begin{tabular}{|c|c|c|c|}
\hline
\textbf{$I$}                       & \textbf{$f$}    & \textbf{$X$ CB b/a by $K$} & \textbf{$I\times_f X$ TLCB b/a by $K'$} \\ \hline
$(0,\pi)$                        & $\sin$        & $-1$                 & $-1$                \\ \hline
$(-\frac{\pi}{2},\frac{\pi}{2})$ & $\cos$        & $-1$                 & $-1$                \\ \hline
$(0,\infty)$                     & $\mathrm{id}$ & $-1$                 & $\hphantom{-}0$                 \\ \hline
$(0,\infty)$                     & $\sinh$       & $-1$                 & $\hphantom{-}1$                 \\ \hline
$\R$                             & $\exp$        & $\hphantom{-}0$                  & $\hphantom{-}1$                 \\ \hline
$\R$                             & $1$           & $\hphantom{-}0$                  & $\hphantom{-}0$                 \\ \hline
$\R$                             & $\cosh$       & $\hphantom{-}1$                  & $\hphantom{-}1$                 \\ \hline
\end{tabular}
\end{table}
\end{cor}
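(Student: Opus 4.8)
The plan is to verify each row of the table by a direct application of Corollary~\ref{cor-f-smo-cb}. The unifying observation, which I would establish first, is that in every row the (manifestly positive) warping function satisfies the linear ODE $f'' = K'f$ on its interval $I$: indeed $\sin'' = -\sin$ and $\cos'' = -\cos$ (so $K'=-1$), $\mathrm{id}'' = 0$ and $1'' = 0$ (so $K'=0$), and $\sinh'' = \sinh$, $\exp'' = \exp$, $\cosh'' = \cosh$ (so $K'=1$). In each case this matches the value of $K'$ recorded in the last column, and the interval is precisely the one on which the chosen $f$ stays strictly positive.

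The consequence I would draw from this ODE is twofold. First, $f'' = K'f$ is equivalent to $f'' - K'f = 0$, so $f$ is simultaneously $K'$-concave and $K'$-convex. Hence both hypotheses of Corollary~\ref{cor-f-smo-cb} hold at once: the inequality $f'' - K'f \leq 0$ needed for lower bounds \emph{and} the inequality $f'' - K'f \geq 0$ needed for upper bounds. This is exactly what lets me treat the \textbf{below/above} statement in a single stroke. Second, along any solution of $f'' = K'f$ the quantity $K'f^2 - (f')^2$ is constant, since $\frac{d}{dt}\bigl(K'f^2 - (f')^2\bigr) = 2f'(K'f - f'') = 0$. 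Therefore $\inf\bigl(K'f^2 - (f')^2\bigr) = \sup\bigl(K'f^2 - (f')^2\bigr)$ equals this one constant value, so the single hypothesis $K = \inf(\cdots)$ (for lower bounds) and $K = \sup(\cdots)$ (for upper bounds) are both met with the same $K$.

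It then remains to evaluate the constant $K'f^2 - (f')^2$ for each entry and check it equals the value $K$ in the third column: for $f=\sin$ one gets $-\sin^2 - \cos^2 = -1$; for $f=\cos$, $-\cos^2 - \sin^2 = -1$; for $f=\mathrm{id}$, $0 - 1 = -1$; for $f=\sinh$, $\sinh^2 - \cosh^2 = -1$; for $f=\exp$, $e^{2t} - e^{2t} = 0$; for $f=1$, $0 - 0 = 0$; and for $f=\cosh$, $\cosh^2 - \sinh^2 = 1$. In every row the computed constant coincides with the stated $K$. With smoothness of $f$, the simultaneous concavity/convexity condition, and the value of $K$ all confirmed, Corollary~\ref{cor-f-smo-cb} applied to a geodesic length space $X$ with curvature bounded below/above by $K$ yields that $I\times_f X$ has timelike curvature bounded below/above by $K'$, which is the assertion of the table.

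Since every step is an elementary verification, there is no genuine obstacle here; the only point requiring mild care is applying the trigonometric and hyperbolic identities with the correct signs, so that the constant $K'f^2 - (f')^2$ reproduces the tabulated $K$ exactly. The conceptual heart of the argument is simply the recognition that each listed $f$ is an eigenfunction of $\frac{d^2}{dt^2}$ with eigenvalue $K'$, which collapses the three distinct hypotheses of Corollary~\ref{cor-f-smo-cb} into a single observation valid for all rows at once.
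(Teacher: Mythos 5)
Your proposal is correct and takes essentially the same route as the paper: the paper proves this corollary simply by invoking Corollary \ref{cor-f-smo-cb} for each row, which is exactly what you do. Your unifying observation that each $f$ satisfies $f''=K'f$ (so that $f$ is simultaneously $K'$-concave and $K'$-convex, and $K'f^2-(f')^2$ is automatically constant, making $\inf$ and $\sup$ coincide) is a clean way of organizing the row-by-row verifications that the paper leaves implicit.
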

\bigskip

A result in the converse direction holds as well, showing that if $Y$ and $Y'$ satisfy timelike curvature bounds 
then $X$ has a curvature bound. To show this, we first need the following Lemma that establishes that we can lift a geodesic 
triangle $\tilde\Delta$ in $X$ to a timelike geodesic triangle in $Y$, provided $\tilde\Delta$ is small enough.

\begin{lem}\label{lem-lif}
	Let $Y=I\times_f X$ be a \wpd, where $X$ is a geodesic length space. Let $(t_0,\bar p_0)\in Y$, then for all neighborhoods $V\subseteq Y$ of $(t_0,\bar p_0)$  
	there is a constant $C>0$ (depending on $f$ and $V$) such that any convex neighborhood $U$ of $\bar p_0$ in $X$ with $\diam(U)\leq C$ has the property that any geodesic triangle in $U$ can be lifted to a timelike geodesic triangle in $V$.
\end{lem}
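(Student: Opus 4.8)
The plan is to place the three lifted vertices at three fixed time levels clustered around $t_0$ and then to produce the timelike sides by fiber independence. Since $V$ is open and $D$ is the product metric, I first fix $\eta,\rho>0$ with $[t_0-\eta,t_0+\eta]\subseteq I$ and $(t_0-\eta,t_0+\eta)\times B^d_\rho(\bar p_0)\subseteq V$, and I declare the three vertex times to be $s:=t_0-\tfrac{\eta}{2}$, $t:=t_0$ and $u:=t_0+\tfrac{\eta}{2}$. These lie in $(t_0-\eta,t_0+\eta)$ irrespective of which triangle we lift, so the whole difficulty is reduced to (a) guaranteeing the chronological relations between the levels and (b) keeping the fiber parts inside $B^d_\rho(\bar p_0)$.

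The quantitative heart of the argument is a uniform smallness estimate for the flow $h_{(\cdot)}$ appearing in \eqref{I+}. Fixing $c\in I$ and writing $F(r):=\int_c^r\tfrac1f$, the function $h_\tau$ is by definition the inverse of $r\mapsto F(r)-F(\tau)$, so $h_\tau(\delta)=F^{-1}\!\big(F(\tau)+\delta\big)$ is jointly continuous in $(\tau,\delta)$ and satisfies $h_\tau(0)=\tau$. Hence on the compact level interval $\tau\in[s,t]$ the continuous map $(\tau,\delta)\mapsto h_\tau(\delta)-\tau$ vanishes identically at $\delta=0$, and uniform continuity yields a $C_1>0$ with $h_\tau(\delta)-\tau<\tfrac{\eta}{2}$ for every $\tau\in[s,t]$ and every $0\le\delta\le C_1$. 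I then set $C:=\min\{C_1,\rho/2\}$; note $C\le C_1$ and, since $\bar p_0\in U$ and $\diam(U)\le C$, every vertex of every geodesic triangle in $U$ lies in $\bar B^d_{\rho/2}(\bar p_0)\subseteq B^d_\rho(\bar p_0)$ and all three side lengths are $\le C\le C_1$.

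Now take a convex $U\ni\bar p_0$ with $\diam(U)\le C$ and a geodesic triangle $(\bar p,\bar q,\bar r)$ in $U$, with minimizing geodesic sides $\beta_{pq},\beta_{qr},\beta_{pr}$ contained in $U$. Applying the flow estimate with $\tau=s$ and $\delta=d(\bar p,\bar q)$ gives $h_s(d(\bar p,\bar q))<s+\tfrac{\eta}{2}=t$, so $(s,\bar p)\ll(t,\bar q)$ by \eqref{I+}; likewise with $\tau=t$ and $\delta=d(\bar q,\bar r)$ we get $h_t(d(\bar q,\bar r))<t+\tfrac{\eta}{2}=u$, so $(t,\bar q)\ll(u,\bar r)$. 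Transitivity of $\ll$ (Lemma \ref{lem-cau-rel}) then gives the full timelike chain $(s,\bar p)\ll(t,\bar q)\ll(u,\bar r)$, and $\tau$ is finite on this bounded region.

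Finally I realize each side by a maximal timelike curve carrying the prescribed fiber geodesic. For the $\bar p\bar q$-side, put $\ell:=d(\bar p,\bar q)$ and apply Corollary \ref{cor:geodesicifffibergeod} to the auxiliary cone $I\times_f[0,\ell]$ to connect $(s,0)$ and $(t,\ell)$ (which are chronologically related, having the same fiber distance $\ell$) by a maximal timelike curve; after reparametrizing so its fiber part is unit speed, fiber independence (Theorem \ref{thm-structure-of-geod},\ref{thm-structure-of-geod-fib-ind}, with the two fibers interchanged) transfers it to a maximal timelike curve $(\alpha',\beta_{pq})$ in $Y$ whose fiber component is exactly $\beta_{pq}\subseteq U$. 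Running the same construction for $\bar q\bar r$ and $\bar p\bar r$ supplies the remaining two sides; each is future directed, so its time component increases monotonically between the two vertex levels and stays in $[s,u]\subseteq(t_0-\eta,t_0+\eta)$. Thus the lifted timelike geodesic triangle lies in $(t_0-\eta,t_0+\eta)\times B^d_\rho(\bar p_0)\subseteq V$. I expect the single genuine obstacle to be the uniform flow estimate of the second paragraph — making the height gain $h_\tau(\delta)-\tau$ beat the fixed gap $\tfrac{\eta}{2}$ simultaneously over all base levels $\tau\in[s,t]$ — for which joint continuity of $h_\tau(\delta)=F^{-1}(F(\tau)+\delta)$ and compactness are decisive; the auxiliary-interval form of fiber independence then removes any need to assume uniqueness of geodesics in $U$.
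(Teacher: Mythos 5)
Your proof is correct, and its overall skeleton coincides with the paper's: place the three vertices at fixed time levels symmetric about $t_0$, check they are chronologically related, and then realize each side by a maximal timelike curve whose fiber component is the prescribed geodesic, via the auxiliary cone $I\times_f[0,\ell]$, Corollary \ref{cor:geodesicifffibergeod} and fiber independence (Theorem \ref{thm-structure-of-geod}) --- this last step is exactly what the paper does, only written out more explicitly by you. Where you genuinely diverge is the quantitative step producing the chronological relations: the paper constructs explicit connecting curves $(\alpha,\beta)$ with $\alpha(t)=t$ and $\beta$ a constant-speed reparametrization of the fiber geodesic, and verifies timelikeness by the pointwise estimate $-\dot\alpha^2+(f\circ\alpha)^2v_\beta^2\le -\tfrac12$, using only the bound $f\le 2m$ near $t_0$; this is elementary, self-contained, and yields the explicit constant $C=\eps/(2\sqrt{2}m)$. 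You instead invoke the description \eqref{I+} of $I^+$ from Proposition \ref{prop-cau-pla} and obtain uniformity from continuity of the null flow $(\tau,\delta)\mapsto h_\tau(\delta)$ plus compactness; this buys you a softer argument (no curve needs to be exhibited or estimated) at the cost of leaning on the push-up machinery and losing explicitness of $C$. One point you should make precise: to invoke uniform continuity you must first restrict $(\tau,\delta)$ to a compact rectangle $[s,t]\times[0,\delta_0]$ actually contained in the domain of $h$, i.e.\ choose $\delta_0<b_t=\int_t^b \frac{1}{f}$, which works because $b_\tau\ge b_t>0$ for all $\tau\in[s,t]$; as written, ``uniform continuity'' is applied on a set that is not obviously compact (the domain of $h_\tau$ shrinks with $\tau$). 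This restriction also automatically supplies the hypothesis $d(\bar p,\bar q)<b_s$ needed for \eqref{I+} to apply, which your text uses silently. With that bookkeeping added, the argument is complete.
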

\begin{pr}
	Let $(t_0,\bar{p}_0)\in Y$ and $V\subseteq Y$ a neighborhood of $(t_0,\bar p_0)$, and set $f(t_0)=:m>0$. Then there is an $\eps>0$ such that for all $t\in[t_0-\eps,t_0+\eps]\subseteq I$ we have $f(t)\leq 2m$ and $[t_0-\eps,t_0+\eps ]\times B_{\frac{\eps}{2\sqrt{2}m}}(\bar p_0)\subseteq V$. Moreover, set $\alpha(t):=t$, $\alpha\colon[t_0-\eps,t_0+\eps]\rightarrow I$ and $t_\pm:=\alpha(t_0\pm\eps)\in I$. Let $U$ be a convex neighborhood in $X$ with $\diam(U)\leq \frac{\eps}{2\sqrt{2} m}=: C$. Then $U\subseteq B_{\frac{\eps}{2\sqrt{2}m}}(\bar p_0)$. Let $\bar{\Delta}=(\bx,\by,\bz)$ be a geodesic triangle in  $U$. Let $\tilde\beta\colon[0,d(\bx,\by)]\rightarrow U$ be a minimizing unit-speed geodesic connecting $\bx$ and $\by$. We now reparametrize $\tilde\beta$ as follows: $\beta(s):=\tilde\beta(\frac{s-t_0+\eps}{\eps} d(\bx,\by))$ for $s\in[t_0-\eps,t_0]$. Clearly, $\beta\colon[t_0-\eps,t_0]\rightarrow U$ is minimizing and $v_\beta=\frac{d(\bx,\by)}{\eps}$ almost everywhere. We establish that $\gamma:= (\alpha,\beta)\colon[t_0-\eps,t_0]\rightarrow V$ is future directed timelike from $x:=(t_-,\bx)$ to $y:=(t_0,\by)$:
	\begin{align}
	-\dot\alpha^2 + (f\circ\alpha)^2 v_\beta^2 = -1 + f(t)^2\frac{d(\bx,\by)^2}{\eps^2} \leq -1 + m^2\frac{4\diam(U)^2}{\eps^2} \leq -\frac{1}{2}\,.
	\end{align}
	Thus we have $(t_-,\bx)\ll (t_0,\by)$ and analogously $(t_0,\by)\ll(t_+,\bz)=:z$. As $Y$ is geodesic by Corollary \ref{cor:geodesicifffibergeod}, there is a maximizing future directed timelike curve from $x$ to $y$, whose projection to $X$ is a minimizing curve in $X$ by Theorem \ref{thm-structure-of-geod} (i). Note that this projection is in general different from $\beta$. However, one can now proceed as in the proof of Corollary \ref{cor:geodesicifffibergeod} and obtain a maximal timelike curve in $Y$ whose projection is $\beta$ or $\tilde\beta$, respectively. Analogously, one can argue the existence of maximizing causal curves from $y$ to $z$ and from $x$ to $z$ for which the $X$-components are the minimizing geodesics from $\by$ to $\bz$ and from $\bx $ to $\bz$. Thus $\Delta:=(x,y,z)$ is a lift of $\bar{\Delta}$ and by construction $\Delta$ and all its sides lie in $V$.
\end{pr}

With this preparation we can show the following:
\begin{thm}\label{thm-Y-cb-X-cb}
	If $X$ is a geodesic length space, $Y=I\times_f X$ has timelike curvature bounded below (above) by $K'$ and $Y'=I\times_f \mathbb{M}^2(K)$ has timelike curvature bounded above (below) by $K'$ then $X$ has curvature bounded below (above) by $K$.
\end{thm}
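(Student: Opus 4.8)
The plan is to deduce the metric comparison inequality in $X$ from the two timelike comparison inequalities in $Y$ and $Y'$, routed through a single comparison triangle in $\mathbb{L}^2(K')$ and converted back and forth between $\tau$-estimates and $d$-estimates by Lemmas \ref{lem:taudownifdup} and \ref{lem-tau-vs-d-conv}. I will treat the lower bound case; the upper bound case is the same argument with every inequality reversed (using the strict-monotonicity remark after Lemma \ref{lem:taudownifdup} in place of Lemma \ref{lem-tau-vs-d-conv}). First I would set up matching neighborhoods. Fix $\bar p_0\in X$ and $t_0\in I$, let $W\subseteq Y$ be a comparison neighborhood of $(t_0,\bar p_0)$ for the lower bound $K'$, fix a basepoint $\bar w'\in\mathbb{M}^2(K)$, and let $W'\subseteq Y'$ be a comparison neighborhood of $(t_0,\bar w')$ for the upper bound $K'$. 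Applying Lemma \ref{lem-lif} with $V:=W$ produces a diameter bound $C>0$ and a time-extent parameter $\eps$ so that every geodesic triangle of diameter $\le C$ in a convex neighborhood $U$ of $\bar p_0$ lifts to a maximal timelike triangle inside $W$ with prescribed vertex times $t_-<t_0<t_+$. Shrinking $\eps,C$ if necessary, I would also arrange that triangle comparison with $\mathbb{M}^2(K)$ holds on $U$, that the fiber-independent lift (using the same times $t_\pm,t_0$) of any $\mathbb{M}^2(K)$-comparison triangle positioned by homogeneity to have a vertex at $\bar w'$ lands in $W'$, and that all resulting side lengths are small enough to satisfy the size restrictions for $\mathbb{L}^2(K')$-comparison.

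Next I would run the comparison for a single triangle. Let $\bar\Delta=(\bar x,\bar y,\bar z)$ be a geodesic triangle in $U$ of diameter $\le C$, let $\bar m,\bar n$ lie on its sides, and let $\bar m',\bar n'$ be the corresponding points on a comparison triangle $\bar\Delta'=(\bar x',\bar y',\bar z')$ in $\mathbb{M}^2(K)$ (same side lengths). After relabeling the vertices of $\bar\Delta$, I may assume $\bar m$ lies on $\bar x\bar y$ and $\bar n$ on $\bar y\bar z$, so that their common vertex $\bar y$ becomes the causal middle vertex after lifting. Using Lemma \ref{lem-lif} I lift $\bar\Delta$ to a maximal timelike triangle $\Delta=(x,y,z)$ in $W$ with $x\ll y\ll z$, $x=(t_-,\bar x)$, $y=(t_0,\bar y)$, $z=(t_+,\bar z)$, whose sides project onto the sides of $\bar\Delta$; by fiber independence (Theorem \ref{thm-structure-of-geod}\,\ref{thm-structure-of-geod-fib-ind}) I lift $\bar\Delta'$ to a maximal timelike triangle $\Delta'=(x',y',z')$ in $Y'$ with the same base components, so that $\Delta$ and $\Delta'$ have identical time-separation side lengths $a,b,c$. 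Since the warping function $f$ and the base components $\alpha$ of the realizing curves are shared and the fiber curves are unit speed, the point of $\Delta$ at fiber-distance $\rho$ from a vertex has the same base coordinate and the same time separation from that vertex as the corresponding point of $\Delta'$; hence $\bar m,\bar n$ lift to $M=(m_0,\bar m),N=(n_0,\bar n)$ on $\Delta$ and to $M'=(m_0,\bar m'),N'=(n_0,\bar n')$ on $\Delta'$, these being corresponding (time-separation) points, and $M\le y\le N$, $M'\le y'\le N'$ makes both pairs causally related.

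Finally I would patch through $\mathbb{L}^2(K')$. Because $\Delta$ and $\Delta'$ share side lengths $a,b,c$, they have (up to isometry) a single timelike comparison triangle $\hat\Delta\subseteq\mathbb{L}^2(K')$, and the comparison points $\hat M,\hat N$ of $M,N$ coincide with those of $M',N'$. The lower bound on $Y$ gives $\tau(M,N)\le\tau_{\mathbb{L}^2(K')}(\hat M,\hat N)$, the upper bound on $Y'$ gives $\tau_{\mathbb{L}^2(K')}(\hat M,\hat N)\le\tau_{Y'}(M',N')$, so $\tau(M,N)\le\tau_{Y'}(M',N')$. As $M,N$ and $M',N'$ are causally related with equal base coordinates, Lemma \ref{lem-tau-vs-d-conv} yields $d_{\mathbb{M}^2(K)}(\bar m',\bar n')\le d_X(\bar m,\bar n)$, which is precisely the inequality defining curvature bounded below by $K$. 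Since $\bar p_0$ was arbitrary and $\bar m,\bar n$ ranged over all pairs on the sides, this establishes the claim.

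The hard part will be the bookkeeping ensuring that the three notions of corresponding points — fiber-distance from a vertex in $X$ and $\mathbb{M}^2(K)$, time separation from a vertex in $Y$ and $Y'$, and position on the $\mathbb{L}^2(K')$-comparison triangle — all single out the same points, so that the chain of inequalities actually composes on a fixed pair; this is exactly where fiber independence and the shared warping function are indispensable. A secondary but essential point is the relabeling that places the common vertex of the two sides in the causal middle, since this is what guarantees $M\le N$ (and $M'\le N'$) and hence lets Lemma \ref{lem-tau-vs-d-conv} be applied; without it the lifted points could be causally unrelated and the conversion from $\tau$ to $d$ would fail.
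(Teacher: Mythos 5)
Your proposal is correct and follows essentially the same route as the paper's proof: lift the metric triangle to $Y$ via Lemma \ref{lem-lif}, lift its $\mathbb{M}^2(K)$-comparison triangle to $Y'$ via fiber independence (Theorem \ref{thm-structure-of-geod}), chain the two timelike curvature bounds through the common $\mathbb{L}^2(K')$-comparison triangle, and convert back to a distance inequality with Lemma \ref{lem-tau-vs-d-conv}. Your extra bookkeeping (using homogeneity to place the lifted $\mathbb{M}^2(K)$-triangle inside a comparison neighborhood of $Y'$, and relabeling so that the vertex shared by the two sides becomes the causal middle vertex, which guarantees the lifted points are causally related) makes explicit two points the paper leaves implicit.
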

\begin{pr}
	Fix $\bar{p}_0\in X$ and $t_0\in I$. Let $V\subseteq Y$ be a comparison neighborhood for $(t_0,\bar p_0)$ in $Y$ such that all timelike triangles in $V$ satisfy the size bounds. Let $C>0$ be given by Lemma \ref{lem-lif}, and let $U$ be a convex neighborhood of $\bar p_0$ in $X$ 
	with $\diam(U)\leq C$. Let $\bar \Delta=(\bx,\by,\bz)$ be a geodesic triangle in $U$ satisfying the appropriate size bounds with its sides realized by unit speed geodesics $\beta_{\bar r,\bar s}$ ($r,s\in\{x,y,z\}$). Let $\bar p,\bar q $ be points on $\bar \Delta$, say $\bar p \in \beta_{\bar x,\bar y}$ and $\bar  q\in \beta_{\bar y,\bar z}$. Then by Lemma \ref{lem-lif} we can lift $\bar\Delta$ to a timelike geodesic triangle $\Delta=(x,y,z)$
	in $I\times_f X$, where $x=(x_0,\bx)$, $y=(y_0,\by)$, $z=(z_0,\bz)$, with $x\ll y \ll z$, and whose sides are realized by future directed timelike maximal curves $\gamma_{r,s}=(\alpha_{r_0,s_0},\beta_{\bar r,\bar s})$ ($r,s\in\{x,y,z\})$. Let $\bar\Delta'=(\bx',\by',\bz')$ be a comparison triangle in $\mathbb{M}^2(K)$ of $\bar\Delta$, whose sides are realized by geodesics $\beta_{\bar r',\bar s'}'$ in $\mathbb{M}^2(K)$. Again we lift $\bar\Delta'$ to a timelike geodesic triangle $\Delta'$ in $Y'$ of the same side lengths as $\Delta$. Moreover, let $\Delta''$ be a comparison triangle in $\mathbb{L}^2(K')$ of $\Delta$, which therefore is also a comparison triangle of $\Delta'$. Let $\bar p',\bar q'$ be points corresponding to $\bar p,\bar q$ on $\bar\Delta'$. 
	We can lift these points to points on $p,q\in \Delta$ and $p',q'\in\Delta'$ as follows: Let $t_{\bar p}\in[0,d_X(\bar r,\bar s)]$, then $p:=\gamma_{r,s}(t_p)=(\alpha_{r_0,s_0}(t_{\bar p}),\bar p)$, and analogously for $\bar q$ and $\bar p',\bar q'$. Then let $p'',q''$ be corresponding points on $\Delta''$ with respect to $p,q$, which are also corresponding to $p',q'$.  At this point we can use the curvature bounds to obtain
	\begin{align}
	\tau(p,q)\leq \tau_{\mathbb{L}^2(K')}(p'',q'')\leq \tau'(p',q')\,.
	\end{align}
	Since by construction $p\leq q$ and $p'\leq q'$, Lemma \ref{lem-tau-vs-d-conv} now gives $d_{\mathbb{M}^2(K)}(\bar p',\bar q')\leq d(\bar p,\bar q)$, concluding the proof.
\end{pr}

\section{Synthetic singularity theorems for generalized cones}\label{sec:sin-thm}

\begin{lem} \label{lem:tauinmodel} Let $K'>K$ (resp.~$K'<K$). Then for small enough corresponding timelike triangles $\Delta=(x,y,z)\in \mathbb{L}^2(K)$ with $x\ll y\ll z$ such that $\tau_{\mathbb{L}^2(K)}(x,z)>\tau_{\mathbb{L}^2(K)}(x,y)+\tau_{\mathbb{L}^2(K)}(y,z)$ (i.e., $x,y,z$ don't lie on a single maximizing geodesic) and $\Delta'=(x',y',z')\in \mathbb{L}^2(K')$, $x'\ll y'\ll z'$ and corresponding points $q\in yz\subseteq I^+(x)$, $q'\in y'z' \subseteq I^+(x')$ with $q\neq y,z$ (and consequently $q'\neq y',z'$) we have
	\begin{align}\label{eq:tauinmodel} \tau_{\mathbb{L}^2(K')}(x',q')>\tau_{\mathbb{L}^2(K)}(x,q) \quad \text{(resp.~}<\text{)}
	\end{align}
\end{lem}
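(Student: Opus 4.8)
The plan is to reduce the three-vertex comparison to a one-parameter hinge comparison along the side $yz$ and then to establish strict monotonicity in the curvature parameter. Write $a:=\tau(x,y)$, $b:=\tau(y,z)$, $c:=\tau(x,z)$ for the (common) side lengths of $\Delta$ and $\Delta'$, and parametrize the side $yz$ by $\tau$-arclength, so that the corresponding points are $q=q(s)$ and $q'=q'(s)$ with $s=\tau(y,q)=\tau'(y',q')\in(0,b)$. Set $g_K(s):=\tau_{\mathbb{L}^2(K)}(x,q(s))$ and $g_{K'}(s):=\tau_{\mathbb{L}^2(K')}(x',q'(s))$. The key structural observation is that the subtriangle $(x,y,q)$ shares its vertex angle at $y$ with $(x,y,z)$, since $q$ lies on $yz$; hence $g_K(s)$ is governed by the hinge data $(a,s)$ at $y$ together with that angle. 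Note also that $g_K(0)=a=g_{K'}(0)$ and $g_K(b)=c=g_{K'}(b)$, so the two functions agree at both endpoints, and the hypothesis $q\neq y,z$ places us on the open interval, where we must produce a strict inequality.

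First I would invoke the Lorentzian law of cosines in each model space $\mathbb{L}^2(K)$. It expresses, on the one hand, the hyperbolic angle $\omega_K$ at $y$ as an explicit function of the three side lengths $(a,b,c)$, and on the other hand $g_K(s)$ as the same type of expression in the hinge $(a,s,\omega_K)$. Concretely, in the Minkowski case $K=0$ one computes $g_0(s)^2=a^2+s^2+2as\cosh\omega_0$ together with $\cosh\omega_0=(c^2-a^2-b^2)/(2ab)$, and the de Sitter ($K>0$) and pseudohyperbolic ($K<0$) cases are obtained by replacing the side lengths with the appropriate curvature-normalized sine/cosine functions. This reduces the lemma to the purely one-variable statement that, with $(a,b,c)$ and $s$ held fixed, $K\mapsto g_K(s)$ is strictly increasing for each $s\in(0,b)$.

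To prove this monotonicity I would differentiate $g_K(s)=\Phi_K\bigl(a,s,\omega_K(a,b,c)\bigr)$ in $K$, using the defining relation $\Phi_K(a,b,\omega_K)=c$ to eliminate $d\omega_K/dK$ in terms of the partial derivatives of $\Phi_K$; this turns the claim into checking a single sign. A cleaner, more conceptual alternative is a Sturm-type comparison: along the geodesic $yz$ the radial quantity $g_K$ obeys a second-order focal (Jacobi) equation in $s$ whose coefficient is the curvature $K$, and since $g_K$ and $g_{K'}$ carry the same boundary values at $s=0$ and $s=b$, the standard oscillation/comparison argument forces $g_{K'}>g_K$ on the interior precisely when $K'>K$ (with the reversed inequality when $K'<K$). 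The flat case $K=0$ computed above fixes the direction of the inequality and serves as a consistency check.

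The main obstacle is twofold. First, the sign of the $K$-derivative must be tracked uniformly through the three regimes $K>0$, $K=0$, $K<0$ and through the implicitly defined angle $\omega_K$, where the curvature-normalized trigonometric functions change character; if I use the Sturm route, the corresponding care goes into deriving the focal equation correctly in the Lorentzian constant-curvature setting, identifying the transformed radial variable that linearizes it, and controlling the $K$-dependence of its boundary data. Second, this is exactly where \emph{small enough} is needed: the size restrictions guarantee that comparison triangles in all the $\mathbb{L}^2(K)$ exist and are non-degenerate, and that we stay inside the range on which the law of cosines is invertible and the relevant functions are monotone — in particular away from the conjugate and cut locus phenomena of $\tilde S^2_1(r)$ — so that $\omega_K$ and $g_K$ depend smoothly and monotonically on $K$ throughout.
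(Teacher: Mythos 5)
Your reduction to the one-parameter comparison of $g_K(s)=\tau_{\mathbb{L}^2(K)}(x,q(s))$ and $g_{K'}(s)=\tau_{\mathbb{L}^2(K')}(x',q'(s))$ along $yz$, with equal values at $s=0$ and $s=b$, is sound, and your second (``Sturm-type'') route is in fact the strategy the paper follows: the ``transformed radial variable that linearizes the focal equation'' is precisely the modified distance function $h_{K,x}$ of \cite{AB:08}, which along timelike geodesics satisfies the linear equation $\frac{d^2}{ds^2}(h_{K,x}\circ\gamma)+\langle\dot\gamma,\dot\gamma\rangle K\,(h_{K,x}\circ\gamma)=\langle\dot\gamma,\dot\gamma\rangle$; the paper then compares the two sides $yz$ and $y'z'$ through a differential inequality $\ddot f<\sigma K' f$ for the difference $f$ of modified distances, gets $f\geq 0$ from the boundary conditions, upgrades this to $f>0$ in the interior by a minimum-principle argument (an interior zero would be a local minimum with $\ddot f(s_0)\geq 0$, contradicting $\ddot f(s_0)<\sigma K'f(s_0)=0$), and finally converts back to $\tau$ via monotonicity of $h$ in the signed distance.

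The genuine gap is that both of your routes defer exactly the step that constitutes the lemma. In Route A you reduce to ``checking a single sign'' of $\partial_K g_K(s)$ after eliminating $d\omega_K/dK$, but you never check it; tracking that sign uniformly through the implicitly defined angle and the regimes $K>0$, $K=0$, $K<0$ \emph{is} the content of the statement, and the flat-case formula only tells you what the answer should be, it does not prove it. In Route B you invoke ``the standard oscillation/comparison argument'' for $g_K,g_{K'}$ directly, but $g_K$ itself does not satisfy a linear second-order equation with coefficient $K$ (the radial equation for the distance is nonlinear; only the transformed quantity is linear), and after the linearizing transformation the equal-boundary-value property you rely on is lost, because the transformation profile is itself $K$-dependent: with side lengths equal, $h_{K,x}(y)$ and $h_{K',x'}(y')$ are in general different numbers. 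Resolving this tension --- for instance by passing both radial distances through one and the same profile, so that one composite satisfies the exact model ODE while the other satisfies only a one-sided differential inequality of the correct sign, and then running the maximum principle --- is precisely the work done by the modified-distance-function machinery of \cite{AB:08} that the paper's proof uses, and it is the part missing from your proposal. As written, you have a correct reduction and a correct guess at the mechanism, but no proof of the strict inequality \eqref{eq:tauinmodel}.
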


\begin{proof}
	Note first that $K'\geq K$ implies that $\mathbb{L}^2(K)$  has  {\em sectional curvature bounded  below} by $K'$ in the sense of \cite[Eq.~(1.1)]{AB:08} (i.e., $\mathcal{R}\geq K'$ if and only if spacelike sectional curvatures are bounded below by $K'$ and timelike ones bounded above by $K'$) and thus \cite[Thm.~1.1]{AB:08} shows $\tau_{\mathbb{L}^2(K')}(x',q')\geq \tau_{\mathbb{L}^2(K)}(x,q)$. 
	
	To show that strict inequality in the curvatures implies strict inequality of the time separations we follow the proof of \cite[Thm.~1.1]{AB:08}, at times referencing \cite{K:18} for additional details, following the notations therein except that our model space is now  $\mathbb{L}^2(K')$, not $\mathbb{L}^2(K)$, and we use ${}'$ to denote the corresponding objects in the model space (as opposed to $\tilde{}$~in \cite{AB:08, K:18}): 
	
	In this proof the curvature inequality is first used in Corollary 4.5, where the authors use ${R}_{\dot{{\sigma}}}\geq R'_{\dot{\sigma}'}$ and a general comparison result for modified shape operators along a geodesic, see \cite[Thm.~4.3]{AB:08}, to get that the modified shape operators satisfy $S\leq S'$ at corresponding
	points of corresponding non-null geodesic segments $\sigma $ and $\sigma '$. Note that by the proof of \cite[Cor.~4.5]{AB:08}, see also \cite[Lem.~5.2.2]{K:18} for a more detailed argument, $S$ and  $S'$ split into direct sums 
	\begin{equation}\label{eq-direct-sum}
	S= S|_{\dot{\sigma}}\oplus S|_{\dot{\sigma }^\perp}, \qquad S'=S'|_{\dot{\sigma}'}\oplus S'|_{(\dot{\sigma }') ^\perp},
	\end{equation}
	where the summands $S|_{\dot{\sigma}},S'|_{\dot{\sigma}'}$ act on the 1-dimensional spaces tangent to the geodesics $\sigma, \sigma '$ whereas $S|_{\dot{\sigma }^\perp},S'|_{(\dot{\sigma }') ^\perp}$ act on their orthogonal complements. According to \cite[Cor.~4.5]{AB:08} the summands acting on the 1-dimensional spaces tangent to the radial geodesics are actually equal, i.e., $S|_{\dot{\sigma}}=S'|_{\dot{\sigma}'} $ 
	at corresponding points. Because of this it is not possible to get the strict inequality $S<S'$ even if ${R}_{\dot{\sigma}}> R'_{\dot{\sigma}'}$. However, the rigidity part of \cite[Thm.~4.3]{AB:08} (resp.~\cite[Thm.~2.2.3]{K:18}) together with the proof of \cite[Cor.~4.5]{AB:08} {\emph{does}} imply that we get the strict inequality
	\begin{equation}\label{eq-str-ine-S}
	S|_{\dot{\sigma }^\perp}<S'|_{(\dot{\sigma }') ^\perp}\,.
	\end{equation}
	Indeed, if we had equality of these (restricted) shape operators at any $\sigma(t_0)$, $\sigma'(t_0)$ then \cite[Thm.~4.3]{AB:08} would imply that ${R}_{\dot{\sigma}(t)}=R'_{\dot{\sigma}'(t)}$ for all $t\leq t_0$. However, this cannot hold as $K'>K$ implies that ${R}_{\dot{\sigma}}> R'_{\dot{\sigma}'}$ for all parameter values.
	
	It remains to argue that the strict inequality \eqref{eq-str-ine-S} carries through to a strict inequality in the time separations as long as the three points don't lie on a common geodesic segment. Thus, let $x,y,z$ be as in the statement of the lemma and let $\gamma \equiv \gamma_{zy}:[0,1]\to \mathbb{L}^2(K)$ be the geodesic from $y$ to $z$. For $q(s)=\gamma(s)$, let $\gamma_{q(s)x}:[0,1]\to \mathbb{L}^2(K)$ be the geodesic from $x$ to $q(s)$ and for each $s$ denote its tangent vector at the endpoint by  $w(q(s))$, i.e. $w(q(s))=\dot{\gamma}_{q(s)x}(1)$. Then, because $x,y,z$ don't lie on a single geodesic, $ \dot{\gamma}(s)\not \parallel w(q(s))$ and hence, replacing \cite[Cor.\ 4.5]{AB:08} by \eqref{eq-str-ine-S}  in the proof of \cite[Cor.\ 4.6]{AB:08} (and keeping in mind the direct sum decompositions \eqref{eq-direct-sum} of $S,S'$ with $S|_{\dot{\sigma}}=S'|_{\dot{\sigma}'} $), we get $ \langle S_{\gamma_{q(s)x}}(\gamma(s))\dot{\gamma}(s),\dot{\gamma}(s)\rangle < (1-K' h_{K',x}(\gamma(s)))\langle \dot{\gamma}(s), \dot{\gamma}(s)\rangle$. Thus, following \cite[Prop.~5.2]{AB:08}, $(h_{K',x}\circ\gamma )\ddot{\;}\, (s)< (1 -K' h_{K',x}(\gamma(s))) \langle \dot{\gamma}(s), \dot{\gamma}(s) \rangle $.
	
	Defining $f:=h_{K',x}\circ \gamma_{yz} - h'_{K',x'}\circ {\gamma}'_{y'z'} $ as in \cite[Prop.~5.2]{AB:08}, we get  $\ddot{f}+ E(\gamma) K f<0$ as contrasted with \cite[Prop.~5.2]{AB:08}. From this $f\geq 0$ follows as in \cite[Prop.~5.2]{AB:08}, see \cite[Lem.~5.1.1]{K:18} for details. We now argue that $f>0$ on the open interval $(0,1)$. Assume there exists $s_0\in (0,1)$ with $f(s_0)=0$. Then $f$ has a local minimum at $s_0$, so $\ddot{f}(s_0)\geq 0$, contradicting the strict inequality $\ddot{f}(s_0)<E(\gamma) K f(s_0)=0$.
	
	Thus, $h_{K',x}\circ \gamma_{yz} (s)> h'_{K',x'}\circ {\gamma}'_{y'z'}(s)$ for all $s\in (0,1)$, and in particular $h_{K',x}(q)>h'_{K',x'}(q')$. Since $h_{K',x}(q)$ is really only a function of the \emph{signed} distance $-\tau_{\mathbb{L}^2(K)}(x,q)$ and  $h'_{K',x'}(q')$ is the {\em same} function of the \emph{signed} distance $-\tau_{\mathbb{L}^2(K')}(x',q')$ and this function is strictly increasing (see \cite[Lem.~4.1.4]{K:18} or simply by the definition of the $h_{K,x}$ in \cite[Eq.~(1.6)]{AB:08}), this implies $-\tau_{\mathbb{L}^2(K)}(x,q)>-\tau_{\mathbb{L}^2(K')}(x',q')$, i.e.,
	\[\tau_{\mathbb{L}^2(K)}(x,q)<\tau_{\mathbb{L}^2(K')}(x',q') \]
	and we are done with the case $K'>K$.
	
	The $K'<K$ case follows from this immediately by simply switching $K$ and $K'$ above.		
\end{proof}

\begin{thm} \label{thm:singthm}
	Let $X$ be a geodesic length space, $Y=I\times_f X$ with $f\colon I\to (0,\infty)$ smooth. Assume that $Y$ has timelike curvature bounded below (above) by $K$, then $f$ is 
	$K$-concave (convex), i.e., $f''-Kf\leq 0$ ($f''-Kf\geq 0$).
\end{thm}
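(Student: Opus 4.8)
The plan is to argue by contradiction and to reduce the statement, which concerns the general fibre $X$, to a curvature computation on a two-dimensional warped product, where it can be played off against the constant-curvature models by means of Lemma \ref{lem:tauinmodel}. I treat the lower bound; the upper bound is entirely symmetric with all inequalities reversed and $K'<K$ below. So suppose $f$ fails to be $K$-concave, i.e.\ $f''(t_0)-Kf(t_0)>0$ for some $t_0\in I$. By continuity I may fix a value $K'\in\bigl(K,\,f''(t_0)/f(t_0)\bigr)$ and a neighbourhood $J\ni t_0$ on which $f''/f>K'$ still holds. (We may assume $X$ is not a single point, since otherwise $Y$ carries no nondegenerate timelike triangles and there is nothing to prove.)

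First I would pass to a fibre geodesic. Since $X$ is a geodesic length space it carries a nonconstant minimizing unit-speed geodesic $\sigma$, and I consider the smooth Lorentzian surface $S:=\bigl(J\times\R,\,-\dd t^2+f^2\,\dd x^2\bigr)$ together with the map $(t,x)\mapsto(t,\sigma(x))$. Because sub-segments of $\sigma$ are again minimizing, fibre independence (Theorem \ref{thm-structure-of-geod},\ref{thm-structure-of-geod-fib-ind}, cf.\ Example \ref{ex-warped-product-mf}) carries timelike geodesic triangles of $S$ with small fibre displacement to timelike geodesic triangles of $Y=I\times_f X$ with the same side lengths and the same values of $\tau$ at corresponding points. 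It therefore suffices to produce, in $S$, a small timelike triangle violating comparison with $\mathbb{L}^2(K)$; its image in $Y$ will then contradict the assumed bound.

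The key computation is that the sectional curvature of the (timelike) tangent plane of $S$ equals $f''/f$, so on $J$ the surface $S$ has sectional curvature $>K'$. Hence, by the fact that smooth sectional curvature bounds imply synthetic timelike curvature bounds (\cite{AB:08}, cf.\ the direction used in Corollary \ref{cor-f-smo-cb} via \cite[Ex.~4.9]{KS:18}), $S$ has timelike curvature bounded above by $K'$ near $(t_0,0)$. Concretely, for a sufficiently small timelike triangle $x\ll y\ll z$ in $S$, an interior point $q$ of the side $yz$, and the corresponding comparison configuration $(x',y',z',q')$ in $\mathbb{L}^2(K')$, this yields $\tau_S(x,q)\ge\tau_{\mathbb{L}^2(K')}(x',q')$.

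Finally I would chain this with Lemma \ref{lem:tauinmodel}. Let $(x'',y'',z'',q'')$ be the comparison configuration of the same triangle in $\mathbb{L}^2(K)$; as the two model triangles have equal side lengths, they are corresponding in the sense of Lemma \ref{lem:tauinmodel}, which (using $K'>K$) gives $\tau_{\mathbb{L}^2(K')}(x',q')>\tau_{\mathbb{L}^2(K)}(x'',q'')$. Transporting the $S$-triangle into $Y$ via $\sigma$ and combining, $\tau_Y(x,q)=\tau_S(x,q)\ge\tau_{\mathbb{L}^2(K')}(x',q')>\tau_{\mathbb{L}^2(K)}(x'',q'')$, while $(x'',y'',z'',q'')$ is precisely an $\mathbb{L}^2(K)$-comparison configuration for the image triangle. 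This contradicts that $Y$ has timelike curvature bounded below by $K$, which forces the reverse inequality $\tau_Y(x,q)\le\tau_{\mathbb{L}^2(K)}(x'',q'')$. I expect the main obstacle to be the bookkeeping of the reduction — checking that fibre independence transports the complete comparison data (side lengths, corresponding points, and the timelike size bounds) between $Y$ and the smooth surface $S$ — together with fixing the curvature sign convention so that bounded below by $K$ corresponds to the sectional bound $f''/f\le K$. This is exactly the Lorentzian below/above flip, whose consistency is what Lemma \ref{lem:tauinmodel} (larger curvature parameter yielding larger $\tau$) encodes.
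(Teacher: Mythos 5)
Your proposal is correct and follows essentially the same route as the paper's proof: argue by contradiction, pass to the two-dimensional warped product $J\times_f\R$ whose timelike sectional curvature $f''/f>K'$ yields (via \cite{AB:08} and \cite[Ex.~4.9]{KS:18}, with the Lorentzian sign reversal) timelike curvature bounded \emph{above} by $K'$, chain this with Lemma \ref{lem:tauinmodel}, and transport the resulting triangle into $Y$ by fiber independence (Theorem \ref{thm-structure-of-geod}) to contradict the assumed lower bound. Your explicit remarks on the degenerate single-point fiber and on the transfer via a fixed fiber geodesic $\sigma$ are just bookkeeping refinements of the same argument.
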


\begin{proof} 
	We only treat the case where $Y$ has timelike curvature bounded below by $K$. The proof for a bound from above is analogous.

	Assume to the contrary that $f$ is not $K$-concave, i.e., there exists $t_0\in I$ such that $f''(t_0)> K f(t_0)$. Then there exists an interval $J\subseteq I$, $t_0\in J$ and $K'>K$ such that $f'' >K' f$ on $J$. Define $Y':=J\times_f \R $, then $Y'$ is a smooth two-dimensional Lorentzian manifold with (timelike) sectional curvature $\mathcal{R}=\frac{f''}{f}>K'$. Since sectional curvature bounds in the sense of \cite[Eq.~(1.1)]{AB:08} (i.e., $\mathcal{R}\geq K'$ if and only if spacelike sectional curvatures are bounded below by $K'$ and timelike ones bounded above by $K'$) imply timelike curvature bounds in the sense of \LLSn s
	(cf.~\cite[Ex.\ 4.9]{KS:18}, based on \cite[Thm.~1.1]{AB:08}), we get that  $Y'$ has timelike curvature bounded above by $K'>K$. Let $x'=(x_0,0), y'=(y_0,\by'),z'=(z_0,0)$ be points in $Y'$ forming a small timelike geodesic triangle. Let $p'=x'$ and $q'=(q_0,\bar{q}')\in y'z'$ and let $p'',q''$ and $p''',q'''$ be corresponding points for $p',q'$ on the sides of the comparison triangle for $\Delta(x',y',z')$ in $\mathbb{L}^2(K')$ and $\mathbb{L}^2(K)$, respectively. Then
	\[\tau_{Y'}(p',q')\geq \tau_{\mathbb{L}^2(K')}(p'',q'')>\tau_{\mathbb{L}^2(K)}(p''',q'''),\]
	where we used Lemma~\ref{lem:tauinmodel} to get the last strict inequality.
	
	Now let $\bx \in X$ be arbitrary and set $x:=(x_0,\bx ),y:=(y_0,\by ),z:=(z_0,\bx )$, where $\by \in X$ is chosen such that $d_X(\bx,\by)=d_{\R}(0,\by')=|\by'|$. Then by fiber independence (Theorem \ref{thm-structure-of-geod},\ref{thm-structure-of-geod-fib-ind}), $\Delta =(x,y,z)$ is a triangle in $Y$ corresponding to $(x',y',z')$ in $Y'$. Let $p=x$ and let $q$ be the point corresponding to $q'$ on the side $yz$ of $\Delta $. Then, again by fiber independence, $\tau_Y(p,q)=\tau_{Y'}(p',q')$ and using the assumption that $Y$ has timelike curvature bounded below by $K$ we obtain the contradiction
	\[\tau_{\mathbb{L}^2(K)}(p''',q''') \geq \tau_Y(p,q)=\tau_{Y'}(p',q')>\tau_{\mathbb{L}^2(K)}(p''',q''').\qedhere \]
\end{proof}
\begin{rem}In the future it would also be interesting to examine if one might still obtain $K$-concavity ($K$-convexity) in the barrier sense for non-smooth warping functions $f$.
\end{rem}

We now relate non-positive lower timelike curvature bounds to the existence of singularities, i.e., incomplete causal geodesics. To this end we first recall some notions and results from \cite{GKS:19}. A \emph{geodesic} in a \LLS $X$ is a causal curve that is locally maximizing (\cite[Def.\ 4.1]{GKS:19}) and for a smooth strongly causal spacetime $(M,g)$ one has that causal pregeodesics of $(M,g)$ are geodesics in the synthetic sense above and vice versa (of the same causal character) (\cite[Thm.\ 4.4]{GKS:19}). Moreover, a geodesic $\gamma\colon[a,b)\rightarrow X$ is \emph{extendible} if there is a geodesic $\bar\gamma\colon[a,b]\rightarrow X$ such that $\bar\gamma\rvert_{[a,b)}=\gamma$. Otherwise, $\gamma$ is called \emph{inextendible}. Also, we have an analogous notion of timelike geodesic completeness in the synthetic setting: A \LLS $X$ is said to have \emph{property (TC)} if all inextendible timelike geodesics have infinite $\tau$-length (\cite[Def.\ 5.1]{GKS:19}). This notion is again compatible with the smooth spacetime case as a smooth and strongly causal spacetime is timelike geodesically complete if and only if it has property (TC) (\cite[Lem.\ 5.2]{GKS:19}). Consequently, we call a timelike geodesic \emph{incomplete} if it has finite $\tau$-length and the space $X$ \emph{timelike geodesically incomplete} if there is an inextendible timelike geodesic which is incomplete. Analogous notions are defined for past and future incompleteness.

\begin{cor}\label{cor:singthm} Let $X$ be a geodesic length space, $Y=I\times_f X$ with $I=(a,b),\,f:I\to (0,\infty)$ smooth. Assume that $Y$ has timelike curvature bounded below by $K$. Then:
	\begin{enumerate}
		\item[(i)] If $K<0$, then $a>-\infty$ and $b<\infty$ and hence the time separation function $\tau_Y $ of $Y$ is bounded by $b-a$. Thus any such $Y$ is timelike geodesically incomplete.
		\item[(ii)] If $K=0$ and $f$ is non-constant, then $a>-\infty $ 
		or $b<\infty$ 
		and hence $Y$ is past or future timelike geodesically incomplete.
	\end{enumerate}
\end{cor}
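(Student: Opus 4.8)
The plan is to read off everything from Theorem~\ref{thm:singthm}, which already tells us that a lower timelike curvature bound $K$ forces the warping function to satisfy the differential inequality $f''\le Kf$ on all of $I=(a,b)$. Both parts of the corollary then become purely one-dimensional statements about a smooth positive function obeying this inequality, and the Lorentzian content only re-enters at the very end, when translating boundedness of $I$ into incompleteness of the obvious vertical geodesics $s\mapsto(s,\bar x)$ (which are timelike since their tangent satisfies $-\dot\alpha^2+(f\circ\alpha)^2v_\beta^2=-1$).

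For (i) I would set $\omega:=\sqrt{|K|}>0$, so that $f>0$ satisfies $f''+\omega^2 f\le 0$, and prove by Sturm comparison that $b-a\le \pi/\omega$. Concretely, if this failed there would be a closed subinterval $[c,c+\pi/\omega]\subseteq I$, and I would test $f$ against the solution $w(t)=\sin(\omega(t-c))$ of $w''+\omega^2 w=0$ via the Wronskian $W:=f'w-fw'$. A direct computation gives $W'=(f''+\omega^2 f)\,w\le 0$ on $[c,c+\pi/\omega]$ (as $w\ge 0$ there), so $W$ is non-increasing; but $W(c)=-\omega f(c)<0$ while $W(c+\pi/\omega)=\omega f(c+\pi/\omega)>0$, a contradiction. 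Hence $b-a<\infty$, which forces both $a>-\infty$ and $b<\infty$. The bound $\tau_Y\le b-a$ is then immediate from Definition~\ref{def-len}: for any future directed causal $\gamma=(\alpha,\beta)$ one has $L(\gamma)=\int\sqrt{\dot\alpha^2-(f\circ\alpha)^2v_\beta^2}\le\int\dot\alpha=\alpha(b)-\alpha(a)\le b-a$. Finally, the vertical curve above is globally maximizing (equality forces $v_\beta=0$ a.e.), hence a timelike geodesic defined on the maximal base interval $(a,b)$; it is inextendible since its limit point $(b,\bar x)$ lies outside $Y$, and it has $\tau$-length $b-a<\infty$, so property (TC) fails and $Y$ is timelike geodesically incomplete.

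For (ii) the inequality reads $f''\le 0$, i.e.\ $f$ is concave, and by assumption positive and non-constant. Here I would rule out $I=\R$ by a convexity argument: since $f$ is non-constant there is $t_0$ with $f'(t_0)\neq 0$, and monotonicity of $f'$ together with the tangent-line estimate $f(t)\le f(t_0)+f'(t_0)(t-t_0)$ forces $f(t)\to-\infty$ either as $t\to+\infty$ (if $f'(t_0)<0$) or as $t\to-\infty$ (if $f'(t_0)>0$), contradicting $f>0$. Thus $I\neq\R$, i.e.\ $a>-\infty$ or $b<\infty$. Exactly as in (i), a one-sided restriction of the vertical geodesic then witnesses past incompleteness (if $a>-\infty$) or future incompleteness (if $b<\infty$).

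The only genuinely delicate points are the two analytic cores --- the Sturm/Wronskian comparison in (i) and the ``no positive non-constant concave function on $\R$'' argument in (ii) --- both short once set up correctly; I expect the main care to go into matching the geodesic-incompleteness conclusions to the precise definitions of property (TC) and of past/future incompleteness, in particular verifying that the vertical curves are genuine inextendible timelike geodesics of finite $\tau$-length.
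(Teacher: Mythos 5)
Your proof is correct, but its analytic core takes a genuinely different route from the paper's. Both arguments funnel through Theorem \ref{thm:singthm} in exactly the same way, reducing everything to the scalar inequality $f''\le Kf$ on $I$; the difference is how boundedness of $I$ is extracted from it. The paper runs a Riccati argument: setting $u:=f'/f$, the inequality becomes $u'\le -u^2+K$, and (for $K<0$, or for $K=0$ once one finds $u(s_0)<0$) one shows $u$ blows up to $-\infty$ in finite time, forcing $f\to 0$ at a finite parameter value and contradicting $f>0$; time reversal handles the left endpoint, and the same mechanism covers both (i) and (ii) uniformly. You instead use a Sturm/Wronskian comparison against $\sin\bigl(\sqrt{|K|}\,(t-c)\bigr)$ for (i) and a tangent-line/concavity argument for (ii). Each choice buys something: your (i) yields the sharp quantitative bound $b-a\le \pi/\sqrt{|K|}$, i.e.\ the Myers-type timelike diameter bound that the paper only discusses in the remark following the corollary, and your (ii) avoids any blow-up analysis; the paper's Riccati scheme, on the other hand, is a single mechanism serving both cases. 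You also spell out the two ``hence'' clauses that the paper treats as immediate --- the bound $\tau_Y\le b-a$ via $L(\gamma)\le\int\dot\alpha$, and timelike geodesic incompleteness via the vertical curves $s\mapsto(s,\bar x)$, which are maximizing (hence geodesics), timelike, inextendible because their would-be endpoint lies outside $I\times X$, and of finite $\tau$-length --- and your matching of these against property (TC) and the past/future incompleteness conventions is accurate.
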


\begin{proof}
We first show (i): Assume first that $b=\infty$. Set $u:=\frac{f'}{f}$. Theorem \ref{thm:singthm} implies $f''\leq Kf$, so $u$ satisfies the differential inequality $u'\leq -u^2+K\leq \min\{-u^2,K\}$. Since $K<0$ this shows that there exists some $s_0\in (a,\infty)$ with $u(s_0)<0$. Let $s_\mathrm{max}$ be the maximal $s>s_0$ such that $-\infty <u|_{[s_0,s)}<0$ and let $s_1:=s_0-\frac{1}{u(s_0)}>s_0$. Then integrating $u'\leq -u^2$ shows that $u\leq 
\frac{1}{s-s_1} = \frac{1}{\frac{1}{u(s_0)}+s-s_0}<u(s_0)<0$ on $(s_0,\min\{s_\mathrm{max}, s_1\})$. But from this we see that  $s_\mathrm{max}\leq s_1<\infty$ and $u\to -\infty $ as $s\to s_\mathrm{max}$ (otherwise it would contradict the maximality of $s_\mathrm{max}$). Since $u=\frac{f'}{f}$ this implies $f\to 0$ as $s\to s_\mathrm{max}$, contradicting $f>0$ on $(a,\infty)$.
	
	To show that $a>-\infty$ we simply reverse the time orientation, i.e., we consider $\tilde{f}(s):=f(-s)$ instead of $f$.
	
	For (ii), since $f$ is non-constant we have $f'(s_0)\neq 0$ for some $s_0$. If $f'(s_0)<0$, then $u(s_0)<0$ and we get a contradiction to $b=\infty$ as in (i). If $f'(s_0)>0$, we can again just reverse the time orientation to get $\tilde{f}'(-s_0)<0$ and a contradiction to $a=-\infty$.
\end{proof}

\begin{rem}
	 If $(X,h)$ is a smooth $n$-dimensional Riemannian manifold, then $Y=I\times_f X$ with metric $g=-dt^2+f(t)^2 h$ is a smooth Lorentzian manifold and we may compare Corollary \ref{cor:singthm} with the Hawking singularity theorem and the Lorentzian Myers' theorem (\cite[Thm.~55A and 55B]{ONe:83}, \cite[Thm.~11.9]{BEE:96}) applied to $Y$. A key assumption in both these theorems is a bound on the timelike Ricci curvature, which is implied by certain sectional curvature bounds: In any smooth $(n+1)$-dimensional Lorentzian manifold a bound on timelike sectional curvatures from below/above by $K$ implies a bound on the timelike Ricci curvature from above/below by $-n\,K$. However, even in the smooth setting it is not known, to the best of our knowledge, if a bound on merely the \emph{timelike} sectional curvatures, which is strictly weaker than assuming a sectional curvature bound in the sense of \cite[Eq.~(1.1)]{AB:08}, will imply triangle comparison for \emph{timelike} triangles, i.e., a timelike curvature bound as in \cite{KS:18}, or vice versa. To be more precise, let $(M,g)$ be a smooth Lorentzian manifold that has timelike sectional curvatures bounded below by some $K\in\R$, then it is unclear if $(M,g)$ (viewed as a \LLSn) has timelike curvature bounded below by $K$ in the sense of triangle comparison with $\mathbb{L}^2(K)$. Thus, a timelike curvature bound as in \cite{KS:18} might in general not imply the corresponding timelike Ricci curvature bounds. However, in the specific warped product setting we are considering, the following simple relationship holds: For any $V\in TX$ we have $\mathrm{Ric}(\partial_t,\partial_t)=-n\frac{f''}{f} =-n\mathcal{K}(\partial_t,V)$, where $\mathcal{K}(\partial_t,V)$ denotes the sectional curvature of the timelike plane spanned by $\partial_t$ and $V$. So Theorem \ref{thm:singthm} shows that triangle comparison for \emph{timelike} triangles implies both a bound on the sectional curvatures of all timelike planes orthogonal to $X$ and on $\mathrm{Ric}(\partial_t,\partial_t)$. More specifically, timelike curvature bounded below by $K$ implies that $\mathrm{Ric}(\partial_t,\partial_t)\geq -nK$. 

Now the comparison with Hawking's theorem is straightforward: It is well known that the key assumptions for Hawking's theorem to hold boil down to $\mathrm{Ric} (\dot{\gamma},\dot{\gamma})\geq 0$ for all timelike unit speed geodesics $\gamma $ starting orthogonally to an initial Cauchy surface $\Sigma$ with mean curvature $H_\Sigma <\beta <0$. In our warped product we may take $\Sigma$ to be of the form $\{t_0\}\times X$. Then the mean curvature $H_\Sigma $ equals $n \frac{f'(t_0)}{f(t_0)}$ and $\dot{\gamma}(t)=\partial_t|_{\gamma(t)}$, so Hawking's singularity theorem corresponds exactly to the $K=0$ case of Corollary \ref{cor:singthm} (and whether one has a past or a future singularity is determined by the sign of  $H_\Sigma= n \frac{f'(t_0)}{f(t_0)}$, which equals the sign of $\frac{f'(t_0)}{f(t_0)}$).
	
In the $K<0$ case, Corollary \ref{cor:singthm} implies that the timelike diameter of $I\times_f X$ is bounded by $b-a<\infty$, so this case corresponds to the Lorentzian Myers' theorem. Note that while the standard formulation of the Lorentzian Myers' theorem (as in \cite{BEE:96}) requires $\mathrm{Ric}(W,W)\geq n\,|K|>0$ for all unit timelike vectors $W$, one can use the techniques of the proof of the Hawking singularity theorem to show that to get a bound on the timelike diameter it is sufficient to assume $\mathrm{Ric} (\dot{\gamma},\dot{\gamma})\geq n\,|K|>0$ for all timelike unit speed geodesics $\gamma $ starting orthogonally to a Cauchy surface $\Sigma$ (cf.~\cite[Thm.~4.2 and Rem.~4.3]{Gra:16}). However, the bound obtained in this way may be larger than $\frac{\pi}{\sqrt{|K|}}$. With this in mind, we see that also the $K<0$ case of Corollary \ref{cor:singthm} corresponds directly to its smooth counterpart.
\end{rem}

Following \cite[Def.\ 12.16]{ONe:83} we define \emph{big bang} and \emph{big crunch} singularities as follows:

\begin{defi}
 Let $Y=(a,b)\times_f X$ be a \wpd, where $f$ is smooth. Then
 \begin{enumerate}
  \item[(i)] the \wpd\ $Y$ has a \emph{big bang singularity} at $a$ if $f(t)\to 0$ and $f'(t)\to\infty$ as $t\searrow a$, and
  \item[(ii)] the \wpd\ $Y$ has a \emph{big crunch singularity} at $b$ if $f(t)\to0$ and $f'(t)\to-\infty$ as $t\nearrow b$.
 \end{enumerate}
\end{defi}

\begin{cor}\label{cor-big-ban} If $Y$ has a big bang or a big crunch singularity, then $Y$ cannot have timelike curvature bounded above by any $K\in \R$.
\end{cor}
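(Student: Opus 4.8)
The plan is to invoke Theorem~\ref{thm:singthm} to convert the hypothetical curvature bound into a differential inequality for $f$, and then to extract a contradiction from an energy-type quantity that monotonicity forces to stay bounded near the singular endpoint while the singularity conditions force it to blow up. I would argue by contradiction: suppose $Y$ has timelike curvature bounded above by some $K\in\R$. Since $f$ is smooth, Theorem~\ref{thm:singthm} gives that $f$ is $K$-convex, i.e.\ $f''-Kf\geq 0$ on $I=(a,b)$. The key auxiliary object is $g:=(f')^2-Kf^2$, whose derivative is
\[
g'=2f'\,(f''-Kf).
\]
Because $f''-Kf\geq 0$, the sign of $g'$ is governed entirely by the sign of $f'$.

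In the big bang case we have $f\to 0$ and $f'\to+\infty$ as $t\searrow a$, so $f'>0$ on some interval $(a,a+\delta)$, and hence $g'\geq 0$ there, i.e.\ $g$ is non-decreasing on $(a,a+\delta)$. Fixing any $t_0\in(a,a+\delta)$, monotonicity then forces $g(t)\leq g(t_0)<\infty$ for all $t\in(a,t_0)$. On the other hand, the big bang conditions give $Kf^2\to 0$ and $(f')^2\to+\infty$ as $t\searrow a$, so $g(t)\to+\infty$, contradicting the bound $g(t)\leq g(t_0)$.

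The big crunch case is handled analogously with the inequalities reversed: now $f'\to-\infty$ as $t\nearrow b$, so $f'<0$ on some $(b-\delta,b)$, whence $g'\leq 0$ and $g$ is non-increasing there, giving $g(t)\leq g(t_0)<\infty$ for $t\in(t_0,b)$; yet $(f')^2\to+\infty$ and $Kf^2\to 0$ as $t\nearrow b$ force $g(t)\to+\infty$, again a contradiction. Either singularity alone thus rules out a timelike upper curvature bound for any $K$. I do not expect a substantial obstacle here beyond selecting the right auxiliary function; the only point requiring care is that $g$ is monotone only on the one-sided neighborhood where the sign of $f'$ is controlled, which is precisely the side on which the singularity—and hence the blow-up of $g$—occurs, so the two ingredients are guaranteed to collide.
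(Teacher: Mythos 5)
Your proof is correct, and it takes a genuinely different route from the paper's. Both arguments start identically, invoking Theorem~\ref{thm:singthm} to convert the hypothetical upper curvature bound into $f''-Kf\geq 0$, but from there the paper splits on the sign of $K$: for $K\geq 0$ it notes $f''\geq Kf\geq 0$ so $f'$ is non-decreasing, contradicting $f'\to-\infty$; for $K<0$ it uses $f\leq 1$ near $b$ to get $f''\geq K$, integrates to $f'(s)\geq K(s-s_0)+f'(s_0)$, and again contradicts $f'\to-\infty$ (treating only the big crunch case, with big bang handled by time reversal). Your argument instead introduces the energy $g=(f')^2-Kf^2$, whose derivative $g'=2f'(f''-Kf)$ has the sign of $f'$, and plays the resulting one-sided monotonicity of $g$ against its forced blow-up $(f')^2\to\infty$, $Kf^2\to 0$ at the singular endpoint. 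Your approach buys three things: a uniform treatment of all $K$ with no case distinction on its sign; a directly symmetric handling of big bang and big crunch without appealing to time reversal; and independence of whether the singular endpoint is finite --- the paper's $K<0$ case tacitly uses that $s-s_0$ stays bounded (i.e.\ $b<\infty$; the case $b=\infty$ is vacuous anyway since $f'\to-\infty$ with $f>0$ is impossible, but that requires a separate remark), whereas your monotonicity-versus-blow-up collision occurs on any one-sided neighborhood of the endpoint. The paper's approach is marginally more elementary, requiring only a single integration of the differential inequality rather than the selection of the auxiliary function $g$.
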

\begin{proof} Assume that $f(s)\to 0$ and $f'(s)\to -\infty $ as $s\to b$ and that $Y$ has timelike curvature bounded above by $K$. First, if $K\geq 0$ then $f''\geq  K f \ge 0$, contradicting  $f'\to-\infty$. So let $K<0$ and let $s_0$ be such that $f\leq 1$ on $(s_0,b)$. Then on this interval $f'' \geq K f\geq K$, so $f'(s)\geq K(s-s_0)+f'(s_0)$, contradicting $f'(s)\to -\infty $. The big bang case follows by reversing the time orientation.
\end{proof}

\section{Conclusion and outlook}
By transporting the notion of a (generalized) cone into the synthetic Lorentzian setting we achieved the following objectives:
\begin{itemize}
 \item We established that the causality of generalized cones is optimal, even for fibers that are not necessarily manifolds.
 \item Generalized cones are instances of strongly causal \LLSn s.
 \item Timelike curvature bounds of a generalized cone can be related to the (metric) curvature bounds of its fiber and vice versa.
 \item Our results allow one to generate an abundance of examples of \LLSn s with timelike curvature bounds.
 \item We proved singularity theorems for generalized cones that are direct analogues of the Hawking singularity
 	theorem and the Lorentzian Myers' theorem. That is, we showed how non-positive lower timelike curvature bounds 
  imply the existence of incomplete timelike geodesics.
\end{itemize}
Our methods are also expected to be applicable to more general spaces with one-dimensional fiber, like the Colombini-Spagnolo metrics discussed in Remark \ref{rem-cau-pla}. More generally, topics of further investigation are the following:
\begin{itemize}
 \item Generalize our methods and results to spaces of the form $I\times X$, where on each fiber $\{t\}\times X$ one has a $t$-dependent metric. This would generalize the metric splitting for globally hyperbolic spacetimes.
 \item Extend the results of Corollary \ref{cor-f-smo-cb} to non-smooth warping functions $f$ that are $K'$-convex or $K'$-concave in the support sense (cf.\ \cite{AB:03}).
 \item Put a Lorentzian structure on general warped products $B\times_f F$, where the base $B$ is a \LLS and the fiber $F$ is a metric (length) space.
 \item Extend the singularity theorems of Section \ref{sec:sin-thm} to lower regularity (i.e., lower the regularity of the warping function $f$).
 \item Relate our approach to the theory of Sormani and Vega \cite{SV:16}, in particular to the results on 
warped products of Allen and Burtscher \cite{AB:19}.
\end{itemize}

\bigskip

\noindent
{\bf Acknowledgments} 
The authors are grateful to the anonymous referees for their detailed feedback and careful reading of the article.

This work was supported by research grants P28770 and J4305 of the Austrian Science Fund FWF and parts of this work were carried out while Melanie Graf was at the
University of T\"ubingen.

\appendix
\section{Appendix: Lorentzian length structures}\label{app}

In this appendix we want to outline that in the situation where one merely has 
\begin{itemize}
 \item a notion of causal curves and
 \item a notion of length of such curves,
\end{itemize}
one can already construct a \LpLS without push-up or topology, hence without lower semicontinuity of $\tau$, 
but such that nevertheless the time separation function is intrinsic. So in a certain sense the situation is even better 
than starting out with a \LpLSn. In doing so we can reproduce some of the results of Section \ref{sec:gen_cone} but in greater generality. Also, this fact was already mentioned in \cite{KS:18}, so here we expand on \cite[Rem.\ 5.11(ii)]{KS:18} and provide some arguments to illustrate the matter.
\medskip

In analogy with the metric case (cf.\ \cite[Sec.\ 2.1]{BBI:01}) we first define a \emph{Lorentzian length structure}. To this end we also need to introduce notions of admissible curves and length functionals.

\begin{defi}
 Let $(X,d)$ be a metric space and denote by $\A$ the class of absolutely continuous curves from an interval into $X$. Let $\I^\pm\subseteq \Con^\pm\subseteq \A$ be four families of of absolutely continuous curves. Then $(\I^\pm,\Con^\pm)$ is 
called \emph{admissible} if it satisfies the following axioms.
\begin{description}
\item [(C1)]\label{C1}
  Every curve in $\Con^\pm$ (hence in $\I^\pm$) is never constant, i.e., restrictions to non-trivial subintervals are non-constant.
\item [(C2)]\label{C2}
  The classes $\I^\pm$ and $\Con^\pm$ are closed under (non-trivial) restrictions, e.g., if $\gamma\colon[a,b]\to X$ is in $\I^\pm$ or 
in $\Con^\pm$, and $a\leq c< d\leq b$ then the restriction $\gamma|_{[c,d]}$ of $\gamma$ to $[c,d]$ is in $\I^\pm$ or $\Con^\pm$, 
respectively.
\item[(C3)]\label{C3}
  The classes $\I^\pm$ and $\Con^\pm$ are closed under concatenations, that is if $\gamma\colon[a,b]\to X$ is a curve such that 
the (non-trivial) restrictions $\gamma|_{[a,c]}$ and $\gamma|_{[c,d]}$ are in $\I^\pm$ or in $\Con^\pm$ for some 
$a\leq c\leq b$ then so is their concatenation $\gamma$.
\item[(C4)]\label{C4} 
  The classes $\I^\pm$ and $\Con^\pm$ are closed under reparametrizations: Let $\gamma\colon J'$ $\to X$ be in $\I^\pm$ or in $\Con^\pm$ and let $\phi\colon J\to J'$ be a strictly increasing continuous map defined on an interval $J\subseteq\R$ such that it and its inverse are absolutely continuous. Then $\gamma\circ \phi\in\I^\pm$ or $\in\Con^\pm$, respectively. Moreover, if $\phi$ is as above but orientation reversing, i.e., strictly decreasing, then $\gamma\circ \phi\in I^\mp$ or $\in \Con^\mp$, respectively.
\end{description}
We call the curves in $\I^\pm$ \emph{future/past directed timelike} curves and the ones in $\Con^\pm$ \emph{future/past directed causal} curves. Moreover, set $\I:=\I^+\cup \I^-$ and $\Con:=\Con^+\cup\Con^-$.
\end{defi}

\begin{defi}{Lorentzian length structure (LLStr)}\label{def-LLStr}
A \emph{Lorentzian length structure} on a metric space $(X,d)$ is an admissible tuple $(\I^\pm,\Con^\pm)$ of subsets of $\A$ 
together with a function
\begin{equation}
L\colon \Con\ \to\ [0,\infty], 
\end{equation}
called the \emph{Lorentzian length functional}, which satisfy the following list of properties:
\begin{description}
\item[(L1)]\label{L1}
  $L$ \emph{is additive}: If $\gamma\colon[a,b]\to X$ is in $\Con$ then $L(\gamma)=L(\gamma|_{[a,c]})+L(\gamma|_{[c,d]})$ for 
any $c\in(a,b)$.
\item[(L2)]\label{L2}
  $L$ is \emph{invariant under reparametrizations}, i.e., for $\gamma$ and $\phi$ as in (C4) we require 
$L(\gamma\circ\phi)=L(\gamma)$.
\item[(L3)]\label{L3}
 For every $\gamma\in\I$ we have $L(\gamma)>0$.
\item[(L4)]\label{L4}
The \emph{length structure respects the topology of $X$} in the following
  sense:
    $L$ depends continuously on the parameter of the curve, e.g., if $\gamma\colon[a,b]\to X$ is in $\Con$ we 
    set $L(\gamma,a,t):=L(\gamma|_{[a,t]})$. Then $t\mapsto L(\gamma,a,t)$ is continuous.
\end{description}
We write $(X,d,\I^\pm,\Con^\pm,L)$ for a \lstrn.
\end{defi}

Recall that a \emph{causal space} $(X,\ll,\leq)$ on a set $X$ is constituted by two transitive relations $\ll$ and $\leq$ on 
$X$, where $\leq$ is additionally reflexive and contains $\ll$, cf.\ \cite[Def.\ 2.1]{KS:18} (this is a slightly stronger 
notion than the one introduced in \cite{KP:67}).

\begin{defi}\label{def-lst-cr}
  Let $(X,d,\I^\pm,\Con^\pm,L)$ be a \lstrn. For $x,y\in X$ define $x\leq y$ if $x=y$ or there exists a $\gamma\in\Con^+$ from $x$ to $y$. Moreover, define $x<y$ if $x\leq y$ and $x\neq y$. Also, define $x\ll y$ if there exists a $\gamma\in\I^+$ from $x$ to 
$y$.
\end{defi}

\begin{lem}\label{lem-cau-spa}
 Let $(X,d,\I^\pm,\Con^\pm,L)$ be a \lstr and let the relations $\ll$, $\leq$ be given by Definition \ref{def-lst-cr}. Then $(X,\ll,\leq)$ 
is a causal space.
\end{lem}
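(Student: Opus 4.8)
The plan is to verify, one by one, the four defining properties of a causal space $(X,\ll,\leq)$ — reflexivity of $\leq$, the inclusion $\ll\,\subseteq\,\leq$, and transitivity of both $\leq$ and $\ll$ — working directly from Definition \ref{def-lst-cr} and the admissibility axioms, principally the concatenation axiom (C3) and the reparametrization axiom (C4).

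First, reflexivity of $\leq$ is immediate: by Definition \ref{def-lst-cr} one has $x\leq x$ for every $x\in X$ purely on account of the clause $x=y$, with no curve required. The inclusion $\ll\,\subseteq\,\leq$ is equally direct: if $x\ll y$, there is a future directed timelike curve $\gamma\in\I^+$ from $x$ to $y$, and since $\I^+\subseteq\Con^+$ by hypothesis, $\gamma$ is in particular a future directed causal curve from $x$ to $y$, whence $x\leq y$.

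For transitivity of $\leq$, I would argue as follows. Suppose $x\leq y$ and $y\leq z$. If $x=y$ or $y=z$, the conclusion $x\leq z$ follows trivially from the remaining relation, so assume $x\neq y$ and $y\neq z$; then there are $\gamma_1\in\Con^+$ from $x$ to $y$ and $\gamma_2\in\Con^+$ from $y$ to $z$. Using axiom (C4) I first reparametrize $\gamma_1$ and $\gamma_2$ onto adjacent intervals, say $[a,c]$ and $[c,b]$, so that the terminal point of the reparametrized $\gamma_1$ coincides with the initial point $y$ of the reparametrized $\gamma_2$. Their concatenation $\gamma\colon[a,b]\to X$ is then absolutely continuous, being a concatenation of two absolutely continuous curves agreeing at $c$, and by axiom (C3) one has $\gamma\in\Con^+$; since $\gamma$ runs from $x$ to $z$, this yields $x\leq z$. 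Transitivity of $\ll$ is established by the identical argument with $\I^+$ in place of $\Con^+$: timelike curves $\gamma_1,\gamma_2\in\I^+$ are reparametrized via (C4) and concatenated via the $\I^+$-part of (C3) to produce a curve in $\I^+$ from $x$ to $z$, so $x\ll z$. Here no equality cases need be separated out, since $\ll$ is defined purely through the existence of a timelike curve.

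The only point requiring any care — and the mildest possible obstacle — is the bookkeeping in the concatenation step: axiom (C3) is phrased for a single curve whose two (non-trivial) restrictions are admissible, so one must first invoke (C4) to align the parameter intervals and confirm that the two pieces genuinely fit together into one absolutely continuous curve before (C3) may be applied. Everything else is immediate from the definitions, and the argument mirrors the concrete computation already carried out in Lemma \ref{lem-cau-rel} for generalized cones.
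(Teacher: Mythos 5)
Your proof is correct and follows essentially the same route as the paper, whose entire proof is the remark that the claim follows from the concatenation axiom (C3) together with the inclusion $\I^+\subseteq\Con^+$. Your write-up merely makes explicit the details the paper leaves implicit — reflexivity from the $x=y$ clause, and the use of (C4) to align parameter intervals before invoking (C3) — which is a faithful elaboration rather than a different argument.
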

\begin{pr}
 This follows from axiom \descref{C3} and $\I^+\subseteq\Con^+$.
\end{pr}

\begin{defi}\label{def-lst-tau}
  Let $(X,d,\I^\pm,\Con^\pm,L)$ be a \lstrn. For $x,y\in X$ define the \emph{time separation} of $x$ and $y$ by
\begin{equation}
 \tau(x,y):=\sup\{L(\gamma):\gamma\in\Con^+\text{ connects } x \text{ and } y\}\,,
\end{equation}
if the set of connecting future directed causal curves is non-empty, otherwise set $\tau(x,y):=0$.
\end{defi}

The time separation function $\tau$ has the following properties.
\begin{lem}\label{lem-ele-pro-tau}
  Let $(X,d,\I^\pm,\Con^\pm,L)$ be a \lstr and let the time separation function $\tau$ and the causal relations $\ll,\leq$ be as defined 
above. Then
\begin{enumerate}
 \item[(i)]\label{lem-ele-pro-tau-not} $\tau(x,y)=0$ if $x\not\leq y$ and
 \item[(ii)] $\tau(x,y)>0$ if $x\ll y$.
\end{enumerate}
\end{lem}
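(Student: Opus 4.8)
The plan is to read both parts directly off the definitions of $\tau$, $\leq$ and $\ll$ (Definitions \ref{def-lst-cr} and \ref{def-lst-tau}), together with the inclusion $\I^+\subseteq\Con^+$ built into admissibility and axiom \descref{L3}. There is no genuine construction to perform and hence no real obstacle here; the entire content is to unwind the definitions carefully, with the only point requiring a little attention being the exact shape of the ``otherwise'' clause in the definition of $\tau$, which is what produces the value $0$ in part (i).

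For part (i), I would start from the hypothesis $x\not\leq y$ and expand it via Definition \ref{def-lst-cr}: since $x\leq y$ means precisely that $x=y$ or that some $\gamma\in\Con^+$ runs from $x$ to $y$, the negation $x\not\leq y$ forces in particular that \emph{no} future directed causal curve connects $x$ to $y$. Consequently the set $\{L(\gamma):\gamma\in\Con^+\text{ connects } x \text{ and } y\}$ appearing in Definition \ref{def-lst-tau} is empty, and the ``otherwise'' branch of that definition assigns $\tau(x,y)=0$. This is the whole argument for (i).

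For part (ii), I would instead start from $x\ll y$, which by Definition \ref{def-lst-cr} supplies a future directed timelike curve $\gamma\in\I^+$ from $x$ to $y$. By admissibility $\I^+\subseteq\Con^+$, so this same $\gamma$ counts as a connecting future directed causal curve; the defining set of $\tau(x,y)$ is therefore non-empty and $\tau(x,y)\ge L(\gamma)$. Since $\gamma\in\I^+\subseteq\I$, axiom \descref{L3} gives $L(\gamma)>0$, and combining the two inequalities yields $\tau(x,y)\ge L(\gamma)>0$, as required.
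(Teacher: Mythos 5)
Your proposal is correct and follows essentially the same route as the paper: part (i) is read off from the ``otherwise'' clause in the definition of $\tau$ (the paper simply states it is immediate from the definitions), and part (ii) is exactly the paper's argument, using $\I^+\subseteq\Con^+$ to get non-emptiness of the defining set and axiom \descref{L3} to get $\tau(x,y)\geq L(\gamma)>0$.
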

\begin{pr}
 \begin{enumerate}
 \item[(i)] This is immediate from the definitions.
 \item[(ii)] Let $x\ll y$, then there is a timelike curve $\gamma\in\I^+$ from $x$ to $y$ and by axiom \descref{L3} we have 
$L(\gamma)>0$. Thus $0<L(\gamma)\leq \tau(x,y)$.\qedhere
 \end{enumerate}
\end{pr}

The reverse triangle inequality holds:
\begin{lem}\label{lem-rev-tri-ine-llstr}
  Let $(X,d,\I^\pm,\Con^\pm,L)$ be a \lstr with time separation function $\tau$ and causal relations $\ll,\leq$ as defined 
above. Then for all $x,y,z\in X$ with $x\leq y\leq z$
\begin{equation}
 \tau(x,y) + \tau(y,z) \leq \tau(x,z)\,.
\end{equation}
\end{lem}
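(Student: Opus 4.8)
The plan is to run the classical Lorentzian argument, which in this abstract setting relies only on the concatenation axiom \descref{C3} and the additivity axiom \descref{L1}; it is essentially the length-structure version of the proof of Lemma \ref{lem-rev-tri}. First I would split into two cases according to whether the two pairs $(x,y)$ and $(y,z)$ are actually joined by future directed causal curves.

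In the degenerate case, suppose one of the pairs, say $x$ and $y$, is not joined by any $\gamma\in\Con^+$. Since $x\leq y$, Definition \ref{def-lst-cr} forces $x=y$, so by Definition \ref{def-lst-tau} we have $\tau(x,y)=0$ and $\tau(x,z)=\tau(y,z)$; the asserted inequality then reads $\tau(y,z)\leq\tau(y,z)$ and holds trivially. The symmetric situation (no curve from $y$ to $z$, forcing $y=z$) is handled identically, so it remains to treat the case where both pairs admit connecting causal curves. In that main case I would fix arbitrary $\gamma_1\in\Con^+$ from $x$ to $y$ and $\gamma_2\in\Con^+$ from $y$ to $z$. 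By \descref{C3} the concatenation $\gamma_1*\gamma_2$ again lies in $\Con^+$ and joins $x$ to $z$, and by \descref{L1} its length is $L(\gamma_1*\gamma_2)=L(\gamma_1)+L(\gamma_2)$. As $\gamma_1*\gamma_2$ is one particular competitor in the supremum defining $\tau(x,z)$, this gives $\tau(x,z)\geq L(\gamma_1)+L(\gamma_2)$. Taking the supremum over $\gamma_1$ and $\gamma_2$ independently then yields $\tau(x,z)\geq\tau(x,y)+\tau(y,z)$, since the supremum of a sum of two independently varying quantities equals the sum of their suprema.

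There is no genuine obstacle here; the only points requiring a little care are purely formal. The supremum-of-a-sum step must be read in $[0,\infty]$, which is exactly what allows this single computation to absorb the cases $\tau(x,y)=\infty$ or $\tau(y,z)=\infty$ with no separate discussion (alternatively, one may reproduce the $\varepsilon/2$ selection of near-maximal curves from the proof of Lemma \ref{lem-rev-tri}, choosing $L(\gamma_i)$ arbitrarily large whenever the corresponding time separation is infinite). I would also remark that no properties of $\tau$ beyond Definitions \ref{def-lst-cr} and \ref{def-lst-tau} and the axioms \descref{C3} and \descref{L1} enter the argument, so the reverse triangle inequality indeed holds at the level of an arbitrary \lstrn, as claimed.
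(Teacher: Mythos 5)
Your proof is correct and follows essentially the same route as the paper's: dispose of the degenerate case where a pair is causally related only by reflexivity (so $\tau=0$ there by definition), then concatenate connecting causal curves via \descref{C3}, apply additivity \descref{L1}, and take suprema. The only cosmetic difference is that you case-split on the existence of connecting curves rather than on $x=y$ directly, which is a slightly cleaner organization of the same argument.
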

\begin{pr}
If $x=y$ then either $\tau(x,x)=0$ or there exists a causal curve in $\Con^+$ from $x$ to $x$. In the first case we are done. Analogously, if $y=z$ either $\tau(y,y)=0$ 
or there exists a causal curve in $\Con^+$ from $y$ to $y$. Again in the first case we are done. So we can without loss of generality assume that there are causal curves connecting $x$ to $y$ and connecting $y$ to $z$. Let $\gamma,\lambda\in\Con^+$ 
with $\gamma$ from $x$ to $y$ and $\lambda$ from $y$ to $z$. Then by axiom \descref{C3} the concatenation $\gamma*\lambda$ is in $\Con^+$ and connects $x$ to $z$. Thus $L(\gamma) + L(\lambda) = L(\gamma*\lambda) \leq \tau(x,z)$, by \descref{L1}. 
Taking now the supremum over all future directed causal curves connecting $x$ and $y$ and the ones connecting $y$ to $z$ gives the claim.
\end{pr}

To investigate only the algebraic properties and further consequences of having a set with a \lstr we relax the notion of 
\LpLS as introduced in \cite[Def. 2.8]{KS:18} and call the resulting generalization a \emph{bare} \LpLSn. In particular, we do not 
want 
to assume semi-continuity properties of the time separation function from the beginning.

\begin{defi}\label{def:bare}
Let $(X, \ll, \leq)$ be a causal space together with a function  $\tau\colon X\times X\rightarrow [0,\infty]$ that 
satisfy the following properties:
\begin{enumerate}
 \item[(i)] If $x\ll y$ then $\tau(x,y)>0$.
 \item[(ii)] The reverse triangle inequality holds: for all $x,y,z\in X$ with $x\leq y\leq z$ one has
\begin{equation}
 \tau(x,y)+\tau(y,z)\leq \tau(x,z)\,.
\end{equation}
\end{enumerate}
Then $(X,\ll,\leq,\tau)$ is called a \emph{bare \LpLSn}.
\end{defi}

This is a generalization of the notion of a \LpLSn, where we do not assume that $X$ comes with a metric (topology), push-up 
to hold or that $\tau$ is lower semicontinuous with respect to this topology \footnote{Of course, by endowing $X$ with the discrete metric, any function from $X$ to another topological space is continuous. Thus any such bare \LpLS without topology 
can be viewed as a \LpLS in the sense of \cite[Def.\ 2.8]{KS:18} (except that push-up need not hold). However, note that then there are no non-constant curves, hence no causal curves.}. 
\bigskip

Given a bare \LpLS and a metric $d$ on $X$ one can still define causal curves, their length etc., and get the same basic properties (following the steps as in \cite[Sec.\ 2]{KS:18}). For example, given now the causal relations defined in Definition \ref{def-lst-cr} a $\leq$-causal curve is a locally Lipschitz curve $\gamma$ such that $\gamma(s)\leq\gamma(t)$ for any two parameter values with $s\leq t$ (cf.\ \cite[Def.\ 2.18]{KS:18}). Note that any curve in $\Con^+$ is $\leq$-causal, and analogously for timelike curves.

Finally, we define
\begin{equation}
 \Tau(x,y):=\sup\{L_\tau(\gamma): \gamma \text{ future directed } \leq\text{-causal from } x \text{ to }y\}\,,
\end{equation}
if the set of connecting future directed $\leq$-causal curves from $x$ to $y$ is non-empty. Otherwise, we set $\Tau(x,y):=0$.

\begin{defi}
 A bare \LpLS $(X,\ll,\leq,\tau)$ is called \emph{bare \LLSn} if $\Tau=\tau$, i.e., if the time separation function $\tau$ is intrinsic (for some background metric $d$ on $X$). 
\end{defi}
\medskip

At this point we are able to establish that a \lstr gives rise to a bare \LLSn.
\begin{thm}\label{thm:LLS-bareLorLength}
 Let $(X,d,\I^\pm,\Con^\pm,L)$ be a \lstr and define the causal relations $\leq$, $\ll$ and the time separation function $\tau$ as 
above. Then $(X,\ll,\leq,\tau)$ is a bare \LLSn.
\end{thm}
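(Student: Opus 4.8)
The plan is to observe that the preceding lemmas already supply almost everything, so that the theorem reduces to a single identity. Indeed, by Lemma \ref{lem-cau-spa} the triple $(X,\ll,\leq)$ is a causal space, and by Lemmas \ref{lem-ele-pro-tau} and \ref{lem-rev-tri-ine-llstr} the function $\tau$ satisfies properties (i) and (ii) of Definition \ref{def:bare}, so $(X,\ll,\leq,\tau)$ is a bare \LpLSn. The entire remaining content is therefore the equality $\Tau=\tau$, which I would prove by establishing the two inequalities separately.

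For the upper bound $\Tau\leq\tau$, I would fix any future directed $\leq$-causal curve $\gamma\colon[a,b]\to X$ from $x$ to $y$ together with an arbitrary partition $a=t_0<\dots<t_N=b$. Since $\gamma$ is $\leq$-causal we have $x=\gamma(t_0)\leq\gamma(t_1)\leq\dots\leq\gamma(t_N)=y$, so iterating the reverse triangle inequality (Lemma \ref{lem-rev-tri-ine-llstr}) yields $\sum_{i=0}^{N-1}\tau(\gamma(t_i),\gamma(t_{i+1}))\leq\tau(x,y)$. As this bound is uniform in the partition, taking the infimum gives $L_\tau(\gamma)\leq\tau(x,y)$, and the supremum over all such $\gamma$ then gives $\Tau(x,y)\leq\tau(x,y)$; the case in which no connecting $\leq$-causal curve exists is trivial because there $\Tau(x,y)=0$.

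For the lower bound $\Tau\geq\tau$, the key structural fact is that every curve in $\Con^+$ is a future directed $\leq$-causal curve (noted in the text preceding the definition of $\Tau$), so the family over which the supremum defining $\Tau$ is taken contains all curves used to define $\tau$. For a fixed $\gamma\in\Con^+$ from $x$ to $y$ and any partition, axiom \descref{C2} guarantees that each restriction $\gamma|_{[t_i,t_{i+1}]}$ again lies in $\Con^+$ and connects $\gamma(t_i)$ to $\gamma(t_{i+1})$, whence $\tau(\gamma(t_i),\gamma(t_{i+1}))\geq L(\gamma|_{[t_i,t_{i+1}]})$ directly from the definition of $\tau$. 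Summing and invoking additivity \descref{L1} gives $\sum_{i}\tau(\gamma(t_i),\gamma(t_{i+1}))\geq L(\gamma)$, hence $L_\tau(\gamma)\geq L(\gamma)$; taking suprema produces $\Tau(x,y)\geq\tau(x,y)$ (and $\tau(x,y)=0$ whenever no $\Con^+$ curve connects $x$ and $y$). Combining the two bounds gives $\Tau=\tau$, as required.

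I expect the only genuinely delicate point to be the compatibility of the two curve classes in the lower-bound step: the family $\Con^+$ consists of absolutely continuous curves, whereas the curves entering $L_\tau$ and $\Tau$ are required to be locally Lipschitz. The inclusion $\Con^+\subseteq\{\leq\text{-causal}\}$ therefore rests on reparametrizing each $\Con^+$ curve to Lipschitz form (using axiom \descref{C4} and the analogue of Lemma \ref{lem-ac-lip}), which is harmless since $L$ is reparametrization invariant by \descref{L2} and $L_\tau$ depends only on the ordered set of points traced out by the curve. Once this inclusion is secured, both inequalities are a straightforward telescoping argument, the upper bound coming from the reverse triangle inequality and the lower bound from additivity together with the definition of $\tau$.
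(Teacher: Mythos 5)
Your proof is correct and follows essentially the same route as the paper's: the same three lemmas give the bare \LpLSn\ structure, the inequality $\tau\leq\Tau$ comes from \descref{C2}, \descref{L1} and the definition of $\tau$ exactly as you describe, and the inequality $\Tau\leq\tau$ comes from $L_\tau(\gamma)\leq\tau(x,y)$ (the paper obtains this directly from the definition of $L_\tau$ via the trivial partition rather than by iterating the reverse triangle inequality over arbitrary partitions, but the two arguments are interchangeable). The absolutely-continuous-versus-Lipschitz subtlety you flag at the end is a genuine point, but the paper itself glosses over it, simply asserting that every curve in $\Con^+$ is $\leq$-causal; your suggested repair via reparametrization invariance of $L$ and $L_\tau$ is a reasonable way to close it.
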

\begin{pr}
That $(X,\ll,\leq,\tau)$ is a bare \LpLS follows from Lemmata \ref{lem-cau-spa}, \ref{lem-ele-pro-tau} and 
\ref{lem-rev-tri-ine-llstr}.

 As noted, any $\Con^+$-causal curve is $\leq$-causal and analogously for $\I^+$ and $\ll$. Moreover, for any $\leq$-causal curve $\gamma$ (hence also for any $\gamma\in\Con^+$) from $x$ to $y$ one has $L_\tau(\gamma)\leq \tau(x,y)$ by the definition of the $\tau$-length. Let $(\gamma\colon[a,b]\rightarrow X)\in\Con^+$ and let $a=t_0<t_1<\cdots < t_N = b$ be a partition of $[a,b]$, 
then by \descref{L1}
 \begin{equation}
L(\gamma) = \sum_{i=0}^{N-1} L(\gamma\rvert_{[t_i,t_{i+1}]}) \leq \sum_{i=0}^{N-1} \tau(\gamma(t_i),\gamma(t_{i+1}))\,.
 \end{equation}
Taking the infimum over all partitions of $[a,b]$ yields that $L(\gamma)\leq L_\tau(\gamma)$ for any $\gamma\in\Con^+$ 
(defined on a compact interval). From this we immediately get $\tau \leq \Tau$ from the definitions by first taking the supremum on the right hand side and then on the left hand side.

 It remains to show that $\Tau \leq \tau$. If $\Tau(x,y)=0$ there is nothing to do, so let 
$\Tau(x,y)>0$. This means that for any $\eps>0$ there is a future directed $\leq$-causal curve $\gamma$ from $x$ to $y$ with 
$\Tau(x,y)<L_\tau(\gamma) + \eps \leq \tau(x,y)+\eps$. Here we used that $L_\tau(\gamma)\leq \tau(x,y)$, and since $\eps>0$ 
was arbitrary we are done.
\end{pr}

\bibliographystyle{alphaabbr}
\bibliography{LLSEx}

\begin{thebibliography}{GGKS18}

\bibitem[AB98]{AB:98}
S.~B. Alexander and R.~L. Bishop.
\newblock Warped products of {H}adamard spaces.
\newblock {\em Manuscripta Math.}, 96(4):487--505, 1998.

\bibitem[AB03]{AB:03}
S.~B. Alexander and R.~L. Bishop.
\newblock {$\mathcal FK$}-convex functions on metric spaces.
\newblock {\em Manuscripta Math.}, 110(1):115--133, 2003.

\bibitem[AB04]{AB:04}
S.~B. Alexander and R.~L. Bishop.
\newblock Curvature bounds for warped products of metric spaces.
\newblock {\em Geom. Funct. Anal.}, 14(6):1143--1181, 2004.

\bibitem[AB06]{AB:06}
C.~D. Aliprantis and K.~C. Border.
\newblock {\em Infinite dimensional analysis}.
\newblock Springer, Berlin, {T}hird edition, 2006.
\newblock A hitchhiker's guide.

\bibitem[AB08]{AB:08}
S.~B. Alexander and R.~L. Bishop.
\newblock Lorentz and semi-{R}iemannian spaces with {A}lexandrov curvature
  bounds.
\newblock {\em Comm. Anal. Geom.}, 16(2):251--282, 2008.

\bibitem[AB16]{AB:16}
S.~B. Alexander and R.~L. Bishop.
\newblock Warped products admitting a curvature bound.
\newblock {\em Adv. Math.}, 303:88--122, 2016.

\bibitem[AB19]{AB:19}
B.~Allen and A.~Y. Burtscher.
\newblock Properties of the {N}ull {D}istance and {S}pacetime {C}onvergence.
\newblock {\em Int. Math. Res. Not., to appear, arXiv:1909.04483 [math.DG]},
  2019.

\bibitem[AGS05]{AGS:05}
L.~Ambrosio, N.~Gigli, and G.~Savar\'e.
\newblock {\em Gradient flows in metric spaces and in the space of probability
  measures}.
\newblock Lectures in Mathematics ETH Z\"urich. Birkh\"auser Verlag, Basel,
  2005.

\bibitem[AKP19]{AKP:19}
S.~Alexander, V.~Kapovitch, and A.~Petrunin.
\newblock Alexandrov geometry: preliminary version no. 1.
\newblock {\em Preprint, arXiv:1903.08539 [math.DG]}, 2019.

\bibitem[Ale19]{Ale:19}
S.~B. Alexander.
\newblock Alexandrov {G}eometry for {L}orentzian-{P}seudometrics.
\newblock 2019.
\newblock in preparation.

\bibitem[AT04]{AT:04}
L.~Ambrosio and P.~Tilli.
\newblock {\em Topics on analysis in metric spaces}, volume~25 of {\em Oxford
  Lecture Series in Mathematics and its Applications}.
\newblock Oxford University Press, Oxford, 2004.

\bibitem[BBI01]{BBI:01}
D.~Burago, Y.~Burago, and S.~Ivanov.
\newblock {\em A course in metric geometry}, volume~33 of {\em Graduate Studies
  in Mathematics}.
\newblock American Mathematical Society, Providence, RI, 2001.

\bibitem[BEE96]{BEE:96}
J.~K. Beem, P.~E. Ehrlich, and K.~L. Easley.
\newblock {\em Global {L}orentzian geometry}, volume 202 of {\em Monographs and
  Textbooks in Pure and Applied Mathematics}.
\newblock Marcel Dekker Inc., New York, second edition, 1996.

\bibitem[BH99]{BH:99}
M.~R. Bridson and A.~Haefliger.
\newblock {\em Metric spaces of non-positive curvature}, volume 319 of {\em
  Grundlehren der Mathematischen Wissenschaften [Fundamental Principles of
  Mathematical Sciences]}.
\newblock Springer-Verlag, Berlin, 1999.

\bibitem[BLMS87]{BLMS:87}
L.~Bombelli, J.~Lee, D.~Meyer, and R.~D. Sorkin.
\newblock Space-time as a causal set.
\newblock {\em Phys. Rev. Lett.}, 59(5):521--524, 1987.

\bibitem[BS03]{BS:03}
A.~N. Bernal and M.~S{\'a}nchez.
\newblock On smooth {C}auchy hypersurfaces and {G}eroch’s splitting theorem.
\newblock {\em Commun. Math. Phys.}, 243(3):461--470, 2003.

\bibitem[BS05]{BS:05}
A.~N. Bernal and M.~S{\'a}nchez.
\newblock {Smoothness of time functions and the metric splitting of globally
  hyperbolic spacetimes.}
\newblock {\em Commun. Math. Phys.}, 257(1):43--50, 2005.

\bibitem[BS18]{BS:18}
P.~Bernard and S.~Suhr.
\newblock Lyapounov {F}unctions of {C}losed {C}one {F}ields: {F}rom {C}onley
  {T}heory to {T}ime {F}unctions.
\newblock {\em Comm. Math. Phys.}, 359(2):467--498, 2018.

\bibitem[Bus67]{Bus:67}
H.~Busemann.
\newblock Timelike spaces.
\newblock {\em Dissertationes Math. Rozprawy Mat.}, 53:52, 1967.

\bibitem[CG12]{CG:12}
P.~T. Chru{\'s}ciel and J.~D.~E. Grant.
\newblock On {L}orentzian causality with continuous metrics.
\newblock {\em Classical Quantum Gravity}, 29(14):145001, 32, 2012.

\bibitem[Che99]{Che:99}
C.-H. Chen.
\newblock Warped products of metric spaces of curvature bounded from above.
\newblock {\em Trans. Amer. Math. Soc.}, 351(12):4727--4740, 1999.

\bibitem[CM20]{CM:20}
F.~Cavalletti and A.~Mondino.
\newblock Optimal transport in {L}orentzian synthetic spaces, synthetic
  timelike {R}icci curvature lower bounds and applications.
\newblock {\em preprint, arXiv:2004.08934 [math.MG]}, 2020.

\bibitem[CS89]{CS:89}
F.~Colombini and S.~Spagnolo.
\newblock Some examples of hyperbolic equations without local solvability.
\newblock {\em Ann. Sci. \'{E}cole Norm. Sup. (4)}, 22(1):109--125, 1989.

\bibitem[DL17]{DL:17}
M.~Dafermos and J.~Luk.
\newblock The interior of dynamical vacuum black holes {I}: The
  ${C^0}$-stability of the {K}err {C}auchy horizon.
\newblock {\em Preprint, arXiv:1710.01722 [gr-qc]}, 2017.

\bibitem[FS12]{FS:12}
A.~Fathi and A.~Siconolfi.
\newblock On smooth time functions.
\newblock {\em Math. Proc. Cambridge Philos. Soc.}, 152(2):303--339, 2012.

\bibitem[GGKS18]{GGKS:18}
M.~Graf, J.~D.~E. Grant, M.~Kunzinger, and R.~Steinbauer.
\newblock The {H}awking--{P}enrose {S}ingularity {T}heorem for
  {$C^{1,1}$}-{L}orentzian {M}etrics.
\newblock {\em Comm. Math. Phys.}, 360(3):1009--1042, 2018.

\bibitem[GKS19]{GKS:19}
J.~D.~E. Grant, M.~Kunzinger, and C.~S{\"a}mann.
\newblock Inextendibility of spacetimes and lorentzian length spaces.
\newblock {\em Annals of Global Analysis and Geometry}, 55(1):133--147, 2019.

\bibitem[GKSS20]{GKSS:20}
J.~D.~E. Grant, M.~Kunzinger, C.~S\"{a}mann, and R.~Steinbauer.
\newblock The future is not always open.
\newblock {\em Lett. Math. Phys.}, 110(1):83--103, 2020.

\bibitem[GL17]{GL:17}
G.~J. Galloway and E.~Ling.
\newblock Some remarks on the {$C^0$}-(in)extendibility of spacetimes.
\newblock {\em Ann. Henri Poincar\'e}, 18(10):3427--3447, 2017.

\bibitem[GL18]{GL:18}
M.~Graf and E.~Ling.
\newblock Maximizers in {L}ipschitz spacetimes are either timelike or null.
\newblock {\em Classical Quantum Gravity}, 35(8):087001, 6, 2018.

\bibitem[GLS18]{GLS:18}
G.~J. Galloway, E.~Ling, and J.~Sbierski.
\newblock Timelike completeness as an obstruction to {$C^0$}-extensions.
\newblock {\em Comm. Math. Phys.}, 359(3):937--949, 2018.

\bibitem[Gra16]{Gra:16}
M.~Graf.
\newblock Volume comparison for {$C^{1,1}$}-metrics.
\newblock {\em Ann. Glob. Anal. Geom.}, 50(3):209--235, 2016.

\bibitem[Gra20]{Gra:20}
M.~Graf.
\newblock Singularity theorems for {$C^1$}-{L}orentzian metrics.
\newblock {\em Comm. Math. Phys.}, 378(2):1417--1450, 2020.

\bibitem[Gro01]{Gro:01}
K.~Grove.
\newblock Review of "{M}etric structures for {R}iemannian and non-{R}iemannian
  spaces" by {M}.\ {G}romov.
\newblock {\em Bull.\ Amer.\ Math.\ Soc.\ 38}, pages 353--363, 2001.

\bibitem[Ise15]{Ise:15}
J.~Isenberg.
\newblock On strong cosmic censorship.
\newblock In {\em Surveys in differential geometry 2015. {O}ne hundred years of
  general relativity}, volume~20 of {\em Surv. Differ. Geom.}, pages 17--36.
  Int. Press, Boston, MA, 2015.

\bibitem[Kap07]{Kap:07}
V.~Kapovitch.
\newblock Perelman's stability theorem.
\newblock In {\em Surveys in differential geometry. {V}ol. {XI}}, volume~11 of
  {\em Surv. Differ. Geom.}, pages 103--136. Int. Press, Somerville, MA, 2007.

\bibitem[Kir18]{K:18}
M.~Kirchberger.
\newblock {Lorentzian Comparison Geometry}.
\newblock \textit{Master's thesis, University of Vienna. Available at
  \url{http://othes.univie.ac.at/56285/}}, 2018.

\bibitem[KP67]{KP:67}
E.~H. Kronheimer and R.~Penrose.
\newblock On the structure of causal spaces.
\newblock {\em Proc. Cambridge Philos. Soc.}, 63:481--501, 1967.

\bibitem[KS18]{KS:18}
M.~Kunzinger and C.~S\"amann.
\newblock Lorentzian length spaces.
\newblock {\em Ann.\ Glob.\ Anal.\ Geom.}, 54(3):399--447, 2018.

\bibitem[KSS14]{KSS:14}
M.~Kunzinger, R.~Steinbauer, and M.~Stojkovi{\'c}.
\newblock The exponential map of a {$C^{1,1}$}-metric.
\newblock {\em Differential Geom. Appl.}, 34:14--24, 2014.

\bibitem[KSSV14]{KSSV:14}
M.~Kunzinger, R.~Steinbauer, M.~Stojkovi{\'c}, and J.~A. Vickers.
\newblock A regularisation approach to causality theory for
  {$C^{1,1}$}-{L}orentzian metrics.
\newblock {\em Gen. Relativity Gravitation}, 46(8):Art. 1738, 18, 2014.

\bibitem[KSSV15]{KSSV:15}
M.~Kunzinger, R.~Steinbauer, M.~Stojkovi{\'c}, and J.~A. Vickers.
\newblock Hawking's singularity theorem for {$C^{1,1}$}-metrics.
\newblock {\em Classical Quantum Gravity}, 32(7):075012, 19, 2015.

\bibitem[KSV15]{KSV:15}
M.~Kunzinger, R.~Steinbauer, and J.~A. Vickers.
\newblock The {P}enrose singularity theorem in regularity {$C^{1,1}$}.
\newblock {\em Classical Quantum Gravity}, 32(15):155010, 12, 2015.

\bibitem[LMO19]{LMO:19}
Y.~Lu, E.~Minguzzi, and S.-i. Ohta.
\newblock Geometry of weighted {L}orentz-{F}insler manifolds {I}: Singularity
  theorems.
\newblock {\em Preprint, arXiv:1908.03832 [math.DG]}, 2019.

\bibitem[McC20]{McC:20}
R.~McCann.
\newblock Displacement concavity of {B}oltzmann’s entropy characterizes
  positive energy in general relativity.
\newblock {\em Camb. J. Math.}, 8(3):609–--681, 2020.

\bibitem[Min07]{Min:07}
E.~Minguzzi.
\newblock On the causal properties of warped product spacetimes.
\newblock {\em Classical Quantum Gravity}, 24(17):4457--4474, 2007.

\bibitem[Min15]{Min:15}
E.~Minguzzi.
\newblock Convex neighborhoods for {L}ipschitz connections and sprays.
\newblock {\em Monatsh. Math.}, 177(4):569--625, 2015.

\bibitem[Min19a]{Min:19a}
E.~Minguzzi.
\newblock Causality theory for closed cone structures with applications.
\newblock {\em Rev. Math. Phys.}, 31(5):1930001, 139, 2019.

\bibitem[Min19b]{Min:19b}
E.~Minguzzi.
\newblock Lorentzian causality theory.
\newblock {\em Living Reviews in Relativity}, 22(1):3, 2019.

\bibitem[MS08]{MS:08}
E.~Minguzzi and M.~S{\'a}nchez.
\newblock The causal hierarchy of spacetimes.
\newblock In {\em Recent developments in pseudo-{R}iemannian geometry}, ESI
  Lect. Math. Phys., pages 299--358. Eur. Math. Soc., Z\"urich, 2008.

\bibitem[MS18]{MS:18}
A.~Mondino and S.~Suhr.
\newblock An optimal transport formulation of the einstein equations of general
  relativity.
\newblock {\em Preprint, arXiv:1810.13309 [math-ph]}, 2018.

\bibitem[MS19]{MS:19}
E.~Minguzzi and S.~Suhr.
\newblock Some regularity results for {L}orentz-{F}insler spaces.
\newblock {\em Ann. Global Anal. Geom.}, 56(3):597--611, 2019.

\bibitem[Nat55]{Nat:55}
I.~P. Natanson.
\newblock {\em Theory of functions of a real variable}.
\newblock Frederick Ungar Publishing Co., New York, 1955.
\newblock Translated by Leo F. Boron with the collaboration of Edwin Hewitt.

\bibitem[O'N83]{ONe:83}
B.~O'Neill.
\newblock {\em Semi-{R}iemannian geometry with applications to relativity},
  volume 103 of {\em Pure and Applied Mathematics}.
\newblock Academic Press, Inc. [Harcourt Brace Jovanovich, Publishers], New
  York, 1983.

\bibitem[Pap14]{Pap:14}
A.~Papadopoulos.
\newblock {\em Metric spaces, convexity and non-positive curvature}, volume~6
  of {\em IRMA Lectures in Mathematics and Theoretical Physics}.
\newblock European Mathematical Society (EMS), Z\"urich, second edition, 2014.

\bibitem[Per93]{Per:93}
G.~Y. Perel'man.
\newblock Elements of {M}orse theory on {A}leksandrov spaces.
\newblock {\em Algebra i Analiz}, 5(1):232--241, 1993.

\bibitem[Pet16]{Pet:16}
P.~Petersen.
\newblock {\em Riemannian geometry}, volume 171 of {\em Graduate Texts in
  Mathematics}.
\newblock Springer, Cham, third edition, 2016.

\bibitem[S{\"a}m16]{Sae:16}
C.~S{\"a}mann.
\newblock Global hyperbolicity for spacetimes with continuous metrics.
\newblock {\em Ann. Henri Poincar\'e}, 17(6):1429--1455, 2016.

\bibitem[Sbi18]{Sbi:18}
J.~Sbierski.
\newblock The {$C^0$}-inextendibility of the {S}chwarzschild spacetime and the
  spacelike diameter in {L}orentzian geometry.
\newblock {\em J. Differential Geom.}, 108(2):319--378, 2018.

\bibitem[Sur19]{Sur:19}
S.~Surya.
\newblock The causal set approach to quantum gravity.
\newblock {\em Living Reviews in Relativity}, 22(5), 2019.

\bibitem[SV16]{SV:16}
C.~Sormani and C.~Vega.
\newblock Null distance on a spacetime.
\newblock {\em Classical Quantum Gravity}, 33(8):085001, 29, 2016.

\bibitem[Wal84]{Wal:84}
R.~M. Wald.
\newblock {\em General relativity}.
\newblock University of Chicago Press, Chicago, IL, 1984.

\end{thebibliography}
\addcontentsline{toc}{section}{References}

\end{document}